\title[]{A paradifferential approach for well-posedness of the Muskat problem}
\author{Huy Q. Nguyen}
\address{Department of Mathematics, Brown University, Providence, RI 02912}
\email{hnguyen@math.brown.edu}
\author{Beno\^it Pausader}
\address{Department of Mathematics, Brown University, Providence, RI 02912}
\email{benoit.pausader@math.brown.edu}
\newcommand{\bq}{\begin{equation}}
\newcommand{\eq}{\end{equation}}
\newcommand{\bqa}{\begin{eqnarray*}}
\newcommand{\eqa}{\end{eqnarray*}}
\theoremstyle{plain}
\newtheorem{theo}{Theorem}[section]
\newtheorem{prop}[theo]{Proposition}
\newtheorem{lemm}[theo]{Lemma}
\newtheorem{coro}[theo]{Corollary}
\newtheorem{defi}[theo]{Definition}
\theoremstyle{definition}
\newtheorem{rema}[theo]{Remark}
\newtheorem{nota}[theo]{Notation}
\DeclareMathOperator{\cnx}{div}
\DeclareMathOperator{\RE}{Re}
\DeclareMathOperator{\dist}{dist}
\DeclareMathOperator{\supp}{supp}
\DeclareSymbolFont{pletters}{OT1}{cmr}{m}{sl}
\DeclareMathSymbol{s}{\mathalpha}{pletters}{`s}
\def\tt{\theta}
\def\eps{\varepsilon}
\def\na{\nabla}
\def\la{\left\lvert}
\def\lA{\left\lVert}
\def\ra{\right\vert}
\def\rA{\right\Vert} 
\def\lb{\llbracket}
\def\rb{\rrbracket}
\def\les{\lesssim}
\def\mez{\frac{1}{2}}
\def\tdm{\frac{3}{2}}
\def\Rr{\mathbb{R}}
\def\T{\mathbb{T}}
\def\Nn{\mathbb{N}}
\def\Cc{\mathbb{C}}
\def\cF{\mathcal{F}}
\def\cN{\mathcal{N}}
\def\ld{\lambda}
\def\p{\partial}
\def\na{\nabla}
\def\wc{\rightharpoonup}
\def\ka{\kappa}
\def\ma{\mathfrak{a}}
\def\mb{\mathfrak{b}}
\def\wt{\widetilde}
\numberwithin{equation}{section}
\date{today}
\begin{document}
\begin{abstract}
We study the Muskat problem for  one fluid or two fluids, with or without viscosity jump, with or without rigid boundaries, and in arbitrary space dimension $d$ of the interface. The Muskat problem is scaling invariant in the Sobolev space $H^{s_c}(\mathbb{R}^d)$ where $s_c=1+\frac{d}{2}$. Employing a paradifferential approach, we prove local well-posedness for large data in any subcritical Sobolev spaces $H^s(\mathbb{R}^d)$, $s>s_c$. Moreover, the rigid boundaries are only required to be Lipschitz and can have arbitrarily large variation.  The Rayleigh-Taylor stability condition is assumed for the case of two fluids with viscosity jump but is proved to be automatically satisfied for the case of one fluid. The starting point of this work is  a reformulation  solely in terms of the Drichlet-Neumann operator. The key elements of proofs are new paralinearization and contraction results for the Drichlet-Neumann operator in rough domains.
\end{abstract}

\keywords{Muskat, Hele-Shaw, Free boundary problems, Regularity, Paradifferential calculus.}

\noindent\thanks{\em{ MSC Classification: 35R35, 35Q35, 35S10, 35S50, 76B03.}}

\maketitle

%\tableofcontents

\section{Introduction}

\subsection{The Muskat problem} In its full generality, the  Muskat problem describes the dynamics of two immiscible fluids  in a porous medium with different densities $\rho^\pm$ and different viscosities $\mu^\pm$. Let us denote the interface between the two fluids by $\Sigma$ and assume that it is the graph of a time-dependent function $\eta(x, t)$, i.e. 
\bq\label{Sigma}
\Sigma_t=\{(x, \eta(t,x)): x\in \Rr^d\}.
\eq
The associated time-dependent fluid domains are then given by
\bq\label{Omega+}
\Omega^+_t=\{(x, y)\in \Rr^d\times \Rr: \eta(t, x)<y<\underline{b}^+(x)\}
\eq
and
\bq\label{Omega-}
\Omega^-_t=\{(x, y)\in \Rr^d\times \Rr: \underline{b}^-(x)<y<\eta(t, x)\}
\eq
where $\underline{b}^\pm$ are the parametrizations of the rigid boundaries
\bq\label{Gamma:pm}
\Gamma^\pm =\{(x, \underline{b}^\pm(x)): x\in \Rr^d\}.
\eq

%\begin{figure}[h]
%\caption{One phase Muskat over an atoll}
%\centering
%\includegraphics[width=0.3\textwidth]{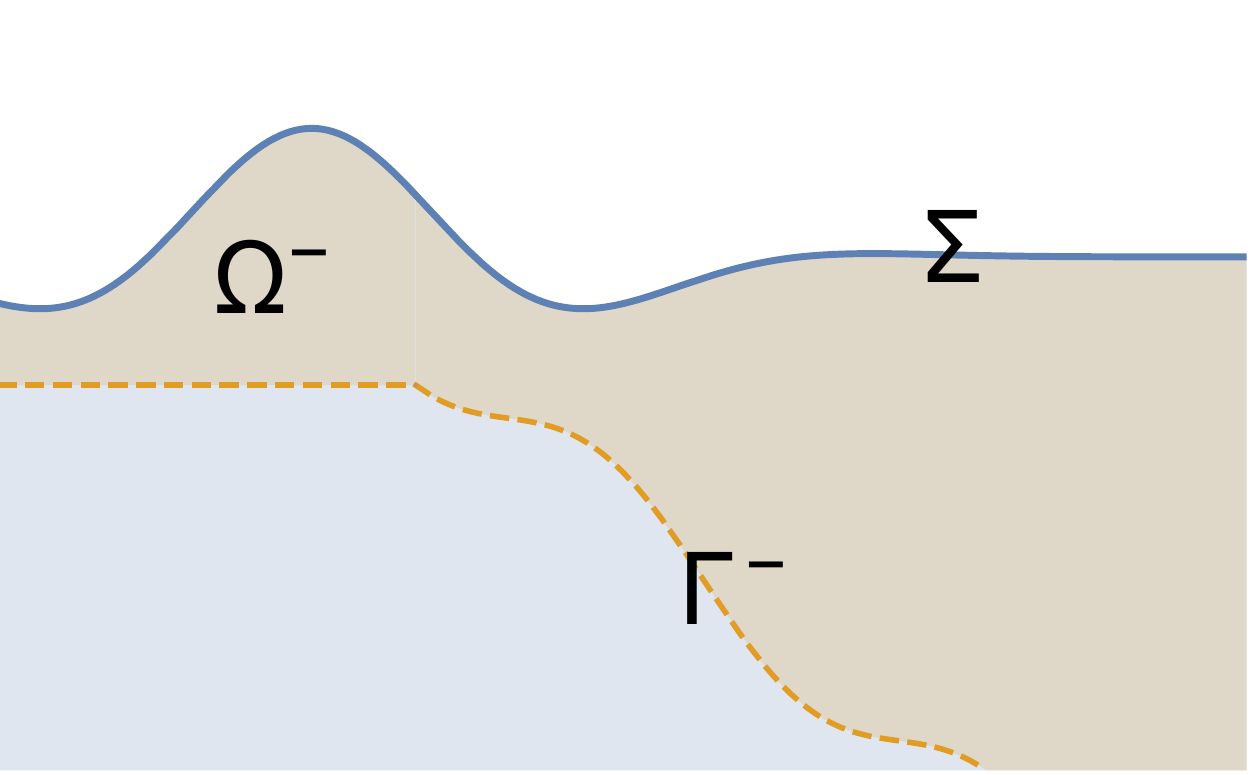}
%\end{figure}

\begin{figure}[H]
\begin{subfigure}{.5\textwidth}
  \centering
  \includegraphics[width=.6\linewidth]{MuskatOnePhase.pdf}
  \caption{The one-phase problem}
  %\label{fig:sfig1}
\end{subfigure}%
\begin{subfigure}{.5\textwidth}
  \centering
  \includegraphics[width=.6\linewidth]{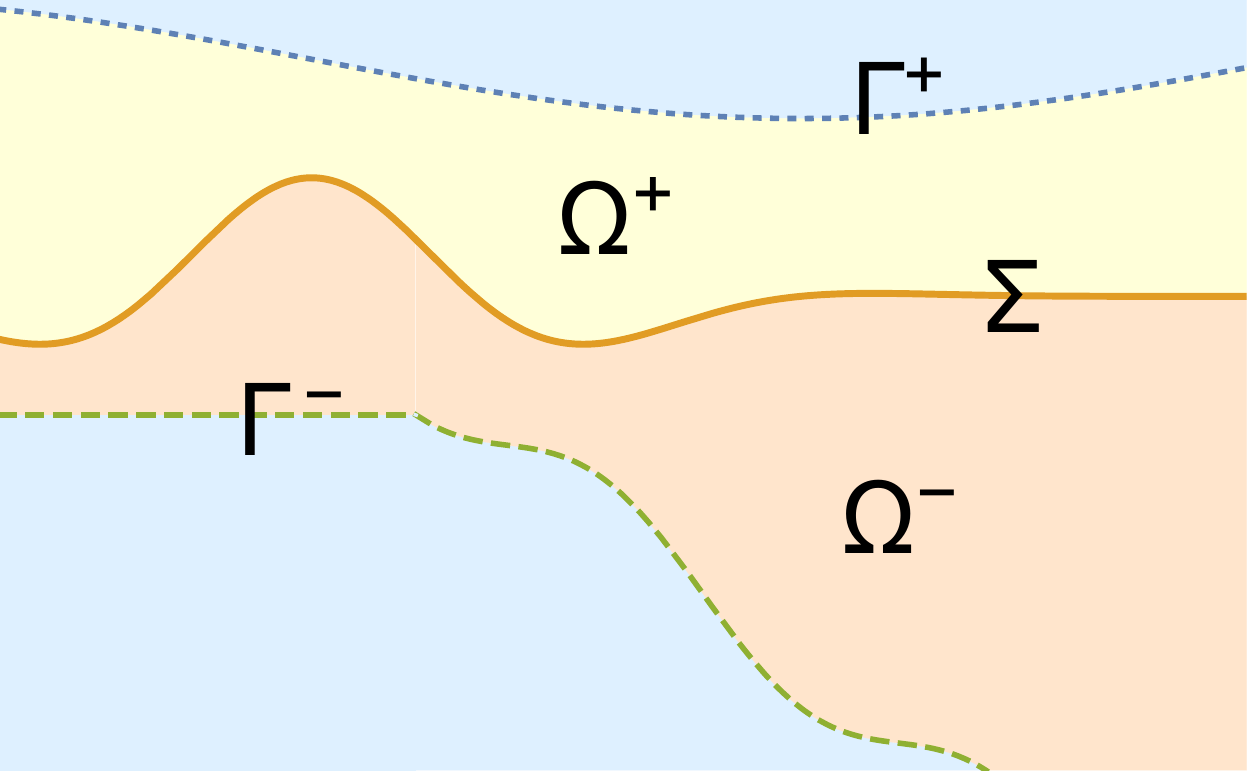}
  \caption{The two-phase problem}
 % \label{fig:sfig2}
\end{subfigure}
%\caption{plots of....}
\label{fig:fig}
\end{figure}

The incompressible fluid velocity $u^\pm$ in each region is governed by Darcy's law:
\bq\label{Darcy:pm}
\mu^\pm u^\pm+\nabla_{x, y}p^\pm=-(0, \rho^\pm)\quad \text{in}~\Omega^\pm_t,
\eq
and 
\bq
\cnx_{x, y} u^\pm =0\quad \text{in}~\Omega^\pm_t.
\eq
Note that we have normalized  gravity  to $1$ in \eqref{Darcy:pm}.

At the interface $\Sigma$, the normal velocity is continuous:
\bq\label{u.n:pm}
u^+\cdot n=u^-\cdot n\quad\text{on}~\Sigma_t
\eq
where $n=\frac{1}{\sqrt{1+|\nabla \eta|^2}}(-\nabla \eta, 1)$ is the upward pointing unit normal to $\Sigma_t$. Then, the interface moves with the fluid:
\bq\label{kbc:pm}
\p_t\eta=\sqrt{1+|\nabla \eta|^2}u^-\cdot n\vert_{\Sigma_t}.
\eq
By neglecting the effect of surface tension, the pressure is continuous at the interface: 
\bq\label{p:pm}
p^+=p^-\quad\text{on}~\Sigma_t.
\eq
Finally, at the two rigid boundaries, the no-penetration boundary conditions are imposed:
\bq\label{bc:b:pm}
u^\pm\cdot \nu^\pm=0\quad \text{on}~\Gamma^\pm
\eq
where $\nu^\pm=\pm\frac{1}{\sqrt{1+|\nabla \underline b^\pm|^2}}(-\nabla \underline b^\pm, 1)$ denotes the outward pointing unit normal to $\Gamma^\pm$. We will also consider the case that at least one of $\Gamma^\pm$ is empty (infinite depth); \eqref{bc:b:pm} is  then replaced by the vanishing of $u$ at infinity. 

We shall refer to the system \eqref{Omega+}-\eqref{bc:b:pm} as the two-phase Muskat problem. When the top phase corresponds to vacuum, i.e. $\mu^+=\rho^+=0$, the two-phase Muskat problem reduces to the one-phase Muskat problem and \eqref{p:pm} becomes
\bq
p^-=0\quad\text{on}~\Sigma_t.
\eq

\subsection{Presentation of the main results}

It turns out that the Muskat problem can be recast as a {\it quasilinear} evolution problem of the interface $\eta$ only (see e.g. \cite{AlaMeuSme,CorGan07, CorLaz, GanGarPatStr, Mat}). Moreover, in the case of infinite bottom, if $\eta(t,x)$ is a solution then so is
\begin{equation*}
\eta_\lambda (t,x):=\lambda^{-1}\eta(\lambda t, \lambda x),\qquad \lambda>0
\end{equation*}
and thus the Sobolev space $H^{1+\frac d2}(\Rr^d)$ is scaling invariant. Our main results assert that the Muskat problem in arbitrary dimension is locally well-posed for large data in {\it all subcritical Sobolev spaces} $H^s(\Rr)$, $s>1+\frac{d}{2}$, either in the case of one fluid or the case of two fluids with or without viscosity jump, and when the bottom is either empty or is  the graph of a Lipshitz function with arbitrarily large variation. We state here an informal version of our main results and refer to Theorem \ref{theo:wp1} and Theorem \ref{theo:wp2} for precise statements.

\begin{theo}[Informal version]\label{theo:informal}Let $d\ge1$ and $s>1+\frac{d}{2}$.

$(i)$ (The one-phase problem) Consider $\rho^->\rho^+=0$ and $\mu^->\mu^+=0$. Assume either that the depth is infinite or that the bottom is the graph of a  Lipschitz function that does not touch the surface. Then the one-phase Muskat problem is locally well posed in $H^s(\Rr^d)$.
 
$(ii)$ (The two-phase problem)  Consider  $\rho^->\rho^+>0$ and $\mu^\pm>0$. Assume that the upper and lower boundaries are either empty or graphs of  Lipschitz functions that do not touch the interface. The two-phase Muskat problem is locally well posed in $H^s(\Rr^d)$ in the sense that any initial data in $H^s(\Rr^d)$ satisfying the Rayleigh-Taylor condition leads to a unique solution in $L^\infty([0, T]; H^s(\Rr^d))$ for some $T>0$.
\end{theo}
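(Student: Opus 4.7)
The plan is to follow a paradifferential strategy centered on the Dirichlet--Neumann (DN) operators. First I would reformulate the system purely as a scalar evolution for the interface $\eta$ by eliminating pressures and velocities. Using Darcy's law together with incompressibility, each $u^\pm$ is (up to gravity) a gradient of a harmonic function $p^\pm+\rho^\pm y$, while the kinematic condition \eqref{kbc:pm}, the continuity of normal velocity \eqref{u.n:pm}, and the continuity of pressure \eqref{p:pm} at $\Sigma_t$ together produce an elliptic transmission problem that determines $p^\pm|_{\Sigma_t}$ uniquely in terms of $\eta$. Substituting back, the dynamics reduces to a scalar quasilinear evolution $\p_t\eta=F[\eta]$, where $F$ is built from compositions of the DN operators $G^\pm[\eta]$ attached to the fluid domains $\Omega^\pm_t$ cut off by the Lipschitz bottoms $\Gamma^\pm$.

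Next I would paralinearize $F[\eta]$. Using the new paralinearization theorem for the DN operator in rough domains, the principal part of $F$ can be isolated and the evolution recast in paradifferential form
\bq
\p_t\eta + T_\ld \eta = R[\eta],
\eq
where $\ld(x,\xi)$ is a first-order symbol whose real part is, after diagonalization if needed, comparable to $|\xi|$, and where $R[\eta]$ gains at least one derivative relative to the leading part. The Rayleigh--Taylor hypothesis is precisely what guarantees $\RE \ld>0$ in the two-phase case with viscosity jump; in the one-phase case the sign is automatic from $\rho^->0$ and the geometry, which explains why the stability condition need not be imposed.

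A priori $H^s$ estimates for $s>1+\tfrac{d}{2}$ should then follow from a parabolic energy scheme: apply $\L{D}^s$, commute past $T_\ld$ using symbolic calculus (valid because $s>1+\tfrac{d}{2}$ puts coefficients at the Lipschitz threshold), test against $\eta$, and invoke a G{\aa}rding-type inequality $\RE T_\ld \geq c\, T_{|\xi|}-C$ to obtain a differential inequality of the schematic form
\bq
\frac{d}{dt}\|\eta\|_{H^s}^2 + c\|\eta\|_{H^{s+\mez}}^2 \les \Phi(\|\eta\|_{H^s}),
\eq
with $\Phi$ continuous. After regularizing the initial data and running a parabolic approximation, this estimate yields smooth approximate solutions on a uniform time interval, and a compactness argument extracts a limit in $L^\infty([0,T];H^s)$.

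The main obstacle, and the technical heart of the work, will be uniqueness and continuous dependence at this near-critical regularity with only Lipschitz bottoms. For two solutions $\eta_1,\eta_2$ one needs to control $F[\eta_1]-F[\eta_2]$ in a topology weak enough to close a Gronwall argument yet strong enough to identify limits. This is exactly where the second new ingredient announced in the abstract is required: a contraction estimate for the DN operator in rough domains, establishing Lipschitz dependence of $G^\pm[\eta]$ on $\eta$ in a suitably chosen lower-order Sobolev norm, uniformly in the Lipschitz geometry of $\Gamma^\pm$. Once this contraction is in hand, the difference equation $\p_t(\eta_1-\eta_2)+T_{\ld_1}(\eta_1-\eta_2)=\cdots$ inherits the same paradifferential parabolic structure, and uniqueness together with continuous dependence on initial data follows from applying the previous energy argument to $\eta_1-\eta_2$ at a reduced regularity level, completing the proof of well-posedness.
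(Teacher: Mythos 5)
Your proposal follows essentially the same architecture as the paper: reformulate the system in terms of the Dirichlet--Neumann operators, prove a paralinearization of $G^\pm(\eta)$ in rough domains that isolates the elliptic symbol $\lambda(1-B)$ (or $\lambda(\lb\rho\rb-\lb B\rb)$) and a transport term, close parabolic $L^\infty_tH^s\cap L^2_tH^{s+1/2}$ estimates, and obtain uniqueness from a contraction estimate for the DN operator applied to the difference equation, with the one-phase Rayleigh--Taylor condition shown to hold automatically by a maximum-principle argument. The only significant detail not surfaced in your sketch is Alinhac's good unknown ($f-T_B\eta$ appearing inside $T_\lambda$) and the precise tame structure of the remainder --- a gain of $\tfrac12$ (not a full) derivative, with the highest norm $\|\eta\|_{H^{s+\frac12-\delta}}$ entering linearly and with an extra $\delta$-gain to let $T$ be small --- which is exactly what closes the estimate for large data; but these are implementation details consistent with the plan you laid out.
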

The starting point of our analysis is the fact that the Muskat problem has a very simple reformulation in terms of the Dirichlet-Neumann map $G$ (see the definition \eqref{defG:intro} below); most strikingly, in the case of one fluid, it is equivalent to
\begin{equation}\label{SimplifiedMuskat}
\partial_t\eta+G(\eta)\eta=0.
\end{equation}
See Proposition \ref{reform:1}. This makes it clear that
\begin{itemize}
\item Any precise result on the continuity of the Dirichlet-Neumann map leads to direct application for the Muskat problem. This is especially relevant in view of the recent intensive work in the context of water-waves  \cite{ABZ1,ABZ3, ThoGuy, PoyNg1, LannesJAMS}.
\item The Muskat problem is the natural parabolic analog of the water-wave problem and as such is a useful toy-model to understand some of the outstanding challenges for the water-wave problem. 
\end{itemize}

The second point above applies to the study of possible splash singularities, see \cite{CasCorFefGan,CasCorFefGanMar}. Another problematic is the question of optimal low-regularity well-posedness for quasilinear problems. This seems a rather formidable problem for water-waves since the mechanism of dispersion is harder to properly pin down in the quasilinear case (see \cite{Ai,ABZ1,ABZ3, ABZ4, PoyNg2,HunIfrTat, Ng, Wu2d,Wu3d}), but becomes much more tractable in the case of the Muskat problem due to its parabolicity. This is the question we consider here.

The Muskat problem exists in many incarnations:  with or without viscosity jump, with or without surface tension, with or without bottom, with or without permeability jump, in $2d$ or $3d$, when the interface are graphs or curves\dots Our main objective is to provide a flexible approach that covers many aspects at the same time and provides almost sharp well-posedness results. The main questions that we do not address here are
\begin{itemize}
\item The case of surface tension or jump in permeability (see e.g. \cite{Mat, Mat2, BerCorGra, GraShk, Per}). This can also be covered by the paradifferential formalism, but we decided to leave it for another work in order to highlight the centrality of the Dirichlet-Neumann operator. 
\item The case the interface is not a graph. We believe that so long as the interface is a graph over {\it some} smooth reference surface, the approach here may be adapted, but this would require substantial additional technicalities.
\item The case of beaches when the bottom and the interface meet. This is again a difficult problem (see e.g. \cite{Poy}).
%\item The case of stratified fluids. It is unclear whether a formulation solely on the surface is relevant in this case. This might be a case when an approach controlling the velocity field inside the fluid as in \cite{CheGraShk} could be better adapted.
\item The case of critical regularity. This is a delicate issue, especially for large data, or in the presence of corners. We believe that the approach outlined here could lead to interesting new insights into this question, but the estimates we provide would need to be significantly refined.
\end{itemize}
Finally, let us stress the fact that in our quasilinear case, there is a significant difference between small and large data, even for local existence. Indeed, the solution is created through some scheme which amounts to decomposing
\begin{equation*}
\begin{split}
\partial_t\eta=\mathcal{D}\eta+\Pi
\end{split}
\end{equation*}
where $\partial_t-\mathcal{D}$ can be more or less explicitly integrated, while $\Pi$ contains the perturbative terms. There are two ways the terms can be perturbative in an expansion
\begin{enumerate}
\item because they are small and at the same level of regularity,
\item because they are more regular.
\end{enumerate}
The first possibility allows, in the case of small data to bypass the precise understanding of the terms entailing derivative losses, so long as they are compatible with the regularity of solutions to $(\partial_t-\mathcal{D})\eta=0$. In our case, when considering large data, we need to extract the terms corresponding to the loss of derivatives in \eqref{SimplifiedMuskat} and this is where the {\it paradifferential calculus approach} is particularly useful. 
\subsection{Prior results}
 %%%%%%%
The Muskat problem was introduced in \cite{Mus} and has recently been the subject of intense study, both numerically and analytically. Interestingly, the Muskat problem is mathematically analogous to the Hele-Shaw  problem \cite{Hel1,Hel2} for viscous flows between two closely spaced parallel plates. We will mostly discuss the issue of well-posedness and refer to \cite{CasCorFefGan, CasCorFefGanMar,GomGra} for interesting results on singularity formation and to  \cite{GraLaz,GanSur} for recent reviews on the Muskat problem.  In the case of small data and infinite depth, global strong solutions have been constructed in subcritical spaces \cite{ Cam, Chen, CheGraShk, ConCorGanRodPStr, ConCorGanStr, ConGanShvVic, ConPug, CorGan07, CorLaz, SieCafHow} and in critical spaces \cite{GanGarPatStr}. We note in particular that \cite{CorLaz} allows for interfaces with large slope and \cite{CheGraShk, GanGarPatStr} allow for viscosity jump. Global weak solutions were obtained in \cite{ConCorGanStr, DenLeiLin}.

As noted earlier there is a significant difference between small and large data for this quasilinear problem. We now discuss in detail the issue of local well-posedness for large data. Early results on  local well-posedness for large data in Sobolev spaces  date back to \cite{Chen, EscSim, Yi} and  \cite{Amb0, Amb}. C\'ordoba and Gancedo \cite{CorGan07} introduced  the contour dynamic formulation for the Muskat problem without viscosity jump and with infinite depth, and proved local well-posedness in $H^{d+2}(\Rr^d)$, $d=1, 2$; here the interface is the graph of a function. In \cite{CorCorGan, CorCorGan2}, C\'ordoba, C\'ordoba and Gancedo  extended this result to the case of viscosity jump and nongraph interfaces satisfying the arc-chord and the Rayleigh-Taylor conditions.  Note that in the case with viscosity jump, one needs to invert a highly nonlocal equation to obtain the vorticity as an operator of the interface. Using an ALE (Arbitrary Lagrangian-Eulerian) approach, Cheng,  Granero and Shkoller  \cite{CheGraShk} proved local well-posedness for the one-phase problem with flat bottom when the initial surface $\eta\in H^2(\T)$ which allows for unbounded curvature. This result was then extended by Matioc \cite{Mat2} to the case of viscosity jump (but no bottom). For the case of constant viscosity, using nonlinear lower bounds, the authors in \cite{ConGanShvVic} obtained local well-posedness for $\eta\in W^{2, p}(\Rr)$ with $p\in (1, \infty]$. Note that $W^{2, 1}(\Rr)$ is scaling invariant yet requires $1/2$ more derivative compared to $H^{3/2}(\Rr)$. By rewriting the problem as an abstract parabolic equation in a suitable functional setting, Matioc \cite{Mat} sharpened the local well-posedness theory to $\eta \in H^{3/2+\eps}(\Rr)$ for the case of constant viscosity and infinite depth. This covers all subcritical ($L^2$-based) Sobolev spaces for the given one-dimensional setting. We also note the recent work of Alazard-Lazar \cite{AlaLaz} which extends this result by allowing non $L^2$-data.

Our Theorem \ref{theo:informal} thus confirms local-wellposedness for large data in all subcritical Sobolev spaces for a rather general setting allowing for  viscosity jump, large bottom variations and higher dimensions. A notable feature of our approach is that it is entirely phrased in terms of the Dirichlet-Neumann operator and as a result, once this operator is properly understood, there is no significant difficulty in passing from constant viscosity to viscosity jump. Furthermore, we obtain an explicit quasilinear parabolic form (see \eqref{reduce:intro} and \eqref{reduce:intro2}) of the Muskat problem by extracting the elliptic and the transport part in the nonlinearity. 
\subsection{Organization of the paper}

In Section \ref{SecMainRes}, we reformulate the Muskat problem in terms of the  Dirichlet-Neumann operator and present the main results of the paper. In Section \ref{DefDN}, we properly define the Dirichlet-Neumann operator in our setting and obtain preliminary low-regularity bounds which are then used to obtain paralinearization and contraction estimates in higher norms via a paradifferential approach. These are key technical ingredients for the proof of the main results which are given in  Section \ref{SecMainProof}. Appendix \ref{appendix:trace} gathers trace theorems for homogeneous Sobolev spaces; Appendix \ref{appendixB} is devoted to the proof of \eqref{RT} and \eqref{RT2}; finally, a review of the paradifferential calculus machinery is presented in Appendix \ref{appendix:para}.

 %%%%%%%%%%%
 %%%%%%%%%%%
 %%%%%%%%%%%
 %%%%%%%%%%%
 
 \section{Reformulation and main results}\label{SecMainRes}
\subsection{Reformulation}
In order to state our reformulation for the Muskat problem, let us define the Dirichlet-Neumann operators $G^\pm(\eta)$ associated to $\Omega^\pm$. For a given function $f$, if $\phi^\pm$ solve 
\bq\label{elliptic:intro}
\begin{cases}
\Delta_{x, y} \phi^\pm=0\quad\text{in}~\Omega^\pm,\\
\phi^\pm=f\quad\text{on}~\Sigma,\\
\frac{\p\phi^\pm}{\p \nu^\pm}=0\quad\text{on}~\Gamma^\pm.
\end{cases}
\eq
then 
\bq\label{defG:intro}
G^\pm(\eta) f:=\sqrt{1+|\nabla \eta|^2}\frac{\p\phi^\pm}{\p n}.
\eq
The Dirichlet-Neumann operator will be studied in detail in Section \ref{DefDN}. We can now restate the Muskat problem in terms of $G^\pm$.

\begin{prop}\label{reform:1} Let $d\ge 1$.

$(i)$ If $(u, p, \eta)$ solve the one-phase Muskat problem then $\eta:\Rr^d\to \Rr$  obeys the equation 
\bq
\p_t\eta+\kappa G^{-}(\eta)\eta=0,\quad \kappa=\frac{\rho^-}{\mu^-}.\label{eq:eta}
\eq
Conversely, if $\eta$  is a solution of \eqref{eq:eta} then the one-phase Muskat problem has a solution which admits $\eta$ as the free surface.

$(ii)$ If $(u^\pm, p^\pm, \eta)$ is a solution of the two-phase Muskat problem then 
\bq\label{eq:eta2p}
\p_t\eta=-\frac{1}{\mu^-}G^{-}(\eta)f^-,
\eq
where $f^\pm:=p^\pm\vert_{\Sigma}+\rho^\pm \eta$ satisfy
\bq\label{system:fpm}
\begin{cases}
 f^+-f^-= (\rho^+-\rho^-)\eta,\\
\frac{1}{\mu^+}G^+(\eta)f^+-\frac{1}{\mu^-}G^-(\eta)f^-=0.
\end{cases}
\eq
Conversely, if $\eta$ is a solution of  \eqref{eq:eta2p} with $f^\pm$ solution of \eqref{system:fpm} then the two-phase Muskat problem has a solution which admits $\eta$ as the free interface.
\end{prop}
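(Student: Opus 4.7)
The guiding principle is to extract a harmonic velocity potential from Darcy's law and rephrase every remaining condition as either a boundary datum or a Dirichlet-Neumann identity. Solving \eqref{Darcy:pm} algebraically yields $u^\pm = \nabla_{x,y}\phi^\pm$ with
\begin{equation*}
\phi^\pm := -\frac{1}{\mu^\pm}\bigl(p^\pm + \rho^\pm y\bigr);
\end{equation*}
incompressibility then turns into $\Delta_{x,y}\phi^\pm = 0$ in $\Omega^\pm$, and the no-penetration condition \eqref{bc:b:pm} becomes $\partial\phi^\pm/\partial\nu^\pm = 0$ on $\Gamma^\pm$. Thus $\phi^\pm$ is precisely the solution of \eqref{elliptic:intro} associated to its own trace on $\Sigma$, and by \eqref{defG:intro}, $G^\pm(\eta)(\phi^\pm|_\Sigma) = \sqrt{1+|\nabla\eta|^2}\,u^\pm\cdot n|_\Sigma$.

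From this observation, the forward direction reduces to identifying the trace of $\phi^\pm$ on $\Sigma$. For (i), $p^-|_\Sigma=0$ yields $\phi^-|_\Sigma = -\kappa\eta$ with $\kappa = \rho^-/\mu^-$, so combining \eqref{kbc:pm} with the linearity of $G^-(\eta)$ in its argument gives $\partial_t\eta = -\kappa G^-(\eta)\eta$, i.e., \eqref{eq:eta}. For (ii), with $f^\pm := p^\pm|_\Sigma + \rho^\pm\eta$, one has $\phi^\pm|_\Sigma = -f^\pm/\mu^\pm$: pressure continuity \eqref{p:pm} then reads $f^+ - f^- = (\rho^+-\rho^-)\eta$; normal velocity continuity \eqref{u.n:pm} becomes $\tfrac{1}{\mu^+}G^+(\eta)f^+ = \tfrac{1}{\mu^-}G^-(\eta)f^-$; and the kinematic condition \eqref{kbc:pm} becomes \eqref{eq:eta2p}.

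For the converse, given $\eta$ (and $f^\pm$ in the two-phase case) solving the reduced system, I would define $\phi^\pm$ as the solution of \eqref{elliptic:intro} with Dirichlet data $-\kappa\eta$ (respectively $-f^\pm/\mu^\pm$) and set $u^\pm := \nabla_{x,y}\phi^\pm$ and $p^\pm := -\mu^\pm\phi^\pm - \rho^\pm y$. Then Darcy's law, incompressibility, and the no-penetration condition on $\Gamma^\pm$ hold by construction. In case (i) one checks directly $p^-|_\Sigma = -\mu^-(-\kappa\eta) - \rho^-\eta = 0$; in case (ii), $p^\pm|_\Sigma + \rho^\pm\eta = f^\pm$, so the first equation of \eqref{system:fpm} yields pressure continuity and the second yields normal velocity continuity. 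The kinematic identity is then precisely the assumed evolution equation for $\eta$.

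The algebraic content is thus essentially a bookkeeping exercise once the velocity potential has been introduced; the genuine technical content is tucked into the very definition of $G^\pm(\eta)$ and the well-posedness of \eqref{elliptic:intro} in our rough setting (Lipschitz bottom, possibly infinite depth, Dirichlet condition on $\Sigma$ and Neumann condition on $\Gamma^\pm$). This is where I expect the main obstacle, and it is the object of Section \ref{DefDN}: one must unambiguously construct $\phi^\pm$ with enough regularity (in appropriate homogeneous Sobolev spaces, cf. Appendix \ref{appendix:trace}) for the trace identities above to make sense, and show that $G^\pm(\eta)$ has the linearity and continuity properties tacitly used. Once those foundations are in place, the proposition follows by direct substitution.
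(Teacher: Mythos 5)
Your proof is correct and follows essentially the same approach as the paper: both extract a harmonic function from Darcy's law (the paper's $q = p^- + \rho^- y$ is just $-\mu^-\phi^-$ in your notation), identify its trace on $\Sigma$ using the pressure boundary condition, and then read off the evolution equation from the kinematic condition and the jump conditions directly as Dirichlet--Neumann identities. Your explicit bookkeeping via the velocity potential $\phi^\pm$ and the closing remark deferring the analytical content to the construction of $G^\pm(\eta)$ in Section~\ref{DefDN} match the paper's reasoning.
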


We refer to \cite{AlaMeuSme,ChlGuiSch} for similar reformulations and derivation of a number of interesting properties.

\begin{proof}
$(i)$ Assume first that $(u^{-}, p^{-}, \eta)$ solve the one-phase Muskat problem. Setting $q=p^{-}+\rho^- y$, then $q$ solves the elliptic problem
\bq\label{elliptic:q}
\Delta_{x, y}q=0\quad\text{in}~\Omega^{-}_t,\qquad q=\rho^-\eta\quad\text{on}~\Sigma_t,\qquad \p_{\nu} q=0\quad\text{on}~\Gamma^{-}.
\eq
Since $\sqrt{1+|\nabla \eta|^2}u^{-}\cdot n\vert_{\Sigma_t}=-G^{-}(\eta)(\rho^-\eta)$, \eqref{eq:eta} follows from  \eqref{kbc:pm} and \eqref{Darcy:pm}.

Conversely, if $\eta$ satisfies \eqref{eq:eta}  then the pressure $p^{-}=q-\rho^- y$ is obtained by solving \eqref{elliptic:q}, and the velocity is determined from the Darcy's law \eqref{Darcy:pm}. 

$(ii)$ As before, \eqref{eq:eta2p} follows from \eqref{kbc:pm} and  \eqref{Darcy:pm} for $\Omega^-$. The jump of $f$ in \eqref{system:fpm} is a consequence of the continuity \eqref{p:pm} of the pressure. Lastly, the jump of Dirichlet-Neumann operators is exactly the continuity \eqref{u.n:pm} of  the normal velocity. Conversely, if $\eta$ is known then $(u^\pm, p^\pm)$ can be easily determined. 
\end{proof}

\begin{rema}
For a given function $\eta\in W^{1, \infty}(\Rr^d)\cap H^\mez(\Rr^d)$, we prove in Proposition \ref{prop:fpm} below that there exists a unique pair $f^\pm$ solving \eqref{system:fpm} in a variational sense. 
\end{rema}
%%%%%%%%%
\subsection{Main results}

The Rayleigh-Taylor stability condition requires that the pressure is increasing in the normal direction when crossing the interface from the top fluid to the bottom fluid. More precisely, 
\bq
\text{RT}(x, t)=(\na_{x, y}p^+-\na_{x, y}p^-)\cdot n\vert_{\Sigma_t}>0.
\eq
In terms of $\eta$ and $f^\pm$, we have 
\bq\label{RT}
\text{RT}(x, t)=(1+|\na\eta|^2)^\mez \big(\lb \rho\rb-\lb B\rb\big),
\eq
where 
\[
\lb\rho\rb=\rho^{-}-\rho^+,\quad \lb B\rb=B^--B^+
\]
and
\[
B^\pm=\frac{\nabla \eta\cdot \nabla f^\pm+G^\pm(\eta)f^\pm}{1+|\nabla \eta|^2}.
\]
Using the Darcy law \eqref{Darcy:pm} we can write that
\bq\label{RT2}
\text{RT}(x, t)=\lb \rho\rb(1+|\na \eta|^2)^{-\mez}+\lb \mu\rb u\cdot n,\quad \lb \mu\rb=(\mu^--\mu^+).
\eq
See Appendix \ref{appendixB} for the proof of \eqref{RT} and \eqref{RT2}. Let us denote
\bq\label{def:spaceZs}
Z^s(T)=L^\infty([0, T]; H^s(\Rr^d))\cap L^2([0, T]; H^{s+\mez}(\Rr^d)).
\eq

For the one-phase problem, we prove local well-posedness {\it without} assuming the Rayleigh-Taylor stability condition which in fact {\it always holds}, even in finite depth (see Remark \ref{rema:RT}).
\begin{theo}\label{theo:wp1}
 Let $\mu^->0$ and $\rho^->0$. Let $s>1+\frac{d}{2}$ with $d\ge 1$. Consider either $\Gamma^-=\emptyset$ or $\underline{b}^{-}\in  \dot W^{1, \infty}(\Rr^d)$. Let  $\eta_0\in H^s(\Rr^d)$ satisfy 
\bq
 \dist(\eta_0, \Gamma^{-})\ge 2h>0.
 \eq
   Then, there exist a positive time $T$ depending only on  $(s, \kappa)$, $h$ and $\| \eta_0\|_{H^s(\Rr^d)}$, and a unique solution $\eta\in Z^s(T)$  to equation \eqref{eq:eta} such that $\eta\vert_{t=0}=\eta_0$ and
 \bq
\dist(\eta(t), \Gamma^{-})\ge h\quad\forall t\in [0, T].
\eq
Furthermore, the $L^2$ norm of $\eta$ in nonincreasing in time. 
\end{theo}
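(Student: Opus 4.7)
The plan is to combine the paralinearization of the Dirichlet-Neumann operator established in Section \ref{DefDN} with a parabolic energy estimate at the $H^s$ level, then to construct the solution by an approximation scheme and to close uniqueness by a contraction estimate at one lower order of regularity. By Proposition \ref{reform:1}(i), the problem reduces to the single scalar equation
\bq\label{proof:eta}
\p_t\eta+\kappa G^-(\eta)\eta=0,\qquad \kappa=\rho^-/\mu^->0.
\eq
Since $G^-(\eta)$ is an elliptic pseudodifferential operator of order one with real, positive principal symbol, \eqref{proof:eta} should be viewed as a quasilinear parabolic equation of order $1$ whose natural energy space is precisely $Z^s(T)$, yielding the parabolic gain $L^2([0,T];H^{s+\mez})$.

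\textbf{Paralinearization.} The paralinearization results of Section \ref{DefDN}, applied in the Lipschitz-bottom setting, should give a decomposition of the form
\bq\label{proof:para}
G^-(\eta)u=T_\ld(u-T_B\eta)-T_V\cdot\na\eta+R(\eta,u),
\eq
where $\ld=\ld(x,\xi)$ is an elliptic symbol of order $1$ satisfying $\RE\ld(x,\xi)\ge c\langle\xi\rangle$ for $|\xi|\ge 1$, the functions $V$ and $B$ are the horizontal and vertical traces at $\Sigma$ of $\na_{x,y}\phi^-$, and $R(\eta,u)$ is smoothing of positive order, all estimated in terms of $\|\eta\|_{H^s}$ and $h$. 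Specializing $u=\eta$ rewrites \eqref{proof:eta} as
\bq\label{proof:para2}
\p_t\eta+\kappa T_\ld\eta+\kappa T_V\cdot\na\eta=\kappa T_\ld T_B\eta-\kappa R(\eta,\eta),
\eq
whose right-hand side is bounded in $H^{s+\mez-\delta}$ for some $\delta>0$ by $C(\|\eta\|_{H^s},h^{-1})\|\eta\|_{H^s}$.

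\textbf{A priori estimate.} Setting $u_s=\langle D\rangle^s\eta$ and commuting $\langle D\rangle^s$ through \eqref{proof:para2} produces an equation of the same form for $u_s$ with commutator errors of order at most $s$. Testing against $u_s$ and integrating, the elliptic piece contributes, after G{\aa}rding's inequality for paradifferential operators,
\bq
\kappa\langle u_s,T_{\RE\ld}u_s\rangle_{L^2}\ge c\kappa\|\eta\|_{H^{s+\mez}}^2-C(\|\eta\|_{H^s})\|\eta\|_{H^s}^2,
\eq
while the transport term $\kappa\langle u_s,T_V\cdot\na u_s\rangle_{L^2}$ is skew-symmetric modulo a zero-order remainder, hence bounded by $C\|\na V\|_{L^\infty}\|u_s\|_{L^2}^2\le C(\|\eta\|_{H^s})\|\eta\|_{H^s}^2$. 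The perturbative right-hand side of \eqref{proof:para2} is absorbed by interpolation against the parabolic gain, producing
\bq\label{proof:enest}
\tfrac{d}{dt}\|\eta\|_{H^s}^2+c\kappa\|\eta\|_{H^{s+\mez}}^2\le C(\|\eta\|_{H^s},h^{-1})\|\eta\|_{H^s}^2,
\eq
from which a positive time $T=T(s,\kappa,h,\|\eta_0\|_{H^s})$ and the uniform bounds $\|\eta\|_{L^\infty_T H^s}\les \|\eta_0\|_{H^s}$ follow by ODE comparison. Existence is then obtained by a viscous (or Friedrichs) regularization $\p_t\eta_\eps+\kappa G^-(\eta_\eps)\eta_\eps=\eps\Delta\eta_\eps$, for which \eqref{proof:enest} holds uniformly in $\eps$, followed by Aubin-Lions compactness. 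Uniqueness and Lipschitz dependence on initial data follow from the contraction estimate for $G^-$ of Section \ref{DefDN} applied to $\eta_1-\eta_2$, giving a parabolic inequality at level $H^{s-\mez}$. The lower bound $\dist(\eta(t),\Gamma^-)\ge h$ comes from $H^s\hookrightarrow W^{1,\infty}$ and $\|\p_t\eta\|_{L^1([0,T];L^\infty)}\le T^{\mez}\|\p_t\eta\|_{L^2([0,T];H^{s-\mez})}$, shrinking $T$ if necessary; the $L^2$ monotonicity follows by pairing \eqref{proof:eta} with $\eta$ and using $\int\eta\, G^-(\eta)\eta\,dx=\int_{\Omega^-}|\na_{x,y}\phi^-|^2\,dxdy\ge 0$.

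\textbf{Main obstacle.} The principal difficulty lies in making the paralinearization \eqref{proof:para} rigorous in the rough geometric setting, where $\Gamma^-$ is only Lipschitz and $\underline{b}^-$ may have arbitrarily large variation. In particular, one must control $V$ and $B$ in $W^{1,\infty}$ (or a similar scale) by $\|\eta\|_{H^s}$ alone, with constants depending only on $h$ and $\|\na\underline{b}^-\|_{L^\infty}$, and one must retain enough regularity of the symbol $\ld$ and remainder $R$ for the symmetrization in the energy argument to close without losing the parabolic gain. This is exactly what the Dirichlet-Neumann estimates of Section \ref{DefDN} and the paradifferential apparatus of Appendix \ref{appendix:para} are designed to provide.
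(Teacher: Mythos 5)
Your overall scheme (paralinearize $G^-$, run an $H^s$ energy estimate with the parabolic gain, construct solutions by viscous approximation, and close uniqueness one derivative lower) is the same as the paper's, but there is a genuine gap exactly where the large-data difficulty sits. In your reduction you move $\kappa T_\ld T_B\eta$ to the right-hand side and claim the right-hand side is bounded in $H^{s+\mez-\delta}$ by $C(\|\eta\|_{H^s},h^{-1})\|\eta\|_{H^s}$. This is not true: $T_\ld T_B$ is an operator of order one, so for $\eta\in H^s$ this term lies only in $H^{s-1}$ (in $H^{s-\mez}$ in $L^2_t$ if one spends the parabolic gain), and, more importantly, for large data $B$ need not be small, so $\kappa T_\ld T_B\eta$ has exactly the same strength as the leading term $\kappa T_\ld\eta$ and cannot be absorbed perturbatively or by interpolation against the diffusion. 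The correct move is to keep it with the principal part: by Theorem \ref{theo:sc} (ii), $T_\ld\eta-T_\ld T_B\eta=T_{\ld(1-B)}\eta$ up to an operator of order $1-\delta$, so the true diffusion operator is $T_{\ld(1-B)}$, and the energy estimate closes only if one has the pointwise lower bound $1-B\ge \ma>0$ (the Rayleigh--Taylor condition), which Theorem \ref{theo:wp1} does \emph{not} assume. Your proposal never establishes this; G\aa rding's inequality for $T_{\RE\ld}$ alone does not give it. The paper's essential extra ingredient is Proposition \ref{lemm:upperbounG}, which proves $G^-(\eta)\eta\le 1-c_0$ (hence $1-B>0$) via a maximum-principle argument for $\psi=\phi-y$ — with a cutoff/averaging device to handle the unbounded Lipschitz bottom — followed by Hopf's lemma, together with the propagation of this bound and of $\dist(\eta,\Gamma^-)\ge h$ on a short time interval. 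Without this step the a priori estimate cannot be closed for large data.

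Two further points in your sketch are not available at this regularity and are handled differently in the paper. First, the remainder in the paralinearization is only in $H^{s-\mez}$, with the tame bound $\cF(\|\eta\|_{H^s})\|\eta\|_{H^s}(1+\|\eta\|_{H^{s+\mez-\delta}})$ of Theorem \ref{tame:RDN}; it is precisely the linear occurrence of the top norm with a $\delta$-gain that lets one absorb it by interpolation and Young's inequality, so your stronger claim is both false and unnecessary. Second, $\nabla V$ is not in $L^\infty$ when $s$ is just above $1+\frac d2$ (one only has $V\in H^{s-1}\subset W^{\delta,\infty}$), so the transport term must be symmetrized paradifferentially: $T_V\cdot\nabla+(T_V\cdot\nabla)^*$ is of order $1-\delta$ by Theorem \ref{theo:sc} (iii), not of order $0$ with a $\|\nabla V\|_{L^\infty}$ bound. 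Finally, for uniqueness the crude contraction estimate of Corollary \ref{theo:contractionDN} loses a derivative and does not close by itself; one needs the refined Theorem \ref{theo:contractionDN2}, which exhibits the paralinearized (transport plus $T_{\ld_2 B_2}$) structure of $G^-(\eta_1)f-G^-(\eta_2)f$ so that the equation for the difference is again parabolic — and this again uses the positivity $1-B>0$.
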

As for the two-phase problem, we prove local well-posedness in the stable regime ($\rho^+<\rho^-$) for large data satisfying the Rayleigh-Taylor stability condition. 
\begin{theo}\label{theo:wp2}
Let $\mu^\pm>0$ and $\rho^->\rho^+>0$. Let $s>1+\frac{d}{2}$ with $d\ge 1$. Consider  any combination of $\Gamma^\pm=\emptyset$ and $\underline{b}^\pm\in  \dot W^{1, \infty}(\Rr^d)$. Let  $\eta_0\in H^s(\Rr^d)$ satisfy 
\begin{align}
& \dist(\eta_0, \Gamma^\pm)\ge 2h>0,\\
 &\inf_{x\in \Rr^d}\mathrm{RT}(x, 0)>2\ma>0.
 \end{align}
  Then, there exist a positive time $T$ depending only on $(s, \mu^\pm, \lb\rho\rb)$, $(h, \ma)$ and $\| \eta_0\|_{H^s(\Rr^d)}$, and a unique solution 
  $ \eta\in Z^s(T)$ to equations \eqref{eq:eta2p}-\eqref{system:fpm} such that $\eta\vert_{t=0}=\eta_0$,
 \begin{align}
&\dist(\eta(t), \Gamma^\pm)\ge h\quad\forall t\in [0, T],\\
&\inf_{(x, t)\in \Rr^d\times [0, T]}\mathrm{RT}(x, t)> \ma.
\end{align}
Furthermore, the $L^2$ norm of $\eta$ is nonincreasing in time.
\end{theo}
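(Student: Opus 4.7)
The plan is to reduce Theorem \ref{theo:wp2} to a single quasilinear paradifferential equation on $\eta$ whose principal symbol carries exactly the Rayleigh-Taylor factor, and then to close the argument by a parabolic $H^s$ a priori estimate followed by a standard approximation--compactness--contraction scheme. The scalarization proceeds as follows: eliminating $f^+=f^--\lb\rho\rb\eta$ from \eqref{system:fpm}, the remaining transmission condition reads
\bq\label{ElimJump}
\Big(\tfrac{1}{\mu^+}G^+(\eta)-\tfrac{1}{\mu^-}G^-(\eta)\Big)f^-=\tfrac{\lb\rho\rb}{\mu^+}G^+(\eta)\eta,
\eq
and $f^-$ is recovered from $\eta$ by inverting this operator --- best handled variationally on the full transmission domain $\Omega^+\cup\Omega^-$ so as to include the degenerate case $\mu^+=\mu^-$, where \eqref{ElimJump} has no leading-order part, along the lines indicated in the remark following Proposition \ref{reform:1}. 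Substitution into \eqref{eq:eta2p} yields the closed quasilinear equation
\bq\label{QuasiDraft}
\p_t\eta+\mathcal{M}(\eta)\eta=0.
\eq

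The next step is to invoke the paralinearization and contraction results for $G^\pm(\eta)$ developed in Section \ref{DefDN} in order to recast \eqref{QuasiDraft} in paradifferential form
\bq\label{ParaDraft}
\p_t\eta+T_p\eta+T_V\cdot\na\eta=R(\eta),
\eq
where $p(x,\xi)$ is a first-order symbol, $V$ is a real vector field and $R$ is a remainder gaining $\mez$ derivative in $\eta$. The decisive algebraic computation is the identification of $\RE p$ with the Rayleigh-Taylor quantity: combining \eqref{RT}, the transmission condition in \eqref{system:fpm}, and Darcy's law, one should verify
\bq
\RE p(x,\xi)\gtrsim\mathrm{RT}(x)\,\L\xi+(\text{lower order}),
\eq
so that the hypothesis $\inf\mathrm{RT}(0)>2\ma$ yields $\RE p\gtrsim\ma\L\xi$, and this positivity is preserved for a short time by continuity of $\mathrm{RT}$ in $\eta\in H^s$.

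With \eqref{ParaDraft} in hand, the a priori energy estimate is standard parabolic: commute $\L D^s$ with the equation, pair with $\L D^s\eta$, and apply G\aa rding's inequality for the positive symbol $p$ (the drift $T_V\cdot\na$ is skew-symmetric at principal order and contributes only commutator remainders). This yields
\bq
\frac{d}{dt}\lA\eta\rA_{H^s}^2+c_0\ma\lA\eta\rA_{H^{s+\mez}}^2\les F\big(\lA\eta\rA_{H^s}\big),
\eq
hence a lifetime $T=T(\lA\eta_0\rA_{H^s},h,\ma,\mu^\pm,\lb\rho\rb)$ on which $\eta\in Z^s(T)$; persistence of $\dist(\eta(t),\Gamma^\pm)\geq h$ is immediate from $H^s\hookrightarrow L^\infty$ since $s>1+d/2$. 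Existence is obtained by an approximation scheme (parabolic regularization or Friedrichs mollification) combined with Aubin-Lions compactness; uniqueness follows from the contraction estimates of Section \ref{DefDN} applied to $\delta=\eta_1-\eta_2$, which satisfies a parabolic differential inequality at the $L^2$ level. Finally, the monotonicity of $\lA\eta\rA_{L^2}^2$ follows by pairing \eqref{eq:eta2p} with $\eta$ and using \eqref{system:fpm} to rewrite the resulting quadratic form as $\lb\rho\rb^{-1}$ times a $\mu^\pm$-weighted sum of Dirichlet energies on $\Omega^\pm$, which is nonnegative precisely because $\lb\rho\rb>0$.

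The main obstacle is the algebraic extraction of the first-order symbol $p$ from the composition $\mathcal{M}(\eta)$ and the identification $\RE p\gtrsim\mathrm{RT}\cdot\L\xi$, uniformly in the parameter range $\mu^\pm>0$. This requires a careful symbolic inversion of the transmission operator in \eqref{ElimJump}, and the fact that the bottoms $\Gamma^\pm$ are only Lipschitz rules out any appeal to classical pseudodifferential calculus, forcing exclusive reliance on the rough-domain paralinearization and contraction results of Section \ref{DefDN}, whose low-regularity bounds drive the energy argument throughout.
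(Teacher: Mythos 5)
Your top-level strategy matches the paper's: solve the elliptic system \eqref{system:fpm} for $f^\pm$ variationally on the full domain $\Omega^+\cup\Omega^-$ (this is Proposition \ref{prop:fpm}), derive a paradifferential form of the evolution in which the principal symbol carries the Rayleigh--Taylor factor, run a parabolic $H^s$ energy estimate via G{\aa}rding, propagate the constraints over a short time, and conclude by parabolic regularization together with contraction estimates. The $L^2$-monotonicity argument you sketch is also exactly right. There are, however, three points where your proposal deviates from what actually works, and one of them is a genuine gap.

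\textbf{No symbolic inversion is required, or possible.} You identify the ``main obstacle'' as a symbolic inversion of the transmission operator $\tfrac{1}{\mu^+}G^+(\eta)-\tfrac{1}{\mu^-}G^-(\eta)$ inside the composition $\mathcal{M}(\eta)$. The paper does something different and cleaner in Proposition \ref{prop:reduce2p}: it applies the paralinearization of Theorem \ref{tame:RDN} to $G^+(\eta)f^+$ and $G^-(\eta)f^-$ \emph{separately}, using the Alinhac good unknowns $f^\pm-T_{B^\pm}\eta$, and then feeds the two scalar identities of \eqref{system:fpm} into the resulting pair of paralinearizations as \emph{linear} relations. Eliminating $T_\lambda f^\pm$ algebraically produces, with no operator inversion whatsoever, the reduced equation \eqref{eta:reduce2p}, whose principal symbol is $\tfrac{1}{\mu^++\mu^-}\lambda(\lb\rho\rb-\lb B\rb)$ — i.e.\ exactly $\mathrm{RT}$ up to the factor $(1+|\nabla\eta|^2)^{-1/2}$, by \eqref{RT}. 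The Lipschitz bottoms and low regularity make a symbolic parametrix for the transmission operator unavailable (you acknowledge this), so the elimination-after-paralinearization step is not an optional shortcut; it is the mechanism that makes the argument possible.

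\textbf{Your degeneracy claim is wrong.} You assert that when $\mu^+=\mu^-$ the equation \eqref{ElimJump} ``has no leading-order part.'' Since $G^-(\eta)$ has principal part $+T_\lambda$ while $G^+(\eta)$ has principal part $-T_\lambda$ (the $z$-orientation is reversed on the upper side — compare \eqref{linGpm:pr1}), the operator $\tfrac{1}{\mu^+}G^+(\eta)-\tfrac{1}{\mu^-}G^-(\eta)$ has principal symbol $-(\tfrac{1}{\mu^+}+\tfrac{1}{\mu^-})\lambda$, which is elliptic of order one uniformly for $\mu^\pm>0$. The variational formulation is still the right way to define $f^-$, but the motivation you give for it is incorrect.

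\textbf{Missing: regularity bootstrap for $f^\pm$.} The variational solution only delivers $f^\pm\in\wt H^{1/2}_\pm$ with a bound by $\|\eta\|_{H^{1/2}}$. In order to apply Theorem \ref{tame:RDN} at $\sigma=s$ in the paradifferential reduction (and in order for $B^\pm,V^\pm\in H^{s-1}$), one needs $f^\pm\in\wt H^s_\pm$ with a bound $\|f^\pm\|_{\wt H^s_\pm}\le\mathcal{F}(\|\eta\|_{H^s})\|\eta\|_{H^s}$. Your proposal jumps directly from the weak variational solution to the paradifferential form, but this intermediate elliptic bootstrap (Proposition \ref{prop:fpmdotHs}, driven by iterated application of the cruder paralinearization in Theorem \ref{paralin:ABZ}) is a necessary and non-trivial step: the second equation in \eqref{system:fpm} is used at each stage to trade $\tfrac12$ a derivative from $G^\pm$ to $T_\lambda f^-$, and hence to $f^-$ itself.

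Two smaller remarks. The persistence of $\dist(\eta(t),\Gamma^\pm)\ge h$ is not ``immediate'' from the embedding $H^s\hookrightarrow L^\infty$: you need a quantitative H\"older-in-time modulus, which the paper obtains by estimating $\|\eta(t)-\eta(0)\|_{H^{-1/2}}$ from the equation and interpolating (§\ref{section:propaRT}); the same structure propagates the Rayleigh--Taylor lower bound. Finally, your statement of the $L^2$-monotonicity is correct and matches the paper's; just note that the weighted Dirichlet-energy form requires the sign conventions $(f^-,G^-(\eta)f^-)\ge0$ and $(f^+,G^+(\eta)f^+)\le0$, the latter because $n$ is the \emph{inward} normal for $\Omega^+$.
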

Several remarks on our main results are in order.
\begin{rema}
The solutions constructed in Theorems \ref{theo:wp1} and \ref{theo:wp2} are unique in  $L^\infty_tH^s_x$ and the solution maps are locally Lipschitz in  $L^\infty_tH^{s}_x$ with respect to the topology of $L^\infty_tH^{s-1}_x$. The proof of Theorem \ref{theo:wp2} also provides the following estimate for $f^\pm$
\bq
\| f^\pm\|_{ \wt H^s_\pm(\Rr^d)}\le \cF(\| \eta\|_{H^s(\Rr^d)})\| \eta\|_{H^s(\Rr^d)}
\eq
where the space $ \wt H^s_\pm(\Rr^d)$ is defined by \eqref{def:wtHs}. Modulo some minor modifications, our proofs work equally for the periodic case. 
\end{rema}
\begin{rema}\label{rema:RT}
The Rayleigh-Taylor (RT) condition is ubiquitous in free boundary problems. For irrotational water-waves (one fluid), Wu \cite{Wu2d} proved that this condition is automatically satisfied if there is no bottom. In the presence of a bottom that is the graph of a function, Lannes \cite{LannesJAMS}  proved this condition assuming that the second fundamental form of the bottom is sufficiently small, covering the case of flat bottoms. In the context of the  Muskat problem, there are various scenarios for the stable regime $\rho^+<\rho^-$. When the interface is a general curve/surface, the RT condition was assumed in \cite{Amb0, CorCorGan, CorCorGan2}. On the other hand, when the interface is  a graph, we see from \eqref{RT2} that this condition always holds if there is no viscosity jump but need not be so otherwise. In particular, for the one-phase problem, the local well-posedness result in \cite{CheGraShk} assumes the RT condition for flat bottoms. However, we prove in Proposition \ref{lemm:upperbounG} that the RT condition holds in the one-phase case so long as the bottom is either empty or is the graph of a Lipschitz function which can be unbounded and have large variation. 
\end{rema}
\begin{rema}
When the surface tension effect is taken into account, well-posedness holds without the Rayleigh-Taylor condition. It turns out that $H^{1+\frac{d}{2}}(\Rr^d)$ is also the scaling invariant Sobolev space for the Muskat problem with surface tension. Local well-posedness for all subcritical data in $H^s(\Rr^d)$, $s>1+\frac{d}{2}$, is established in \cite{Ng1}. Furthermore, at the same level of regularity, \cite{FlyNgu} proves that solutions constructed in Theorems \ref{theo:wp1} and \ref{theo:wp2} are limits of solutions to the problem with surface tension as surface tension vanishes. 
\end{rema}
\subsection{Strategy of proof}
Let us briefly explain our strategy for a priori estimates. The main step consists in obtaining  a precise paralinearization  for the Dirichlet-Neumann operator $G^\pm(\eta)f$ when $\eta\in H^s(\Rr^d)$, $s>1+\frac d2$ and $f$ has the maximal regularity $H^s(\Rr^d)$. We prove in Theorem \ref{tame:RDN} that 
\bq\label{paralin:intro}
G^{-}(\eta)f=T_\lambda(f-T_B\eta)-T_V\cdot\nabla \eta+R^-(\eta)f
\eq
where $(B, V)$ are explicit functions (see \eqref{BV}), $\ld$ is an elliptic  first-order symbol (see \eqref{ld}) and the remainder $R^-(\eta)f$ obeys 
\[
\| R^-(\eta)f\|_{H^{s-\mez}}\le \cF(\| \eta\|_{H^s})\big(1+\| \eta\|_{H^{s+\mez-\delta}}\big)\Vert f\Vert_{\widetilde{H}^s_-}
\]
provided that $0<\delta< \min(s-1-\frac d2, \mez)$. Here we note that the term $f-T_B\eta$ comes from the consideration of Alinhac's good unknown. 

1) For the one-phase problem \eqref{eq:eta}, taking $f=\eta$ yields
\bq\label{reduce:intro}
\p_t \eta
= -\ka T_{\lambda(1-B)}\eta+\ka T_V\cdot\nabla \eta+ R_1
\eq
where 
\bq\label{R:intro}
\| R_1\|_{H^{s-\mez}}\le \cF(\| \eta\|_{H^s})\big(1+\| \eta\|_{H^{s+\mez-\delta}}\big)\Vert \eta\Vert_{H^s}.
\eq
We observe that the transport term $T_V\cdot\nabla \eta$ is harmless for energy estimates and the term $-\ka T_{\lambda(1-B)}\eta$ would give the parabolicity if $1-B>0$. Then this latter term entails a gain of $\mez$ derivative when measured in $L^2_t$, compensating the loss of $\mez$ derivative in the remainder $R_1$. Moreover, the fact that the highest order term $\| \eta\|_{H^{s+\mez-\delta}}$ in \eqref{R:intro} appears linearly with a gain of $\delta$ derivative gives room to choose the time $T$ as a small parameter. We thus obtain a closed a priori estimate in $L^\infty_tH^s_x\cap L^2_tH^{s+\mez}_x$. Finally, we prove in Proposition \ref{lemm:upperbounG} that the stability condition $1-B>0$ is automatically satisfied.

2) As for the two-phase problem \eqref{eq:eta2p}-\eqref{system:fpm}, we apply the paralinearization \eqref{paralin:intro} and obtain a reduced equation similar to \eqref{reduce:intro}:
\bq\label{reduce:intro2}
\p_t\eta=-\frac{1}{\mu^++\mu^-}T_{\lambda(\lb\rho\rb-\lb B\rb)}\eta+\frac{1}{\mu^++\mu^-}T_{\lb V\rb}\cdot\nabla \eta+R_2
\eq
where $R_2$ obeys the same bound  \eqref{R:intro} as $R_1$. Consequently, the parabolicity holds if $\lb\rho\rb-\lb B\rb>0$ and  in view of \eqref{RT}, this is equivalent to $\mathrm{RT}>0$. This shows a remarkable link between Alinhac's good unknown and the Rayleigh-Taylor stability condition. 

Finally, we remark that the contraction estimate for the solutions requires a fine contraction estimate for the Dirichlet-Neumann operator, see Theorem \ref{theo:contractionDN2}.
%%%%%%%%%%
%%%%%%%%%%
%%%%%%%%%%
%%%%%%%%%%

%%%%%%%%%%%
\section{The Dirichlet-Neumann operator: continuity, paralinearization and contraction estimates}\label{DefDN}

This section is devoted to the study of the Dirichlet-Neumann operator. For the two-phase problem \eqref{eq:eta2p}, the function $f^-$ obtained from solving \eqref{system:fpm} is only determined up to additive constants and we need to define $G^-(\eta)f$ for $f$ belonging to a suitable homogeneous space. Since $f$ is the trace of a harmonic function (see \eqref{elliptic:intro}) with bounded gradient in $L^2$,  the trace theory recently developed in \cite{LeoTice} is perfectly suited for this purpose, allowing us to take $f$ in a ``screened'' homogeneous Sobolev space (see \eqref{def:wtHmez}) tailored to the bottom. This is the content of  Subsection \ref{section:contDN}, where we obtain existence and modest regularity of the variational solution to the appropriate Dirichlet problem. 

Next in Subsection \ref{section:paralin}, we obtain a precise paralinearization for $G^-(\eta)f$ by extracting all the first order symbols. This is done when $\eta$ has subcritical regularity $H^s(\Rr^d)$, $s>1+\frac d2$, and $f$ has the maximal regularity $H^s(\Rr^d)$. The error estimate is precise enough to obtain closed a priori estimates afterwards. Finally, in Subsection \ref{section:contraction} we prove a contraction estimate for $G^-(\eta_1)-G^-(\eta_2)$, showing a gain of derivative for $\eta_1-\eta_2$ which will be crucial for the contraction estimate of solutions. 
\subsection{Definition and continuity}\label{section:contDN}
We  study the Dirichlet-Neumann problem associated to the fluid domain $\Omega^-$ underneath the free interface $\Sigma=\{(x,\eta(x)): x\in \Rr^d\}$.  Here and in what follows, the time variable is frozen. We say that a function $g:\Rr^d\to \Rr$ is Lipschitz, $g\in \dot W^{1, \infty}(\Rr^d)$, if $\na g\in L^\infty(\Rr^d)$.  As for the bottom $\Gamma^-$, we assume  that either
\begin{itemize}
 \item $\Gamma^-=\emptyset$ or 
 \item $\Gamma^-=\{(x, \underline b^-(x)): x\in \Rr^d\}$ where $\underline b^-\in \dot W^{1, \infty}(\Rr)$  satisfies
 \bq\label{assume:dist}
 \dist(\Sigma, \Gamma^-)>h>0.
 \eq
 \end{itemize}
 In either case, $ \dist(\Sigma, \Gamma^-)>h>0$. Consider the elliptic problem
\bq\label{eq:elliptic}
\begin{cases}
\Delta_{x, y} \phi=0\quad\text{in}~\Omega^-,\\
\phi=f\quad\text{on}~\Sigma,\\
\frac{\p\phi}{\p \nu^-}=0\quad\text{on}~\Gamma^-,
\end{cases}
\eq
where in the case of infinite depth ($\Gamma^-=\emptyset$), the Neumann condition is replaced by the vanishing of $\na_{x, y} \phi$ as $y\to -\infty$
\bq\label{vanish:dyphi}
\lim_{y\to -\infty} \na_{x, y}\phi=0.
\eq
The Dirichlet-Neuman operator associated to $\Omega^-$ is formally defined by
\bq\label{def:DN}
G^-(\eta) f=\sqrt{1+|\nabla \eta|^2}\frac{\p\phi}{\p n},
\eq
where we recall that $n$ is the upward-pointing unit normal to $\Sigma$. Similarly, if $\phi$ solves the elliptic problem \eqref{eq:elliptic} with $(\Omega^-, \Gamma^-, \nu^-)$ replaced by $(\Omega^+, \Gamma^+, \nu^+)$ then we define 
\begin{equation*}
G^+(\eta) f=\sqrt{1+|\nabla \eta|^2}\frac{\p\phi}{\p n}.
\end{equation*}
Note that $n$ is inward-pointing for $\Omega^+$. In the rest of this section, we only state results for $G^-(\eta)$ since corresponding results for  $G^+(\eta)$ are completely parallel.

The Dirichlet data $f$ for \eqref{eq:elliptic} will be taken  in the following ``screened'' fractional Sobolev space (\cite{LeoTice})
\bq\label{def:wtHmez}
\wt H^\mez_{\Theta}(\Rr^d)=\left\{ f\in  \mathcal{S}'(\Rr^d)\cap  L^2_{loc}(\Rr^d): \int_{\Rr^d}\int_{B_{\Rr^d}(0, \Theta(x))}\frac{|f(x+k)-f(x)|^2}{|k|^{d+1}}dkdx<\infty\right\}/ \Rr,
\eq
where $\Theta:\Rr^d\to (0, \infty]$ is a given lower semi-continuous function.  We will choose for an arbitrary number $a\in (0, 1)$ that
\bq\label{def:frakd}
\Theta(x)=
\begin{cases}
 \infty\quad\text{when}\quad \Gamma^-=\emptyset,\\
\frak{d}(x):=a(\eta(x)-\underline b^-(x))\quad\text{when}\quad \underline b^-\in \dot W^{1, \infty}(\Rr^d).
\end{cases}
\eq
In view of assumption \eqref{assume:dist}, 
\bq\label{lower:frakd}
\mathfrak{d} \ge ah.
\eq
We also define the slightly-homogeneous Sobolev spaces
\bq\label{def:dotHs}
 H^{1,\sigma}(\Rr^d)=\{f\in \mathcal{S}'(\Rr^d)\cap L^2_{loc}(\Rr^d): \na f\in H^{\sigma-1}(\Rr^d)\}/~\Rr.
\eq
\begin{rema}
 According to Theorem 2.2 b) in \cite{Stri}, $f\in \wt H^\mez_1(\Rr^d)$ ($\Theta\equiv 1$) if and only if $f\in \mathcal{S}'(\Rr^d)\cap L^2_{loc}(\Rr^d)$ and $\hat f$ is locally $L^2$ in the complement of the origin such that
\bq\label{Fourier:mezH1}
\int_{\Rr^d}\min(|\xi|, |\xi|^2) |\hat f(\xi)|^2d\xi<\infty;
\eq
moreover, $\| f\|_{\wt H^\mez_1(\Rr^d)}^2$ is bounded above and below by a multiple of \eqref{Fourier:mezH1} so that
\bq
\wt H^\mez_1(\Rr^d)=\dot H^{1,\frac{1}{2}}(\Rr^d).
\eq
On the other hand,  $f\in \wt H^\mez_\infty(\Rr^d)$ ($\Theta\equiv \infty$) if and only if $f\in L^2_{loc}(\Rr^d)$ and $\hat f$ is locally $L^2$ in the complement of the origin, with 
\bq\label{Fourier:mezHinfty}
\int_{\Rr^d}|\xi||\hat f(\xi)|^2d\xi<\infty;
\eq
moreover, $\| f\|_{\wt H^\mez_\infty(\Rr^d)}^2$ is a constant multiple of \eqref{Fourier:mezHinfty}. Thus, we have the continuous embeddings
\bq\label{wtH:dotH}
\dot{H}^\frac{1}{2}(\mathbb{R}^d)= \wt H^\mez_\infty(\Rr^d)\subset \wt H^\mez_{\mathfrak{d}}(\Rr^d)\subset \wt H^\mez_1(\Rr^d) = H^{1,\frac{1}{2}}(\Rr^d)
 \eq
 upon recalling the lower bound \eqref{lower:frakd} for $\mathfrak{d}$. In addition, under condition \eqref{assume:dist},
 \bq\label{iden:boundedb}
 \text{if}\quad \eta,~\underline b^-\in W^{1, \infty}(\Rr^d)\quad\text{then}\quad  \wt H^\mez_{\mathfrak{d}}(\Rr^d)=\wt H^\mez_1(\Rr^d).
 \eq
See Theorem 3.13 \cite{LeoTice}. To accommodate unbounded bottoms, we have only assumed that $\underline{b}^-\in \dot W^{1, \infty}(\Rr^d)$ and thus \eqref{iden:boundedb} is not applicable. Nevertheless, we have the following proposition. 
\end{rema}
\begin{prop}\label{prop:wtH}
Assume that $\sigma_1,~\sigma_2:\Rr^d\to (0, \infty]$ satisfy 
\begin{align}\label{wt:cd1}
&\inf_{x\in \Rr^d}\{\sigma_1(x), \sigma_2(x)\}>h>0,\quad\| \sigma_1-\sigma_2\|_{L^\infty(\Rr^d)}\le M<\infty.
\end{align}
Then  there exists $C=C(d, h, M)$ such that
\bq\label{equi:wtH}
\| f\|_{\wt H^\mez_{\sigma_1}(\Rr^d)}\le C\| f\|_{\wt H^\mez_{\sigma_2}(\Rr^d)}\quad\forall f\in \wt H^\mez_{\sigma_2}(\Rr^d).
\eq 
\end{prop}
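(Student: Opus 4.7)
Let $\sigma_*(x) = \min(\sigma_1(x), \sigma_2(x))$. Since the integrand defining $\|\cdot\|_{\wt H^\mez_\Theta}$ is pointwise nonnegative, one has trivially $\|f\|^2_{\wt H^\mez_{\sigma_*}} \le \|f\|^2_{\wt H^\mez_{\sigma_2}}$, so it suffices to control the annular correction
\[
A(f) := \int_{\Rr^d} \int_{\{k\,:\,\sigma_*(x)\le|k|<\sigma_1(x)\}} \frac{|f(x+k) - f(x)|^2}{|k|^{d+1}} \, dk \, dx
\]
by a constant multiple of $\|f\|^2_{\wt H^\mez_{\sigma_2}}$. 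The hypotheses \eqref{wt:cd1} force $\sigma_*(x) \le |k| \le \sigma_*(x) + M$ on this bad annulus, so $|k| \ge h$ and $|k|/\sigma_*(x) \in [1, 1 + M/h]$: the outer radius is uniformly comparable to the inner radius $\sigma_*(x)$, with ratio depending only on $(h, M)$. This is the geometric fact that makes the comparison possible.

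The core estimate is a chaining argument. For each $(x, k)$ in the bad annulus, I would subdivide the segment from $x$ to $x + k$ into $N$ equal pieces, with $N = N(|k|)$ the smallest integer such that $|k|/N < h$, and set $y_j = x + j k/N$, $k_0 = k/N$. Then each atomic step $|k_0|$ lies strictly below $\sigma_2(y_j) \ge h$, so every sub-increment $f(y_{j+1}) - f(y_j)$ falls within the support of the $\sigma_2$-kernel at its own base point; Cauchy--Schwarz yields
\[
\frac{|f(x+k) - f(x)|^2}{|k|^{d+1}} \le \frac{N}{|k|^{d+1}} \sum_{j=0}^{N-1} |f(y_j + k_0) - f(y_j)|^2.
\]
Integrating over $x$ and using translation invariance identifies each of the $N$ summands with the $\sigma_2$-norm integrand in the variable $k_0$; after dyadic decomposition $|k| \sim 2^j h$ (so $N \sim 2^j$ and $|k_0| \in [h/4, h/2]$) and the change of variables $k \mapsto k_0$, the contribution of each shell can be estimated against a fixed piece of $\|f\|^2_{\wt H^\mez_{\sigma_2}}$.

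The delicate point is the combinatorial bookkeeping: without care, the dyadic estimates accumulate a factor $N_j$ per shell that grows with $j$, giving a divergent series. To compensate, the bad-region indicator $\mathbf{1}_{\sigma_*(x) \le |k| < \sigma_1(x)}$ must be retained throughout the change of variables $(x, k) \mapsto (y_j, k_0)$, since after passing to the new variable the assumption $\|\sigma_1 - \sigma_2\|_{L^\infty} \le M$ confines the summation to a set where $\sigma_2$ is at scale $2^j h$, and this matching of scales absorbs the chain-length factor. When $M \le h$, a single chaining step ($N = 2$) suffices and the verification is direct; the general case follows either by iterating this one-step estimate or, equivalently, by invoking the variational characterization of $\wt H^\mez_\Theta$ from Theorem 3.13 of \cite{LeoTice} (the trace norm for $H^1$ on the strip $\{-\Theta(x) < y < 0\}$) and constructing a bounded extension operator from the $\sigma_2$-strip to the $\sigma_1$-strip by reflection in the $y$-variable across $y = -\sigma_2(x)$ followed by a cutoff in the layer of uniform thickness $M$ separating the two strips. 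The main obstacle in either approach is handling the possibly unbounded range of $\sigma_1, \sigma_2$ without quantitative regularity of these profiles beyond $\|\sigma_1 - \sigma_2\|_{L^\infty} \le M$.
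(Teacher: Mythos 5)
Your reduction (pass to $\sigma_*=\min(\sigma_1,\sigma_2)$ and control the annular correction $A(f)$, using $|k|\ge h$ and annulus width $\le M$) matches the paper's starting point, but the core of your argument has a genuine gap at exactly the point you flag as "delicate". In the equal-step chaining, Cauchy--Schwarz produces a factor $N\sim |k|/h$ for each bad pair $(x,k)$, and after re-centering the $j$-th link the change of variables $(x,k)\mapsto(y_j,k_0)$ does cancel that factor in the weight, but the bad-region indicator is then evaluated at the shifted base points $x=y_j-jk_0$, which are different for each $j$. To sum over $j$ (and over the dyadic shells) you need that, for a.e.\ link $(y,k_0)$ with $|k_0|\sim h$, only boundedly many pairs $(N,j)$ satisfy $\sigma_*(y-jk_0)\le N|k_0|<\sigma_1(y-jk_0)$. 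The hypothesis $\|\sigma_1-\sigma_2\|_{L^\infty}\le M$ does bound the number of admissible $N$ for each fixed $j$, but it does not bound the number of admissible $j$ for a fixed shell: since $\sigma_1,\sigma_2$ carry no regularity beyond lower semicontinuity and the lower bound $h$, the profile $\sigma_*$ can increase roughly linearly along the ray through $y$ in direction $k_0$ so that the constraint holds at \emph{every} chain point $y-jk_0$, $0\le j\le N-1$; then the multiplicity is $\sim N$ and the chain-length factor is not absorbed. So the asserted "matching of scales" is unjustified, and in this chain geometry it is false in general. Your fallbacks do not close the gap either: when $M\le h$ you cannot take $N=2$, because the bad annulus sits at radius $\sigma_*(x)$, which can be arbitrarily large, so $|k|/2$ is far larger than $h$ and the second half-step leaves the $\sigma_2$-kernel at its base point; and the reflection/extension route through the trace characterization requires the strip boundaries to be Lipschitz graphs (the constants in Theorems \ref{theo:trace}--\ref{theo:lift} degrade with the Lipschitz norm), whereas the whole point of Proposition \ref{prop:wtH} is that $\sigma_1,\sigma_2$ have no such regularity.

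For comparison, the paper avoids long chains altogether. It first reduces, by iterating in increments $\delta=h/10$ (so $O(M/h)$ iterations, each multiplying the constant), to comparing $\sigma$ with $\sigma+\delta$. For that single step it splits $f(x+k)-f(x)$ into only two pieces: a \emph{shrunken} long displacement $f(x+\theta(x)k+u)-f(x)$ with $\theta(x)=1-\frac{h}{2\sigma(x)}$, which satisfies $|\theta(x)k+u|\le\sigma(x)$ and is therefore controlled by the $\sigma$-norm at the \emph{same} base point $x$ (no translation, hence no multiplicity issue), and a short difference $f(x+k)-f(x+\theta k+u)$ of length $\le h$, which is converted into kernel form by averaging over the auxiliary shift $u\in B(0,h/4)$; the resulting geometric factor $\int_x |z-x|^{-(d+1)}\mathbf{1}_{\{\sigma(x)\le|z-x|\le\sigma(x)+\delta\}}\,dx$ is bounded using only $|z-x|\ge 2h$. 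If you want to salvage your write-up, replacing the equal-step chain by this "one long shrunken step plus one averaged $O(h)$ step, iterated $O(M/h)$ times" is the missing idea.
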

It follows  that for any two surfaces $\eta_1$ and $\eta_2$ in $L^\infty(\Rr^d)$  satisfying \eqref{assume:dist}, the screened Sobolev space $\wt H^\mez_{\frak{d}}(\Rr^d)$, $\frak{d}$ given by \eqref{def:frakd}, is independent of $\eta_j$. The proof of Proposition \ref{prop:wtH} is given in Appendix \ref{appendix:wtH}. 

We will solve \eqref{eq:elliptic} in the homogeneous Sobolev space $\dot H^1(\Omega^-)$ where 
\bq\label{def:dotH}
\dot H^1(U)=\{u\in L^2_{loc}(U): \na u\in L^2(U)\}/~\Rr
\eq
for $U\subset \Rr^N$ connected. Here, the norm of $\dot H^1(U)$ is given by $\| u\|_{\dot H^1(U)}=\| \na u\|_{L^2(U)}$. \begin{prop}\label{dotH1:complete}
The vector space $\dot H^1(U)$ equipped with the norm $\| u\|_{\dot H^1(U)}=\| \na u\|_{L^2(U)}$ is complete. 
\end{prop}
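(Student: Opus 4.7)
The plan is to exploit the Poincaré inequality on balls together with the connectedness of $U$ in order to promote convergence of the gradients in $L^2(U)$ to local $L^2$-convergence of suitably normalized representatives. Concretely, given a Cauchy sequence $\{[u_n]\}$ in $\dot H^1(U)$, the definition of the norm gives immediately a $v\in L^2(U;\Rr^N)$ such that $\na u_n\to v$ in $L^2(U)$; the point is to produce $u\in L^2_{loc}(U)$ with $\na u=v$, after which the quotient convergence $[u_n]\to [u]$ in $\dot H^1(U)$ follows from $\|\na(u_n-u)\|_{L^2(U)}\to 0$.

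To produce $u$, I would fix a reference ball $B_0\Subset U$ and normalize each representative by setting $u_n:=u_n-\fint_{B_0}u_n$ (this does not change the equivalence class). Since $U$ is open and connected, it can be exhausted by an increasing sequence of bounded open sets $V_k\Subset U$ with $B_0\subset V_k$ and $U=\bigcup_k V_k$, each $V_k$ being a finite union of open balls chained so that the Poincaré inequality
\begin{equation*}
\lA w-\textstyle\fint_{V_k} w\rA_{L^2(V_k)}\le C_k\lA\na w\rA_{L^2(V_k)}
\end{equation*}
holds (obtained by chaining the usual Poincaré inequality on overlapping balls).

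The main technical step is then to show that, under our normalization, the sequence $\{u_n\}$ is Cauchy in $L^2(V_k)$ for every $k$. Applying the inequality above to $w=u_n-u_m$ and writing $c^{(k)}_{n,m}:=\fint_{V_k}(u_n-u_m)$ gives
\begin{equation*}
\lA u_n-u_m-c^{(k)}_{n,m}\rA_{L^2(V_k)}\le C_k\lA\na(u_n-u_m)\rA_{L^2(V_k)}\underset{n,m\to\infty}{\longrightarrow}0.
\end{equation*}
Integrating this estimate over $B_0\subset V_k$ and using that $\fint_{B_0}(u_n-u_m)=0$ by the normalization, one obtains $|c^{(k)}_{n,m}|\lesssim_{k,B_0}\|\na(u_n-u_m)\|_{L^2(V_k)}\to 0$, so that $\{u_n\}$ itself is Cauchy in $L^2(V_k)$. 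A diagonal argument over $k$ yields a function $u\in L^2_{loc}(U)$ with $u_n\to u$ in $L^2(V_k)$ for every $k$.

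Finally, passing to the limit in the distributional identity $\int u_n\,\na\varphi=-\int\na u_n\cdot\varphi$ for $\varphi\in C_c^\infty(U;\Rr^N)$ gives $\na u=v\in L^2(U)$, hence $u\in\dot H^1(U)$ and $[u_n]\to [u]$ in the quotient norm. I expect the only delicate point to be the construction of the exhausting domains $V_k$ with a usable Poincaré inequality and the bookkeeping of the mean-value constants $c^{(k)}_{n,m}$; both are standard but must be handled carefully because neither $U$ nor its Lipschitz structure is assumed to be regular.
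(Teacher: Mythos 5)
Your argument is correct and proves the statement, but it takes a somewhat different route from the paper. The paper normalizes on each bounded domain $V\subset U$ \emph{separately} (subtracting $|V|^{-1}\int_V u_n$), observes that the Poincar\'e inequality only gives \emph{boundedness} in $L^2(V)$, and then invokes weak compactness together with a diagonal process to extract a subsequence $u_{n_k}-|V|^{-1}\int_V u_{n_k}\rightharpoonup u$ in $L^2(V)$ for every $V$; the distributional identity then identifies $\nabla u=F$. You instead fix a single reference ball $B_0$, normalize once and for all, and use a chained Poincar\'e inequality on an exhausting sequence $V_k\Supset B_0$ to show that the normalized representatives are \emph{strongly} Cauchy in $L^2(V_k)$, via the estimate $|c^{(k)}_{n,m}|\lesssim\|\nabla(u_n-u_m)\|_{L^2(V_k)}$. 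Your version buys strong local convergence of the whole sequence (no subsequence needed) and sidesteps the compatibility question of how the weak limits on different $V$'s, with their different normalizations, patch together into a single $u\in L^2_{loc}(U)$ — a point the paper's proof glosses over. The cost is the construction of the connected, compactly contained exhaustion $V_k$ admitting a usable (chained) Poincar\'e inequality; this is standard for open connected $U$ (cover a compact exhaustion by chains of balls in $U$ rooted at $B_0$) and does not require any boundary regularity of $U$, so the concern you flag at the end is harmless. Both proofs then finish identically by passing to the limit in $\int u_n\,\operatorname{div}\varphi=-\int\nabla u_n\cdot\varphi$.
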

\begin{proof}
Suppose that $u_n$ is a Cauchy sequence in $\dot H^1(U)$. Then $\na u_n\to F$ in $L^2(U)$. We claim that $F=\na u$ for some $u\in L^2_{loc}(U)$. Indeed, for any  bounded domain $V\subset U$, the sequence $u_n-|V|^{-1}\int_V u_n$ is bounded in $L^2(V)$, according to the Poincar\'e inequality, hence weakly converges in $L^2(V)$.
By a diagonal process, we can find $u\in L^2_{loc}(U)$ and a subsequence $n_k\to \infty$ such that
\[
u_{n_k}-|V|^{-1}\int_V u_{n_k}\wc u\quad\text{in}~L^2(V)
\]
for any bounded $V\subset U$. Let $\varphi\in C^\infty_c(U)$ be a test vector field with $\supp \varphi \subset V\Subset U$. We have
\[
\begin{aligned}
\int_U \varphi\cdot \na u_n dx&=\int_V \varphi \na (u_n- |V|^{-1}\int_V u_ndy)dx\\
&=-\int_V  (u_n- |V|^{-1}\int_V u_ndy) \cnx \varphi dx\to -\int_V u\cnx\varphi dx.
\end{aligned}
\]
Thus,
\[
\int_U F\cdot \varphi dx=\lim_{n\to \infty}\int_U \na u_n\cdot \varphi dx=-\int_U u\cnx\varphi dx
\]
for any test vector field $\varphi$. This proves that $F=\na u$ and thus finishes the proof.
\end{proof}
%When $U$ is an infinite strip-like domain, the trace theory for $\dot H^1(U)$ has been developed in \cite{Stri, LeoTice}.
 We refer to Appendix \ref{appendix:trace} of the present paper for a summary of trace theory, taken from \cite{LeoTice}, when $U$ is an infinite strip-like domain or a Lipschitz half space.
\begin{prop}\label{prop:vari1}%[\protect{\cite[See section 3.1]{ABZ3}}]
Consider the finite-depth case with $\underline b^-\in \dot W^{1, \infty}(\Rr^d)$. If $\eta\in \dot W^{1, \infty}(\Rr^d)$ then for every $f\in \wt H^\mez_{\frak{d}}(\Rr^d)$, there exists a unique variational solution $\phi \in \dot H^1(\Omega^-)$ to \eqref{eq:elliptic}. Moreover, $\phi$ satisfies 
\bq\label{variest:phi}
\|\na_{x, y}\phi\|_{L^2(\Omega^-)}\le \cF(\| \na\eta\|_{L^\infty})\| f\|_{\wt H^\mez_\frak{d}(\Rr^d)}
\eq
for some $\cF:\Rr^+\to \Rr^+$ depending only on $h$ and $\| \na\underline b^-\|_{L^\infty(\Rr^d)}$.
\end{prop}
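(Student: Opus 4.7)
The plan is to reduce \eqref{eq:elliptic} to a variational problem with zero Dirichlet data by lifting $f$ into $\Omega^-$ and then applying the Lax--Milgram lemma on a Hilbert space of functions vanishing on $\Sigma$. Existence, uniqueness, and the bound \eqref{variest:phi} all follow from this scheme provided the lift is built with the correct quantitative bound.

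\emph{Step 1: Lift of the Dirichlet data.} The key technical input is an extension operator $E:\wt H^\mez_{\mathfrak{d}}(\Rr^d) \to \dot H^1(\Omega^-)$ satisfying $(Ef)|_{\Sigma}=f$ and
$$\|\na_{x,y} Ef\|_{L^2(\Omega^-)} \le \cF(\|\na\eta\|_{L^\infty})\, \|f\|_{\wt H^\mez_\mathfrak{d}(\Rr^d)},$$
with $\cF$ depending also on $h$ and $\|\na \underline b^-\|_{L^\infty}$. To produce $E$ I would flatten $\Omega^-$ via the bi-Lipschitz diffeomorphism $\Phi(x,z)=(x,\eta(x)+z(\eta(x)-\underline b^-(x)))$ from $\Rr^d\times(-1,0)$ onto $\Omega^-$, whose bi-Lipschitz constants are controlled by $\|\na \eta\|_{L^\infty}$, $\|\na\underline b^-\|_{L^\infty}$, and $h$ thanks to \eqref{assume:dist}. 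On the flat strip I would build the extension of $f$ via a Littlewood--Paley/Whitney-type decomposition adapted to the distance to the top boundary, of the type used in the trace theory of \cite{LeoTice} recalled in Appendix \ref{appendix:trace}; this is exactly what the screened space $\wt H^\mez_{\mathfrak{d}}$ is designed for, since the local scale $\mathfrak{d}(x)\sim \eta(x)-\underline b^-(x)$ replaces the ambient scale~$1$.

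\emph{Step 2: Lax--Milgram.} Setting $v=\phi-Ef$, the problem becomes: find $v$ in
$$H=\{u\in \dot H^1(\Omega^-):\,u|_\Sigma=0\}$$
such that $\int_{\Omega^-}\na_{x,y} v\cdot \na_{x,y}\psi\,dxdy=-\int_{\Omega^-}\na_{x,y}(Ef)\cdot\na_{x,y}\psi\,dxdy$ for all $\psi\in H$. The space $H$ is a closed subspace of the complete space $\dot H^1(\Omega^-)$ (Proposition \ref{dotH1:complete}), using the continuity of the trace map from $\dot H^1(\Omega^-)$ to $\wt H^\mez_{\mathfrak{d}}(\Rr^d)$ provided by Appendix \ref{appendix:trace}. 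The bilinear form is exactly the inner product of $H$, hence bounded and coercive, while the right-hand side is a bounded linear functional by Cauchy--Schwarz. Lax--Milgram then delivers a unique $v$ with $\|\na_{x,y} v\|_{L^2}\le \|\na_{x,y} Ef\|_{L^2}$; the triangle inequality yields \eqref{variest:phi}. Harmonicity of $\phi$ is recovered by testing against $\psi\in C^\infty_c(\Omega^-)$, and the Neumann condition $\p_{\nu^-}\phi=0$ on $\Gamma^-$ is extracted in the weak trace sense by integrating by parts with $\psi\in H$ that do not vanish on $\Gamma^-$; this is legitimate because $\Sigma$ and $\Gamma^-$ are separated by \eqref{assume:dist}. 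Uniqueness of $\phi$ modulo constants follows by testing the equation satisfied by the difference of two solutions against itself.

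\emph{Main obstacle.} The delicate point is the quantitative control of the lift with the claimed dependence of the constant on $\|\na \underline b^-\|_{L^\infty}$ only and not on $\|\underline b^-\|_{L^\infty}$, which may be infinite. This is precisely what forces the use of the screened space $\wt H^\mez_{\mathfrak{d}}$ rather than the standard $\dot H^\mez$, and is also why Proposition \ref{prop:wtH} is invoked in the background to guarantee that this space is intrinsic to the pair $(\Sigma,\Gamma^-)$ under \eqref{assume:dist}.
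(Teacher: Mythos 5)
Your proof takes essentially the same route as the paper: construct a lift of $f$ into $\dot H^1(\Omega^-)$ and then run Lax--Milgram on $H^1_{0,*}(\Omega^-)=\{u\in\dot H^1(\Omega^-):u|_\Sigma=0\}$, which is exactly what the paper does. The one place you diverge slightly is the lift itself: the paper simply invokes Theorem~\ref{theo:lift} (i.e.\ Theorem~5.4 of \cite{LeoTice}) directly on the curved strip, choosing the pair $(g_1,g_2)=(f,f)$ so that the compatibility condition $\int|g_1-g_2|^2/(\eta-\underline b^-)<\infty$ holds trivially, whereas you propose to flatten $\Omega^-$ and rebuild the extension on $\Rr^d\times(-1,0)$ by a Littlewood--Paley/Whitney construction. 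Both paths lean on the same LeoTice trace machinery in the screened space $\wt H^{1/2}_\mathfrak{d}$, so the difference is cosmetic; if anything, flattening first introduces the Jacobian weight $\eta(x)-\underline b^-(x)$ (which can be unbounded) into the $L^2$ norms on the strip, which is an avoidable complication that the direct use of Theorem~\ref{theo:lift} sidesteps. Your identification of the main obstacle (constants independent of $\|\underline b^-\|_{L^\infty}$, hence the necessity of the screened space and of Proposition~\ref{prop:wtH}) matches the paper's stance precisely.
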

\begin{proof}
By virtue of Theorem \ref{theo:lift}, there exists $\underline f \in \dot H^1(\Omega^-)$ such that $\text{Tr} (\underline f)(x, \eta(x))=f(x)$, $\text{Tr} (\underline f)(x, \underline b^-(x))=f(x)$, and 
\bq\label{bound:psi1}
\| \na_{x, y} \underline f\|_{L^2(\Omega^-)}\le \cF(\| \na\eta\|_{L^\infty})\| f\|_{\wt H^\mez_\mathfrak{d}(\Rr^d)}
\eq
where $\cF$ depends only on $h$ and $\| \na\underline b^-\|_{L^\infty(\Rr^d)}$. Set
\[
H^{1}_{0,\ast}(\Omega^-)=\{ u\in \dot H^1(\Omega^-):~\text{Tr}(u)\vert_\Sigma=0\}
\]
endowed with the norm of $\dot H^1(\Omega^-)$. We then define $\phi$ solution to \eqref{eq:elliptic} to be 
\bq\label{def:variphi}
\phi(x, y)=\underline f(x, y)+u(x, y)
\eq
where $u\in H^{1}_{0,\ast}(\Omega^-)$ is the unique  solution to the variational problem
\bq\label{variform:u}
\int_{\Omega^-}\na_{x, y} u\cdot \na_{x, y} \varphi dxdy=-\int_{\Omega^-}\na_{x, y} \underline f\cdot \na_{x, y} \varphi dxdy\quad\forall \varphi \in  H^{1}_{0,\ast}(\Omega^-).
\eq 
The existence and uniqueness of $u$ is guaranteed by the Lax-Milgram theorem upon using the bound \eqref{bound:psi1}. Setting $\varphi=u$ in \eqref{variform:u} and recalling the definition \eqref{def:variphi} of $\phi$ we obtain the estimate \eqref{variest:phi}. It follows from \eqref{variform:u} that 
\[
\int_{\Omega^-}\na_{x, y} \phi\cdot \na_{x, y} \varphi dxdy=0\quad\forall \varphi \in  H^{1}_{0,\ast}(\Omega^-).
\]
Thus, if $\phi$ is smooth then $\phi$ solves \eqref{eq:elliptic} in the classical sense upon integrating by parts. Finally, it is easy to see that the solution $\phi$ constructed by \eqref{def:variphi} and \eqref{variform:u} is independent of the choice of $\underline f\in \dot H^1(U)$ that has  trace $f$ on $\Sigma$.
\end{proof}
\begin{rema}
As the functions $\underline b^\pm$ are fixed, we shall omit the dependence on $\| \na\underline b^\pm\|_{L^\infty(\Rr^d)}$.
\end{rema}
\begin{prop}\label{prop:vari2}
Consider the infinite-depth case $\Gamma^-=\emptyset$. If $\eta\in \dot W^{1, \infty}(\Rr^d)$ then for every $f\in \wt H^\mez_\infty(\Rr^d)$ there exists a unique variational solution $\phi \in \dot H^1(\Omega^-)$ to \eqref{eq:elliptic}. Moreover, $\phi$ satisfies 
\bq\label{variest:phi2}
\|\na_{x, y}\phi\|_{L^2(\Omega^-)}\le \cF(\| \na\eta\|_{L^\infty})\| f\|_{\wt H^\mez_\infty(\Rr^d)}
\eq
for some $\cF:\Rr^+\to \Rr^+$ depending only on $h$.
\end{prop}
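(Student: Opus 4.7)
The plan is to follow the argument of Proposition \ref{prop:vari1} almost verbatim, with the simplifying feature that only the top trace on $\Sigma$ needs to be lifted: there is no bottom boundary to accommodate and no bottom Neumann condition to build into the function space. The first step would be to invoke the trace/lift theorem recalled in Appendix \ref{appendix:trace} (from \cite{LeoTice}) for the Lipschitz half-space $\Omega^-$: for every $f \in \wt H^\mez_\infty(\Rr^d)$ it produces $\underline f \in \dot H^1(\Omega^-)$ with $\mathrm{Tr}(\underline f)\vert_\Sigma = f$ and
\bq\label{pl:lift}
\|\na_{x,y}\underline f\|_{L^2(\Omega^-)} \le \cF(\|\na\eta\|_{L^\infty})\|f\|_{\wt H^\mez_\infty(\Rr^d)}.
\eq

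Next I would introduce the closed subspace $H^1_{0,\ast}(\Omega^-) = \{u\in \dot H^1(\Omega^-): \mathrm{Tr}(u)\vert_\Sigma = 0\}$ of the Hilbert space $\dot H^1(\Omega^-)$ (complete by Proposition \ref{dotH1:complete}); it is closed thanks to the continuity of the trace map into $\wt H^\mez_\infty(\Rr^d)$. On this space the bilinear form $a(u,\varphi) = \int_{\Omega^-}\na_{x,y}u\cdot\na_{x,y}\varphi\,dxdy$ is continuous and, crucially, coercive by the very definition of the $\dot H^1$-norm; the linear form $L(\varphi) = -\int_{\Omega^-}\na_{x,y}\underline f\cdot\na_{x,y}\varphi\,dxdy$ is continuous by Cauchy–Schwarz in view of \eqref{pl:lift}. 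Lax–Milgram then yields a unique $u \in H^1_{0,\ast}(\Omega^-)$ satisfying $a(u,\varphi) = L(\varphi)$ for every admissible test function, and I would set $\phi := \underline f + u$.

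Testing the variational identity with $\varphi = u$ gives $\|\na_{x,y}u\|_{L^2} \le \|\na_{x,y}\underline f\|_{L^2}$, so $\|\na_{x,y}\phi\|_{L^2(\Omega^-)} \le 2\|\na_{x,y}\underline f\|_{L^2(\Omega^-)}$, which together with \eqref{pl:lift} gives \eqref{variest:phi2}. Harmonicity $\Delta_{x,y}\phi = 0$ in $\Omega^-$ is read off by testing against $C^\infty_c(\Omega^-)$; the boundary condition $\phi\vert_\Sigma = f$ holds by construction; and the decay condition \eqref{vanish:dyphi} is now encoded automatically by the membership $\na_{x,y}\phi \in L^2(\Omega^-)$, which replaces the Neumann condition on $\Gamma^-$ used in Proposition \ref{prop:vari1}. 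Uniqueness modulo constants and independence from the lift follow exactly as before: the difference $w$ of two solutions lies in $H^1_{0,\ast}(\Omega^-)$ and satisfies $a(w,\varphi) = 0$ for all $\varphi$; taking $\varphi = w$ yields $\na_{x,y}w = 0$ in $L^2(\Omega^-)$, hence $w = 0$ as an element of $\dot H^1(\Omega^-)$.

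The only genuinely delicate ingredient is the trace/lift step, which relies on the sharp half-space results of \cite{LeoTice} tailored to the screened space $\wt H^\mez_\infty(\Rr^d)$ and to a Lipschitz unbounded upper boundary $\Sigma$; these are imported directly from Appendix \ref{appendix:trace}. Once this input is granted, the Lax–Milgram argument is entirely standard and in fact strictly simpler than in Proposition \ref{prop:vari1}, since only a one-sided lift is constructed and no second trace condition enters the definition of the test-function space.
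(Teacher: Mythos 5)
Your proposal is correct and follows essentially the same route as the paper: the paper's proof simply notes that the argument of Proposition \ref{prop:vari1} carries over verbatim once the strip trace/lift theorems are replaced by the half-space versions (Theorems \ref{theo:trace2} and \ref{theo:lift2}), with the decay condition \eqref{vanish:dyphi} now implicit in the membership $\na_{x,y}\phi\in L^2(\Omega^-)$ — exactly the points you make.
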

\begin{proof}
The proof follows along the same lines as in the proof of Proposition \ref{prop:vari1} upon using the trace Theorem \ref{theo:trace2} and the lifting Theorem \ref{theo:lift2} for the half space $U=\Omega^-$. The fact that $\na \phi\in L^2(\Omega^-)$ gives a sense to the boundary condition \eqref{vanish:dyphi}. 
\end{proof}
\begin{nota}
We denote 
\bq\label{wtH-}
\wt H^\mez_{-}=
\begin{cases}
\wt H^\mez_\infty(\Rr^d)~\quad \text{if}~\Gamma^-=\emptyset,\\
\wt H^\mez_{\mathfrak{d}}(\Rr^d)~\quad \text{if}~\underline b^-\in \dot W^{1, \infty}(\Rr^d),\\
\end{cases}
\eq
and 
\bq\label{wtH+}
\wt H^\mez_{+}=
\begin{cases}
\wt H^\mez_\infty(\Rr^d)~\quad \text{if}~\Gamma^+=\emptyset,\\
\wt H^\mez_{\mathfrak{d}'}(\Rr^d)~\quad \text{if}~ \underline b^+\in \dot W^{1, \infty}(\Rr^d),\quad \mathfrak{d}'(x):=a(\eta(x)-\underline b^+(x)),~a\in (0, 1).
\end{cases}
\eq
 For $s>\mez$, we denote
\begin{equation}\label{def:wtHs}
\widetilde{H}^s_\pm=\widetilde{H}^{\frac{1}{2}}_\pm\cap H^{1,s}(\Rr^d).
\end{equation}
%We have abused notation in that the spaces $\widetilde{H}^s_\pm$ depend not only on $\underline{b}^\pm$ but also on the surface $\eta$. However, the latter dependence will be clarified whenever necessary. 
\end{nota}
\begin{prop}\label{prop:lowG}
If $\eta\in \dot W^{1, \infty}(\Rr^d)$ then the Dirichlet-Neumann operator is continuous from $\wt H^\mez_-$ to $H^{-\mez}(\Rr^d)$. Moreover,  there exists a constant $C>0$ depending only on $h$ such that 
\bq\label{G:H-mez}
\| G^-(\eta)f\|_{H^{-\mez}}\le \cF(\| \na\eta\|_{L^\infty})\| f\|_{\wt H^\mez_{-}}.
\eq
\end{prop}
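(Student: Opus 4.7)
The plan is to define $G^-(\eta)f$ by duality through the variational formulation, and then read off the $H^{-\mez}$ bound from the energy estimates already established in Propositions \ref{prop:vari1} and \ref{prop:vari2}.

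Fix $f\in \widetilde H^\mez_-$ and let $\phi\in \dot H^1(\Omega^-)$ be the variational solution of \eqref{eq:elliptic} provided by Proposition \ref{prop:vari1} (or \ref{prop:vari2} in the infinite depth case). Pick any test function $g\in H^\mez(\Rr^d)$. Using the embedding \eqref{wtH:dotH}, $g$ belongs to $\widetilde H^\mez_-$, so the lifting theorems of Appendix \ref{appendix:trace} (Theorem \ref{theo:lift} or \ref{theo:lift2}) produce a function $\Psi\in \dot H^1(\Omega^-)$ with $\mathrm{Tr}(\Psi)\vert_\Sigma=g$ and
\[
\| \nabla_{x,y}\Psi\|_{L^2(\Omega^-)}\le \cF(\| \nabla \eta\|_{L^\infty})\| g\|_{H^\mez(\Rr^d)},
\]
where $\cF$ depends only on $h$. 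I then set
\bq\label{dual:def}
\langle G^-(\eta)f, g\rangle := \int_{\Omega^-}\nabla_{x,y}\phi\cdot \nabla_{x,y}\Psi\, dxdy.
\eq
Formally, this is just Green's identity for the elliptic problem \eqref{eq:elliptic}, using that $\partial_{\nu^-}\phi=0$ on $\Gamma^-$ in the finite-depth case and the decay \eqref{vanish:dyphi} in the infinite-depth case.

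The first step is to check that \eqref{dual:def} is independent of the choice of lift $\Psi$. If $\Psi_1,\Psi_2$ are two such lifts, then $\Psi_1-\Psi_2$ lies in $H^1_{0,\ast}(\Omega^-)$, and the variational characterization of $\phi$ established in the proof of Proposition \ref{prop:vari1} (namely $\int_{\Omega^-}\nabla_{x,y}\phi\cdot \nabla_{x,y}\varphi=0$ for every $\varphi\in H^1_{0,\ast}(\Omega^-)$) yields
\[
\int_{\Omega^-}\nabla_{x,y}\phi\cdot \nabla_{x,y}(\Psi_1-\Psi_2)\, dxdy=0.
\]
This also shows that \eqref{dual:def} coincides with $\int_\Sigma (\partial \phi/\partial n)\, g\sqrt{1+|\nabla\eta|^2}\,dx$ whenever $\phi$ is smooth enough, so it is a legitimate extension of the pointwise definition \eqref{def:DN}.

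Once \eqref{dual:def} is well-defined, Cauchy-Schwarz combined with the energy estimate \eqref{variest:phi} (or \eqref{variest:phi2}) and the lifting bound gives
\[
|\langle G^-(\eta)f, g\rangle|\le \| \nabla_{x,y}\phi\|_{L^2(\Omega^-)}\| \nabla_{x,y}\Psi\|_{L^2(\Omega^-)}\le \cF(\| \nabla \eta\|_{L^\infty})\| f\|_{\widetilde H^\mez_-}\| g\|_{H^\mez}.
\]
Since this holds for every $g\in H^\mez(\Rr^d)$, duality $H^\mez$--$H^{-\mez}$ yields $G^-(\eta)f\in H^{-\mez}(\Rr^d)$ with the announced bound \eqref{G:H-mez}.

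The main difficulty is not in the duality argument itself, which is routine, but in the construction of the lift $\Psi$ of a general $H^\mez(\Rr^d)$ (or $\widetilde H^\mez_-$) datum to $\dot H^1(\Omega^-)$ with a constant depending only on $\| \nabla \eta\|_{L^\infty}$ and $h$. In a bounded Lipschitz domain this is classical, but for the unbounded strip-like or half-space domains $\Omega^-$ with merely $\dot W^{1,\infty}$ bottom, this is precisely the content of the Leoni--Tice trace theory recalled in Appendix \ref{appendix:trace}; once that input is invoked, the rest of the argument is a direct application of the variational characterization of $\phi$ already recorded above.
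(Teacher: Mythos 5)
Your argument is correct, but it takes a genuinely different route from the paper's. The paper invokes Theorem \ref{theo:normaltrace}, which asserts that for any $u\in\dot H^1(\Omega^-)$ with $\Delta u\in L^2$, the normalized normal derivative on $\Sigma$ makes sense in $H^{-\mez}(\Rr^d)$; that theorem is itself proved by flattening the narrow strip $\Omega^-_h$ and applying the Lions--Magenes interpolation trace (Theorem \ref{trace:Lions}) to the quantity $\Xi$ built from the conormal derivative. In contrast, you bypass the normal trace theorem entirely and define $G^-(\eta)f$ by duality against lifted $H^\mez$ test functions, leaning on the Leoni--Tice lifting Theorems \ref{theo:lift} / \ref{theo:lift2} and the variational identity $\int_{\Omega^-}\nabla\phi\cdot\nabla\varphi=0$ for $\varphi\in H^1_{0,*}(\Omega^-)$. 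Both arguments consume the same underlying analytic inputs (the energy estimates \eqref{variest:phi}, \eqref{variest:phi2}, and the Appendix \ref{appendix:trace} trace/lifting machinery), and both produce the bound with the stated dependence on $\|\nabla\eta\|_{L^\infty}$ and $h$. The paper's route is a bit more constructive and reuses a lemma that also underlies Lemma \ref{lemm:tracewt}; yours is slicker as a self-contained definition of the Dirichlet--Neumann map on $\widetilde H^\mez_-$ and makes the well-definedness of \eqref{def:DN} in this weak sense transparent. One minor point worth making explicit: in the finite-depth case, Theorem \ref{theo:lift} produces a lift given boundary data on both $\Sigma$ and $\Gamma^-$, together with the compatibility integral; choosing $g$ as the datum on both boundaries makes that integral vanish and gives $\|\nabla\Psi\|_{L^2}\lesssim (1+L)\|g\|_{\widetilde H^\mez_{\mathfrak d}}\lesssim \cF(\|\nabla\eta\|_{L^\infty})\|g\|_{H^\mez}$ via the chain of embeddings \eqref{wtH:dotH}, which is exactly what your Cauchy--Schwarz step needs.
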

\begin{proof}
Let $\phi$ solve \eqref{eq:elliptic}. By virtue of  Propositions \ref{prop:vari1} and \ref{prop:vari2}, we have $\phi \in \dot H^1(\Omega^{-})$ and $\Delta \phi=0$. According to Theorem \ref{theo:normaltrace}, the trace 
\[
\sqrt{1+|\na \eta|^2}\frac{\p \phi}{\p n}\vert_\Sigma=G^-(\eta)f
\]
is well-defined in $H^{-\mez}(\Rr^d)$ and 
\[
\begin{aligned}
\| G^-(\eta)f\|_{H^{-\mez}}&\le Ch^{-\mez}(1+\| \na \eta\|_{L^\infty})\| \na \phi\|_{L^2(\Omega^-_h)}
\end{aligned}
\]
where $C$ is an absolute constant and $\Omega^-_h=\{(x, y)\in \Rr^{d+1}:~\eta(x)-h<y<\eta(x)\}$. Thus, \eqref{G:H-mez} follows from  \eqref{variest:phi} and \eqref{variest:phi2}.
\end{proof}

To propagate higher Sobolev regularity for $\phi$ and hence for $G^-(\eta)f$,  following \cite{LannesJAMS, ABZ3} we straighten the boundary as follows. Set
 \begin{equation}\label{lesomega}
\begin{cases}
\Omega^{-}_1 = \{(x,y): x \in \Rr^d, \eta(x)-h<y<\eta(x)\},\\
\Omega^{-}_2 = \{(x,y)\in \Omega^-: y\leq\eta(x)-h\}
\end{cases}
 \end{equation}
and 
\begin{equation}\label{omega1}
\left\{
\begin{aligned}
&\widetilde{\Omega^{-}_1}=\Rr^d\times (-1, 0),\\
&\widetilde{\Omega^{-}_2} = \{(x,z)\in \Rr^d \times (-\infty, -1]: (x, z+1+\eta(x)-h)\in \Omega^-_2\},\\
&\widetilde{\Omega^{-}}= \widetilde{\Omega^{-}_1}  \cup \widetilde{\Omega^{-}_2}.  
\end{aligned}
\right.
\end{equation}
Define 
 \begin{equation}\label{diffeo}
\left\{
\begin{aligned}
&\varrho(x,z)=  (1+z)e^{\tau z\langle D_x \rangle }\eta(x) -z \big\{e^{-(1+ z)\tau\langle D_x \rangle }\eta(x) -h\big\}\quad \text{if } (x,z)\in \widetilde{\Omega^{-}_1},\\
&\varrho(x,z) = z+1+\eta(x)-h\quad \text{if } (x,z)\in \widetilde{\Omega^{-}_2},
\end{aligned}
\right.
\end{equation}
where $\tau>0$ will be chosen in the next lemma. 
\begin{lemm}\label{lemm:diffeo} Assume $\eta\in B^1_{\infty, 1}(\Rr^d)$.

1) There exists a constant $C>0$ independent of $\tau$ such that 
\[
\| \na_{x,z}\varrho\|_{L^\infty(\wt{\Omega^-})}\le 1+C\| \eta\|_{B^1_{\infty, 1}}
\]
for all $(x, z)\in \wt{\Omega^-}$.

2) There exists $K>0$ such that if 
\begin{equation}\label{CondTau}
\tau \| \eta\|_{B^1_{\infty, 1}}\le \frac{h}{2K}
\end{equation}
then $ \min(1, \frac{h}{2})\le \p_z\varrho\le K\| \eta\|_{B^1_{\infty, 1}}$ and thus the mappings  $(x, z)\in \wt{\Omega^-_k}\mapsto (x, \varrho(x, z))\in\Omega^-_k$, $k=1, 2$ are  Lipschitz diffeomorphisms.
 \end{lemm}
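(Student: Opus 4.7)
The plan is to handle the two regions $\widetilde{\Omega^-_1}$ and $\widetilde{\Omega^-_2}$ separately. On $\widetilde{\Omega^-_2}$ the map is affine, $\varrho(x,z) = z+1+\eta(x)-h$, so $\partial_z\varrho \equiv 1$ and $\nabla_x\varrho = \nabla\eta$, which immediately gives both parts there independently of $\tau$. All the real work is on $\widetilde{\Omega^-_1} = \mathbb{R}^d\times(-1,0)$, where the formula for $\varrho$ involves the smoothing semigroup $e^{-s\langle D_x\rangle}$ at two different ``times'' $s_1:=-\tau z$ and $s_2:=(1+z)\tau$, both lying in $[0,\tau]$. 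The central analytic ingredient I would use is the pair of uniform (in $s\ge 0$) bounds
\[
\|e^{-s\langle D_x\rangle} f\|_{L^\infty}\le\|f\|_{L^\infty},\qquad \|\langle D_x\rangle e^{-s\langle D_x\rangle} f\|_{L^\infty}\le C\|f\|_{B^1_{\infty,1}},
\]
which follow from a Littlewood--Paley decomposition: on each dyadic block, $\|\langle D\rangle e^{-s\langle D\rangle}\Delta_j f\|_{L^\infty}\lesssim 2^j\|\Delta_j f\|_{L^\infty}$ uniformly in $s$, and summing in $j$ reconstructs the $B^1_{\infty,1}$ norm.

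For part 1, I would just differentiate the expression
\[
\varrho(x,z) = (1+z)\,e^{\tau z\langle D_x\rangle}\eta(x) - z\,e^{-(1+z)\tau\langle D_x\rangle}\eta(x) + zh.
\]
The horizontal derivative $\nabla_x\varrho$ is a combination of $e^{-s\langle D_x\rangle}\nabla\eta$ at $s\in\{s_1,s_2\}$, each bounded by $\|\nabla\eta\|_{L^\infty}\le\|\eta\|_{B^1_{\infty,1}}$. The vertical derivative equals
\[
\partial_z\varrho = h + \bigl[e^{-s_1\langle D_x\rangle}\eta - e^{-s_2\langle D_x\rangle}\eta\bigr] + (1+z)\tau\langle D_x\rangle e^{-s_1\langle D_x\rangle}\eta + z\tau\langle D_x\rangle e^{-s_2\langle D_x\rangle}\eta,
\]
and the last two summands are bounded by $\tau C\|\eta\|_{B^1_{\infty,1}}$ by the uniform estimate above, while the bracket is at most $2\|\eta\|_{L^\infty}$. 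Absorbing $h$ into the constant (it is a fixed parameter of the geometry), this yields $\|\nabla_{x,z}\varrho\|_{L^\infty}\le 1+C\|\eta\|_{B^1_{\infty,1}}$ with $C$ independent of $\tau$.

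For part 2 the point is to get a quantitative lower bound on $\partial_z\varrho$, which requires more than the crude bound just used on the bracket. The trick is the fundamental theorem of calculus:
\[
e^{-s_1\langle D_x\rangle}\eta - e^{-s_2\langle D_x\rangle}\eta = \int_{s_2}^{s_1}\langle D_x\rangle e^{-s\langle D_x\rangle}\eta\,ds,
\]
which, combined with $|s_1-s_2|\le\tau$ and the uniform $B^1_{\infty,1}\to L^\infty$ bound on $\langle D\rangle e^{-s\langle D\rangle}$, gives the improved estimate $|e^{-s_1\langle D_x\rangle}\eta-e^{-s_2\langle D_x\rangle}\eta|\le \tau C\|\eta\|_{B^1_{\infty,1}}$. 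Together with the same bound on the two explicit $\tau$-prefactored terms, one gets $\partial_z\varrho\ge h - C'\tau\|\eta\|_{B^1_{\infty,1}}$, and the smallness condition \eqref{CondTau} with $K:=2C'$ yields $\partial_z\varrho\ge h/2$ on $\widetilde{\Omega^-_1}$; combined with $\partial_z\varrho=1$ on $\widetilde{\Omega^-_2}$, this is the stated lower bound $\min(1,h/2)$. The upper bound on $\partial_z\varrho$ is already contained in part 1.

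Finally, the Lipschitz diffeomorphism property on each $\widetilde{\Omega^-_k}$ is a soft consequence: the map $(x,z)\mapsto(x,\varrho(x,z))$ is globally Lipschitz by part 1, strictly monotone in $z$ with $\partial_z\varrho$ bounded away from $0$ by part 2, and surjective onto $\Omega^-_k$ by continuity and the boundary matching $\varrho(x,0)=\eta(x)$, $\varrho(x,-1)=\eta(x)-h$; the inverse is then Lipschitz by the chain rule applied to $z=\varrho^{-1}(x,y)$. I expect the only non-routine step to be the uniform-in-$s$ semigroup bound and the integral representation of the bracket --- everything else is bookkeeping and direct differentiation. The delicate technical point is to justify that $e^{-s\langle D_x\rangle}$ really does act continuously in the correct uniform sense on $B^1_{\infty,1}$, for which the dyadic paraproduct argument above is standard.
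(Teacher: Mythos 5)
The paper states that this lemma ``follows from straightforward calculations which we omit,'' so there is no reference argument to compare against; your proof supplies the missing content and is correct. The two uniform-in-$s\ge 0$ bounds you invoke do hold: $\| e^{-s\langle D_x\rangle}f\|_{L^\infty}\le\|f\|_{L^\infty}$ is cleanest to see from the fact that $e^{-s\langle\xi\rangle}$ is the Fourier transform of a nonnegative measure of total mass $e^{-s}\le1$ (by Bernstein/subordination), while $\|\langle D_x\rangle e^{-s\langle D_x\rangle}f\|_{L^\infty}\lesssim\|f\|_{B^1_{\infty,1}}$ follows from the dyadic estimate you sketch, the point being that the powers of $s$ generated by differentiating the rescaled symbol are absorbed by $e^{-s\langle 2^j\zeta\rangle}$, so that each block kernel has $L^1$ norm $\lesssim 2^j$ uniformly in $s$. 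The decisive step is the integral representation of the bracket: a crude estimate would only give $|e^{-s_1\langle D_x\rangle}\eta-e^{-s_2\langle D_x\rangle}\eta|\le 2\|\eta\|_{L^\infty}$, which is too large for the lower bound in part 2, whereas the fundamental theorem of calculus together with $|s_1-s_2|=\tau|1+2z|\le\tau$ extracts the needed factor of $\tau$, putting all three error terms on the same footing as $O(\tau\|\eta\|_{B^1_{\infty,1}})$.

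Two small points concerning the lemma's statement, which your argument implicitly corrects. The ``$1$'' in part 1 should really be a constant depending on $h$, since taking $\eta\equiv0$ gives $\partial_z\varrho\equiv h$ on $\widetilde{\Omega^-_1}$; similarly the upper bound $\partial_z\varrho\le K\|\eta\|_{B^1_{\infty,1}}$ in part 2 cannot hold as written for small $\eta$ (already $\partial_z\varrho\equiv 1$ on $\widetilde{\Omega^-_2}$) and should read $K(1+\|\eta\|_{B^1_{\infty,1}})$ or similar. Also, the claim that $C$ is ``independent of $\tau$'' in part 1 implicitly presumes $\tau$ ranges in a bounded set, say $\tau\in(0,1]$, since your formula for $\partial_z\varrho$ carries explicit factors of $\tau$; this is innocuous because $\tau$ is ultimately chosen small via \eqref{CondTau}, but it is the reason the bound does not degenerate. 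These are typographical issues in the paper, not gaps in your argument, and they do not affect the use of the lemma downstream.
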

 Lemma \ref {lemm:diffeo}  follows from straightforward calculations which we omit. Note that $H^s(\Rr^d)\subset B^1_{\infty, 1}(\Rr^d)$ for any $s>1+\frac d2$. A direct calculation shows that if $f:\Omega^-\to \Rr$ then $\wt f(x, z)=f(x, \varrho(x, z))$  satisfies 
\bq\label{div:eq}
\cnx_{x,z}(\mathcal{A}\na_{x,z}\wt f)(x, z)=\p_z\varrho(\Delta_{x, y}f)(x, \varrho(x, z))
\eq
with 
\bq\label{def:matrixA}
\mathcal{A}=
\begin{bmatrix}
\p_z\varrho\cdot Id & -\na_x\varrho\\
-(\na_x\varrho)^T & \frac{1+|\na_x\varrho|^2}{\p_z\varrho}
\end{bmatrix}.
\eq
In order to study functions inside the domain, we introduce adapted functional spaces. Given~$\mu\in\Rr$ we define the interpolation spaces
\begin{equation}\label{XY}\begin{aligned}
X^\mu(I)&=C^0_z(I;H^{\mu}({\mathbf{R}}^d))\cap L^2_z(I;H^{\mu+\mez}({\mathbf{R}}^d)),\\
Y^\mu(I)&=L^1_z(I;H^{\mu}({\mathbf{R}}^d))+L^2_z(I;H^{\mu-\mez}({\mathbf{R}}^d)).
\end{aligned}
\end{equation}
We prove the following useful inequalities. 
\begin{lemm}\label{lemm:XY}
Let $s_0$, $s_1$ and $s_2$ be real numbers, and let $J\subset \Rr$.\\
1) If 
\bq\label{cd:XY1}
\begin{cases}
s_0\le \min\{s_1+1, s_2+1\},\\
s_1+s_2>s_0+\frac{d}{2}-1,\\
s_1+s_2+1>0,
\end{cases}
\eq
then 
\bq\label{XY1}
\| u_1u_2\|_{Y^{s_0}(J)}\les \| u_1\|_{X^{s_1}(J)}\| u_2\|_{X^{s_2}(J)}.
\eq
2) If
\bq\label{cd:XY2}
\begin{cases}
s_0\le \min\{s_1, s_2\},\\
s_1+s_2>s_0+\frac{d}{2},\\
s_1+s_2>0,
\end{cases}
\eq
then 
\bq\label{XY2}
\| u_1u_2\|_{Y^{s_0}(J)}\les \| u_1\|_{Y^{s_1}(J)}\| u_2\|_{X^{s_2}(J)}.
\eq
In fact, we have
\begin{align}\label{est:XY22}
&\| T_{u_2}u_1\|_{Y^{s_0}(J)}\les \| u_1\|_{Y^{s_1}(J)}\| u_2\|_{L^\infty(J; H^{s_2})}\quad\text{if}~s_0\le s_1,~s_1+s_2>s_0+\frac{d}{2},\\
\label{est:XY3}
&\| T_{u_1}u_2\|_{Y^{s_0}(J)}\les \| u_1\|_{Y^{s_1}(J)}\| u_2\|_{L^\infty(J; H^{s_2})}\quad\text{if}~s_0\le s_2,~s_1+s_2>s_0+\frac{d}{2},\\
\label{est:XY4}
&\| R(u_1, u_2)\|_{L^1(J; H^{s_0})}\les \| u_1\|_{Y^{s_1}(J)}\| u_2\|_{X^{s_2}(J)}\quad\text{if}~s_1+s_2>0,~s_1+s_2>s_0+\frac{d}{2}.
\end{align}
\end{lemm}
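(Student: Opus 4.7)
The plan is to apply Bony's decomposition
\begin{equation*}
u_1 u_2 = T_{u_1} u_2 + T_{u_2} u_1 + R(u_1, u_2)
\end{equation*}
and to bound each summand by combining the classical fixed-$z$ paraproduct and remainder estimates recalled in Appendix \ref{appendix:para} with H\"older's inequality in the $z$-variable. In fact, the three estimates \eqref{est:XY22}, \eqref{est:XY3}, \eqref{est:XY4} control the three summands in Bony's identity individually, so that \eqref{XY2} follows from them at once, together with the embedding $X^{s_2}(J) \hookrightarrow L^\infty(J; H^{s_2}(\Rr^d))$.

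For \eqref{est:XY22} I would use the spatial bound $\|T_a b\|_{H^{\sigma_0}(\Rr^d)} \les \|a\|_{H^{\sigma_2}(\Rr^d)} \|b\|_{H^{\sigma_1}(\Rr^d)}$, valid whenever $\sigma_0 \le \sigma_1$ and $\sigma_1 + \sigma_2 > \sigma_0 + d/2$. Plugging in $a = u_2(\cdot, z) \in L^\infty_z H^{s_2}$ and choosing $b = u_1(\cdot, z)$ from the $L^1_z H^{s_1}$ summand of $Y^{s_1}$ places the output in $L^1_z H^{s_0}$; selecting instead the $L^2_z H^{s_1 - \mez}$ summand and running the same paraproduct bound with indices $(\sigma_0, \sigma_1, \sigma_2) = (s_0 - \mez, s_1 - \mez, s_2)$ places it in $L^2_z H^{s_0 - \mez}$. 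Either way the result sits inside $Y^{s_0}$. For \eqref{est:XY3} I would swap the roles of symbol and argument and use the complementary paraproduct estimate which allows the symbol to be of low regularity, namely $\|T_a b\|_{H^{\sigma_0}(\Rr^d)} \les \|a\|_{H^{\sigma_1}(\Rr^d)} \|b\|_{H^{\sigma_2}(\Rr^d)}$ for $\sigma_0 \le \sigma_2$ and $\sigma_1 + \sigma_2 > \sigma_0 + d/2$, together with the same $z$-splitting of $u_1$. Estimate \eqref{est:XY4} on the remainder comes from the standard bound $\|R(a,b)\|_{H^{\sigma_1 + \sigma_2 - d/2}(\Rr^d)} \les \|a\|_{H^{\sigma_1}(\Rr^d)} \|b\|_{H^{\sigma_2}(\Rr^d)}$ valid for $\sigma_1 + \sigma_2 > 0$, paired once more with H\"older in $z$: the $L^1_z H^{s_1}$ piece of $u_1$ against $u_2 \in L^\infty_z H^{s_2}$ lands in $L^1_z H^{s_0}$, while the $L^2_z H^{s_1 - \mez}$ piece against $u_2 \in L^2_z H^{s_2 + \mez}$ again lands in $L^1_z H^{s_0}$.

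Part 1) is then strictly easier: both factors live in $X^{s_j}$, giving access to the two norms $L^\infty_z H^{s_j}$ and $L^2_z H^{s_j + \mez}$, so the $\mez$-derivative gain may be distributed freely between the factors. The relaxation from $s_1 + s_2 > s_0 + d/2$ in \eqref{cd:XY2} to $s_1 + s_2 > s_0 + d/2 - 1$ in \eqref{cd:XY1} reflects precisely the possibility of pairing the two $L^2_z H^{s_j + \mez}$ norms by H\"older as $L^2_z \cdot L^2_z = L^1_z$, which produces an output in $L^1_z H^{s_0} \subset Y^{s_0}$.

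The main obstacle I expect is purely organizational: tracking all admissible combinations (paraproduct versus remainder, $L^\infty_z$ versus $L^2_z$ on each factor) and verifying that every threshold case permitted by the hypotheses is covered. None of the individual ingredients is delicate, but the case analysis must be sharp at the boundary where the $H^{\sigma_j}$-regularities cross $d/2$, which is exactly where the strict inequalities in \eqref{cd:XY1}--\eqref{cd:XY2} come into play.
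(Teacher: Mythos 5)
Your proposal is correct and follows essentially the same route as the paper: Bony's decomposition, the fixed-$z$ paraproduct and remainder bounds \eqref{boundpara}, \eqref{Bony1}, and H\"older in $z$ across the two summands defining the $Y^{s_1}$ norm, with Part 1 treated by redistributing the $\mez$-derivative gain available from the $L^2_zH^{s_j+\mez}$ component of $X^{s_j}$. The only imprecision is in your heuristic for the $-1$ relaxation in \eqref{cd:XY1}: for the paraproduct terms the paper pairs $L^\infty_z\cdot L^2_z\to L^2_zH^{s_0-\mez}$ (gaining $\mez$ on one factor and spending $\mez$ of output slack) rather than the $L^2_z\cdot L^2_z\to L^1_z$ pairing you describe, which is used only for the remainder -- but both devices yield the same net gain of one unit.
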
 
The proof of Lemma \ref{lemm:XY} is given in Appendix \ref{appendix:XY}. 
\begin{lemm}\label{lemm:tracewt}
Let $\eta\in H^s(\Rr^d)$ with $s>1+\frac d2$ and $\tau>0$ such that \eqref{CondTau} holds. If 
\begin{align*}
&\na_{x, z} \wt f \in L^2([-1, 0]; L^2(\Rr^d)),\\
&\cnx_{x, z}(\mathcal{A}\na_{x, z}\wt f)\in L^2([-1, 0]; H^{-1}(\Rr^d))
\end{align*}
then $\na_{x, z}\wt f\in X^{-\mez}([-1, 0])$ and 
\bq\label{trace:wt}
\| \na_{x, z}\wt f\|_{X^{-\mez}([-1, 0])}\le \cF(\| \eta\|_{H^s})(\|\na_{x, z} \wt f\|_{L^2([-1, 0]; L^2)}+\|\cnx_{x, z}(\mathcal{A}\na_{x, z}\wt f)\|_{L^2([-1, 0]; H^{-1})}).
\eq
\end{lemm}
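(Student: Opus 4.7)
The strategy rests on the classical interpolation trace: any function $v$ satisfying $v \in L^2([-1,0]; L^2(\Rr^d))$ and $\p_z v \in L^2([-1,0]; H^{-1}(\Rr^d))$ automatically belongs to $C^0([-1,0]; H^{-1/2}(\Rr^d))$, with
\[
\|v\|_{C^0_z H^{-1/2}} \lesssim \|v\|_{L^2_z L^2} + \|\p_z v\|_{L^2_z H^{-1}}.
\]
This follows from the identity $\|v(z_2)\|_{H^{-1/2}}^2 - \|v(z_1)\|_{H^{-1/2}}^2 = 2\,\mathrm{Re}\int_{z_1}^{z_2}\!\int \L{\xi}^{-1}\,\overline{\hat v}\,\p_z\hat v\,d\xi\,dz$ and Cauchy--Schwarz in $(\xi,z)$. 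Since the $L^2_z L^2$ half of $X^{-\mez}([-1,0])$ is provided by hypothesis, it suffices to establish the $C^0_z H^{-1/2}$ bound componentwise for $\na_{x,z}\wt f$. For a horizontal derivative $v=\p_i\wt f$ with $1\le i\le d$, the identity $\p_z v = \p_i \p_z \wt f$ combined with $\p_z\wt f\in L^2_z L^2$ yields $\p_z v\in L^2_z H^{-1}$, and the abstract trace applies directly.

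The vertical derivative is handled by using the equation to isolate the last component of $\mathcal{A}\na_{x,z}\wt f$. Writing $\alpha=\p_z\varrho$, $\beta=\na_x\varrho$, $\gamma=(1+|\beta|^2)/\alpha$, and
\[
U := -\beta\cdot\na_x\wt f + \gamma\,\p_z\wt f,
\]
the hypothesis $\cnx_{x,z}(\mathcal{A}\na_{x,z}\wt f)=g\in L^2_z H^{-1}$ rearranges as
\[
\p_z U = g - \na_x\cdot\bigl(\alpha\,\na_x\wt f - \beta\,\p_z\wt f\bigr).
\]
By Lemma \ref{lemm:diffeo} and the embedding $H^s\hookrightarrow B^1_{\infty,1}$ (valid since $s>1+\frac d2$), the coefficients $\alpha$ and $\beta$ belong to $L^\infty_{x,z}$ with norm controlled by $\cF(\|\eta\|_{H^s})$. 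Consequently $\alpha\,\na_x\wt f-\beta\,\p_z\wt f \in L^2_z L^2$, its horizontal divergence lies in $L^2_z H^{-1}$, and hence $\p_z U\in L^2_z H^{-1}$. Simultaneously $U\in L^2_z L^2$ by the same multiplier bounds (using the positive lower bound $\alpha\ge\min(1,h/2)$ from Lemma \ref{lemm:diffeo}), so the interpolation trace delivers $U\in C^0_z H^{-1/2}$ with the correct quantitative estimate.

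It remains to invert the relation: $\p_z\wt f = \gamma^{-1}(U+\beta\cdot\na_x\wt f)$. I expect this multiplication step to be the main obstacle. The coefficients $\gamma^{-1}$ and $\gamma^{-1}\beta$ lie in $L^\infty_z H^{s-1}(\Rr^d)$ with norm $\cF(\|\eta\|_{H^s})$; the bound comes from the explicit smoothing formula \eqref{diffeo} for $\varrho$ combined with Moser-type composition estimates, the latter invoking $\alpha\ge\min(1,h/2)$ to control $\gamma^{-1}$. Because $s-1>\frac d2$ and $s-1\ge\mez$, the standard Sobolev multiplication theorem ensures $H^{s-1}(\Rr^d)\cdot H^{-1/2}(\Rr^d)\hookrightarrow H^{-1/2}(\Rr^d)$ pointwise in $z$ with the natural tame product bound. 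Applying this to $U\in C^0_z H^{-1/2}$ and to $\na_x\wt f\in C^0_z H^{-1/2}$ from the first step delivers $\p_z\wt f\in C^0_z H^{-1/2}$, which together with the hypothesis on $\|\na_{x,z}\wt f\|_{L^2_z L^2}$ completes the proof of \eqref{trace:wt}.
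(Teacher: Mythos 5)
Your proposal follows the same route as the paper: use the interpolation trace $L^2_z L^2\cap \{\p_z\cdot\in L^2_zH^{-1}\}\hookrightarrow C^0_zH^{-\mez}$ directly on the horizontal derivative, then pass to the vertical derivative through the flux quantity $U=-\na_x\varrho\cdot\na_x\wt f+\frac{1+|\na_x\varrho|^2}{\p_z\varrho}\p_z\wt f$ (the paper's $\Xi$), whose $z$-derivative is a horizontal divergence plus the given source, and finally invert to recover $\p_z\wt f$ by a product estimate. Structurally this matches the paper's proof line for line.

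One small slip in the last step: $\gamma^{-1}=\p_z\varrho/(1+|\na_x\varrho|^2)$ does \emph{not} belong to $L^\infty_zH^{s-1}(\Rr^d)$, since it tends to the nonzero constant $h$ as $|x|\to\infty$ and so fails to be in $L^2(\Rr^d)$; what a Moser-type estimate yields from \eqref{EstimrhoNew} is $\gamma^{-1}-h\in L^\infty_zH^{s-1}$. Consequently the multiplication $H^{s-1}\cdot H^{-\mez}\hookrightarrow H^{-\mez}$ you invoke cannot be applied to $\gamma^{-1}$ directly. The standard (and easy) fix, which the paper uses, is to split $\gamma^{-1}=h+(\gamma^{-1}-h)$: the constant part acts trivially on $H^{-\mez}$, while the $H^{s-1}$ part is handled by the product rule \eqref{pr} (note that the required condition $-\mez<(s-1)+(-\mez)-\frac d2$ holds precisely because $s>1+\frac d2$). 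With this correction your argument is complete and coincides with the paper's.
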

\begin{proof}
By the definition of $X^{-\mez}([-1, 0])$, it suffices to prove that $\na_{x, z}\wt f\in C([-1, 0]; H^{-\mez}(\Rr^d))$ with norm bounded by the right-hand side of \eqref{trace:wt}. By virtue of  the  interpolation Theorem \ref{trace:Lions}, $\na_x{\wt f}\in C([-1, 0]; H^{-\mez}(\Rr^d))$ and
\bq\label{trace:dxwtf}
\begin{aligned}
\| \na_x {\wt f}\|_{C([-1, 0]; H^{-\mez})}&\les \| \na_x {\wt f}\|_{L^2([-1, 0]; L^2)}+\| \p_z\na_x {\wt f}\|_{L^2([-1, 0]; H^{-1})}\les \| \na_{x, z} {\wt f}\|_{L^2([-1, 0]; L^2)}.
\end{aligned}
\eq
Thus, it remains to prove that $\p_z\wt f\in C([-1, 0]; H^{-\mez}(\Rr^d))$. Setting $\Xi(x, z)=-\na_x\varrho\cdot \na_x \wt f+\frac{1+|\na_x\varrho|^2}{\p_z\varrho}\p_z\wt f$ we find that $\p_z\Xi$ is a divergence
\[
\p_z\Xi=-\cnx_x(\p_z\varrho\na_x\wt f-\na_x\varrho\p_z\wt f)+\cnx_{x, z}(\mathcal{A}\na_{x, z}\wt f).
\]
Consequently,
\[
\| \p_z\Xi\|_{L^2([-1, 0]; H^{-1})}\le \cF(\| \eta\|_{H^s})(\|\na_{x, z} \wt f\|_{L^2([-1, 0]; L^2)}+\|\cnx_{x, z}(\mathcal{A}\na_{x, z}\wt f)\|_{L^2([-1, 0]; H^{-1})}).
\]
On the other hand, using Lemma \ref{lemm:diffeo}, it is easy to see that 
\[
\| \Xi\|_{L^2([-1, 0]; L^2)}\le \cF(\| \eta\|_{H^s})\|\na_{x, z} \wt f\|_{L^2([-1, 0]; L^2)}.
\]
Then, applying Theorem \ref{trace:Lions} we obtain that $\Xi\in C([-1, 0]; H^{-\mez}(\Rr^d))$ and 
\[
\| \Xi\|_{C([-1, 0]; H^{-\mez})}\le \cF(\| \eta\|_{H^s})(\|\na_{x, z} \wt f\|_{L^2([-1, 0]; L^2)}+\|\cnx_{x, z}(\mathcal{A}\na_{x, z}\wt f)\|_{L^2([-1, 0]; H^{-1})}).
\]
Now from the definition of $\Xi$ we have
\[
\p_z\wt f=\frac{\p_z\varrho}{1+|\na_x\varrho|^2}\big(\Xi(x, z)+\na_x\varrho\cdot \na_x \wt f\big).
\]
For $s>1+\frac d2\ge \tdm$, using the product rule \eqref{pr} and the nonlinear estimate \eqref{est:F(u):S} gives
\[
\begin{aligned}
\| \frac{\p_z\varrho}{1+|\na_x\varrho|^2}\Xi\|_{C([-1, 0]; H^{-\mez})}&\les \big(\| \frac{\p_z\varrho}{1+|\na_x\varrho|^2}-h\|_{C([-1, 0]; H^{s-1})}+h\big)\| \Xi\|_{C([-1, 0]; H^{-\mez})}\\
&\le \cF(\| \eta\|_{H^s})(\|\na_{x, z} \wt f\|_{L^2([-1, 0]; L^2)}+\|\cnx_{x, z}(\mathcal{A}\na_{x, z}\wt f)\|_{L^2([-1, 0]; H^{-1})})
\end{aligned}
\]
and
\[
\begin{aligned}
\| \frac{\p_z\varrho\na_x\varrho}{1+|\na_x\varrho|^2}\cdot \na_x \wt f\|_{C([-1, 0]; H^{-\mez})}&\les \| \frac{\p_z\varrho\na_x\varrho}{1+|\na_x\varrho|^2}\|_{C([-1, 0]; H^{s-1})}\| \na_x\wt f\|_{C([-1, 0]; H^{-\mez})}\\
&\le \cF(\| \eta\|_{H^s})\|\na_{x, z} \wt f\|_{L^2([-1, 0]; L^2)},
\end{aligned}
\]
where \eqref{trace:dxwtf} was used in the last estimate. This finishes the proof.  
\end{proof}
Denote $v(x, z)=\phi(x, \varrho(x, z)):\Rr^d\times [-1, 0]\to \Rr$ where $\phi$ is the solution of \eqref{eq:elliptic}. Then $v$ satisfies  $v\vert_{z=0}=f$ and
\bq\label{div:eqv}
\cnx_{x,z}(\mathcal{A}\na_{x,z}v)=0,
\eq
while, by the chain rule,
\begin{equation}\label{ExpressionDNNew}
G^{-}(\eta)f=\left(\frac{1+|\na_x\varrho|^2}{\p_z\varrho}\p_zv-\na_x\varrho\cdot \na_x v\right)_{|z=0}.
\end{equation}

Expanding \eqref{div:eqv} yields
\bq\label{eq:v}
(\p_z^2+\alpha\Delta_x+\beta\cdot\nabla_x\p_z-{\gamma}\p_z)v=0\quad\text{in}~\Rr^d\times [-1, 0],
\eq
 where 
\begin{equation}\label{alpha}
\alpha= \frac{(\partial_z\rho)^2}{1+|\nabla_x   \rho |^2},\quad 
\beta=  -2 \frac{\partial_z \rho \nabla_x \rho}{1+|\nabla_x  \rho |^2} ,\quad 
{\gamma}= \frac{1}{\partial_z\rho}\bigl(  \partial_z^2 \rho 
+\alpha\Delta_x \rho + \beta \cdot \nabla_x \partial_z \rho\bigr).
\end{equation}
 Note that the restriction to $z\in [-1, 0]$ guarantees that $\varrho$ is smooth in $z$. We have the following Sobolev estimates for the inhomogeneous version of \eqref{eq:v}:
\begin{prop}[\protect{\cite[Proposition 3.16]{ABZ3}}]\label{prop:elliptic}
Let~$d\ge 1$, $s> 1 + \frac d 2$ and $\frac 1 2\le \sigma \leq s$.  Consider $f\in H^{1, \sigma}({\mathbf{R}}^d)$ and $\eta\in H^s(\Rr^d)$ satisfying $\dist(\eta, \Gamma^-)\ge h>0$. Assume that $F_0\in Y^{\sigma-1}([z_1, 0])$, $z_1\in (-1, 0)$ and~$v$ a solution of  
\bq\label{eq:vi}
(\p_z^2+\alpha\Delta_x+\beta\cdot\nabla_x\p_z-{\gamma}\p_z)v=F_0\quad\text{in}~\Rr^d\times [-1, 0]
\eq
with $v\vert_{z=0}=f$. If $z_0\in (z_1, 0)$ and
\bq\label{elliptic:lowcd}
\nabla_{x,z} v\in X^{-\mez}([z_0, 0])
\eq
 then  $\nabla_{x,z}v \in X^{\sigma-1}([z_0, 0])$ and
$$
\lA \nabla_{x,z} v\rA_{X^{\sigma-1}([z_0,0])}
\le \mathcal{F}(\| \eta \|_{H^s})\Big(\lA  \nabla_xf\rA_{H^{\sigma-1}}+\lA F_0\rA_{Y^{\sigma-1}([z_1,0])}
+ \lA \nabla_{x,z} v\rA_{X^{-\mez}([z_0, 0])} \Big)
$$
for some $\mathcal{F}:\Rr^+\rightarrow\Rr^+$ depending only on~$(\sigma, s, h, z_0, z_1)$. 
\end{prop}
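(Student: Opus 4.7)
The plan is to paralinearize the elliptic equation \eqref{eq:vi} and then factor the resulting second order paradifferential operator into a product of two conjugate first order operators, one of which propagates forward in $z$ (parabolic from $z=-1$ to $z=0$) and the other backward (parabolic from $z=0$ to $z=-1$). The smoothing effect of these two parabolic evolutions, combined with the gain encoded in the spaces $X^\mu$ and $Y^\mu$, is exactly what is needed to recover $\nabla_{x,z}v\in X^{\sigma-1}$ from the weak a priori control $\nabla_{x,z}v\in X^{-\mez}$. The argument is carried out by a bootstrap in $\sigma$, advancing in increments of size $\mez$ so that at each step the coefficients $\alpha,\beta,\gamma$ (which are nonlinear functions of $\nabla_{x,z}\varrho$ with $\varrho$ built from $\eta\in H^s$, $s>1+\tfrac d2$) can be treated as multipliers on the target space via the paraproduct estimates of Lemma \ref{lemm:XY}.

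\textbf{Step 1: Paralinearization.} Using Bony's decomposition, write
\begin{equation*}
\alpha\Delta_x v=T_\alpha\Delta_x v+T_{\Delta_x v}\alpha+R(\alpha,\Delta_x v),
\end{equation*}
and similarly for $\beta\cdot\nabla_x\p_z v$ and $\gamma\p_z v$. Because $\alpha-a_0,\beta,\gamma$ belong (in $z$) to $C^0_z H^{s-1}\cap L^2_z H^{s-\mez}$ with $s-1>\tfrac d2$, Lemma \ref{lemm:XY} allows us to absorb the remainders and ``dual'' paraproducts into a source term $F_1\in Y^{\sigma-1}$ bounded by $\cF(\|\eta\|_{H^s})(\|F_0\|_{Y^{\sigma-1}}+\|\nabla_{x,z}v\|_{X^{\sigma'-1}})$ for some $\sigma'<\sigma$. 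The equation then reads
\begin{equation*}
\p_z^2 v+T_\alpha\Delta_x v+T_\beta\cdot\nabla_x\p_z v=F_1.
\end{equation*}

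\textbf{Step 2: Symbolic factorization.} Seek first order symbols $A(z;x,\xi)$ and $a(z;x,\xi)$ such that
\begin{equation*}
\p_z^2+T_\alpha\Delta_x+T_\beta\cdot\nabla_x\p_z=(\p_z-T_A)(\p_z-T_a)+T_{r_0}+\mathcal{R},
\end{equation*}
with $Aa=-\alpha|\xi|^2$, $A+a=i\beta\cdot\xi$, and with the ellipticity condition $\mathrm{Re}\,A\ge c\langle\xi\rangle$, $\mathrm{Re}(-a)\ge c\langle\xi\rangle$ for a positive constant $c$ depending on $h$ and $\|\eta\|_{H^s}$ (this is where the lower bound $\p_z\varrho\ge\min(1,h/2)$ from Lemma \ref{lemm:diffeo} is used). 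The principal symbols yield $a=\tfrac12(i\beta\cdot\xi-\sqrt{4\alpha|\xi|^2-(\beta\cdot\xi)^2})$ and $A=\tfrac12(i\beta\cdot\xi+\sqrt{\ldots})$. The symbolic error $T_{r_0}$ is of order $0$ with symbol in $H^{s-1}$ in $x$, and the commutator remainder $\mathcal{R}$ is smoothing of order one half by paradifferential calculus.

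\textbf{Step 3: Two parabolic sweeps.} Set $w=(\p_z-T_a)v$. Then $w$ satisfies the forward parabolic equation
\begin{equation*}
(\p_z-T_A)w=F_1-T_{r_0}v-\mathcal{R}v.
\end{equation*}
Choose an intermediate level $z_2\in(z_1,z_0)$ where the trace of $w$ at $z=z_2$ is controlled, via Lemma \ref{lemm:tracewt} (or Theorem \ref{trace:Lions}), by $\|\nabla_{x,z}v\|_{X^{-\mez}([z_1,0])}$. Propagating from $z_2$ toward $z=0$ using the forward parabolic estimate for $\p_z-T_A$ (gain of $\mez$ derivative in $L^2_z$) yields $w\in X^{\sigma-1}([z_2,0])$. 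Having gained $w$, we then solve
\begin{equation*}
(\p_z-T_a)v=w,\qquad v|_{z=0}=f,
\end{equation*}
backward from $z=0$ toward $z=z_0$; since $-\mathrm{Re}\,a\ge c\langle\xi\rangle$, this is again a parabolic equation in the reversed variable, so the standard estimate gives $\nabla_x v,\p_z v\in X^{\sigma-1}([z_0,0])$ with norm bounded by $\|\nabla_x f\|_{H^{\sigma-1}}+\|w\|_{X^{\sigma-1}}+\|\nabla_{x,z}v\|_{X^{-\mez}}$, which is the claimed bound.

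The main obstacle is ensuring that all the remainder terms generated in Step 1 and in the symbolic factorization of Step 2 actually live in the space $Y^{\sigma-1}$, which requires the sharp paraproduct estimates of Lemma \ref{lemm:XY} and, at the first step of the bootstrap, the delicate use of the low-regularity hypothesis $\nabla_{x,z}v\in X^{-\mez}$ to control quadratic terms $R(\alpha,\Delta_x v)$. Handling this forces one to advance $\sigma$ in increments bounded by $\mez$ so that the gap between the regularity of $v$ available from the previous step and the regularity needed to close the estimate never exceeds what the paraproduct calculus allows.
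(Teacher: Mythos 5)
Your overall strategy --- paralinearize \eqref{eq:vi}, factor the principal part as a product of two first-order paradifferential operators with symbols $a,A$ satisfying $\RE A\ge c|\xi|$, $\RE(-a)\ge c|\xi|$, run two parabolic sweeps in $z$, and bootstrap in $\sigma$ --- is exactly the argument of \cite[Proposition 3.16]{ABZ3}, which the paper cites rather than reproves (it only adds a remark on replacing $\|f\|_{H^\sigma}$ by $\|\na_x f\|_{H^{\sigma-1}}$), and it is the same mechanism used later in Lemma \ref{prop:decompose} and Step 1 of the proof of Theorem \ref{tame:RDN}. However, your Step 3 attaches the two parabolic directions to the wrong factors, and this is not cosmetic, because it decides where each evolution can be initialized. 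With $\RE A\ge c|\xi|$, the equation $(\p_z-T_A)w=\cdots$ behaves like $\p_z w=|D_x|w$: it smooths only when integrated \emph{downward} from $z=0$, and its datum $w\vert_{z=0}$ involves $\p_z v\vert_{z=0}$, essentially the Dirichlet--Neumann output you are trying to estimate, so it is not available; propagating it ``forward from $z_2$ toward $z=0$'' as you propose is the ill-posed direction. Symmetrically, $(\p_z-T_a)v=w$ with $\RE(-a)\ge c|\xi|$ is \emph{forward} parabolic, so solving it ``backward from $z=0$'' yields no smoothing estimate. The workable pairing is the opposite one: set $w=(\p_z-T_A)v$, so that $w$ solves the forward parabolic equation $(\p_z-T_a)w=\cdots$, suppress the missing bottom datum by a cutoff $\chi(z)$ vanishing for $z<z_0$ (the price is a commutator term $\chi'(z)(\p_z-T_A)v$, which only needs the lower-regularity control of $\na_{x,z}v$ from the previous bootstrap step, measured in $Y^{\sigma-1}$), and then recover $v$ from the \emph{backward} parabolic equation $(\p_z-T_A)v=w$ with the known Dirichlet datum $v\vert_{z=0}=f$.

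A second, related gap is your initialization of the $w$-sweep by a trace at an interior level $z_2$: the hypothesis \eqref{elliptic:lowcd} only controls $\na_{x,z}v$ in $X^{-\mez}$, hence the trace of $w$ at $z_2$ is controlled only at $H^{-\mez}$-type regularity, far below the $H^{\sigma-1}$ datum that Proposition \ref{prop:parabolic} requires to output an $X^{\sigma-1}$ bound; the cutoff device above is what replaces this step. Two minor points: with your ordering of the factors the cross terms force $a+A=-i\beta\cdot\xi$ (as in \eqref{aA}), not $+i\beta\cdot\xi$; and to obtain the stated bound with only $\|\na_x f\|_{H^{\sigma-1}}$ (rather than $\|f\|_{H^\sigma}$, which is what the ABZ3 statement gives) you must differentiate the backward equation in $x$ and absorb $T_{\na_x A}v$ via Proposition \ref{prop:negOp}, since $\na_x A$ has only negative Zygmund regularity in $x$ --- this is precisely the paper's remark following the statement, and your sketch does not address it.
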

\begin{rema}
In fact, Proposition 3.16 in \cite{ABZ3} assumes $f\in H^\sigma(\Rr^d)$. This comes from estimating $ v$ solving 
\[
\p_zv+T_{A}v=- w\quad z\in [z_0, 0],\quad v\vert_{z=0}=f
\]
where $w \in X^\sigma([z_0, 0])$ and $A\in \Gamma^1_\eps$, $\eps\in (0, \max\{\mez, s-1-\frac d2\})$, is given by \eqref{aA} below. To obtain estimates involving only $\| \na_xf\|_{H^{\sigma-1}}$, it suffices to differentiate this equation in $x$ and apply Proposition \ref{prop:negOp} to control $T_{\na_xA}v$.
\end{rema}
In the rest of this subsection, we fix $s>1+\frac d2$. For $v(x, z)=\phi(x, \varrho(x, z))$, $\phi$  solution of \eqref{eq:elliptic},   Lemma \ref{lemm:tracewt} combined with \eqref{div:eqv} yields
\[
\| \nabla_{x,z} v\|_{X^{-\mez}([-1,0])}\le \cF(\| \eta\|_{H^s})\| \na_{x,z}v\|_{L^2([-1, 0]; L^2(\Rr^d))} \le \cF(\| \eta\|_{H^s})\| \na_{x, y}\phi\|_{L^2(\Omega^-)}
\]
In conjunction with \eqref{variest:phi} and \eqref{variest:phi2}, this implies 
\bq\label{nav:X-mez}
\| \nabla_{x,z} v\|_{X^{-\mez}([-1,0])}\le
\cF(\| \eta\|_{H^s})\| f\|_{\wt H^\mez_{-}}.
\eq
This verifies condition \eqref{elliptic:lowcd} of Proposition \ref{prop:elliptic} from which the estimate for $\| \na_{x,z}v\|_{X^{\sigma-1}}$, $\sigma\in [\mez, s]$, follows. Using this and the product rule \eqref{pr} one can easily  deduce the continuity of $G^{-}(\eta)$ in higher Sobolev norms:
%%%%%%%%%%
\begin{theo}[\protect{\cite[Theorem 3.12]{ABZ3}}]\label{theo:estDN}
Let~$d\ge 1$, $s>1+\frac{d}{2}$ and $\frac 1 2 \le \sigma \leq s$. Consider  $f\in \wt H^\sigma_{-}$  and  $\eta\in H^s(\Rr^d)$ with $\dist(\eta, {\Gamma^{-}})\ge h>0$. Then we have 
$G^{-}(\eta)f\in H^{\sigma-1}(\Rr^d)$, together with the estimate
\begin{equation}\label{est:DN}
\| G^{-}(\eta)f \|_{H^{\sigma-1}}\le 
\mathcal{F}\big(\| \eta \|_{H^s}\big)\Vert f\Vert_{\widetilde{H}^\sigma_-}
\end{equation}
for some $\cF:\Rr^+\to \Rr^+$  depending only on $(s, \sigma, h)$.
\end{theo}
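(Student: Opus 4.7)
The plan is to transport the problem to the fixed slab $\Rr^d\times(-1,0)$ via the straightening diffeomorphism \eqref{diffeo}, apply the interior elliptic regularity of Proposition \ref{prop:elliptic} to the transported potential, and then read off $G^-(\eta)f$ from its boundary values through the explicit formula \eqref{ExpressionDNNew}. Concretely, setting $v(x,z)=\phi(x,\varrho(x,z))$ where $\phi$ solves \eqref{eq:elliptic}, we have $v|_{z=0}=f$ and $v$ satisfies the homogeneous equation \eqref{eq:v} (equivalently \eqref{div:eqv} with right-hand side zero), while
\bq\label{GexpressPlan}
G^{-}(\eta)f=\Big(\frac{1+|\na_x\varrho|^2}{\p_z\varrho}\p_zv-\na_x\varrho\cdot \na_x v\Big)\Big|_{z=0}.
\eq

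The first step is to verify the low-regularity hypothesis \eqref{elliptic:lowcd} of Proposition \ref{prop:elliptic}. This has already been done: by Propositions \ref{prop:vari1}--\ref{prop:vari2} together with Lemma \ref{lemm:tracewt}, one obtains the bound \eqref{nav:X-mez}, namely $\|\na_{x,z}v\|_{X^{-\mez}([-1,0])}\le \cF(\|\eta\|_{H^s})\|f\|_{\wt H^\mez_-}$. Now apply Proposition \ref{prop:elliptic} with $F_0=0$, some $z_1\in(-1,0)$ and $z_0\in (z_1,0)$. Since $\|\na_x f\|_{H^{\sigma-1}}\le \|f\|_{\wt H^\sigma_-}$ by definition of $\wt H^\sigma_-$ in \eqref{def:wtHs}, we conclude
\bq\label{nav:sigma}
\|\na_{x,z}v\|_{X^{\sigma-1}([z_0,0])}\le \cF(\|\eta\|_{H^s})\|f\|_{\wt H^\sigma_-}.
\eq
In particular, by the very definition of $X^{\sigma-1}$ in \eqref{XY}, $\na_{x,z}v\in C([z_0,0];H^{\sigma-1}(\Rr^d))$, so the trace $\na_{x,z}v|_{z=0}\in H^{\sigma-1}(\Rr^d)$ enjoys the same bound.

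Finally, we control the coefficients appearing in \eqref{GexpressPlan} at $z=0$. From the definition \eqref{diffeo} one has $\na_x\varrho|_{z=0}=\tau\langle D_x\rangle\eta+\ldots$ and $\p_z\varrho|_{z=0}$ differing from the constant $h$ by terms built linearly from $\eta$; since $s>1+\tfrac d2$, these belong to $H^{s-1}(\Rr^d)$ with norm controlled by $\cF(\|\eta\|_{H^s})$. Because $s-1>\tfrac d2$, the space $H^{s-1}(\Rr^d)$ is an algebra, and the nonlinear estimate \eqref{est:F(u):S} applied to the smooth function $F(X,Y)=(1+|X|^2)/Y$ (valid since $\p_z\varrho\ge \min(1,h/2)>0$ by Lemma \ref{lemm:diffeo}) bounds $(1+|\na_x\varrho|^2)/\p_z\varrho|_{z=0}$ minus a constant in $H^{s-1}$ by $\cF(\|\eta\|_{H^s})$. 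The product rule \eqref{pr}, together with the restriction $\sigma-1\in[-\mez,s-1]$ and $s-1>\tfrac d2$, then yields
\bq\label{FinalPlanBd}
\|G^-(\eta)f\|_{H^{\sigma-1}}\le \cF(\|\eta\|_{H^s})\,\|\na_{x,z}v|_{z=0}\|_{H^{\sigma-1}}\le \cF(\|\eta\|_{H^s})\|f\|_{\wt H^\sigma_-}.
\eq

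The argument is almost entirely bookkeeping once Proposition \ref{prop:elliptic} and the low-regularity bound \eqref{nav:X-mez} are in hand; the only genuinely delicate point is that the nonlinear coefficient $(1+|\na_x\varrho|^2)/\p_z\varrho$ at the boundary must be shown to act as a multiplier on $H^{\sigma-1}$ for every $\sigma\in[\mez,s]$, which is where the subcritical threshold $s>1+\tfrac d2$ is used through the algebra property of $H^{s-1}$ and the product estimate \eqref{pr}.
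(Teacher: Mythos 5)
Your proposal is correct and follows essentially the same route the paper itself indicates immediately before the theorem statement: the low-regularity bound \eqref{nav:X-mez} verifies hypothesis \eqref{elliptic:lowcd}, Proposition \ref{prop:elliptic} (with $F_0=0$) then yields $\|\na_{x,z}v\|_{X^{\sigma-1}([z_0,0])}\le\cF(\|\eta\|_{H^s})\|f\|_{\wt H^\sigma_-}$, and the boundary expression \eqref{ExpressionDNNew} is closed with the product rule \eqref{pr} once the coefficients $\frac{1+|\na_x\varrho|^2}{\p_z\varrho}-\frac1h$ and $\na_x\varrho$ at $z=0$ are placed in $H^{s-1}$ via \eqref{est:F(u):S}. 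The only cosmetic point worth tidying is that one should apply \eqref{est:F(u):S} to $\tilde F(X,Y)=\frac{1+|X|^2}{Y+h}-\frac1h$ so that $\tilde F(0,0)=0$, and then split off the resulting constant $\frac1h$ when applying \eqref{pr}; you gesture at this but do not state it explicitly.
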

\begin{rema}
Theorem \ref{theo:estDN} was proved in \cite{ABZ3} for $f\in H^\sigma(\Rr^d)$. The two-phase Muskat problem involves $G^-(\eta)f^-$ where $f^-$ is obtained from system \eqref{system:fpm}. In particular, $f^-$ is only determined up to an additive constant.  
\end{rema}
%%%%%%%
\subsection{Paralinearization with tame error estimate}\label{section:paralin}
The principal symbol of the Dirichlet-Neumann operator is given by 
\bq\label{ld}
\lambda(x, \xi)=\sqrt{(1+|\nabla\eta(x)|^2)|\xi|^2-(\nabla\eta(x)\cdot \xi)^2}.
\eq
Note that when $d=1$, \eqref{ld} reduces to $\lambda(x, \xi)=|\xi|$. %In all cases, we can easily obtain that
%\begin{equation}\label{BoundOpNormLambda}
%M^1_\delta(\lambda)\lesssim \mathcal{F}(\Vert \eta\Vert_{H^s}).
%\end{equation}

We first recall a paralinearization result from \cite{ABZ3}.
\begin{theo}[\protect{\cite[Proposition 3.13]{ABZ3}}]\label{paralin:ABZ}
Let $r>1+\frac d2$ with $d\ge 1$, and let  $\delta\in (0, \mez]$ satisfy $\delta<r-1-\frac d2$. Let $\sigma\in [\mez, r-\delta]$.
If $\eta\in H^r(\Rr^d)$ and $f\in \wt H^\sigma_-$ with $\dist(\eta, \Gamma^-)>h>0$, then  we have
\begin{align}\label{def:R0}
&G^{-}(\eta)f=T_\lambda f+R_0^-(\eta)f,\\ \label{RDN:ABZ}
&\| R_0^-(\eta)f\|_{H^{\sigma-1+\delta}}\le \cF(\| \eta\|_{H^r})\Vert f\Vert_{\wt{H}^\sigma_-}
\end{align}
for some $\cF:\Rr^+\to \Rr^+$  depending only on $(r, \sigma, \delta, h)$.
\end{theo}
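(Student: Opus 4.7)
The plan is to straighten the fluid domain, reduce \eqref{eq:elliptic} to a paradifferential elliptic equation on the flat slab $\Rr^d\times[-1,0]$, factorize that equation into two first-order paradifferential ODEs in $z$, and read off $G^-(\eta)f$ at $z=0$. First, using the diffeomorphism \eqref{diffeo}, I would transport $\phi$ to $v(x,z)=\phi(x,\varrho(x,z))$, which satisfies \eqref{eq:v} with $v|_{z=0}=f$ and yields $G^-(\eta)f$ via \eqref{ExpressionDNNew}. The coefficients $\alpha,\beta,\gamma$ in \eqref{alpha} are smooth nonlinear functions of $(\na_x\varrho,\p_z\varrho)$ and therefore lie in $C^0_z(H^{r-1}_x)$ since $\eta\in H^r$. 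The elliptic control $\na_{x,z}v\in X^{\sigma-1}([z_0,0])$ for every $z_0\in(-1,0)$ follows by combining the low-regularity bound \eqref{nav:X-mez} with Proposition \ref{prop:elliptic}, tamely dominated by $\cF(\|\eta\|_{H^r})\|f\|_{\wt H^\sigma_-}$.

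Next, I would apply Bony's decomposition $cu=T_cu+T_uc+R(c,u)$ to every product in \eqref{eq:v}. Because $r-1>\frac d2$, the coefficients act as paradifferential multipliers of order zero, and the key observation is that the residual pieces $T_uc+R(c,u)$ are \emph{smoother} than $T_cu$: the paraproduct and remainder rules \eqref{XY1}--\eqref{est:XY4} of Lemma \ref{lemm:XY} combined with the $X^{\sigma-1}$ control of $\na_{x,z}v$ produce a source $F$ satisfying
\bq
\|F\|_{Y^{\sigma-1+\delta}([z_0,0])}\le\cF(\|\eta\|_{H^r})\|f\|_{\wt H^\sigma_-},
\eq
where the hypothesis $\delta<r-1-\frac d2$ provides exactly the half-derivative of room needed by those residuals. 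One arrives at the paradifferential elliptic equation
\bq
\p_z^2v+T_\alpha\Delta_xv+T_\beta\cdot\na_x\p_zv-T_\gamma\p_zv=F.
\eq

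The heart of the argument is then to factorize this operator as $(\p_z-T_A)(\p_z-T_a)$ modulo smoother errors, with symbols $a,A\in\Gamma^1_\eps$, $\eps\in(0,\min(\mez,r-1-\frac d2))$, built order by order by symbolic calculus. Their principal parts are the two roots of $X^2+i(\beta\cdot\xi)X-\alpha|\xi|^2=0$; inserting \eqref{alpha} at $z=0$ one finds explicitly $a^{(1)}|_{z=0}=\frac{\p_z\varrho(x,0)}{1+|\na\eta|^2}(\lambda+i\na\eta\cdot\xi)$ for the outgoing root (positive real part) and $A^{(1)}|_{z=0}=\frac{\p_z\varrho(x,0)}{1+|\na\eta|^2}(-\lambda+i\na\eta\cdot\xi)$ for the other, with $\lambda$ as in \eqref{ld}. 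Subprincipal corrections are chosen so that commutator errors stay in $\Gamma^1_\eps$ thanks to the $\eps$-room. Because $\RE a^{(1)}\gtrsim|\xi|$, the first-order paradifferential ODE $(\p_z-T_a)v=w$ propagated from $z=0$ inward is parabolic and smoothing by half a derivative, so $w:=(\p_z-T_a)v\in X^{\sigma-1+\delta}([z_0,0])$ with the expected tame bound.

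Finally, evaluating at $z=0$ yields $\p_zv|_{z=0}=T_af+r_1$ with $\|r_1\|_{H^{\sigma-1+\delta}}\le\cF(\|\eta\|_{H^r})\|f\|_{\wt H^\sigma_-}$. Inserting this into \eqref{ExpressionDNNew} and applying Bony's decomposition once more to the algebraic combination $\frac{1+|\na\eta|^2}{\p_z\varrho(x,0)}\p_zv|_{z=0}-\na\eta\cdot\na_xv|_{z=0}$ gives $G^-(\eta)f=T_\mu f+R_0^-(\eta)f$; a direct computation using the formula for $a^{(1)}|_{z=0}$ shows that $\mu=\lambda$ at the principal level, because the prefactors $\frac{\p_z\varrho(x,0)}{1+|\na\eta|^2}$ and $\frac{1+|\na\eta|^2}{\p_z\varrho(x,0)}$ cancel and the $i\na\eta\cdot\xi$ contribution is absorbed by the transport term. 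The main obstacle is maintaining tame estimates \emph{uniformly} in $\sigma\in[\mez,r-\delta]$ while carrying out the symbolic factorization in the low-regularity class $\Gamma^1_\eps$: this is exactly where the restriction $\delta<r-1-\frac d2$ and the refined product inequalities of Lemma \ref{lemm:XY} are indispensable.
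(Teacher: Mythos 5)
The paper does not prove Theorem~\ref{paralin:ABZ}; it is cited verbatim from \cite{ABZ3} (Proposition~3.13), with only a remark about the admissible range of $\sigma$. Your reconstruction is nonetheless faithful to the ABZ strategy and to the approach the paper itself executes for the refined Theorem~\ref{tame:RDN}: straighten the domain via~\eqref{diffeo}, express $G^-(\eta)f$ through~\eqref{ExpressionDNNew}, factorize $\p_z^2+T_\alpha\Delta_x+T_\beta\cdot\na_x\p_z$ as a product of two first-order paradifferential operators modulo a remainder gaining $\delta$ derivatives (the paper's Lemma~\ref{prop:decompose}), and then exploit parabolic smoothing for the intermediate unknown.

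However, there is a genuine logical slip at that last step. You write that since $\RE a^{(1)}\gtrsim|\xi|$, ``the first-order paradifferential ODE $(\p_z-T_a)v=w$ propagated from $z=0$ inward is parabolic and smoothing, so $w:=(\p_z-T_a)v\in X^{\sigma-1+\delta}$.'' This reverses the roles of the two factors: the regularity of $w$ is not obtained from the ODE for $v$ in terms of $w$, but from the equation satisfied by $w$ itself. Writing the factorization as $(\p_z-T_A)(\p_z-T_a)v=F$ and setting $w=(\p_z-T_a)v$, the intermediate variable solves the scalar parabolic equation $(\p_z-T_A)w=F$, where $F$ is controlled in $Y^{\sigma-1+\delta}$ via the remainder estimates and $A$ (the root with $\RE A<0$ in your labeling) is the one elliptic for forward propagation in $z$; one applies Proposition~\ref{prop:parabolic} after cutting off near $z_0$ so that $w(z_0)=0$, exactly as in Step~1 of the proof of Theorem~\ref{tame:RDN}. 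The resulting regularity of $w$ at $z=0$ is what gives $\p_zv|_{z=0}=T_af+r_1$, and the final Bony decomposition of~\eqref{ExpressionDNNew} together with the cancellation identity~\eqref{form:ld} produces $G^-(\eta)f=T_\lambda f+R_0^-(\eta)f$ with the advertised bound. Incidentally, the paper works only with the principal symbols~\eqref{aA}, absorbing all deviations into $R_Q$; the order-by-order subprincipal corrections you propose are a valid but unnecessarily elaborate alternative at the available regularity $\Gamma^1_\delta$.
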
 
\begin{rema}
In the statement of Proposition 3.1.3 in \cite{ABZ3}, $\sigma\in [\mez, r-\mez]$. However, its proof (see page 116) allows for $\sigma\in [\mez, r-\delta$].
\end{rema}
Our goal in this subsection is to prove the next theorem, which isolates the main term in the Dirichlet-Neumann operator as an operator and which will be the key ingredient for obtaining a priori estimates for the Muskat problem in any subcritical Sobolev regularity.
\begin{theo}\label{tame:RDN}
Let $s>1+\frac d2$ with $d\ge 1$, and let  $ \delta \in (0, \mez]$ satisfy $\delta<s-1-\frac d2$. For any $\sigma\in[\mez, s]$, if $\eta\in H^{s+\mez-\delta}(\Rr^d)$ and $f\in \widetilde{H}^\sigma_{-}$ satisfies $\dist(\eta, \Gamma^-)>h>0$ then 
\bq\label{ParaDN}
G^{-}(\eta)f=T_\lambda(f-T_B\eta)-T_V\cdot\nabla \eta+R^-(\eta)f
\eq
where
\bq\label{BV}
B=\frac{\nabla \eta\cdot \nabla f+G^{-}(\eta)f}{1+|\nabla \eta|^2},\quad V=\nabla f-B\nabla \eta
\eq
and the remainder $R^-(\eta)f$ satisfies 
\bq\label{est:RDN}
\| R^-(\eta)f\|_{H^{\sigma-\mez}}\le \cF(\| \eta\|_{H^s})\big(1+\| \eta\|_{H^{s+\mez-\delta}}\big)\Vert f\Vert_{\widetilde{H}^\sigma_-}.
\eq
for some $\cF:\Rr^+\to \Rr^+$  depending only on $(s, \sigma, \delta, h)$.  In fact, $B=(\p_y\phi)\vert_{y=\eta(x)}$ and $V=(\nabla_x\phi)\vert_{y=\eta(x)}$ where $\phi$ is the solution of \eqref{eq:elliptic}.
\end{theo}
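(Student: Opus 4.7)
The plan is to establish \eqref{ParaDN} via Alinhac's good unknown in the flattened domain. Writing $v(x,z) = \phi(x, \varrho(x,z))$, which solves the degenerate elliptic equation \eqref{eq:v}, I introduce
\[
u = v - T_{\p_z v/\p_z \varrho}\,\varrho,
\]
whose trace at $z=0$ is $u|_{z=0} = f - T_B \eta$ since $(\p_z v / \p_z \varrho)|_{z=0} = (\p_y \phi)|_{y=\eta} = B$ and $\varrho|_{z=0} = \eta$. The key algebraic point is that the worst paraproducts arising when Bony's decomposition is applied to the coefficients $\alpha$, $\beta$, $\gamma$ in \eqref{eq:v} against derivatives of $v$ -- precisely the terms that would cost a derivative on $\eta$ -- are exactly absorbed by the correction $T_{\p_z v/\p_z\varrho}\,\varrho$. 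Controlling remainders via Lemma \ref{lemm:XY} at the $X^\mu$--$Y^\mu$ interpolation level, one shows that $u$ satisfies
\[
(\p_z^2 + T_\alpha \Delta_x + T_\beta\cdot\na_x\p_z - T_\gamma \p_z)\,u = F,
\]
with $F$ bounded in $Y^{\sigma-\tdm}([-1,0])$ by $\cF(\|\eta\|_{H^s})(1+\|\eta\|_{H^{s+\mez-\delta}})\|f\|_{\wt H^\sigma_-}$, \emph{linearly} in the top-order norm on $\eta$.

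I then factor this paradifferential operator, modulo remainders of order $-\delta$, as $(\p_z - T_a)(\p_z - T_A)$, where $A,a \in \Gamma^1$ satisfy the characteristic relations $A+a = \gamma - i\beta\cdot\xi$ and $aA = -\alpha|\xi|^2$ (up to lower order), chosen so that $\RE A \gtrsim \lambda$ and $\RE a \lesssim -\lambda$. Evaluating at $z=0$ with $\alpha|_{z=0} = (\p_z\varrho)^2/(1+|\na\eta|^2)$ and $\beta|_{z=0} = -2\p_z\varrho\,\na\eta/(1+|\na\eta|^2)$, the principal symbol of $A$ at $z=0$ equals $\frac{\p_z\varrho|_{z=0}}{1+|\na\eta|^2}(\lambda + i\na\eta\cdot\xi)$. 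Since the elliptic factor $\p_z - T_A$ propagates in the favorable direction going up toward $z=0$, its one-sided parabolic smoothing, fed by the low-regularity starting bound \eqref{nav:X-mez} and the elliptic estimate of Proposition \ref{prop:elliptic}, yields
\[
\p_z u|_{z=0} = T_{A|_{z=0}}\, u|_{z=0} + r,\qquad \|r\|_{H^{\sigma-\mez}} \le \cF(\|\eta\|_{H^s})\big(1+\|\eta\|_{H^{s+\mez-\delta}}\big)\|f\|_{\wt H^\sigma_-}.
\]

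To close the argument, I translate back to the surface using the Dirichlet--Neumann formula \eqref{ExpressionDNNew} together with the chain-rule identities $\p_y\phi|_\Sigma = B$, $\na_x\phi|_\Sigma = V + B\na\eta$. Substituting the good-unknown expansions $\p_z v = \p_z u + T_B(\p_z\varrho) + $ (good remainders) and $\na_x v = \na_x u + T_B\,\na_x\varrho + $ (good remainders), together with the formula for $A|_{z=0}$ above, the factor $(1+|\na\eta|^2)/\p_z\varrho|_{z=0}$ in \eqref{ExpressionDNNew} cancels against $\p_z\varrho|_{z=0}/(1+|\na\eta|^2)$ in $A|_{z=0}$, the contribution $T_{i\na\eta\cdot\xi}(f - T_B\eta)$ compensates the transport term $-T_{\na\eta}\cdot\na_x u|_{z=0}$ up to paraproducts of $\na\eta$ with $B\na\eta$, and after regrouping with $V = \na f - B\na\eta$ one recovers precisely $T_\lambda(f - T_B\eta) - T_V\cdot\na\eta$ modulo remainders absorbed into $R^-(\eta)f$. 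The principal obstacle is the \emph{tame} character of estimate \eqref{est:RDN}: the existing paralinearization of Theorem \ref{paralin:ABZ} only covers $\sigma\le s-\delta$ and is unusable at the Muskat-relevant regularity $\sigma=s$. Linear rather than polynomial dependence on $\|\eta\|_{H^{s+\mez-\delta}}$ must be preserved throughout, which requires systematically placing, at every Bony decomposition, the high-frequency factor at the top available regularity while absorbing the low-frequency factor into $\cF(\|\eta\|_{H^s})$ via Sobolev embedding, and invoking the negative-order paradifferential bounds of Appendix \ref{appendix:para} when combining the factorization with the elliptic smoothing.
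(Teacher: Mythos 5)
Your overall architecture (flattening, the good unknown $u=v-T_\mb\varrho$, factorization into $(\p_z-T_a)(\p_z-T_A)$, one-sided parabolic smoothing, then the trace identities \eqref{ExpressionDNNew} and \eqref{form:ld}) is the right skeleton, but there is a genuine gap at the central analytic step, and it is exactly the point where the paper departs from the ABZ-type argument you are reproducing. You propose to bound the source in the second-order paradifferential equation satisfied by $u$ itself, i.e.\ to estimate $(\p_z-T_a)(\p_z-T_A)u$. Expanding $(\p_z-T_a)(\p_z-T_A)T_\mb\varrho$ produces the term $T_{\p_z^2\mb}\varrho$, and controlling it at the required level demands $\p_z^2\mb$ (hence $\p_z^3v$) in spaces like $X^{s-3}$; at the subcritical regularity of the theorem (e.g.\ $d=1$, $s>\tdm$) this is precisely the estimate the paper points out is not available, and nothing in your appeal to Lemma \ref{lemm:XY} supplies it. The paper's proof avoids letting $\p_z$ fall twice on the paracoefficient by working with the modified quantity $w=\chi(z)\big[(\p_z-T_A)v-T_\mb(\p_z-T_A)\varrho\big]$ (Lemma \ref{lemm:F2}), for which only the $Y$-type bound \eqref{estb:Y} on $\p_z\mb$ is needed. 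Your proposal contains no such device, so the claimed absorption of ``the worst paraproducts'' is unproved at exactly the step where the tame, low-regularity character of \eqref{est:RDN} lives.

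There is also a quantitative inconsistency. You claim the source $F$ is bounded in $Y^{\sigma-\tdm}([-1,0])$ and then conclude $\p_zu|_{z=0}=T_{A}u|_{z=0}+r$ with $r\in H^{\sigma-\mez}$. Proposition \ref{prop:parabolic} converts a $Y^{r}$ source into an $X^{r}$ solution, so a $Y^{\sigma-\tdm}$ bound only gives $(\p_z-T_A)u\in X^{\sigma-\tdm}$ and a trace remainder in $H^{\sigma-\tdm}$ --- below even the trivial level $H^{\sigma-1}$ and a full derivative short of \eqref{est:RDN}. What is actually needed (and what Lemma \ref{lemm:F2} provides for $w$) is a bound in $Y^{\sigma-\mez}$, i.e.\ a gain of $\mez$ over the naive size of the source. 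A secondary issue: you place $\gamma$ in the characteristic relations ($a+A=\gamma-i\beta\cdot\xi$); since $\gamma$ only lies in $X^{s-2}\subset L^\infty_z C_*^{-1+\delta}$, this would give symbols of negative H\"older regularity in $x$ and break the $\Gamma^1_\delta$ calculus behind \eqref{norm:aA:0}--\eqref{norm:aA:1}. The paper instead eliminates $\gamma$ before factorizing, via the identity \eqref{Neweq:v}, $Qv=\mb\,Q\varrho$, which is also what produces the good-unknown structure in the first place.
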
 
\begin{rema}
For $s>1+\frac d2$, we can apply Theorem \ref{paralin:ABZ} with $r=s+\mez$, $\sigma=s$ and $\delta=\mez$ (the maximal value allowed) to have
\bq\label{compare:ABZ}
\| R_0^-(\eta)f\|_{H^{s-\mez}}\le \cF(\| \eta\|_{H^{s+\mez}})\Vert f\Vert_{\widetilde{H}^s_-}.
\eq
Both \eqref{compare:ABZ} and \eqref{est:RDN} provide a gain of $\mez$ derivative for $f$. The improvement of \eqref{est:RDN} is in that 1) there is  a gain of $\delta$ derivative for $\eta$; 2) the highest  norm  $\| \eta\|_{H^{s+\mez-\delta}}$ of $\eta$ appears linearly. For the sake of a priori estimates, 1) gives room to choose the time of existence $T$ as a small parameter; 2) is required to gain $\mez$ derivative using the parabolicity when measured in $L^2$ in time.  
\end{rema}
 We fix $s>1+\frac d2$ in the rest of this subsection. Setting
\bq\label{def:Q}
\begin{aligned}
Q=\p_z^2+\alpha\Delta_x+\beta\cdot\nabla_x\p_z,
\end{aligned}
\eq
we can rewrite \eqref{eq:v} as
\begin{equation}\label{Neweq:v}
Qv=\frac{\partial_zv}{\partial_z\varrho}Q\varrho.
\end{equation}
The coefficients of $Q$ can easily be controlled using \eqref{alpha}, \eqref{diffeo} and Lemma \ref{lemm:diffeo}:
\bq\label{est:alpha}
\| \alpha-h^2\|_{X^{r-1}([-1, 0])}+\| \beta\|_{X^{r-1}([-1, 0])}+\| \gamma\|_{X^{r-2}([-1, 0])}\le \cF(\| \eta\|_{H^s})\|  \eta\|_{H^r}\quad\forall r\ge s
\eq
since it follows from \eqref{diffeo} that
\begin{equation}\label{EstimrhoNew}
\Vert (\na_x\varrho, \p_z\varrho-h)\Vert_{X^{\mu-1}([-1,0])}\le C \Vert \eta\Vert_{H^\mu}\quad\forall \mu \in \Rr.
\end{equation}
We start with a factorization of $Q$ by paradifferential operators and a remainder:
\begin{lemm}\label{prop:decompose}
 With the symbols
\begin{equation}\label{aA}
	\begin{aligned}
		a&=\mez\big(-i\beta\cdot\xi-\sqrt{4\alpha|\xi|^2-(\beta\cdot\xi)^2}\big),\quad A=\mez\big(-i\beta\cdot\xi+\sqrt{4\alpha|\xi|^2-(\beta\cdot\xi)^2}\big),
	\end{aligned}
\end{equation}
we define $R_Q$ by
\bq\label{decompose}
R_Qg=Qg-(\p_z-T_a)(\p_z-T_A)g.
\eq
Let $0<\delta\le 1$ satisfy $\delta<s-1-\frac d2$. If $\tt$ satisfies 
\bq\label{cd:tt-s}
\tt\le s-\delta,\quad\tt+s>1+\delta,
\eq
 then, for any $z_0\in (-1, 0)$ we have
\bq\label{est:RAa}
\|R_Qg\|_{L^2([z_0, 0]; H^{\theta})}\le \cF(\| \eta\|_{H^s})(1+\| \eta\|_{H^{s+\mez-\delta}})\|\na_{x, \textcolor{red}{z}}g\|_{X^{\theta}([z_0, 0])}.\\
\eq
On the other hand, if $\tt$ satisfies
\bq\label{cd:tt:2}
\tt\le s,\quad \tt+s>1+\delta,
\eq
\bq\label{est:RAa2}
\|R_Qg\|_{Y^\tt([z_0, 0])}\le \cF(\| \eta\|_{H^s})\|\na_{x,z}g\|_{X^{\theta-\delta}([z_0, 0])}. 
\eq
\end{lemm}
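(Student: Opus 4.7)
The plan is to expand $(\partial_z - T_a)(\partial_z - T_A)g$ algebraically and identify the discrepancy with $Qg$ as a sum of paradifferential error terms, each of which I will then estimate using the paraproduct mapping properties in Lemma \ref{lemm:XY}, the symbolic calculus of Appendix \ref{appendix:para}, and the control \eqref{est:alpha} on $(\alpha,\beta)$. Using the Leibniz rule $\partial_z(T_A g) = T_{\partial_z A}g + T_A\partial_z g$, the expansion reads
\begin{equation*}
(\partial_z - T_a)(\partial_z - T_A)g = \partial_z^2 g - T_{a+A}\partial_z g + T_aT_A g - T_{\partial_z A}g.
\end{equation*}
The symbols in \eqref{aA} are chosen so that $a+A = -i\beta\cdot\xi$ and $aA = -\alpha|\xi|^2$, so this expression should reproduce $Qg = \partial_z^2 g + \alpha\Delta_x g + \beta\cdot\nabla_x\partial_z g$ modulo paradifferential errors. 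Thus $R_Q g$ decomposes as the sum of the two Bony remainders $[\alpha\Delta_x g - T_\alpha\Delta_x g]$ and $[\beta\cdot\nabla_x\partial_z g - T_\beta\cdot\nabla_x\partial_z g]$, the symbolic composition error $T_{aA}g - T_aT_Ag$, and the subprincipal term $T_{\partial_z A}g$, once the principal parts of the paradifferential operators have been identified with the corresponding differential operators.

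For the Bony remainders, the decomposition $fg = T_f g + T_g f + R(f,g)$ together with \eqref{est:XY22}--\eqref{est:XY4} of Lemma \ref{lemm:XY} yields the required bounds: derivatives of $g$ are placed in $X^{\theta-1}\hookrightarrow Y^\theta$ and the coefficients $\alpha - h^2$ and $\beta$ are bounded in $X^{r-1}$ via \eqref{est:alpha}. Choosing $r = s+\mez-\delta$ produces the tame factor $(1+\|\eta\|_{H^{s+\mez-\delta}})$ needed for \eqref{est:RAa}, while choosing $r=s$ and allowing a loss of $\delta$ derivatives on $g$ produces \eqref{est:RAa2}. For the composition error I would invoke the paradifferential symbolic calculus of Appendix \ref{appendix:para}: the symbols $a,A$ are of order one with coefficients in $H^{s-1}\hookrightarrow C^{s-1-d/2}_*$ and $s-1-d/2 > \delta$, so $T_aT_A - T_{aA}$ is of order $2-\delta$, yielding pointwise in $z$ a bound of the form $\mathcal{F}(\|\eta\|_{H^s})\|\nabla_x g\|_{H^{\theta+1-\delta}}$. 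Integrating in $z$ over $[z_0,0]$ converts this into the claimed $X^\theta$ or $X^{\theta-\delta}$ bound on $\nabla_{x,z}g$. The term $T_{\partial_z A}g$ is handled similarly, since $\partial_z A$ inherits its regularity from \eqref{alpha} and \eqref{EstimrhoNew}.

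The main obstacle is maintaining \emph{tame} dependence on $\eta$ in \eqref{est:RAa}: the top Sobolev norm $\|\eta\|_{H^{s+\mez-\delta}}$ must appear only linearly and multiplied by $\|\nabla_{x,z}g\|_{X^\theta}$ rather than absorbed into a nonlinear function. This forces careful bookkeeping so that at most one factor in each error term carries top-regularity derivatives of $\eta$, while all other occurrences of $\eta$ are absorbed into $\mathcal{F}(\|\eta\|_{H^s})$ through Sobolev embedding ($H^{s-1}\hookrightarrow L^\infty$) and the nonlinear estimates. In particular, for $R(\alpha,\Delta_x g)$, $T_{\Delta_x g}\alpha$, and the composition error $T_aT_A - T_{a\#A}$, the highest norm of $\eta$ must be dedicated exclusively to one of $\alpha$ or $\beta$ with the other factor bounded only by $\|\eta\|_{H^s}$. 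The second estimate \eqref{est:RAa2}, measured in the weaker space $Y^\theta$ and tolerating a $\delta$-derivative loss on $g$, trades this tameness requirement for additional regularity of $g$ and follows by the same scheme after shifting $\delta$ derivatives between factors in the product estimates.
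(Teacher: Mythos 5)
Your decomposition of $R_Q$ into the two Bony-type remainders, the composition error, and the subprincipal term $T_{\partial_z A}g$ is correct and is the same starting point as the paper's \eqref{form:RQ}. However, your treatment of the composition error in the first estimate \eqref{est:RAa} contains a genuine gap. You invoke symbolic calculus with regularity $\delta$, obtaining that $T_aT_A-T_{aA}$ is of order $2-\delta$ with symbol norm $M^1_\delta\le\cF(\|\eta\|_{H^s})$, and then claim that integrating in $z$ gives an estimate in terms of $\|\nabla_{x,z}g\|_{X^\theta}$. This does not close: an operator of order $2-\delta$ applied to $g$ and measured in $L^2_zH^\theta$ requires $\nabla_xg\in L^2_zH^{\theta+1-\delta}$, whereas $\nabla_{x,z}g\in X^\theta$ only supplies $\nabla_xg\in L^2_zH^{\theta+\mez}$ — strictly weaker whenever $\delta<\mez$. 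The paper's proof instead uses Theorem \ref{theo:sc}~(ii) with $\rho=\mez$, so that $T_aT_A-T_\alpha\Delta_x$ is of order $\tdm$ and $\nabla_xg\in L^2_zH^{\theta+\mez}$ suffices; the price is the seminorm $M^1_\mez(a)+M^1_\mez(A)$, which is where the tame factor $(1+\|\eta\|_{H^{s+\mez-\delta}})$ actually enters, via \eqref{norm:aA:1}. In your scheme the tame factor is attributed solely to the Bony remainder terms, so you have simultaneously lost the needed half a derivative on $g$ and missed a contribution to the tame constant — this is not a bookkeeping issue but a mis-identification of which regularity class of the symbols is being used where.

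Two lesser points: (i) For the first estimate the target space is $L^2([z_0,0];H^\theta)$, not a $Y$-space, so the conclusions of Lemma~\ref{lemm:XY} (which output $Y^{s_0}$ bounds) do not directly produce \eqref{est:RAa}. The paper uses \eqref{paralin:product} pointwise in $z$ together with H\"older to put $\|\beta\|_{L^2_zH^{s-\delta}}$ against $\|\nabla_x\partial_zg\|_{L^\infty_zH^{\theta-1}}$; this can be recovered by tracking the $L^2$-component inside the proof of Lemma~\ref{lemm:XY}, but citing the lemma statement alone is not enough. (ii) Saying $T_{\partial_zA}g$ is ``handled similarly'' glosses over the fact that the two estimates require different treatments: the first uses $M^1_0(\partial_zA)\in L^2_z$ with Theorem~\ref{theo:sc}~(i) and picks up the tame factor, while the second views $\partial_zA\in C_*^{-1+\delta}$ and relies on Proposition~\ref{prop:negOp}. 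These are distinct mechanisms and should be spelled out.
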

\begin{proof}
From \eqref{aA} we have that $a+A=-i\beta \cdot \xi$, $aA=-\alpha|\xi|^2$, and hence 
\[
\p_z^2+T_\alpha\Delta_x+T_\beta\cdot\nabla_x\p_z=(\p_z-T_a)(\p_z-T_A)g-(T_aT_A-T_{\alpha}\Delta_x)g+T_{\p_zA}g.
\]
It follows that
\bq\label{form:RQ}
R_Qg=(\alpha-T_\alpha)\Delta_xg+(\beta-T_\beta)\cdot\nabla_x\p_z g-(T_aT_A-T_{\alpha}\Delta_x)g+T_{\p_zA}g,
\eq
{\it Proof of \eqref{est:RAa}.}  Assuming \eqref{cd:tt-s}, we claim that
\bq\label{EquivPQ}
\| (\alpha-T_\alpha)\Delta_xg+(\beta-T_\beta)\cdot\nabla_x\p_z g\|_{L^2([z_0, 0]; H^{\theta})}\le \cF(\| \eta\|_{H^s})\big(1+\|\eta \|_{H^{s+\mez-\delta}}\big)\| \na_{x,z}g\|_{L^\infty([z_0, 0]; H^{\theta})}.
\eq
Using \eqref{paralin:product}, \eqref{est:alpha} and \eqref{cd:tt-s}, we have
\begin{equation*}\label{paralin:beta}
\begin{split}
\| (T_\beta-\beta)\nabla_x\p_z g\|_{L^2H^{\theta}}&\les \| \beta\|_{L^2 H^{s-\delta}}\| \nabla_x\p_zg\|_{L^\infty H^{\theta-1}}\les\| \beta\|_{L^2 H^{s-\delta}}\| \p_zg\|_{L^\infty H^{\theta}}\\
&\le \cF(\| \eta\|_{H^s}) \big(1+\|\eta \|_{H^{s+\mez-\delta}}\big)\| \p_zg\|_{L^\infty H^{\theta}}.
\end{split}
\end{equation*}
 As for $(\alpha-T_\alpha)\Delta_xg$ we write
\[
(\alpha-T_\alpha)\Delta_xg=(\alpha-h^2-T_{\alpha-h^2})\Delta_xg+h^2(\text{Id}-T_1)\Delta_xg.
\]
The first term can be estimated as above and in view of  \eqref{eq.para}, $\text{Id}-T_1=\text{Id}-\Psi(D)$ is a smoothing operator so that
\[
\| (\text{Id}-T_1)\Delta_xg\|_{L^2H^{\theta}}\le C\| \nabla_xg\|_{L^2 H^{\theta}}.
\]
We thus obtain \eqref{EquivPQ}.

Next it is readily seen that $A$ and $a$ satisfy (see \eqref{defi:norms})
\begin{align}
\label{norm:aA:0}
&{M}^1_\delta(a; [-1, 0])+{M}^1_\delta(A; [-1, 0])\le \cF(\| \eta\|_{H^s})\\ \label{norm:aA:1}
&{M}^1_{\mez}(a; [-1, 0])+{M}^1_{\mez}(A; [-1, 0])\le \cF(\| \eta\|_{H^s})(1+\| \eta\|_{H^{s+\mez-\delta}})
\end{align}
Consequently, by Theorem \ref{theo:sc} (ii), $T_aT_A-T_{\alpha}\Delta_x$ is of order $\tdm$ and 
\bq\label{Estim1}
\begin{aligned}
\| (T_aT_A-T_{\alpha}\Delta_x)g\|_{L^2 H^{\theta}}&\le \cF(\| \eta\|_{H^s})(1+\| \eta\|_{H^{s+\mez-\delta}})\|  \nabla_x g\|_{L^2 H^{\theta+\frac{1}{2}}},
\end{aligned}
\eq
where Remark \ref{rema:low} has been used.  Now in view of the seminorm bounds
\[
\| M_0^1(\p_zA)\|_{L^2([-1, 0])}\le  \cF(\| \eta\|_{H^s})(1+\| \eta\|_{H^{s+\mez-\delta}}),
\]
Theorem \ref{theo:sc} (i) combined with Remark \ref{rema:low}  gives
\bq\label{Estim2}
\| T_{\p_zA}g\|_{L^2 H^{\theta}}\le \cF(\| \eta\|_{H^s})(1+\| \eta\|_{H^{s+\mez-\delta}})\| \nabla_x g\|_{L^\infty H^{\theta}}.
\eq
From \eqref{EquivPQ}, \eqref{Estim1} and \eqref{Estim2}, the proof of \eqref{est:RAa} is complete.

{\it Proof of \eqref{est:RAa2}}.  Assume \eqref{cd:tt:2}. Using \eqref{boundpara}, \eqref{Bony1} and \eqref{est:alpha}, we have
\begin{equation*}
\begin{aligned}
\| T_{\nabla_x\p_z g}\cdot \beta\|_{L^2H^{\theta-\mez}}&\les \| \nabla_x\p_zg\|_{L^\infty H^{\theta-1-\delta}}\| \beta\|_{L^2 H^{s-\mez}}\\
&\le \cF(\| \eta\|_{H^s}) \| \p_zg\|_{L^\infty H^{\theta-\delta}}
\end{aligned}
\end{equation*}
and 
\begin{align*}
\| R(\nabla_x\p_z g, \beta)\|_{L^1H^\tt}&\les \| \beta\|_{L^2 H^{s-\mez}}\| \nabla_x\p_zg\|_{L^2 H^{\theta-\mez-\delta}}\\
&\le \cF(\| \eta\|_{H^s}) \| \p_zg\|_{L^2 H^{\theta+\mez-\delta}}.
\end{align*}
 The term $(T_\alpha-\alpha)\Delta_xg$ can be treated similarly. Next using \eqref{norm:aA:0} and Theorem \ref{theo:sc} (ii) we find that  $T_aT_A-T_{\alpha}\Delta_x$ is of order $2-\delta$ and 
\bq\label{Estim3}
\begin{aligned}
\| (T_aT_A-T_{\alpha}\Delta_x)g\|_{L^2 H^{\theta-\mez}}&\le \cF(\| \eta\|_{H^s})\|  \nabla_x g\|_{L^2 H^{\theta+\mez-\delta}}.
\end{aligned}
\eq
As for $T_{\p_zA}g$, we note that since 
\[
\| (\alpha-h^2, \beta)\|_{L^\infty([-1, 0]; H^{s-1})}+\| (\p_z\alpha, \p_z\beta)\|_{L^\infty([-1, 0]; H^{s-2})}\le \cF(\| \eta\|_{H^s}),
\]
and $H^{s-2}(\Rr^d)\subset C_*^{-1+\delta}(\Rr^d)$, we have 
\[
M^1_{-1+\delta}(\p_zA)\le  \cF(\| \eta\|_{H^s}).
\]
By virtue of Proposition \ref{prop:negOp}, $T_{\p_zA}$ is of order $2-\delta$ and 
\[
\| T_{\p_zA}g\|_{L^2 H^{\tt-\mez}}\le  \cF(\| \eta\|_{H^s})\| \na_xg\|_{L^2 H^{\tt+\mez-\delta}}.
\]
This completes the proof of \eqref{est:RAa2}. 
\end{proof}
We can now start analyzing \eqref{Neweq:v}. We fix $\sigma\in [\mez, s]$ and  apply Proposition \ref{prop:elliptic} to have
\bq\label{estv:base}
\| \nabla_{x, z}v\|_{X^{\sigma-1}([z_0, 0])}\le \cF(\| \eta\|_{H^s})\Vert f\Vert_{\widetilde{H}^\sigma_-}
\eq
for all  $z_0\in (-1, 0)$. 

 Next we introduce  
\bq\label{def:goodunknown}
\mb=\frac{\p_zv}{\p_z\varrho},\qquad u=v-T_\mb\varrho.
\eq
We note that $\mb\vert_{z=0}=(\p_y\phi)(x, \eta(x))=B$ given by \eqref{BV}, and $u\vert_{z=0}=f-T_B\eta$. The new variable $u$ is known as the ``good unknown'' \`a la Alinhac. Fixing $\delta\in (0, \mez]$ satisfying $\delta<s-1-\frac{d}{2}$, for all $\sigma\in [\mez, s]$ we have 
\bq\label{sigma:mainpara}
\sigma+s>2+\delta.
\eq
\begin{lemm}\label{LemEstb}
Let $z_0\in (-1, 0)$. For  $\sigma \in [\mez, s]$ we have
\begin{align}\label{est:b1}
&\| \mb \|_{L^\infty ([z_0, 0]; H^{\sigma-1})}\le \cF(\| \eta\|_{H^s})\Vert f\Vert_{\widetilde{H}^\sigma_-},\\ \label{est:b2}
&\| \nabla_{x,z}\mb\|_{L^2([z_0, 0]; H^{\sigma-\tdm})}\le \cF(\| \eta\|_{H^s})\Vert f\Vert_{\widetilde{H}^\sigma_-}~\text{if}~\sigma+s>\frac 52,\\\label{est:b3}
&\| \nabla_{x,z}\mb\|_{L^\infty([z_0, 0]; H^{\sigma-2})}\le \cF(\| \eta\|_{H^s})\Vert f\Vert_{\widetilde{H}^\sigma_-}~\text{if}~\sigma+s>3,\\
\label{estb:Y}
&\| \p_zb\|_{Y^{\sigma-1}([z_0, 0])}\le \cF(\| \eta\|_{H^s})\Vert f\Vert_{\widetilde{H}^\sigma_-}.
\end{align}
\end{lemm}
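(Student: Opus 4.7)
The plan is to derive all four bounds from the base estimate \eqref{estv:base}, which provides $\nabla_{x,z}v\in X^{\sigma-1}([z_0,0])$, combined with the regularity and lower bound of $\partial_z\varrho$ furnished by \eqref{EstimrhoNew} and Lemma \ref{lemm:diffeo}, and from the equation \eqref{eq:v} to trade a $\partial_z^2 v$ factor for spatial derivatives.

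For \eqref{est:b1}, I would split $\mb=\partial_z v/h+(\partial_z v)(1/\partial_z\varrho-1/h)$, observe that $1/\partial_z\varrho-1/h$ is a smooth function of $\partial_z\varrho-h$ bounded away from $-h$ by Lemma \ref{lemm:diffeo}, and use the standard nonlinear composition estimate together with \eqref{EstimrhoNew} to place it in $L^\infty H^{s-1}$. Since $s-1>d/2$ and $|\sigma-1|\le s-1$ for $\sigma\in[\mez,s]$, the tame product rule closes the $L^\infty H^{\sigma-1}$ bound.

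For the gradient estimates \eqref{est:b2} and \eqref{est:b3}, I would differentiate $\mb$ and eliminate $\partial_z^2 v$ via \eqref{eq:v}:
\begin{equation*}
\nabla_x\mb=\frac{\nabla_x\partial_z v}{\partial_z\varrho}-\partial_z v\,\frac{\nabla_x\partial_z\varrho}{(\partial_z\varrho)^2},\qquad \partial_z\mb=\frac{-\alpha\Delta_x v-\beta\cdot\nabla_x\partial_z v+\gamma\,\partial_z v}{\partial_z\varrho}-\partial_z v\,\frac{\partial_z^2\varrho}{(\partial_z\varrho)^2}.
\end{equation*}
Each resulting term is a product of a coefficient controlled by \eqref{EstimrhoNew} or \eqref{est:alpha} with a component of $\nabla_{x,z}^2 v$, which is obtained by one more derivative on $\nabla_{x,z}v\in X^{\sigma-1}$. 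For \eqref{est:b2}, I pair $\nabla_{x,z}^2 v\in L^2 H^{\sigma-3/2}$ with coefficients in $L^\infty H^{s-1}$, and for \eqref{est:b3} I pair $\nabla_{x,z}^2 v\in L^\infty H^{\sigma-2}$ against the same class. The only coefficient with weaker regularity is $\gamma\in L^\infty_z H^{s-2}\cap L^2_z H^{s-3/2}$, and the conditions $\sigma+s>5/2$ and $\sigma+s>3$ are precisely the thresholds at which the product rules $H^{s-3/2}\cdot H^{\sigma-1}\hookrightarrow H^{\sigma-3/2}$ and $H^{s-2}\cdot H^{\sigma-1}\hookrightarrow H^{\sigma-2}$ become available when $s-2$ may fall below $d/2$.

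For \eqref{estb:Y}, the same decomposition of $\partial_z\mb$ is the starting point. All terms other than $\gamma\,\partial_z v$ land in $L^2 H^{\sigma-3/2}\subset Y^{\sigma-1}$ by the standard product rule with coefficients in $L^\infty H^{s-1}$. For the critical term $\gamma\,\partial_z v$ I would apply Lemma \ref{lemm:XY}(1) with $s_0=\sigma-1$, $s_1=s-2$, $s_2=\sigma-1$: the three conditions in \eqref{cd:XY1} collapse precisely to $s>1+\tfrac{d}{2}$ and $\sigma\ge\mez$, both in force, giving $\|\gamma\,\partial_z v\|_{Y^{\sigma-1}}\lesssim\|\gamma\|_{X^{s-2}}\|\partial_z v\|_{X^{\sigma-1}}$. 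The main obstacle in the whole argument is precisely this $\gamma\,\partial_z v$ term, since $\gamma$ is one derivative less regular than $\alpha$ and $\beta$; it is what forces the thresholds $\sigma+s>5/2$ and $\sigma+s>3$ in \eqref{est:b2}–\eqref{est:b3} and makes the $X\!\cdot\! X\to Y$ estimate from Lemma \ref{lemm:XY} essential for \eqref{estb:Y}.
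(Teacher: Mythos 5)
Your treatment of \eqref{est:b1}--\eqref{est:b3} is essentially the paper's argument: substitute the equation for $\partial_z^2 v$, then close with the product rule \eqref{pr} with the same choices of indices, the thresholds $\sigma+s>\frac52$ and $\sigma+s>3$ coming from the condition $s_1+s_2>0$. (A small inaccuracy: these thresholds are not forced solely by the $\gamma$-term; e.g.\ for $\nabla_x\mb$ the pairing $\nabla_x\partial_z v\in L^2H^{\sigma-\frac32}$ against $1/\partial_z\varrho\in L^\infty H^{s-1}$ already requires $\sigma+s>\frac52$. But this does not affect the validity of your argument.)

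For \eqref{estb:Y} there is a genuine gap. You claim that all terms other than $\gamma\,\partial_z v$ land in $L^2H^{\sigma-\frac32}$ via the standard product rule with coefficients in $L^\infty H^{s-1}$. Two problems. First, the term $\partial_z v\,\partial_z^2\varrho/(\partial_z\varrho)^2$ also has a coefficient that is only in $X^{s-2}$, not $L^\infty H^{s-1}$: $\partial_z^2\varrho$ carries one more $z$-derivative than $\partial_z\varrho$, and from \eqref{EstimrhoNew} and the smoothing structure of \eqref{diffeo} one gets $\partial_z^2\varrho\in L^\infty H^{s-2}\cap L^2H^{s-\frac32}$, not $L^\infty H^{s-1}$. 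So this term is on the same footing as the $\gamma$-term and must be handled by \eqref{XY1} as well. Second, and more seriously, even for the genuine $L^\infty H^{s-1}$-coefficient terms $\frac{\alpha}{\partial_z\varrho}\Delta_x v$ and $\frac{\beta}{\partial_z\varrho}\cdot\nabla_x\partial_z v$, placing them in $L^2H^{\sigma-\frac32}$ requires applying \eqref{pr} with $s_0=s_2=\sigma-\frac32$ and $s_1=s-1$, whose condition $s_1+s_2>0$ again demands $\sigma+s>\frac52$. But \eqref{estb:Y} is claimed for all $\sigma\in[\frac12,s]$ with no such restriction --- and it is precisely this unrestricted version that is used later (Case 2 in the proof of Lemma \ref{lemm:F2}, where $\sigma$ can be as small as $\frac12$). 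The entire purpose of the $Y$-norm in \eqref{XY} and of estimate \eqref{XY1} is to relax the condition $s_1+s_2>0$ to $s_1+s_2+1>0$, which reads $\sigma+s>2$ here and is automatic. The paper therefore applies \eqref{XY1} to all four terms (with $(s_0,s_1,s_2)=(\sigma-1,\sigma-1,s-2)$ for the two $X^{s-2}$-coefficient terms and $(s_0,s_1,s_2)=(\sigma-1,s-1,\sigma-2)$ for the $\alpha,\beta$ terms), and your shortcut to $L^2H^{\sigma-\frac32}$ does not reach the full stated range of $\sigma$.
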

\begin{proof}
\eqref{est:b1} follows from  \eqref{EstimrhoNew}, \eqref{estv:base} and the product rule \eqref{pr} with $s_0=s_1=\sigma-1$ and $s_2=s-1$ since $\sigma-1+s-1>0$ in view of \eqref{sigma:mainpara}. 

The estimate \eqref{est:b2} for $ \nabla_x\mb$ can be proved similarly upon using \eqref{pr} with $s_0=s_1=\sigma-\tdm$, $s_2=s-1$ and noting that  guarantees $s_1+s_2=\sigma+s-\frac52>0$.  As for $\p_z\mb$ we use \eqref{eq:v} to have the formula for $\p_z^2v$, then apply \eqref{pr} as in the estimate for $\na_x\mb$.

The proof of \eqref{est:b3} is similar to \eqref{est:b2}: we apply \eqref{pr} with $s_0=s_1=\sigma-2$, $s_2=s-1$ and note that $s_1+s_2>0$ if $\sigma+s>3$.

Let us prove \eqref{estb:Y}. We first compute using \eqref{eq:v} that
\[
\begin{aligned}
\p_z\mb&=-\p_zv\frac{\p_z^2\varrho}{(\p_z\varrho)^2}+\frac{\p_z^2v}{\p_z\varrho}\\
&=-\p_zv\frac{\p_z^2\varrho}{(\p_z\varrho)^2}-\frac{\alpha}{\p_z\varrho}\Delta_xv-\frac{\beta}{\p_z\varrho}\cdot\nabla_x\p_zv+\p_zv\frac{\gamma}{\p_z\varrho}.
\end{aligned}
\]
For the first and last terms,  we apply  \eqref{XY1} with $s_0=\sigma-1$, $s_1=\sigma-1$ and $s_2=s-2$, giving
\[
\|\p_zv\frac{\p_z^2\varrho}{(\p_z\varrho)^2} \|_{X^{\sigma-1}}\les \| \p_zv\|_{X^{\sigma-1}}\| \frac{\p_z^2\varrho}{(\p_z\varrho)^2}\|_{X^{s-2}}\le\cF(\| \eta\|_{H^s})\Vert f\Vert_{\widetilde{H}^\sigma_-}.
\]
Next for the second and third  terms, applying \eqref{XY1} with $s_0=\sigma-1$, $s_1=s-1$ and $s_2=\sigma-2$ yields
\[
\| \frac{\beta}{\p_z\varrho}\cdot\nabla_x\p_zv\|_{Y^{\sigma-1}}\les \| \frac{\beta}{\p_z\varrho}\|_{X^{s-1}}\| \nabla_x\p_zv\|_{X^{\sigma-2}}\le\cF(\| \eta\|_{H^s})\Vert f\Vert_{\widetilde{H}^\sigma_-}.
\]
 This finishes the proof of \eqref{estb:Y}.
\end{proof}
We can now state our main technical estimate.
\begin{lemm}\label{lemm:F2}
For any $z_0\in (-1, 0)$ and $\sigma\in [\mez, s]$, we have
\begin{align}\label{eqF2}
&(\p_z-T_a)\big[(\p_z-T_A)v-T_\mb(\p_z-T_A)\varrho\big]=F_2,\\ \label{est:F2}
&\| F_2\|_{Y^{\sigma-\mez}([z_0, 0])}\le \cF(\| \eta\|_{H^s})\big(1+\|\eta \|_{H^{s+\mez-\delta}}\big)\Vert f\Vert_{\widetilde{H}^\sigma_-}.
\end{align}
\end{lemm}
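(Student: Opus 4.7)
The plan is to expand $F_2$ as a sum of explicit pieces built around Alinhac's good unknown $u = v - T_\mb\varrho$, and then estimate each piece in $Y^{\sigma-\mez}([z_0, 0])$ using the bounds on $\mb$ from Lemma \ref{LemEstb}, the remainder estimate for $R_Q$ from Lemma \ref{prop:decompose}, and paradifferential symbolic calculus (Theorem \ref{theo:sc}).

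First, a direct algebraic manipulation gives
\[
(\p_z - T_A)v - T_\mb(\p_z - T_A)\varrho = (\p_z - T_A)u + T_{\p_z\mb}\varrho - [T_A, T_\mb]\varrho.
\]
Applying $\p_z - T_a$, invoking the factorization $(\p_z - T_a)(\p_z - T_A) = Q - R_Q$ from \eqref{decompose} together with the identity $Qv = \mb Q\varrho$ from \eqref{Neweq:v}, and using Bony's decomposition $(\mb - T_\mb)Q\varrho = T_{Q\varrho}\mb + R(\mb, Q\varrho)$, I obtain
\[
F_2 = T_{Q\varrho}\mb + R(\mb, Q\varrho) - [Q, T_\mb]\varrho - R_Q u + (\p_z - T_a)T_{\p_z\mb}\varrho - (\p_z - T_a)[T_A, T_\mb]\varrho.
\]

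Next I estimate each of the six summands in $Y^{\sigma-\mez}$. The paraproduct $T_{Q\varrho}\mb$ and the remainder $R(\mb, Q\varrho)$ are bounded via Lemma \ref{lemm:XY} combined with Lemma \ref{LemEstb} for $\mb$ and \eqref{est:alpha}, \eqref{EstimrhoNew} for $Q\varrho$; this is where the highest norm $\|\eta\|_{H^{s+\mez-\delta}}$ enters linearly. The commutator $[Q, T_\mb]\varrho$ is expanded according to \eqref{def:Q}: the pieces $[T_\alpha \Delta_x, T_\mb]$ and $[T_\beta \cdot \na_x\p_z, T_\mb]$ gain one derivative by symbolic calculus, while $[\p_z^2, T_\mb] = T_{\p_z^2\mb} + 2 T_{\p_z\mb}\p_z$ is handled using the $\p_z\mb$ bound \eqref{estb:Y}. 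For $R_Q u$, I apply \eqref{est:RAa} with $\theta = \sigma - 1$ (the conditions \eqref{cd:tt-s} reduce to \eqref{sigma:mainpara}, which is exactly our hypothesis) and use the embedding $L^2 H^{\sigma-1}([z_0, 0]) \subset Y^{\sigma-\mez}([z_0, 0])$; the prerequisite bound $\|\na_{x,z} u\|_{X^{\sigma-1}} \le \cF(\|\eta\|_{H^s})\|f\|_{\wt H^\sigma_-}$ follows from \eqref{estv:base} together with paraproduct estimates controlling $\na_{x,z}(T_\mb \varrho)$. Finally, for $(\p_z - T_a) T_{\p_z\mb}\varrho$ I use $\p_z\mb \in Y^{\sigma-1}$ and the regularity of $\varrho$ from \eqref{EstimrhoNew}; for $(\p_z - T_a)[T_A, T_\mb]\varrho$ I exploit the fact that by Theorem \ref{theo:sc} the commutator $[T_A, T_\mb]$ is of order zero (rather than one), which gains precisely the derivative needed to absorb the first-order operator $\p_z - T_a$ in the $Y^{\sigma-\mez}$ norm.

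The main obstacle is enforcing the \emph{tame} dependence $1 + \|\eta\|_{H^{s+\mez-\delta}}$ rather than a nonlinear function of this norm. This requires that at each application of symbolic calculus the appropriate seminorm of the symbol be used, distinguishing between the low-regularity seminorm (controlled by $\cF(\|\eta\|_{H^s})$ alone, as in \eqref{norm:aA:0}) and the higher-regularity seminorm (linear in $\|\eta\|_{H^{s+\mez-\delta}}$, as in \eqref{norm:aA:1}). In particular, whenever two commutator estimates could in principle be composed, the top-order norm must be absorbed by only one of them; picking $\theta = \sigma - 1$ for the $R_Q u$ estimate keeps its prerequisite purely in terms of $\cF(\|\eta\|_{H^s})\|f\|_{\wt H^\sigma_-}$, so that the single linear factor $1 + \|\eta\|_{H^{s+\mez-\delta}}$ produced by $T_{Q\varrho}\mb$, $R(\mb, Q\varrho)$, $[Q, T_\mb]\varrho$ and $R_Q u$ is never iterated.
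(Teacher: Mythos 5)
Your algebraic identity is correct: the six-term expansion you obtain for $F_2$ is equal to the paper's four-term decomposition
\[
F_2 = [T_\mb, (\p_z - T_a)](\p_z - T_A)\varrho - R_Q v + T_\mb R_Q\varrho + (\mb Q\varrho - T_\mb Q\varrho),
\]
as one can verify by expanding $[(\p_z - T_a)(\p_z - T_A), T_\mb]$ via the Leibniz rule for commutators. However, your route through the good unknown $u$ creates a genuine difficulty that the paper's decomposition is specifically designed to sidestep. The term $[Q, T_\mb]\varrho$ contains $[\p_z^2, T_\mb]\varrho = T_{\p_z^2\mb}\varrho + 2T_{\p_z\mb}\p_z\varrho$, and the term $(\p_z - T_a)T_{\p_z\mb}\varrho$ contains $\p_z T_{\p_z\mb}\varrho = T_{\p_z^2\mb}\varrho + T_{\p_z\mb}\p_z\varrho$. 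Both carry the paraproduct $T_{\p_z^2\mb}\varrho$, which cannot be bounded using only $\p_z\mb\in Y^{\sigma-1}$; it requires $\p_z^2\mb$, hence $\p_z^3 v$, and the paper's Remark immediately following the lemma statement warns explicitly that this is out of reach at low regularity (e.g.\ $s > \tdm$ when $d = 1$). Your proof asserts that both pieces are "handled using the $\p_z\mb$ bound," which is false as written. What actually happens is that the two $T_{\p_z^2\mb}\varrho$ contributions cancel exactly in the sum $(\p_z - T_a)T_{\p_z\mb}\varrho - [Q, T_\mb]\varrho$, leaving only $-T_{\p_z\mb}\p_z\varrho$ and lower-order terms; but you neither observe nor exploit this cancellation, and without it the individual estimates fail. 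The paper avoids the issue from the outset: by keeping the commutator in the form $[T_\mb, (\p_z - T_a)]$ acting on the already-differentiated $(\p_z - T_A)\varrho$, only a single $z$-derivative ever lands on $\mb$, producing $T_{\p_z\mb}$ but never $T_{\p_z^2\mb}$.

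Two further points need repair even after the cancellation is incorporated. First, $Q = \p_z^2 + \alpha\Delta_x + \beta\cdot\na_x\p_z$ has the \emph{functions} $\alpha,\beta$ as coefficients, not the paraproducts $T_\alpha, T_\beta$, so $[\alpha\Delta_x, T_\mb]$ and $[\beta\cdot\na_x\p_z, T_\mb]$ are not paradifferential commutators and Theorem \ref{theo:sc} does not apply to them directly; one must first split $\alpha$ and $\beta$ via Bony's decomposition (this is implicitly done in Lemma \ref{prop:decompose} but not in the form $[Q, T_\mb]$). Second, your estimate of $R_Q u$ via \eqref{est:RAa} with $\theta = \sigma - 1$ requires $\na_{x,z}u \in X^{\sigma-1}$, and the $\p_z$ component of $\na_{x,z}(T_\mb\varrho)$ contains $T_{\p_z\mb}\varrho$; since $\p_z\mb$ is only known to lie in $Y^{\sigma-1}$ (Lemma \ref{LemEstb}), whose $L^1_z$ and $L^2_z$ components do not produce $X^{\sigma-1}$ after paraproduct with $\varrho$, the claimed prerequisite bound $\|\na_{x,z}u\|_{X^{\sigma-1}} \le \cF(\|\eta\|_{H^s})\|f\|_{\wt H^\sigma_-}$ is not justified. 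The paper steers clear of both problems by applying $R_Q$ only to $v$ (where \eqref{estv:base} supplies the required $X^{\sigma-1}$ control) and to $\varrho$ (where \eqref{EstimrhoNew} does), never to $u$.
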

\begin{rema}
The direct consideration of the good unknown $u=v-T_\mb \varrho$ in \cite{ABZ1, PoyNg1} consists in obtaining good estimates for  
\[
(\p_z-T_a)(\p_z-T_A)u.
\]
 In our setting, even when $\sigma=s$, estimating this in $Y^{s-\mez}$ demands an estimate for $\| \p_z^2b\|_{X^{s-3}}$. However, in one space dimension, the low regularity $s>\tdm$ makes it challenging to prove  that $\p_z^3v\in X^{s-3}$, where $\p_z^3v$ appears when differentiating $\mb$ twice in $z$.  Lemma \ref{lemm:F2} avoids this issue. 
 \end{rema}
\begin{proof}[Proof of Lemma \ref{lemm:F2}]
Using \eqref{Neweq:v} and Lemma \ref{prop:decompose} with $\theta=\sigma-1$, we see that
\begin{equation*}
\begin{split}
(\partial_z-T_a)(\partial_z-T_A)v+R_Qv=T_\mb(\partial_z-T_a)(\partial_z-T_A)\varrho+(\mb Q\varrho-T_{\mb}Q\varrho)+T_\mb R_Q\varrho
\end{split}
\end{equation*}
which gives
\begin{equation*}
\begin{split}
F_2&=[T_\mb,(\partial_z-T_a)](\partial_z-T_A)\varrho-R_Qv+T_\mb R_Q\varrho+(\mb Q\varrho-T_{\mb}Q\varrho).
\end{split}
\end{equation*}
It is readily checked that 
\[
\| Q\varrho\|_{L^2 H^{s-1-\delta}}\le \cF(\| \eta\|_{H^s})\| \eta\|_{H^{s+\mez-\delta}}
\]
which in conjunction with \eqref{paralin:product} and \eqref{est:b1} yields
\[
\| \mb Q\varrho-T_{\mb}Q\varrho\|_{L^2 H^{\sigma-1}}\les \| \mb\|_{L^\infty H^{\sigma-1}}\| Q\varrho\|_{L^2 H^{s-1-\delta}}\le \cF(\| \eta\|_{H^s})\|\eta \|_{H^{s+\mez-\delta}}\Vert f\Vert_{\widetilde{H}^\sigma_-}.
\]
where  we have used \eqref{sigma:mainpara} in the first inequality. 

In view of \eqref{sigma:mainpara},  \eqref{est:RAa} can be applied with $\tt=\sigma-1$, implying the control of $R_Qv$. As for $T_\mb R_Q\varrho$ we apply \eqref{boundpara}, \eqref{est:RAa} with $\tt=s-1$, and \eqref{est:b1} to have
\begin{align*}
\|T_\mb R_Q\varrho\|_{L^2 H^{\sigma-1}}&\les \| \mb\|_{L^\infty H^{\sigma-1}}\| R_Q\varrho\|_{L^2 H^{s-1}}\\
&\le\cF(\| \eta\|_{H^s})\big(1+\|\eta \|_{H^{s+\mez-\delta}}\big)\Vert f\Vert_{\widetilde{H}^\sigma_-}.
\end{align*}
 Regarding the commutator in $F_2$, we write 
\[
[T_\mb,(\partial_z-T_a)](\partial_z-T_A)\rho=-T_{\p_z\mb}(\partial_z-T_A)\rho-[T_\mb, T_a](\partial_z-T_A)\rho.
\]
For $ T_{\p_z\mb}(\p_z-T_A)\varrho$ we distinguish two cases. 

{\it Cases 1:} $\sigma\in [s-\delta, s]$. Then, $\sigma+s\ge 2s-\delta>\frac52$ and \eqref{est:b2} can be applied. Noting in addition that $\sigma-1\le s-1\le s-\mez-\delta$,  \eqref{boundpara} yields
\[
\begin{aligned}
\| T_{\p_z\mb}(\p_z-T_A)\varrho\|_{L^2 H^{\sigma-1}}&\les\| \p_z\mb\|_{L^2 H^{\sigma-\tdm}}\| (\p_z-T_A)\varrho-h\|_{L^\infty H^{s-\mez-\delta}}\\
&\le \cF(\| \eta\|_{H^s})\big(1+\|\eta \|_{H^{s+\mez-\delta}}\big)\Vert f\Vert_{\widetilde{H}^\sigma_-}.
\end{aligned}
\]
{\it Cases 2:} $\sigma\in [\mez, s-\delta]$. In view of \eqref{estb:Y}, applying \eqref{est:XY3} we obtain 
\begin{align*}
\| T_{\p_z\mb}(\p_z-T_A)\varrho\|_{Y^{\sigma-\mez}}&\les \|\p_z\mb\|_{Y^{\sigma-1}}\| (\p_z-T_A)\varrho-h\|_{L^\infty H^{s-\mez-\delta}}\\
&\le\cF(\| \eta\|_{H^s})\big(1+\|\eta \|_{H^{s+\mez-\delta}}\big)\Vert f\Vert_{\widetilde{H}^\sigma_-}.
\end{align*}
To  treat the commutator $ [T_a, T_\mb]$ we again distinguish two cases. 

{\it Case 1:}  $\sigma\in (s-\delta, s]$. Then we have $\nu:=\sigma+\delta-s\in (0, \mez]$ since  $\delta\le \mez$ and $\sigma\le s$. In addition, $\sigma-1-\frac{d}{2}>\nu$ and $s-1-\frac{d}{2}>\nu$, implying  that $\mb\in  H^{\sigma-1}\subset W^{\nu, \infty}$ and $a\in \Gamma^1_\nu$ uniformly in $z$. Consequently, by virtue of Theorem \ref{theo:sc}, $[T_a, T_\mb]$ is of order $1-\nu$ and 
\begin{align*}
\| [T_\mb, T_a](\partial_z-T_A)\rho\|_{L^2 H^{\sigma-1}}&\le \cF(\| \eta\|_{H^s})\Vert f\Vert_{\widetilde{H}^\sigma_-}\| (\partial_z-T_A)\rho-h\|_{L^2 H^{\sigma-\nu}}\\
&= \cF(\| \eta\|_{H^s})\Vert f\Vert_{\widetilde{H}^\sigma_-}\| (\partial_z-T_A)\rho-h\|_{L^2 H^{s-\delta}}\\
&\le \cF(\| \eta\|_{H^s})\big(1+\|\eta \|_{H^{s+\mez-\delta}}\big)\Vert f\Vert_{\widetilde{H}^\sigma_-},
\end{align*}
where we have used \eqref{est:b1} in the first inequality.

{\it Case 2:} $\sigma\in [\mez, s-\delta]$. In this case we do not use the structure of the commutator but directly estimate using Theorem \ref{theo:sc} (i) and \eqref{boundpara}:
\begin{align*}
\Vert T_aT_\mb(\partial_z-T_A)\varrho\Vert_{L^2H^{\sigma-1}}&\le \cF(\| \eta\|_{H^s})\|T_\mb(\partial_z-T_A)\varrho\Vert_{L^2H^\sigma}\\
&\le \cF(\| \eta\|_{H^s})\Vert \mb\Vert_{L^\infty H^{\sigma-1}}\Vert(\partial_z-T_A)\varrho-h\Vert_{L^2H^{s-\delta}}\\
&\le \cF(\| \eta\|_{H^s})\big(1+\|\eta \|_{H^{s+\mez-\delta}}\big)\Vert f\Vert_{\widetilde{H}^\sigma_-},
\end{align*}
where in the  second inequality we have used  the fact that $\sigma\le s-\delta$.  The term   $T_\mb T_a(\partial_z-T_A)\varrho$ can be controlled similarly. This completes the proof of Lemma \ref{lemm:F2}.  
\end{proof}
\begin{proof}[Proof of Theorem \ref{tame:RDN}] The proof proceeds in two steps. 

{\bf Step 1.} Let us fix $-1<z_0<z_1<0$ and introduce a cut-off $\chi$ satisfying $\chi(z)=1$ for $z>z_1$ and $=0$ for $z<z_0$. Set 
\[
w=\chi(z)\big[(\p_z-T_A)v-T_\mb(\p_z-T_A)\varrho\big].
\]
 It follows from \eqref{eqF2} that
\bq\label{eq:w}
(\p_z-T_a)w=F_3:=\chi(z)F_2+\chi'(z)\big[(\p_z-T_A)v-T_\mb(\p_z-T_A)\varrho\big].
\eq
By virtue of \eqref{est:F2}, \eqref{estv:base}, \eqref{est:b1} and \eqref{norm:aA:0} we have
\bq\label{est:F3}
\| F_3\|_{Y^{\sigma-\mez}([z_0, 0])}\le  \cF(\| \eta\|_{H^s})\Vert f\Vert_{\widetilde{H}^\sigma_-}\big(1+\|\eta \|_{H^{s+\mez-\delta}}\big).
\eq
Next we note that 
\[
\RE(-a) \ge\frac{1}{\cF(\| \eta\|_{H^s})}|\xi|.
\]
Since $w(z_0)=0$,  applying Proposition \ref{prop:parabolic} to equation \eqref{eq:w} with the aid of \eqref{est:F3} we obtain
\bq\label{w:main0}
\| w\|_{X^{\sigma-\mez}([z_0, 0])}\le \cF(\| \eta\|_{H^s})\| F_3\|_{Y^{\sigma-\mez}([z_0, 0])}\le   \cF(\| \eta\|_{H^s})\Vert f\Vert_{\widetilde{H}^\sigma_-}\big(1+\|\eta \|_{H^{s+\mez-\delta}}\big).
\eq

\medskip
{\bf Step 2.} Starting from \eqref{ExpressionDNNew} and using Bony's decomposition, we find that
\begin{equation*}
\begin{split}
G^-(\eta)f&=T_{\frac{1+\vert\nabla_x\varrho\vert^2}{\partial_z\varrho}}(T_Av-T_\mb T_A\varrho)-(T_{\nabla_x\varrho}\nabla_xv-T_{\partial_zv}T_{\frac{\nabla_x\varrho}{\partial_z\varrho}}\nabla_x\varrho)-(T_{\nabla_xv}-T_{\partial_zv}T_{\frac{\nabla_x\varrho}{\partial_z\varrho}})\nabla_x\varrho\\
&\quad+\left\{T_{\partial_zv}\left(\frac{1+\vert\nabla_x\varrho\vert^2}{\partial_z\varrho}\right)+T_{\frac{1+\vert\nabla_x\varrho\vert^2}{\partial_z\varrho}}T_\mb\partial_z\varrho-2T_{\partial_zv}T_{\frac{\nabla_x\varrho}{\partial_z\varrho}}\nabla_x\varrho\right\}\\
&\quad +T_{\frac{1+\vert\nabla_x\varrho\vert^2}{\partial_z\varrho}}w-R(\nabla_x\varrho,\nabla_xv)+R(\frac{1+\vert\nabla_x\varrho\vert^2}{\partial_z\varrho},\partial_zv)
\end{split}
\end{equation*}
where the right-hand side is evaluated at $z=0$. We will see that this gives \eqref{ParaDN} by estimating each term one by one.

Using Theorem \ref{theo:sc}, \eqref{norm:aA:0}, \eqref{norm:aA:1} and \eqref{estv:base}, we first observe that
\begin{equation*}
\begin{split}
R^1=\left\{T_{\frac{1+\vert\nabla_x\varrho\vert^2}{\partial_z\varrho}}(T_Av-T_\mb T_A\varrho)-(T_{\nabla_x\varrho}\nabla_xv-T_{\partial_zv}T_{\frac{\nabla_x\varrho}{\partial_z\varrho}}\nabla_x\varrho)\right\}-T_{\frac{1+\vert\nabla_x\varrho\vert^2}{\partial_z\varrho}A-i\xi\cdot\nabla_x\varrho}\underbrace{(v-T_\mb\rho)}_{=u}
\end{split}
\end{equation*}
satisfies estimates as in \eqref{est:RDN}. Using the formula \eqref{ld} and \eqref{aA}, we see that
\bq\label{form:ld}
A(x, \xi)\frac{1+\la\na_x\varrho\ra^2}{\partial_z\varrho}-i\xi \cdot\na_x\varrho\vert_{z=0}=\lambda
\eq
and this gives the first main term in \eqref{ParaDN}. Similarly, we obtain that
\begin{equation*}
\begin{split}
R^2=-(T_{\nabla_xv}-T_{\partial_zv}T_{\frac{\nabla_x\varrho}{\partial_z\varrho}})\nabla_x\varrho+T_{\nabla_xv-\mb\nabla_x\varrho}\nabla_x\varrho&=(T_{\partial_zv}T_{\frac{\nabla_x\varrho}{\partial_z\varrho}}-T_{\frac{\partial_zv\nabla_x\varrho}{\partial_z\varrho}})\nabla_x\varrho
\end{split}
\end{equation*}
is acceptable, and since
\[
\na_x v-\mb\na_x\varrho\vert_{z=0}=\nabla f-B\nabla \eta=V
\]
we obtain the second main estimate in \eqref{ParaDN}.

We claim that all the other terms are remainders. Next we paralinearize the function $F(m, n)=\frac{1+|m|^2}{n+h}-h^{-1}$ where $m\in \Rr^d$ and $n\in \Rr$. Clearly $F(0, 0)=0$, $\nabla_mF=\frac{2m}{n+h}$, and $\p_nF=-\frac{1+m^2}{(n+h)^2}$. Applying Theorem \ref{paralin:nonl} with $\mu=s-\mez-\delta$ and $\tau=\delta$ yields
\[
\frac{1+|\na_x\varrho|^2}{\partial_z\varrho}-h^{-1}=F(\na_x\varrho, \p_z\varrho-h)=T_{2\frac{\na_x\varrho }{\p_z\varrho}}\cdot\na_x\varrho-T_{\frac{1+|\na_x\varrho|^2}{(\p_z\varrho)^2}}(\p_z\varrho-h)+R_F(\na_x\varrho, \p_z\varrho-h^{-1})
\]
with
\[
\| R_F(\na_x\varrho, \p_z\varrho-h^{-1})\|_{ H^{s-\mez}}\le \cF(\| \eta\|_{H^s})\|\eta \|_{H^{s+\mez-\delta}}.
\]
 Then by virtue of Theorem \ref{theo:sc} (ii) with $\rho=\delta$ we obtain that
\[
R^3=T_{\p_zv}\big(\frac{1+\la\na_x\varrho\ra^2}{\partial_z\varrho}-h^{-1}\big)-2T_{\partial_zv}T_{\frac{\na_x\varrho}{\p_z\varrho}}\cdot\na_x\varrho+T_{\frac{1+|\na_x\varrho|^2}{\p_z\varrho}}T_{\frac{\partial_zv}{\partial_z\varrho}}(\p_z\varrho-h).
\]
is acceptable as in \eqref{est:RDN}. The next term follows from \eqref{w:main0}. Finally, by \eqref{estv:base}, \eqref{Bony1} we get
\[
\| R(\na_x\varrho, \na_x v)\|_{L^\infty H^{\sigma-\mez}}\les \| \na_x\varrho\|_{L^\infty H^{s-\mez-\delta}}\| \na_x v\|_{L^\infty H^{\sigma-1}}\les \cF(\| \eta\|_{H^s})\Vert f\Vert_{\widetilde{H}^\sigma_-}\|\eta \|_{H^{s+\mez-\delta}}
\]
and similarly, since $\frac{1+|\na_x\varrho|^2}{\p_z\varrho}-\frac{1}{h}\in L^\infty H^{s-\mez-\delta}\subset L^\infty C_*^\mez$ it follows from \eqref{Bony2} that
\begin{align*}
\| R(\frac{1+|\na_x\varrho|^2}{\p_z\varrho}, \na_x v)\|_{L^\infty H^{\sigma-\mez}}&\les \big(\| \frac{1+|\na_x\varrho|^2}{\p_z\varrho}-\frac{1}{h}\|_{L^\infty H^{s-\mez-\delta}}+\frac{1}{h}\big)\| \na_x v\|_{L^\infty H^{\sigma-1}}\\
&\les \cF(\| \eta\|_{H^s})\Vert f\Vert_{\widetilde{H}^\sigma_-}\big(1+\|\eta \|_{H^{s+\mez-\delta}}\big).
\end{align*}
The proof of Theorem \ref{tame:RDN} is complete. 
\end{proof}
\subsection{Contraction estimates}\label{section:contraction}
In order to obtain uniqueness and stability estimates for the Muskat problem, we need  contraction estimates for the Dirichlet-Neumann operator associated to two different surfaces $\eta_1$ and $\eta_2$. Since we always assume in this subsection that $\eta_j\in L^\infty(\Rr^d)$ and $\dist(\eta_j, \Gamma^-)>h>0$, Proposition \ref{prop:wtH} guarantees that the spaces $\wt H^s_\pm$, defined by \eqref{wtH-}-\eqref{wtH+}-\eqref{def:wtHs}, are independent of $\eta_j$. We have the following results.
\begin{theo}\label{theo:contractionDN2}
Let  $s>1+\frac{d}{2}$ with $d\ge 1$. Let $\delta\in (0, \mez]$ satisfy $\delta<s-1-\frac{d}{2}$. 
 Consider $f\in \widetilde{H}^s_-$ and  $\eta_1$, $\eta_2\in H^s(\Rr^d)$ with $\dist(\eta_j, \Gamma^-)>4h>0$ for $j=1, 2$. Then  for any $\sigma\in[\mez+\delta, s]$, we have
\bq
G^{-}(\eta_1)f-G^{-}(\eta_2)f =-T_{\ld_2B_2}(\eta_1-\eta_2)-T_{V_2}\cdot\na (\eta_1-\eta_2)+R^-_2(\eta_1, \eta_2)f
\eq 
where
\begin{equation}\label{contraction:DN2}
\| R^-_2(\eta_1, \eta_2)f  \|_{H^{\sigma-1}}\le 
\mathcal{F}\big(\| (\eta_1, \eta_2) \|_{H^s}\big)\| \eta_1-\eta_2\|_{H^{\sigma-\delta}}\Vert f\Vert_{\widetilde{H}^s_-}
\end{equation}
for some $\cF:\Rr^+\to \Rr^+$ depending only on $(s, \sigma, h, \delta)$. 
\end{theo}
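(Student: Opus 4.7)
The plan is to work in the straightened reference strip for both surfaces simultaneously, analyze the difference of the two straightened harmonic extensions via the good-unknown reduction of Section \ref{section:paralin}, and then paralinearize the DN formula \eqref{ExpressionDNNew} to isolate the two advertised main terms.

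\textbf{Setup.} For $j=1,2$ let $\varrho_j$ be the diffeomorphism \eqref{diffeo} associated with $\eta_j$ (for a common $\tau$ satisfying \eqref{CondTau}), set $v_j(x,z)=\phi_j(x,\varrho_j(x,z))$, and let $\mathcal{A}_j$ be the matrix in \eqref{def:matrixA}. Write $w=v_1-v_2$, $\delta\varrho=\varrho_1-\varrho_2$, $\delta\eta=\eta_1-\eta_2$. Then $w\vert_{z=0}=0$ and
\begin{equation*}
\cnx_{x,z}(\mathcal{A}_2\na_{x,z}w)=-\cnx_{x,z}\big((\mathcal{A}_1-\mathcal{A}_2)\na_{x,z}v_1\big).
\end{equation*}
Since $\mathcal{A}_1-\mathcal{A}_2$ is a smooth rational expression in $\na_{x,z}\varrho_j$ that vanishes with $\delta\varrho$, and since \eqref{EstimrhoNew} gives $\|\na_{x,z}\delta\varrho\|_{X^{\mu-1}([-1,0])}\les\|\delta\eta\|_{H^\mu}$, the right-hand side is controlled in $Y^{\sigma-1}([z_0,0])$ by $\cF(\|(\eta_1,\eta_2)\|_{H^s})\|\delta\eta\|_{H^\sigma}\|f\|_{\wt{H}^s_-}$ thanks to Lemma \ref{lemm:XY} and the a priori bound \eqref{estv:base} applied to $v_1$.

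\textbf{Parabolic reduction for $w$.} Set $\mb_2=\p_zv_2/\p_z\varrho_2$ and introduce the Alinhac good unknown $U=w-T_{\mb_2}\delta\varrho$, so that $U\vert_{z=0}=-T_{B_2}\delta\eta$. Repeating the derivation of Lemma \ref{lemm:F2} with $\mathcal{A}_2$-symbols, but with the additional forcing coming from $\mathcal{A}_1-\mathcal{A}_2$, one arrives at
\begin{equation*}
(\p_z-T_{a_2})(\p_z-T_{A_2})U=F,\qquad\|F\|_{Y^{\sigma-1}([z_0,0])}\le\cF(\|(\eta_1,\eta_2)\|_{H^s})\|\delta\eta\|_{H^\sigma}\|f\|_{\wt{H}^s_-}.
\end{equation*}
Because $\RE(-a_2)\gtrsim|\xi|$ with implicit constant depending only on $\|\eta_2\|_{H^s}$, Proposition \ref{prop:parabolic} applied as in Step 1 of the proof of Theorem \ref{tame:RDN} (to a cut-off of $(\p_z-T_{A_2})U$ vanishing at $z=-1$) yields
\begin{equation*}
\|(\p_z-T_{A_2})U\|_{X^{\sigma-1}([z_0,0])}\le\cF(\|(\eta_1,\eta_2)\|_{H^s})\|\delta\eta\|_{H^\sigma}\|f\|_{\wt{H}^s_-}.
\end{equation*}

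\textbf{Extraction of the main terms.} Subtract the two expressions \eqref{ExpressionDNNew} for $G^-(\eta_j)f$ and decompose each factor by Bony. Substituting $\p_z w\vert_{z=0}=\p_z U\vert_{z=0}+T_{\mb_2}\p_z\delta\varrho\vert_{z=0}+T_{\p_z\mb_2}\delta\varrho\vert_{z=0}$ and using the identity \eqref{form:ld} at $z=0$, the $U$-contributions condense to $T_{\ld_2}U\vert_{z=0}=-T_{\ld_2}T_{B_2}\delta\eta$, which Theorem \ref{theo:sc} (ii) reduces to $-T_{\ld_2 B_2}\delta\eta$ modulo an $H^{\sigma-1}$-remainder bounded by the right-hand side of \eqref{contraction:DN2}. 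The $\na_{x,z}\delta\varrho\vert_{z=0}$-contributions, together with paralinearizations of $\na_x v_2\vert_{z=0}$ and $\mb_2\vert_{z=0}=B_2$ and the algebraic identity $V_2=\na f-B_2\na\eta_2$, collapse to $-T_{V_2}\cdot\na\delta\eta$. All residual terms --- paralinearization errors for the rational functions of $\na_{x,z}\varrho_j$, symbolic commutators, and resonant pieces $R(\cdot,\cdot)$ --- are handled exactly as in Step 2 of the proof of Theorem \ref{tame:RDN}, and the $\delta$-derivative loss inherent in those symbolic estimates is precisely what converts the $H^\sigma$-norm of $\delta\eta$ into its $H^{\sigma-\delta}$-norm on the right-hand side of \eqref{contraction:DN2}.

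The main obstacle is the algebraic bookkeeping in the extraction step: one must arrange matters so that the parabolic reduction yields the good unknown $U$ (rather than $w$) at the moment \eqref{form:ld} is invoked, thereby producing the coefficient $\ld_2 B_2$ acting on $\delta\eta$ rather than $\ld_2$ acting on $B_2\delta\eta$ via Bony. The lower restriction $\sigma\ge\mez+\delta$ is exactly what guarantees $\delta\eta\in H^{\sigma-\delta}\hookrightarrow H^{\mez}$, so that $\delta\eta$ can legitimately serve as the low-regularity factor in every paraproduct and every Lemma \ref{lemm:XY} product estimate appearing in the argument.
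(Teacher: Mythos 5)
Your plan correctly identifies the good-unknown structure and the overall two-stage scheme (parabolic reduction followed by Bony decomposition of \eqref{ExpressionDNNew}), but it has two genuine gaps relative to what is actually needed.

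\textbf{The diffeomorphism choice.} Using \eqref{diffeo} for both $\eta_1$ and $\eta_2$ produces, in the finite-depth case, two \emph{different} flattened domains: for $z\le -1$ one has $\varrho_j(x,z)=z+1+\eta_j(x)-h$, so the condition $(x,\varrho_j(x,z))\in\Omega^-_{j,2}$ gives a $j$-dependent lower bound on $z$. Consequently $w=v_1-v_2$ does not live on a single reference domain, the variational identity for the difference cannot be set up cleanly, and the low-norm base estimate that Proposition \ref{prop:elliptic} requires as input (the analogue of $\nabla_{x,z}w\in X^{-\mez}$) has no obvious source. The paper sidesteps this by replacing \eqref{diffeo} with the contraction-adapted map \eqref{diffeo:diff}, built from a single intermediate graph $\eta_*$ between $\Gamma^-$ and both $\eta_j$, which forces $\varrho_1=\varrho_2$ and $\mathcal{A}_1=\mathcal{A}_2$ on $\wt{\Omega^-_2}$; this coincidence is what makes the variational argument of Lemma \ref{lemm:contractlow} close. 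Your write-up never supplies the low-norm bound $\Vert\nabla_{x,z}w\Vert_{X^{-\mez}([-1,0])}\lesssim\Vert\delta\eta\Vert_{H^{\mez}}\Vert f\Vert_{\widetilde H^s_-}$, and with the diffeomorphism you chose it is not clear how to get it.

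\textbf{Where the $\delta$ gain actually comes from.} You claim $\Vert F\Vert_{Y^{\sigma-1}}\lesssim\Vert\delta\eta\Vert_{H^\sigma}\Vert f\Vert_{\widetilde H^s_-}$ and then assert that the improvement from $H^\sigma$ to $H^{\sigma-\delta}$ is produced by the ``$\delta$-derivative loss inherent in the symbolic estimates'' during extraction. That mechanism does not work. After the parabolic step, the dominant non-vanishing remainder evaluated at $z=0$ is the boundary value of the quantity you estimated in $X^{\sigma-1}$ (your $(\p_z-T_{A_2})U$); if that norm is only bounded by $\Vert\delta\eta\Vert_{H^\sigma}\Vert f\Vert_{\widetilde H^s_-}$, no amount of symbolic calculus afterward turns it into $\Vert\delta\eta\Vert_{H^{\sigma-\delta}}$, since this is already a single scalar quantity, not a paraproduct of $\delta\eta$ with a rough coefficient. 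The gain must be present at the level of the forcing. In the paper, Lemma \ref{lemm:F2analog} proves $\Vert R_\delta^1\Vert_{Y^{\sigma-1}}\lesssim\cF(N_s)\Vert\delta\eta\Vert_{H^{\sigma-\delta}}\Vert f\Vert_{\widetilde H^s_-}$ directly, by feeding in the \emph{lower-regularity} contraction bound $\Vert\nabla_{x,z}v\Vert_{X^{\sigma-1-\delta}}\lesssim\cF(N_s)\Vert\delta\eta\Vert_{H^{\sigma-\delta}}\Vert f\Vert_{\widetilde H^s_-}$ (Lemma \ref{lemm:contractv} applied at level $\sigma-\delta$), and then recovering the missing $\delta$ of regularity from the surplus $s-1-\frac d2>\delta$ that $\eta_1,\eta_2\in H^s$ affords. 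This is the same device as in Theorem \ref{tame:RDN} (where the critical estimates \eqref{est:RAa2} and \eqref{est:contractv:2} are stated at level $\theta-\delta$), and it is essential: without it the bound \eqref{contraction:DN2} cannot be reached.

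In short, the two-step architecture you propose is the right one, but you are missing both the domain-coincidence device that makes the base estimate hold and the downshifted $X^{\sigma-1-\delta}$ bound on $v$ that injects the $\delta$ gain into the parabolic forcing; the extraction step cannot manufacture that gain on its own.
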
 
\begin{coro}\label{theo:contractionDN}
Let  $s>1+\frac{d}{2}$ with $d\ge 1$. Consider $f\in \wt H^s_-$ and  $\eta_1$, $\eta_2\in H^s(\Rr^d)$ with $\dist(\eta_j, \Gamma^-)> 4h>0$ for $j=1, 2$. Then  for all $\sigma\in [\mez, s]$, we have 
\begin{equation}\label{contraction:DN}
\| G^{-}(\eta_1)f-G^{-}(\eta_2)f  \|_{H^{\sigma-1}}\le 
\mathcal{F}\big(\| (\eta_1, \eta_2) \|_{H^s}\big)\| \eta_1-\eta_2\|_{H^\sigma}\| f\|_{\wt H^s_{-}}
\end{equation}
for some $\cF:\Rr^+\to \Rr^+$ depending only on $(s, \sigma, h)$.
\end{coro}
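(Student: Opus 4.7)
The plan is to deduce the corollary directly from Theorem \ref{theo:contractionDN2} by estimating each term in the paralinearization
\bq\label{planid}
G^{-}(\eta_1)f-G^{-}(\eta_2)f = -T_{\ld_2B_2}(\eta_1-\eta_2)-T_{V_2}\cdot\na(\eta_1-\eta_2)+R^-_2(\eta_1,\eta_2)f,
\eq
using the standard paradifferential machinery of Appendix \ref{appendix:para}. Note that $R^-_2(\eta_1,\eta_2)f$ is defined by this identity and does not itself depend on $\sigma$; only its norm will.

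First, I would bound the two principal paradifferential terms for any $\sigma\in\Rr$. Theorem \ref{theo:estDN} provides $G^{-}(\eta_2)f\in H^{s-1}(\Rr^d)$ with a $\cF(\| \eta_2\|_{H^s})\| f\|_{\wt H^s_-}$ bound; the definitions of $B_2$ and $V_2$ together with the product rule \eqref{pr} then give $(B_2,V_2)\in H^{s-1}(\Rr^d)$ with the same quantitative bound. Since $s>1+\frac d2$, Sobolev embedding $H^{s-1}\hookrightarrow L^\infty$ controls $V_2$ in $L^\infty$ and turns $\ld_2 B_2$ into a symbol of order $1$ whose $M^1_0$ seminorm is at most $\cF(\| \eta_2\|_{H^s})\| f\|_{\wt H^s_-}$. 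The continuity statement of Theorem \ref{theo:sc}(i), combined with Remark \ref{rema:low}, then yields
\[
\| T_{\ld_2B_2}(\eta_1-\eta_2)\|_{H^{\sigma-1}} + \| T_{V_2}\cdot\nabla(\eta_1-\eta_2)\|_{H^{\sigma-1}} \le \cF(\|\eta_2\|_{H^s})\| f\|_{\wt H^s_-}\|\eta_1-\eta_2\|_{H^\sigma}.
\]

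It remains to control $R^-_2$. Fix $\delta\in(0,\mez]$ with $\delta<s-1-\frac d2$. For $\sigma\in[\mez+\delta, s]$, Theorem \ref{theo:contractionDN2} directly delivers
\[
\| R^-_2\|_{H^{\sigma-1}} \le \cF(\|(\eta_1,\eta_2)\|_{H^s})\|\eta_1-\eta_2\|_{H^{\sigma-\delta}}\| f\|_{\wt H^s_-} \le \cF(\|(\eta_1,\eta_2)\|_{H^s})\|\eta_1-\eta_2\|_{H^\sigma}\| f\|_{\wt H^s_-},
\]
which, combined with the previous paragraph, proves \eqref{contraction:DN} in this range.

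For the low-regularity window $\sigma\in[\mez, \mez+\delta)$, where Theorem \ref{theo:contractionDN2} does not apply at the target index, I would instead apply it at $\sigma'=\mez+\delta$ to obtain
\[
\| R^-_2\|_{H^{\delta-\mez}}\le\cF(\|(\eta_1,\eta_2)\|_{H^s})\|\eta_1-\eta_2\|_{H^\mez}\| f\|_{\wt H^s_-}.
\]
Since $\sigma-1\le\delta-\mez$ and $\sigma\ge\mez$, the elementary Sobolev embeddings $H^{\delta-\mez}\hookrightarrow H^{\sigma-1}$ and $H^\sigma\hookrightarrow H^\mez$ convert this into the desired bound. Thus the corollary is essentially a bookkeeping refinement of Theorem \ref{theo:contractionDN2}; the only mildly delicate point is precisely this endpoint near $\sigma=\mez$, handled by trading the stronger $H^{\delta-\mez}$ remainder estimate against the weaker target norm $H^{\sigma-1}$.
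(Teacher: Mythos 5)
Your proof is correct, and it carries out precisely the argument that the paper mentions at the start of its own proof but does not write out: "Corollary \ref{theo:contractionDN} can be deduced from Theorem [\ref{theo:contractionDN2}], Theorem \ref{theo:sc}~(i) and the fact that $B_2$ and $V_2$ are in $L^\infty_x$." You fill in the necessary bookkeeping, and in particular you correctly flag and handle the endpoint window $\sigma\in[\mez,\mez+\delta)$ where Theorem \ref{theo:contractionDN2} does not directly apply, by invoking it at $\sigma'=\mez+\delta$ and downgrading both sides --- a point the paper's one-sentence remark glosses over.

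The paper's actual written proof, however, bypasses Theorem \ref{theo:contractionDN2} entirely. It goes back to the explicit formula \eqref{ExpressionDNNew} for the Dirichlet--Neumann operator in straightened coordinates, writes $G^-(\eta_1)f-G^-(\eta_2)f$ in terms of $\nabla_x\varrho_j$, $\nabla_xv_j$, and the difference $v=v_1-v_2$, and then bounds the typical terms (such as $\nabla_x(\varrho_1-\varrho_2)\cdot\nabla_xv_1$ and $\nabla_x\varrho_2\cdot\nabla_x v$ at $z=0$) directly via the product rule \eqref{pr} and the contraction estimate of Lemma \ref{lemm:contractv}, which is valid for the full range $\sigma\in[\mez,s]$ without any endpoint issue. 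This route is shorter and more elementary in that it needs only the elliptic contraction estimate on $\nabla_{x,z}v$, not the paradifferential factorization with Alinhac's good unknown underlying Theorem \ref{theo:contractionDN2}. Your route buys nothing extra here (the exhibition of the leading symbols $T_{\lambda_2 B_2}$, $T_{V_2}\cdot\nabla$ is discarded after one application of Theorem \ref{theo:sc}(i)), but it is a legitimate and logically tighter deduction from the more precise paralinearization result, and it makes clear that Corollary \ref{theo:contractionDN} is formally a weaker statement than Theorem \ref{theo:contractionDN2} once the low-$\sigma$ endpoint is absorbed.
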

 \begin{rema}
 The following contraction estimate was obtained in Theorem 5.2 in \cite{ABZ3}
 \bq\label{contraction:ABZ}
 \| G^-(\eta_1)f-G(\eta_2)f\|_{H^{r-2}}\le \mathcal{F}\big(\| (\eta_1, \eta_2) \|_{H^r}\big)\| \eta_1-\eta_2\|_{H^{r-1}}\| f\|_{H^{r-\mez}},
 \eq
 where $r>\tdm+\frac{d}{2}$ ($\mez$ derivative above scaling). It was also noted  in \cite{ABZ3} (see Remark 5.3 therein) that the authors were unable to obtain a similar estimate in higher norms. Applying  \ref{contraction:DN} with $s=r-\mez$ gives such estimates.
  \end{rema}
From now on, to simplify notation, we let 
\[
N_s=\Vert\eta_1\Vert_{H^s}+\Vert \eta_2\Vert_{H^s}.
\]
 The rest of this section is devoted to the proof of Theorem \ref{theo:contractionDN2}.  We follow similar steps as in the previous section, the main novelty coming from the two different domains. To define $G(\eta_j)f$ we call $\phi_j$ solution to \eqref{eq:elliptic} with surface $\eta_j$ and Dirichlet data $f$. For the sake of contraction estimates, we shall use a diffeomorphism different from the one defined by \eqref{lesomega}-\eqref{omega1}-\eqref{diffeo}. Assume $\dist(\eta_j, \Gamma^-)>4h>0$. There exists $\eta_*:\Rr^d\to \Rr$ such that 
 \bq
\|  \eta_*+\frac{3h}{2}\|_{H^{s+100}}\le CN_s
 \eq
 and
\bq\label{def:eta*}
\underline b^-(x)+2h<\eta_*(x)<\eta_j(x)-h
\eq
when the depth is finite and $\eta_*(x)<\eta_j(x)-h$ when $\Gamma^-=\emptyset$. One can take $\eta_*$ to be a mollification of $\min(\eta_1(x), \eta_2(x))-\frac{3h}{2}$. Then we divide $\Omega_j^-$ into 
 \begin{equation}\label{divideOmegaj}
\begin{cases}
\Omega^{-}_{j, 1} = \{(x,y): x \in \Rr^d, \eta_*(x)<y<\eta_j(x)\},\\
\Omega^{-}_{j ,2} = \{(x,y)\in \Omega^-_j: y\leq\eta_*(x)\}
\end{cases}
 \end{equation}
and set $\wt{\Omega^-}=\wt{\Omega^-_1}\cup \wt{\Omega^-_2}$ where
\bq
\begin{cases}
\wt{\Omega^-_1}= \Rr^d\times (-1, 0),\\
\wt{\Omega^{-}_2} =
\begin{cases}
 \Rr^d\times (-\infty, -1]\quad\text{if}~\Gamma^-=\emptyset,\\
 \{(x, z)\in \Rr^d\times (-\infty, -1]:  z+1+\eta_*(x)> \underline b^-(x)\}\quad\text{if}~\underline b^-\in W^{1, \infty}.
 \end{cases}
\end{cases}
 \eq
 Note that $\Omega^-_{1,2}=\Omega^-_{2,2}$ and the sets  $\wt{\Omega^-_1}$ and $\wt{\Omega^-_2}$ are independent of $\eta_j$. Define 
 \begin{equation}\label{diffeo:diff}
\left\{
\begin{aligned}
&\varrho_j(x,z)=  (1+z)e^{\tau z\langle D_x \rangle }\eta_j(x) -z \eta_*(x)\quad \text{for } (x,z)\in \wt{\Omega^-_1},\\
&\varrho_j(x,z) =z+1+\eta_*(x)\quad\text{for } (x,z)\in \wt{\Omega^-_2}.
\end{aligned}
\right.
\end{equation}
In particular, $\varrho_1=\varrho_2$ in $\wt{\Omega^-_2}$. For $\tau>0$ sufficiently small, it is easy to check that the mappings $(x, z)\in \wt{\Omega^-}\mapsto (x, \varrho_j(x, z))\in \Omega^-_j$ and $(x, z)\in\wt{\Omega^-_1}\mapsto (x, \varrho_j(x, z))\in \Omega^-_{j, 1}$ are Lipschitz diffeomorphisms, where the latter is smooth in $z$. Letting also $\varrho_\delta=\varrho_1-\varrho_2$, we observe as in \eqref{EstimrhoNew} that
\begin{equation}\label{EstimRho12}
\begin{split}
& \min(\partial_z\varrho_1,\partial_z\varrho_2)\ge  \min(1,\frac h2),\\
&\Vert (\na_x\varrho_j, \p_z\varrho_j-\frac{3h}{2})\Vert_{X^{s-1}([-1,0])} \lesssim \Vert \eta_1\Vert_{H^s}+\Vert \eta_2\Vert_{H^s}, \\
&\Vert \varrho_\delta\Vert_{X^\sigma((-\infty,0])}\lesssim \Vert \eta_1-\eta_2\Vert_{H^\sigma}
\end{split}
\end{equation}
if $\tau>0$ is chosen small enough (depending on $\Vert \eta_1\Vert_{B^1_{\infty,1}}+\Vert \eta_2\Vert_{B^1_{\infty,1}}$).

As in \eqref{eq:v}, 
\[
v_j(x, z):=\phi_j(x, \varrho_j(x, z))
\]
 solves  
\bq\label{eq:vj}
\mathcal{L}_jv_j:=(\p_z^2+\alpha_j\Delta_x+\beta_j\cdot\nabla_x\p_z-\gamma_j\p_z)v_j=0\quad\text{in}~\Rr^d\times [-1, 0],
\eq
with $(\alpha_j, \beta_j, \gamma_j)$ defined in terms of $\varrho_j$ as in \eqref{alpha} and satisfies\footnote{A priori, Proposition \ref{prop:elliptic} would only give a bound in $X^{s-1}([z_0,0])$ for some $z_0>-1$. However, one can first apply this with $\varrho_j$ replaced by $\varrho_{j,\ast}$ which is equal to $\varrho_j$ for $-1\le z\le 0$ and smooth for $-2\le z\le 0$ to obtain a bound on $[-1,0]$.}
\begin{equation}\label{EstimV_jNew}
\Vert \nabla_{x,z}v_j\Vert_{X^{s-1}([-1,0])}\lesssim \mathcal{F}(N_s)\Vert f\Vert_{\widetilde{H}^s_-}.
\end{equation}
The difference
\[
v=v_1-v_2
\]
then solves
\bq\label{eq:vdiff}
\mathcal{L}_1v=F:=-(\alpha_1-\alpha_2)\Delta_xv_2-(\beta_1-\beta_2)\cdot\nabla_x\p_zv_2+(\gamma_1-\gamma_2)\p_zv_2\quad\text{in}~\Rr^d\times [-1, 0].
\eq

As before, we start with an estimate for $v$ in the low norm $X^{-\mez}([-1, 0])$. 
\begin{lemm}\label{lemm:contractlow} 
\bq\label{contractest:low}
\| \na_{x,z}v\|_{X^{-\mez}([-1, 0])}\le \cF(N_s)\| \eta_1-\eta_2\|_{H^\mez}\Vert f\Vert_{\widetilde{H}^s_-}.
\eq
\end{lemm}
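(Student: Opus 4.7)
The plan is to treat $v=v_1-v_2$ as the solution of a divergence-form equation on the common straightened domain $\wt{\Omega^{-}}$, to perform a variational energy estimate for an $L^2$ bound on $\na_{x,z}v$, and then to invoke Lemma \ref{lemm:tracewt} to upgrade that bound to $X^{-\mez}$. Since each $v_j$ satisfies $\cnx_{x,z}(\mathcal{A}_j\na_{x,z}v_j)=0$, subtracting gives
\[
\cnx_{x,z}(\mathcal{A}_1\na_{x,z}v)=\cnx_{x,z}\big((\mathcal{A}_2-\mathcal{A}_1)\na_{x,z}v_2\big)=:G, \qquad v|_{z=0}=0,
\]
together with a matching no-flux condition on the lower boundary, where $\varrho_1=\varrho_2$. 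Note that $\varrho_\delta$ vanishes on $\wt{\Omega^{-}_2}$, hence so does $\mathcal{A}_2-\mathcal{A}_1$, and only the slab $\wt{\Omega^{-}_1}=\Rr^d\times(-1,0)$ contributes to the error.

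Testing by $v$ and integrating by parts (no boundary contribution arises, by the Dirichlet condition at $z=0$ and the matching Neumann condition at the bottom), the uniform ellipticity of $\mathcal{A}_1$, which follows from $\p_z\varrho_1\ge\min(1,h/2)$ in \eqref{EstimRho12}, gives
\[
\|\na_{x,z}v\|_{L^2(\wt{\Omega^{-}})}^2\le \cF(N_s)\int_{\wt{\Omega^{-}_1}}|\mathcal{A}_2-\mathcal{A}_1|\,|\na_{x,z}v_2|\,|\na_{x,z}v|\,dxdz.
\]
Smoothness of the entries of $\mathcal{A}_j$ in $(\na_x\varrho_j,\p_z\varrho_j)$ together with Taylor's formula yields the pointwise bound $|\mathcal{A}_2-\mathcal{A}_1|\le \cF(N_s)|\na_{x,z}\varrho_\delta|$, so the task reduces to estimating $\|\na_{x,z}\varrho_\delta\,\na_{x,z}v_2\|_{L^2_{x,z}([-1,0])}$.

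The crux is that this pairing should cost only the $H^\mez$ norm of $\eta_1-\eta_2$. Two inputs are needed. First, from \eqref{EstimV_jNew} together with $s-1>d/2$, $\na v_2\in X^{s-1}([-1,0])\hookrightarrow L^\infty([-1,0]\times \Rr^d)$, so $\|\na_{x,z}v_2\|_{L^\infty}\le \cF(N_s)\|f\|_{\widetilde{H}^s_-}$. Second, the explicit form $\varrho_\delta=(1+z)e^{\tau z\langle D_x\rangle}(\eta_1-\eta_2)$ on $\wt{\Omega^{-}_1}$, combined with the standard Poisson-type smoothing bound $\|e^{\tau z\langle D_x\rangle}g\|_{L^2_z([-1,0];H^m)}\lesssim\|g\|_{H^{m-\mez}}$, yields
\[
\|\na_{x,z}\varrho_\delta\|_{L^2([-1,0];L^2(\Rr^d))}\le C(h,\tau)\|\eta_1-\eta_2\|_{H^\mez}.
\]
Cauchy--Schwarz then furnishes $\|\na_{x,z}v\|_{L^2(\wt{\Omega^{-}})}\le \cF(N_s)\|\eta_1-\eta_2\|_{H^\mez}\|f\|_{\widetilde{H}^s_-}$, and the very same product bound gives $\|G\|_{L^2([-1,0];H^{-1})}\le \|(\mathcal{A}_2-\mathcal{A}_1)\na v_2\|_{L^2_{x,z}}\le \cF(N_s)\|\eta_1-\eta_2\|_{H^\mez}\|f\|_{\widetilde{H}^s_-}$.

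The last step is to apply Lemma \ref{lemm:tracewt} to $v$ with coefficient matrix $\mathcal{A}_1$, whose coefficients are controlled by $\eta_1\in H^s$ as required. This converts the two $L^2$ bounds above into
\[
\|\na_{x,z}v\|_{X^{-\mez}([-1,0])}\le \cF(N_s)\big(\|\na_{x,z}v\|_{L^2_{x,z}}+\|G\|_{L^2H^{-1}}\big)\le \cF(N_s)\|\eta_1-\eta_2\|_{H^\mez}\|f\|_{\widetilde{H}^s_-},
\]
which is the claim. The main obstacle is confining the regularity cost of $\eta_1-\eta_2$ to $H^\mez$: this is achieved by placing all derivatives on $v_2$, whose subcritical regularity $s>1+\tfrac{d}{2}$ buys an $L^\infty$ bound on $\na v_2$, leaving $\na_{x,z}\varrho_\delta$ to be measured in $L^2_{x,z}$, which is precisely the regularity inherited from the $H^\mez$ trace of $\eta_1-\eta_2$ via the smoothing provided by the diffeomorphism \eqref{diffeo:diff}.
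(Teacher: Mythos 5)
Your derivation of the $L^2$ energy bound $\|\nabla_{x,z}v\|_{L^2(\widetilde{\Omega^-})}\le\cF(N_s)\|\eta_1-\eta_2\|_{H^\mez}\|f\|_{\widetilde{H}^s_-}$ is correct and matches the paper's in spirit: you test the divergence-form equation by $v$, use ellipticity from $\partial_z\varrho_1\ge\min(1,h/2)$, and exploit $\nabla v_2\in L^\infty$ together with the Poisson-type smoothing to put $\nabla_{x,z}\varrho_\delta$ in $L^2_{x,z}$ at the cost of only $\|\eta_1-\eta_2\|_{H^\mez}$.

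The gap is in the bootstrap to $X^{-\mez}$. You want to apply Lemma \ref{lemm:tracewt} to $v$ with coefficient matrix $\mathcal{A}_1$, and you assert
\begin{equation*}
\|G\|_{L^2([-1,0];H^{-1})}\le\|(\mathcal{A}_2-\mathcal{A}_1)\nabla_{x,z}v_2\|_{L^2_{x,z}},\qquad G:=\cnx_{x,z}\bigl((\mathcal{A}_2-\mathcal{A}_1)\nabla_{x,z}v_2\bigr).
\end{equation*}
This inequality is \emph{false}: writing $\Sigma:=(\mathcal{A}_2-\mathcal{A}_1)\nabla_{x,z}v_2$, you have $G=\cnx_x\Sigma'+\partial_z\Sigma_{d+1}$, and while $\cnx_x$ indeed maps $L^2_zL^2_x\to L^2_zH^{-1}_x$, $\partial_z$ does not — it is an unbounded operator in the $z$-variable on $L^2_z$ (take $\Sigma_{d+1}(x,z)=\sin(Nz)g(x)$ with $N\to\infty$ to see the failure). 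So you have not verified the hypothesis of Lemma \ref{lemm:tracewt}, and the lemma cannot be invoked as stated. One could try to show $\partial_z\Sigma_{d+1}\in L^2_zH^{-1}_x$ by differentiating $\Sigma_{d+1}=\nabla_x\varrho_\delta\cdot\nabla_x v_2-(p_1-p_2)\partial_z v_2$ and running Bony product estimates term by term, but this is delicate near the subcritical threshold and certainly does not follow ``from the same product bound.''

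The paper sidesteps the issue entirely. Instead of a source term, it introduces the fluxes $\Xi_j=-\nabla_x\varrho_j\cdot\nabla_xv_j+p_j\partial_zv_j$ associated to the \emph{homogeneous} equations $\cnx_{x,z}(\mathcal{A}_j\nabla_{x,z}v_j)=0$, so that each $\partial_z\Xi_j$ is a pure horizontal divergence. Consequently $\Xi:=\Xi_1-\Xi_2$ satisfies $\partial_z\Xi=-\cnx_x\bigl(\partial_z\varrho_\delta\nabla_xv_1+\partial_z\varrho_2\nabla_xv-\nabla_x\varrho_\delta\partial_zv_1-\nabla_x\varrho_2\partial_zv\bigr)$, which is manifestly in $L^2_zH^{-1}_x$ by the same $L^2$ bounds you already have; Theorem \ref{trace:Lions} then gives $\Xi\in C([-1,0];H^{-\mez})$, and $\partial_z v$ is algebraically recovered from $\Xi$, $\nabla_xv$, and $\nabla_{x,z}\varrho_\delta$. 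Observe that $\Xi$ coincides with your quantity minus the last component of $\Sigma$, i.e., $\Xi=\Xi^{\mathrm{stu}}-\Sigma_{d+1}$ — this shift is precisely what eliminates the troublesome $\partial_z\Sigma_{d+1}$. So to repair your argument, either re-derive the trace estimate with the source moved into the definition of the flux (which amounts to the paper's $\Xi_1-\Xi_2$), or prove directly that $\partial_z\Sigma_{d+1}\in L^2_zH^{-1}_x$ with the right norm; the former is cleaner.
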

\begin{proof}
We first recall the variational characterization \eqref{variform:u} 
\bq\label{variform:phij}
\int_{\Omega^-_j}\na_{x, y} \phi_j\cdot \na_{x, y} \varphi dxdy=0\quad\forall \varphi \in  H^{1}_{0,\ast}(\Omega^-_j)=\{\varphi \in \dot H^1(\Omega^-_j): \varphi\vert_{\Sigma_j}=0\}.
\eq
In the fixed domain $\wt{\Omega^-}$, this becomes
\[
\int_{\wt{\Omega^-}}\mathcal{A}_j\na_{x, z} v_j\cdot \na_{x, z} \tt dxdy=0\quad\forall \tt \in  H^{1}_{0,\ast}(\wt{\Omega^-})=\{\varphi \in \dot H^1(\wt{\Omega^-}): \varphi\vert_{z=0}=0\},
\]
where 
\[
\mathcal{A}_j=
\begin{bmatrix}
\p_z\varrho_j\cdot Id & -\na_x\varrho_j\\
-(\na_x\varrho_j)^T& \frac{1+|\na_x\varrho_j|^2}{\p_z\varrho_j}
\end{bmatrix}.
\]
Consequently,
\bq\label{variform:fixed}
\int_{\wt{\Omega^-}}\mathcal{A}_1\na_{x, z} v\cdot \na_{x, z} \tt dxdy=\int_{\wt{\Omega^-}}(\mathcal{A}_2-\mathcal{A}_1)\na_{x, z} v_2\cdot \na_{x, z} \tt dxdy\quad\forall \tt \in  H^1_{0, *}(\wt{\Omega^-}).
\eq
Since $v\vert_{z=0}=0$, we have $v\in H^{1}_{ 0,\ast}(\wt{\Omega^-})$. Inserting $\tt=v$ into \eqref{variform:fixed} yields
\bq\label{variform:fixed:2}
\begin{aligned}
\int_{\wt{\Omega^-}}\mathcal{A}_1\na_{x, z} v\cdot \na_{x, z} v dxdy&=\int_{\wt{\Omega^-}}(\mathcal{A}_2-\mathcal{A}_1)\na_{x, z} v_2\cdot \na_{x, z} v dxdy\\
&=\int_{\Rr^d\times (-1, 0)}(\mathcal{A}_2-\mathcal{A}_1)\na_{x, z} v_2\cdot \na_{x, z} v dxdy,
\end{aligned}
\eq
where we used the fact that $\mathcal{A}_2=\mathcal{A}_1$ in $\wt{\Omega^-_2}$, which in turn comes from the fact that $\varrho_1=\varrho_2$  in $\wt{\Omega^-_2}$. In view of  \eqref{EstimRho12} and \eqref{EstimV_jNew}, 
\bq\label{est:diffA}
\begin{split}
&\| \mathcal{A}_1-\mathcal{A}_2\|_{L^2(\Rr^d\times (-1, 0))}\le \cF(\|(\eta_1, \eta_2)\|_{B^1_{\infty, 1}})\|\eta_1-\eta_2\|_{H^\mez},\\
&\Vert \na_{x, z}v_2\Vert_{L^\infty(\mathbb{R}^d\times (-1,0))}\le \cF(N_s)\Vert f\Vert_{\widetilde{H}^s_-}
\end{split}
\eq
and since (see \eqref{EstimRho12})
\begin{equation*}\label{lowerbound:A}
\langle\mathcal{A}_1\na_{x, z} v\cdot \na_{x, z} v\rangle \ge \min(\p_z\varrho_1, \frac{1}{\p_z\varrho_1})| \na_{x, z}v|^2\ge \frac{1}{\cF(\| \eta_1\|_{B^1_{\infty, 1}})}| \na_{x, z}v|^2
\end{equation*}
pointwise in $\wt{\Omega^-}$, we obtain that
\bq\label{contraction:H1}
\| \na_{x, z} v\|_{L^2(\wt{\Omega^-})}\le \cF(N_s)\|\eta_1-\eta_2\|_{H^\mez}\| f\|_{\widetilde{H}^s_{-}}.
\eq
Since $\Rr^d\times (-1, 0)\subset \wt{\Omega^-}$, this yields
\bq\label{nax:vdiff}
\| \na_{x, z} v\|_{L^2((-1, 0); L^2(\Rr^d))}\le  \cF(N_s)\|\eta_1-\eta_2\|_{H^\mez}\| f\|_{\widetilde{H}^s_{-}}.
\eq
According to Theorem \ref{trace:Lions}, 
\bq\label{nax:vdiffC}
\| \na_x v\|_{X^{-\mez}([-1, 0])}\les \| \na_{x, z} v\|_{L^2((-1, 0); L^2)}.
\eq
As for $\| \p_zv\|_{X^{-\mez}([-1, 0])}$ it remains to estimate $\| \p_zv\|_{C([-1, 0]; H^{-\mez})}$. Setting 
\[
\Xi_j(x, z)=-\na_x\varrho_j\cdot \na_x v_j+\frac{1+|\na_x\varrho_j|^2}{\p_z\varrho_j}\p_zv_j,
\]
 it follows from the equation $\cnx_{x, z}(\mathcal{A}_j\na_{x,z}v_j)=0$ that
\[
\p_z\Xi_j=-\cnx_x(\p_z\varrho_j\na_xv_j-\na_x\varrho_j\p_zv_j).
\]
Hence $\Xi=\Xi_1-\Xi_2$ is a divergence 
\[
\p_z\Xi=-\cnx_x(\p_z\varrho_\delta\na_xv_1+\p_z\varrho_2\na_xv-\na_x\varrho_\delta\p_zv_1-\na_x\varrho_2\p_zv),
\]
and using \eqref{EstimRho12} and \eqref{EstimV_jNew}, we obtain the bounds
\begin{equation}
\begin{split}
\Vert \Xi\Vert_{L^2_zL^2}&\le  \cF(N_s)\| \eta_1-\eta_2\|_{H^\mez}\| f\|_{\wt H^s_{-}},\\
\| \p_z\Xi\|_{L^2_zH^{-1}}&\le \| \p_z\varrho_\delta\na_xv_1+\p_z\varrho_2\na_xv-\na_x\varrho_\delta\p_zv_1-\na_x\varrho_2\p_zv\|_{L^2_zL^2},\\
&\le  \cF(N_s)\| \eta_1-\eta_2\|_{H^\mez}\| f\|_{\wt H^s_{-}}.
\end{split}
\end{equation}
Theorem \ref{trace:Lions} then yields
\[
\| \Xi\|_{C([-1, 0]; H^{-\mez})}\le  \cF(N_s)\| \eta_1-\eta_2\|_{H^\mez}\| f\|_{\widetilde{H}^s_{-}}.
\]
Finally, by writting
\[
\p_zv=\frac{\p_z\varrho_2}{1+|\na_x\varrho_2|^2}\Big\{\Xi+\na_x\varrho_2\cdot \na_x v+\frac{(1+\vert\nabla_x\varrho_1\vert^2)}{\partial_z\varrho_1\p_z\varrho_2}\partial_z\varrho_\delta+(\nabla_xv_1-\frac{\p_zv_1\nabla_x(\varrho_1+\varrho_2)}{\partial_z\varrho_1})\nabla_x\varrho_\delta\Big\}
\]
we deduce that 
\[
\| \p_zv\|_{C([-1, 0]; H^{-\mez})}\le  \cF(N_s)\| \eta_1-\eta_2\|_{H^\mez}\| f\|_{\widetilde{H}^s_{-}}.
\]
This completes  the proof of Lemma \ref{lemm:contractlow}.
\end{proof}

This low-regularity bound can easily be upgraded to a bound with no loss of regularity in $\eta_1-\eta_2$  with the aid of the next lemma. We shall use frequently the fact that for $s>1+\frac {d}{2}$, $\sigma\in [\mez, s]$ and $\delta<s-1-\frac{d}{2}$, we have
\bq\label{cons:cdsigma}
\sigma+s>2+\delta.
\eq
\begin{lemm}
For any $\sigma\in [\mez, s]$, we have
\begin{align}\label{contra:alpha}
&\| \alpha_1-\alpha_2\|_{X^{\sigma-1}([-1, 0])}+\| \beta_1-\beta_2\|_{X^{\sigma-1}([-1, 0])}\le \cF(\| \eta\|_{H^s})\| \eta_1-\eta_2\|_{H^\sigma},\\
\label{contra:gammaY}
&\| \gamma_1-\gamma_2\|_{Y^{\sigma-1}([-1, 0])}\le \cF(\| \eta\|_{H^s})\| \eta_1-\eta_2\|_{H^\sigma}.
\end{align}
\end{lemm}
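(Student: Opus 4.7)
The strategy is to view each coefficient as a smooth nonlinear function of $(\nabla_x\varrho_j,\partial_z\varrho_j)$ (times, in the case of $\gamma_j$, second-order derivatives of $\varrho_j$), then reduce the differences to estimating products of the form (smooth function of $\varrho_j$)\,$\times$\,(derivative of $\varrho_\delta$) via the fundamental theorem of calculus, and finally apply the product rules in the $X^\mu/Y^\mu$ scale contained in Lemma \ref{lemm:XY}.

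For \eqref{contra:alpha}, write $\alpha_j=F_\alpha(\nabla_x\varrho_j,\partial_z\varrho_j)$ and $\beta_j=F_\beta(\nabla_x\varrho_j,\partial_z\varrho_j)$ with $F_\alpha(m,n)=n^2/(1+|m|^2)$ and $F_\beta(m,n)=-2nm/(1+|m|^2)$. Since $\partial_z\varrho_j\ge\min(1,h/2)$ by \eqref{EstimRho12}, $F_\alpha$ and $F_\beta$ are smooth on a neighborhood of the range of $(\nabla_x\varrho_j,\partial_z\varrho_j)$. The fundamental theorem of calculus gives
\begin{equation*}
\alpha_1-\alpha_2=\int_0^1\bigl[(\partial_m F_\alpha)(m_t,n_t)\cdot\nabla_x\varrho_\delta+(\partial_n F_\alpha)(m_t,n_t)\,\partial_z\varrho_\delta\bigr]\,dt,
\end{equation*}
with $m_t=t\nabla_x\varrho_1+(1-t)\nabla_x\varrho_2$ and $n_t=t\partial_z\varrho_1+(1-t)\partial_z\varrho_2$, and similarly for $\beta_1-\beta_2$. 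The tame composition estimate (standard for smooth functions vanishing at zero, e.g.\ \eqref{est:F(u):S}) combined with \eqref{EstimRho12} bounds the coefficients $(\partial_m F_\alpha)(m_t,n_t)$, $(\partial_n F_\alpha)(m_t,n_t)$ in $X^{s-1}([-1,0])$ by $\mathcal{F}(N_s)$ uniformly in $t\in[0,1]$. Combined with $\|\nabla_x\varrho_\delta,\partial_z\varrho_\delta\|_{X^{\sigma-1}}\lesssim\|\eta_1-\eta_2\|_{H^\sigma}$ (from \eqref{EstimRho12}), a product estimate $X^{s-1}\cdot X^{\sigma-1}\hookrightarrow X^{\sigma-1}$ (valid since $s>1+\tfrac d2$ makes $X^{s-1}$ an algebra acting on $X^{\sigma-1}$ for $\sigma\in[\mez,s]$, as follows from paraproducts and the Sobolev product rules used throughout the paper) yields \eqref{contra:alpha}.

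For \eqref{contra:gammaY}, decompose $\gamma_j$ into a sum of products of the form (smooth function of $\nabla_x\varrho_j,\partial_z\varrho_j$)\,$\times$\,(second-order derivative of $\varrho_j$), where the second-order derivatives are $\partial_z^2\varrho_j,\Delta_x\varrho_j,\nabla_x\partial_z\varrho_j$. Each such second-order derivative lies in $X^{s-2}([-1,0])$ with norm $\le\mathcal{F}(N_s)$, and the corresponding difference $\partial_z^2\varrho_\delta,\Delta_x\varrho_\delta,\nabla_x\partial_z\varrho_\delta$ lies in $X^{\sigma-2}$ with norm $\lesssim\|\eta_1-\eta_2\|_{H^\sigma}$. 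Writing $\gamma_1-\gamma_2$ via the fundamental theorem of calculus produces two types of terms: (i) a smooth function evaluated at the $\varrho_j$'s multiplied by a second derivative of $\varrho_\delta$, and (ii) a difference of smooth functions (controlled in $X^{\sigma-1}$ as in Step 1) multiplied by a second derivative of $\varrho_2$. For type (i), apply \eqref{XY1} with $(s_0,s_1,s_2)=(\sigma-1,s-1,\sigma-2)$; the three conditions in \eqref{cd:XY1} reduce to $\sigma-1\le s$, $s>1+\tfrac d2$, and $\sigma+s>2$, all of which hold. For type (ii), apply \eqref{XY1} with $(s_0,s_1,s_2)=(\sigma-1,\sigma-1,s-2)$; the conditions become $\sigma-1\le s-1$, $s>1+\tfrac d2$, and $\sigma+s>2$, all satisfied. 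Both types land in $Y^{\sigma-1}$ with the bound $\mathcal{F}(N_s)\|\eta_1-\eta_2\|_{H^\sigma}$, giving \eqref{contra:gammaY}.

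The main obstacle lies in handling $\gamma_j$: it contains second-order derivatives of $\varrho_j$, placing it at a lower regularity than $\alpha_j,\beta_j$, which is why the target space drops from $X^{\sigma-1}$ to $Y^{\sigma-1}$. The bookkeeping must be done carefully to ensure that the three hypotheses of \eqref{cd:XY1} are met at the limiting exponent $s=1+\tfrac d2+\varepsilon$; fortunately the gap $s-(1+\tfrac d2)>0$ is precisely what makes the second condition strict. No delicate cancellation is required—only the tame composition estimates for smooth Nemytskii operators and the paraproduct-based multiplication rules of Lemma \ref{lemm:XY}.
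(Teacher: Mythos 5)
Your proposal is correct and follows essentially the same path as the paper's proof: part 1 reduces $\alpha_1-\alpha_2$, $\beta_1-\beta_2$ to products of coefficients bounded in $X^{s-1}$ with $\nabla_{x,z}\varrho_\delta\in X^{\sigma-1}$, and part 2 decomposes $\gamma_1-\gamma_2$ into exactly the two types of terms the paper isolates (their $E_1$ and $E_2$), estimating each with \eqref{XY1} using the same exponent triples $(s_0,s_1,s_2)=(\sigma-1,\sigma-1,s-2)$ and $(\sigma-1,s-1,\sigma-2)$. The only stylistic difference is that you make the fundamental-theorem-of-calculus/composition step explicit, where the paper simply says the differences "follow easily" from the product rule.
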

\begin{proof}
From the definition of $\alpha$ and $\beta$ we see that they are nonlinear functions of $\na_{x, z}\varrho$ which is bounded in $L^\infty_z H^{s-1}_x$. By the  product rule \eqref{pr}, we have that  multiplication with $H^{s-1}$ is a continuous linear operator from $H^\nu$ to $H^\nu$ for any $\nu\in [-\mez, s]$. Thus, \eqref{contra:alpha}  follows easily. 

For $\gamma_1-\gamma_2$, let us consider the typical term  $\frac{\alpha_1}{\p_z\varrho_1}\Delta_x\varrho_1-\frac{\alpha_2}{\p_z\varrho_2}\Delta_x\varrho_2$ which in turns contains the following typical terms
\[
E_1=\frac{\alpha_1-\alpha_2}{\p_z\varrho_1}\Delta_x\varrho_1,\quad E_2=\frac{\alpha_2}{\p_z\varrho_2}\Delta_x\varrho_\delta.
\]
In view of \eqref{cons:cdsigma}, using \eqref{XY1} with $s_0=\sigma-1$, $s_1=\sigma-1$ and $s_2=s-2$ we obtain
\[
\| E_1\|_{Y^{\sigma-1}}\les \| \frac{\alpha_1-\alpha_2}{\p_z\varrho_1}\|_{X^{\sigma-1}}\| \Delta_x\varrho_1\|_{X^{s-2}}\le\cF(\| \eta\|_{H^s})\| \eta_1-\eta_2\|_{H^\sigma}.
\]
On the other hand, applying \eqref{XY1} with $s_0=\sigma-1$, $s_1=s-1$ and $s_2=\sigma-2$, we bound $E_2$ as
\[
\| E_2\|_{Y^{\sigma-1}}\les \| \frac{\alpha_2}{\p_z\varrho_2}\|_{X^{s-1}}\| \Delta_x\varrho_\delta\|_{X^{\sigma-2}}\le\cF(\| \eta\|_{H^s})\| \eta_1-\eta_2\|_{H^\sigma}.
\]
The other terms can be treated similarly; this finishes the proof of \eqref{contra:gammaY}. 
\end{proof}

\begin{lemm}\label{lemm:contractv}
For any $\sigma\in [\mez, s]$, we have
\bq\label{est:contractv}
\lA \nabla_{x,z} v\rA_{X^{\sigma-1}([z_0,0])}\le \mathcal{F}\big(N_s\big)\| \eta_1-\eta_2\|_{H^\sigma}\| f\|_{\wt H^s_{-}}.
\eq
\end{lemm}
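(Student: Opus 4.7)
The plan is to apply the elliptic regularity result of Proposition \ref{prop:elliptic} to the equation \eqref{eq:vdiff} satisfied by $v=v_1-v_2$, and reduce to estimating the right-hand side $F$ and the low-regularity norm $\|\nabla_{x,z}v\|_{X^{-\mez}}$. Both of these have essentially already been prepared: the low-norm estimate is exactly Lemma \ref{lemm:contractlow}, and the contraction estimates for the coefficients are \eqref{contra:alpha}--\eqref{contra:gammaY}.

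More precisely, I would first observe that since $v_1|_{z=0} = v_2|_{z=0} = f$, the trace $v|_{z=0} = 0$, so there is no Dirichlet contribution in the elliptic estimate. Since $\mathcal{L}_1 v = F$, Proposition \ref{prop:elliptic} (applied after first verifying condition \eqref{elliptic:lowcd}, which is guaranteed by Lemma \ref{lemm:contractlow}) yields
\[
\|\nabla_{x,z}v\|_{X^{\sigma-1}([z_0,0])} \le \cF(N_s)\big(\|F\|_{Y^{\sigma-1}([z_1,0])} + \|\nabla_{x,z}v\|_{X^{-\mez}([z_0',0])}\big)
\]
for $z_1 < z_0' < z_0 < 0$. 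The last term is controlled by $\cF(N_s)\|\eta_1-\eta_2\|_{H^\mez}\|f\|_{\wt H^s_-} \le \cF(N_s)\|\eta_1-\eta_2\|_{H^\sigma}\|f\|_{\wt H^s_-}$ via Lemma \ref{lemm:contractlow}, so it only remains to estimate $\|F\|_{Y^{\sigma-1}}$.

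For $F$, I split it into three pieces. For $(\alpha_1-\alpha_2)\Delta_x v_2$ and $(\beta_1-\beta_2)\cdot\nabla_x\p_z v_2$, I apply the product estimate \eqref{XY1} of Lemma \ref{lemm:XY}(1) with $(s_0,s_1,s_2)=(\sigma-1,\sigma-1,s-2)$. The hypotheses \eqref{cd:XY1} are satisfied since $\sigma\le s$, $s>1+\frac d2$ (so $s_1+s_2=\sigma+s-3 > \sigma-1+\frac d2-1$), and $s_1+s_2+1 = \sigma+s-2>0$ (which uses \eqref{cons:cdsigma}); combined with \eqref{contra:alpha} and \eqref{EstimV_jNew}, this yields a bound by $\cF(N_s)\|\eta_1-\eta_2\|_{H^\sigma}\|f\|_{\wt H^s_-}$. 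For the third piece $(\gamma_1-\gamma_2)\p_z v_2$, the coefficient is only controlled in $Y^{\sigma-1}$ (by \eqref{contra:gammaY}), so I use \eqref{XY2} with $(s_0,s_1,s_2)=(\sigma-1,\sigma-1,s-1)$. The hypotheses \eqref{cd:XY2} hold because $s>1+\frac d2$ gives $s_1+s_2=\sigma+s-2 > \sigma-1+\frac d2$, and again $\cF(N_s)\|\eta_1-\eta_2\|_{H^\sigma}\|f\|_{\wt H^s_-}$ drops out after combining with \eqref{EstimV_jNew}.

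The main technical concern is simply verifying that the product-rule conditions \eqref{cd:XY1} and \eqref{cd:XY2} hold uniformly across the entire range $\sigma\in[\mez,s]$; this is precisely where the constraint $s>1+\frac d2$ is used. Once those are in place, the proof is a direct assembly: the elliptic regularity with zero Dirichlet data, the contraction bounds for the coefficients, and the a priori bound \eqref{EstimV_jNew} for $v_2$. A minor bookkeeping point is that Proposition \ref{prop:elliptic} is stated on a subinterval $[z_0,0]\subset [z_1,0]\subset(-1,0)$, but since \eqref{EstimV_jNew} already gives $\nabla_{x,z}v_j\in X^{s-1}([-1,0])$ up to $z=-1$, one can choose $z_0, z_1$ arbitrarily close to $-1$ and extend the estimate to any compact subinterval of $(-1,0]$, which suffices for the subsequent applications (in particular for taking the trace at $z=0$).
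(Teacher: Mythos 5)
Your proof is correct and follows essentially the same route as the paper's: apply Proposition \ref{prop:elliptic} to $\mathcal{L}_1 v = F$ with $v|_{z=0}=0$, bound $\|F\|_{Y^{\sigma-1}}$ via \eqref{XY1} for the $\alpha$ and $\beta$ terms (with exactly the same index choices $(\sigma-1,\sigma-1,s-2)$) and \eqref{XY2} for the $\gamma$ term (with $(\sigma-1,\sigma-1,s-1)$), and close with Lemma \ref{lemm:contractlow} and $\|\eta_1-\eta_2\|_{H^\mez}\le\|\eta_1-\eta_2\|_{H^\sigma}$. The only differences are cosmetic (order of treatment and your extra bookkeeping remark about the interval endpoints).
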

\begin{proof}
We claim that for $z_1\in (-1, 0)$, and $F$ as in \eqref{eq:vdiff}, there exists $\cF$ such that
\begin{equation}\label{estF:contra}
\| F\|_{Y^{\sigma-1}([z_1, 0])}%\le \mathcal{F}\big(N_s\big)\| \varrho_\delta \|_{X^\sigma}\| f\|_{\wt H^s_{-}}
\le \mathcal{F}\big(N_s\big)\| \eta_1-\eta_2 \|_{H^\sigma}\| f\|_{\wt H^s_{-}}.
\end{equation}
We first apply \eqref{XY2} with $s_0=\sigma-1$, $s_1=\sigma-1$ and $s_2=s-1$:
\[
\| (\gamma_1-\gamma_2)\p_zv_2\|_{Y^{\sigma-1}}\les \| \gamma_1-\gamma_2\|_{Y^{\sigma-1}}\| \p_zv_2\|_{X^{s-1}}\le \mathcal{F}\big(N_s\big)\| \eta_1-\eta_2 \|_{H^\sigma}\| f\|_{\wt H^s_{-}},
\]
where we have used \eqref{contra:gammaY} in the second inequality. As for $(\alpha_1-\alpha_2)\Delta_xv_2$ we apply \eqref{XY1} with $s_0=\sigma-1$, $s_1=\sigma-1$ and $s_2=s-2$:
\[
\| (\alpha_1-\alpha_2)\Delta_xv_2\|_{Y^{\sigma-1}}\les \| \alpha_1-\alpha_2\|_{X^{\sigma-1}}\| \Delta_xv_2\|_{X^{s-2}}\les  \mathcal{F}\big(N_s\big)\| \eta_1-\eta_2 \|_{H^\sigma}\| f\|_{\wt H^s_{-}}.
\]
Proceeding similarly for $(\beta_1-\beta_2)\cdot\nabla_{x}\partial_zv_2$, we obtain \eqref{estF:contra}. Since $v\vert_{z=0}=0$, Proposition  \ref{prop:elliptic} gives that
\bq\label{contractest:v}
\lA \nabla_{x,z} v\rA_{X^{\sigma-1}([z_0,0])}
\le \mathcal{F}(N_s)\big(\lA F\rA_{Y^{\sigma-1}([z_1, 0])}
+ \lA \nabla_{x,z} v\rA_{X^{-\mez}([-1,0])} \big)
\eq
 for  $-1<z_1<z_0<0$. Combining this with \eqref{estF:contra}, Lemma \ref{lemm:contractlow} and the condition $\sigma\ge \mez$, we finish the proof.
\end{proof}
\begin{proof}[Proof of Corollary \ref{theo:contractionDN}]
 Corollary \ref{theo:contractionDN} can be deduced from Theorem \ref{contraction:DN}, Theorem \ref{theo:sc} (i) and the fact that $B_2$ and  $V_2$ are in $L^\infty_x$. Here we give a short proof using Lemma \ref{lemm:contractv}. In view of \eqref{ExpressionDNNew}, we find that typical terms in $G^-(\eta_1)f-G^-(\eta_2)f$ are $\na_x(\varrho_1-\varrho_2)\cdot\na_x v_1\vert_{z=0}$ and $\na_x\varrho_2\cdot\na_x(v_1-v_2)\vert_{z=0}$. Using \eqref{pr} and \eqref{cons:cdsigma}, we have at $z=0$ that
\begin{align*}
&\| \na_x(\varrho_1-\varrho_2)\cdot\na_x v_1\|_{H^{\sigma-1}}\les \| \na_x(\varrho_1-\varrho_2)\|_{H^{\sigma-1}}\| \na_xv_1\|_{H^{s-1}}\le\mathcal{F}\big(N_s\big)\| \eta_1-\eta_2\|_{H^\sigma}\| f\|_{\wt H^s_{-}},\\
&\|\na_x\varrho_2\cdot\na_x(v_1-v_2)\|_{H^{\sigma-1}}\les \| \na_x\varrho_2\|_{H^{s-1}}\| \na_xv\|_{H^{\sigma-1}}\le\mathcal{F}\big(N_s\big)\| \eta_1-\eta_2\|_{H^\sigma}\| f\|_{\wt H^s_{-}},
\end{align*}
where we have applied Lemma \ref{lemm:contractv} in the last inequality. This finishes the proof of Corollary \ref{theo:contractionDN}. 
\end{proof}

Let us turn to the proof of Theorem \ref{theo:contractionDN2}. Fixing $\sigma\in [\mez+\delta, s]$, we have $\sigma-\delta\in [\mez, s-\delta]$, and  hence  Lemma \ref{lemm:contractv} yields the contraction estimate
\bq\label{est:contractv:2}
\lA \nabla_{x,z} v\rA_{X^{\sigma-1-\delta}([z_0,0])}\le \mathcal{F}\big(N_s\big)\| \eta_1-\eta_2\|_{H^{\sigma-\delta}}\| f\|_{\wt H^s_{-}}.
\eq
  We first prove a technical analog of Lemma \ref{lemm:F2}.
\begin{lemm}\label{lemm:F2analog}
With notation similar to Lemma \ref{prop:decompose}, letting $\mb_2=\frac{\partial_zv_2}{\partial_z\varrho_2}$ we have
\begin{equation}\label{DefR2Delta}
\begin{split}
&(\partial_z-T_{a_1})\left[(\partial_z-T_{A_1})v-T_{\mb_2}(\partial_z-T_{A_1})\varrho_\delta\right]=R_\delta^1,\\
&\Vert R^1_\delta\Vert_{Y^{\sigma-1}([z_0,0])}\le\mathcal{F}(N_s)\Vert \eta_1-\eta_2\Vert_{H^{\sigma-\delta}}\Vert f\Vert_{\widetilde{H}^s_-}
\end{split}
\end{equation}
for any $z_0\in (-1, 0)$.
\end{lemm}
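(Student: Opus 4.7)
The plan is to mirror the proof of Lemma \ref{lemm:F2}, with each loss of regularity in the high norm of $\eta$ replaced by a loss of regularity in $\eta_1-\eta_2$. Applying the factorization $Q_1 = (\p_z - T_{a_1})(\p_z-T_{A_1}) + R_{Q_1}$ from Lemma \ref{prop:decompose} to $v=v_1-v_2$, together with the identities $Q_j v_j = \mb_j Q_j\varrho_j$ from \eqref{Neweq:v}, one derives
\begin{equation*}
Q_1 v = \mb_2\, Q_1\varrho_\delta + (\mb_1 - \mb_2)\, Q_1\varrho_1 + \mb_2(Q_1 - Q_2)\varrho_2 - (Q_1-Q_2)\, v_2.
\end{equation*}
Only the first summand is \emph{main}: paralinearize it as $T_{\mb_2} Q_1\varrho_\delta + (\mb_2 - T_{\mb_2})Q_1\varrho_\delta$, factor once more to obtain $T_{\mb_2}(\p_z-T_{a_1})(\p_z-T_{A_1})\varrho_\delta + T_{\mb_2} R_{Q_1}\varrho_\delta$, and use the commutator identity $[T_{\mb_2},\p_z - T_{a_1}] = -T_{\p_z\mb_2} - [T_{\mb_2},T_{a_1}]$ to pull $T_{\mb_2}$ to the outside. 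Collecting, $R^1_\delta$ decomposes as the sum of seven explicit remainders: $-R_{Q_1}v$, $T_{\mb_2}R_{Q_1}\varrho_\delta$, $(\mb_2-T_{\mb_2})Q_1\varrho_\delta$, $-T_{\p_z\mb_2}(\p_z-T_{A_1})\varrho_\delta$, $-[T_{\mb_2},T_{a_1}](\p_z-T_{A_1})\varrho_\delta$, $(\mb_1-\mb_2)Q_1\varrho_1$, and $\mb_2(Q_1-Q_2)\varrho_2-(Q_1-Q_2)v_2$.

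Each remainder will be bounded in $Y^{\sigma-1}([z_0,0])$ by $\cF(N_s)\|\eta_1-\eta_2\|_{H^{\sigma-\delta}}\|f\|_{\widetilde{H}^s_-}$. The two $R_{Q_1}$ contributions follow from \eqref{est:RAa2} with $\theta=\sigma-1$, which gains $\delta$, combined with $\|\na_{x,z}v\|_{X^{\sigma-1-\delta}}$ via \eqref{est:contractv:2} and $\|\na_{x,z}\varrho_\delta\|_{X^{\sigma-1-\delta}}\lesssim\|\eta_1-\eta_2\|_{H^{\sigma-\delta}}$ via \eqref{EstimRho12}; the paraproduct with $\mb_2\in L^\infty_zL^\infty_x$ (from Lemma \ref{LemEstb}) is harmless. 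The operator-difference contributions reduce to products of $(\alpha_1-\alpha_2)$ or $(\beta_1-\beta_2)$ against baseline quantities in $X^{s-1}$ and $X^{s-2}$, so \eqref{contra:alpha} and Lemma \ref{lemm:XY} close them. The paraproduct remainder $(\mb_2-T_{\mb_2})Q_1\varrho_\delta$ is split by Bony's decomposition and controlled by the $L^\infty H^{s-1}$-bound on $\mb_2$ together with \eqref{EstimRho12}. The term $(\mb_1-\mb_2)Q_1\varrho_1$ is handled via the algebraic identity
\begin{equation*}
\mb_1 - \mb_2 = \frac{\p_z v - \mb_2\,\p_z\varrho_\delta}{\p_z\varrho_1},
\end{equation*}
which together with \eqref{est:contractv:2}, \eqref{EstimRho12}, and Lemma \ref{lemm:XY} yields a contraction estimate on $\mb_1-\mb_2$ pairing against $Q_1\varrho_1\in L^2_zH^{s-1-\delta}$. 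Finally, the commutator $[T_{\mb_2},T_{a_1}](\p_z-T_{A_1})\varrho_\delta$ is handled by Theorem \ref{theo:sc}(ii), which produces an operator of order $1-\nu$ for $\nu>0$ coming from the embedding $H^{s-1}\subset W^{\nu,\infty}$, while $T_{\p_z\mb_2}(\p_z-T_{A_1})\varrho_\delta$ uses \eqref{boundpara} together with an analog of \eqref{estb:Y} for $\p_z\mb_2$.

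The main technical obstacle lies in these last two commutator-type estimates: both require a case split over $\sigma$ exactly as in Lemma \ref{lemm:F2} (comparing $\sigma$ to $s-\delta$), with Theorem \ref{theo:sc}(ii) delivering the required smoothing in one case and a direct paraproduct bound handling the other. The lower restriction $\sigma\ge\mez+\delta$ enters precisely to guarantee that the difference estimate \eqref{est:contractv:2} applies at the shifted index $\sigma-\delta\ge\mez$, so that the $R_{Q_1}v$ and $(\mb_1-\mb_2)Q_1\varrho_1$ terms can absorb the $\delta$-loss cleanly. The bookkeeping is sharp: every remainder must produce exactly the $\delta$-loss on $\eta_1-\eta_2$ and no loss on the baselines.
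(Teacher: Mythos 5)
Your decomposition is essentially the paper's: the same algebraic identity $Q_1v-\mb_2Q_1\varrho_\delta=(\mb_1-\mb_2)Q_1\varrho_1+\mb_2(Q_1-Q_2)\varrho_2-(Q_1-Q_2)v_2$ (the paper writes the first summand as $\gamma_1\partial_zv-\frac{\mb_2}{\partial_z\varrho_1}Q_1\varrho_1\partial_z\varrho_\delta$, which is exactly your identity for $\mb_1-\mb_2$), followed by Bony's decomposition of $\mb_2Q_1\varrho_\delta$, the factorization of $Q_1$ through $R_{Q_1}$, and the commutator $[T_{\mb_2},\partial_z-T_{a_1}]$, with the $R_{Q_1}$ terms closed by \eqref{est:RAa2} at $\theta=\sigma-1$ and \eqref{est:contractv:2}, the operator differences by \eqref{contra:alpha} and Lemma \ref{lemm:XY}, and the role of $\sigma\ge\mez+\delta$ identified correctly.

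The one place where your plan deviates is the treatment of the commutator-type terms, and there your claim is off. You assert that $T_{\partial_z\mb_2}(\partial_z-T_{A_1})\varrho_\delta$ and $[T_{\mb_2},T_{a_1}](\partial_z-T_{A_1})\varrho_\delta$ "require a case split over $\sigma$ exactly as in Lemma \ref{lemm:F2}", with Theorem \ref{theo:sc}(ii) in one case and "a direct paraproduct bound" in the other. First, no case split is needed here: in Lemma \ref{lemm:F2} the split was forced by the fact that $\mb$ only had $H^{\sigma-1}$ regularity, which for small $\sigma$ does not embed in any positive H\"older class; in the present lemma $\mb_2$ is built from $v_2$ with $f\in\widetilde H^s_-$ at full regularity, so $\mb_2\in L^\infty_zH^{s-1}\subset W^{\delta,\infty}$ and $\partial_z\mb_2\in L^2_zH^{s-\tdm}$ uniformly in $\sigma$ (this is \eqref{BdonB2}), and Theorem \ref{theo:sc}(ii) together with \eqref{boundpara} closes both terms for every $\sigma\in[\mez+\delta,s]$ — which is what the paper does. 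Second, and more seriously, the "direct paraproduct bound" you propose for the regime $\sigma\le s-\delta$ would not close: estimating $T_{a_1}T_{\mb_2}(\partial_z-T_{A_1})\varrho_\delta$ in $L^2H^{\sigma-\tdm}$ without exploiting the commutator costs a full derivative on $\varrho_\delta$, i.e. it requires $\nabla_{x,z}\varrho_\delta\in L^2H^{\sigma-\mez}$, which is paid by $\Vert\eta_1-\eta_2\Vert_{H^\sigma}$ rather than $H^{\sigma-\delta}$; unlike Lemma \ref{lemm:F2}, where the spare regularity $\eta\in H^{s+\mez-\delta}$ with $\sigma\le s-\delta$ absorbed exactly this loss, here $\varrho_\delta$ has no spare regularity relative to the target norm $\Vert\eta_1-\eta_2\Vert_{H^{\sigma-\delta}}$. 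So as written that sub-case of your argument fails; the fix is simply to drop the case split and use the symbolic-calculus estimate uniformly, after which your proof coincides with the paper's.
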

\begin{proof}
 Applying \eqref{est:b1}, \eqref{est:b2} and \eqref{est:b3} with $\sigma=s$ (note that $s+s>3$) we obtain that $\mb_2$ satisfies  
\begin{equation}\label{BdonB2}
 \Vert \mb_2\Vert_{L^\infty([z_0,0]; H^{s-1})}+\Vert \nabla_{x,z}\mb_2\Vert_{X^{s-2}([z_0,0])}\le\mathcal{F}(N_s)\Vert f\Vert_{\widetilde{H}^s_-}.
\end{equation} 
We also recall from \eqref{EstimV_jNew} that
\[
\Vert \nabla_{x,z}v_j\Vert_{X^{s-1}([-1,0])}\lesssim \mathcal{F}(N_s)\Vert f\Vert_{\widetilde{H}^s_-}.
\]
Set $Q_j=\p_z^2+\alpha_j\Delta_x+\beta_j\cdot\nabla_x\p_z$. Using \eqref{pr}, \eqref{EstimRho12} and \eqref{cons:cdsigma} we obtain the bounds 
\begin{equation*}
\begin{split}
&\Vert (\nabla_{x}\varrho_1,\nabla_{x}\varrho_2)\Vert_{X^{s-1}}+\Vert (\p_z\varrho_1-\tdm h,\p_z\varrho_2-\tdm h)\Vert_{X^{s-1}}+\Vert (Q_1\varrho_1,\gamma_1)\Vert_{X^{s-2}}\le \mathcal{F}(N_s),\\
&\Vert \na_{x, z}\varrho_\delta\Vert_{X^{\sigma-1-\delta}}+\Vert (\alpha_1-\alpha_2,\beta_1-\beta_2)\Vert_{X^{\sigma-1-\delta}}\le\mathcal{F}(N_s)\Vert \eta_1-\eta_2\Vert_{H^{\sigma-\delta}}.
\end{split}
\end{equation*}
On the other hand, we claim that%by virtue of \eqref{XY1} we can prove that 
\bq\label{est:Q1rhodelta}
\| Q_1\varrho_\delta\|_{Y^{\sigma-1-\delta}}\le\mathcal{F}(N_s)\Vert \eta_1-\eta_2\Vert_{H^{\sigma-\delta}}.
\eq
Indeed, from the definition of $\varrho_\delta$ we have
\[
\| \p_z^2\varrho_\delta\|_{L^2 H^{\sigma-\tdm-\delta}}\le \cF(N_s)\Vert \eta_1-\eta_2\Vert_{H^{\sigma-\delta}};
\]
on the other hand, by \eqref{XY1},
\[
\| \alpha_1\Delta_x\varrho_\delta\|_{Y^{\sigma-1-\delta}}\les(\|\alpha_1-h^2\|_{X^{s-1}}+1)\| \Delta_x\varrho_\delta\|_{X^{\sigma-2-\delta}}\le \cF(N_s)\Vert \eta_1-\eta_2\Vert_{H^{\sigma-\delta}}
\]
and similarly for $\beta_j\cdot\nabla_x\p_z\varrho_\delta$.

{\bf Step 1.} From \eqref{eq:vj} and the definition \eqref{alpha} of $\gamma$ we have 
\begin{equation*}
\begin{split}
Q_1v-(\gamma_1-\gamma_2)\partial_zv_2&=\gamma_1\partial_zv-(Q_1-Q_2)v_2,\\
\gamma_1-\gamma_2-\frac{1}{\partial_z\varrho_2}Q_1\varrho_\delta &=\frac{1}{\partial_z\varrho_2}(Q_1-Q_2)\varrho_2+\frac{Q_1\varrho_1}{\partial_z\varrho_1\partial_z\varrho_2}\partial_z\varrho_\delta.
\end{split}
\end{equation*}
It follows that 
\begin{align}\label{Q_1v}
&Q_1v-\mb_2Q_1\varrho_\delta=R_\delta^\prime,\\
&R_\delta'=\gamma_1\partial_zv-(Q_1-Q_2)v_2+\mb_2(Q_1-Q_2)\varrho_2+\frac{\mb_2}{\p_z\varrho_1}Q_1\varrho_1\p_z\varrho_\delta.
\end{align}
We claim that
\begin{align}\label{est:Rdelta'}
\Vert R^\prime_\delta\Vert_{Y^{\sigma-1}([z_0,0])}\le\mathcal{F}(N_s)\Vert \eta_1-\eta_2\Vert_{H^{\sigma-\delta}}\Vert f\Vert_{\widetilde{H}^s_-}.
\end{align}
 We first apply \eqref{XY1} with $s_0=\sigma-1$, $s_1=s-2$, $s_2=\sigma-1-\delta$, giving
 \bq\label{est:sigma:cd1}
\| \gamma_1\partial_zv\|_{Y^{\sigma-1}}\les \|\gamma_1 \|_{X^{s-2}}\| \p_zv\|_{X^{\sigma-1-\delta}}\le \mathcal{F}(N_s)\Vert \eta_1-\eta_2\Vert_{H^{\sigma-\delta}}\Vert f\Vert_{\widetilde{H}^s_-}.
\eq
 By the same argument we can control $(Q_1-Q_2)\varrho_2$ and  $(Q_1-Q_2)v_2$ in $Y^{\sigma-1}$. For example, when distributing derivatives in the term $(\alpha_1-\alpha_2)\Delta_xv_2$ in $(Q_1-Q_2)v_2$, we see that  $\alpha_1-\alpha_2$ and $\Delta_xv_2$ respectively play the role of $\p_zv$ and $\gamma_1$ in the product $\gamma_1\p_zv$. 

For products we note that \eqref{XY2} gives 
\[
\| ab\|_{Y^{\sigma-1}}\les \|a\|_{X^{s-1}}\|b\|_{Y^{\sigma-1}}.
\]
Applying this for $(a, b)=(\mb_2, (Q_1-Q_2)\varrho_2)$ we obtain the control of $\mb_2(Q_1-Q_2)\varrho_2$. As for the last term in $R_\delta'$, we take $a=\frac{\mb_2}{\p_z\varrho_1}$ and $b=Q_1\varrho_1\p_z\varrho_\delta$ where applying \eqref{XY1} again yields
\[
\|Q_1\varrho_1\p_z\varrho_\delta\|_{Y^{\sigma-1}}\les \| Q_1\varrho_1\|_{X^{s-2}}\|\p_z\varrho_\delta\|_{X^{\sigma-1-\delta}}\le \mathcal{F}(N_s)\Vert \eta_1-\eta_2\Vert_{H^{\sigma-\delta}}\Vert f\Vert_{\widetilde{H}^s_-}.
\]
We thus conclude the proof of \eqref{est:Rdelta'}.

{\bf Step 2.}  Using  \eqref{decompose}, we factorize $Q_1v$ and $Q_1\varrho_\delta$ in \eqref{Q_1v}. Then we obtain the first equation in \eqref{DefR2Delta} with
\begin{equation*}
\begin{split}
R^1_\delta&=(\partial_z-T_{a_1})\left[(\partial_z-T_{A_1})v-T_{\mb_2}(\partial_z-T_{A_1})\varrho_\delta\right]\\
&= R_\delta^\prime+T_{Q_1\varrho_\delta}\mb_2+R(\mb_2, Q_1\varrho_\delta)-R_{Q_1}v+T_{\mb_2}R_{Q_1}\varrho_\delta+[T_{\mb_2},\partial_z-T_{a_1}](\partial_z-T_{A_1})\varrho_\delta.
\end{split}
\end{equation*}
In view of \eqref{cons:cdsigma}, applying  \eqref{est:XY4} and \eqref{est:Q1rhodelta} gives
\[
\| R(\mb_2, Q_1\varrho_\delta)\|_{L^1H^{\sigma-1}}\les\| Q_1\varrho_\delta\|_{Y^{\sigma-1-\delta}}\| \mb_2\|_{X^{s-1}} \le\mathcal{F}(N_s)\Vert \eta_1-\eta_2\Vert_{H^{\sigma-\delta}}\Vert f\Vert_{\widetilde{H}^s_-}.
\]
On the other hand,  by \eqref{est:XY3} and \eqref{est:Q1rhodelta},
\[
\| T_{Q_1\varrho_\delta}\mb_2\|_{Y^{\sigma-1}}\les\| Q_1\varrho_\delta\|_{Y^{\sigma-1-\delta}} \| \mb_2\|_{L^\infty H^{s-1}}\le\mathcal{F}(N_s)\Vert \eta_1-\eta_2\Vert_{H^{\sigma-\delta}}\Vert f\Vert_{\widetilde{H}^s_-}.
\]
Next we compute 
\[
[T_{\mb_2},\partial_z-T_{a_1}](\partial_z-T_{A_1})\varrho_\delta=-T_{\p_z\mb_2}(\partial_z-T_{A_1})\varrho_\delta-[T_{\mb_2},T_{a_1}](\partial_z-T_{A_1})\varrho_\delta.
\]
In view of \eqref{BdonB2} for $\|\p_z\mb_2\|_{X^{s-2}}$, we have 
\[
\begin{aligned}
\|T_{\p_z\mb_2}(\partial_z-T_{A_1})\varrho_\delta\|_{L^2 H^{\sigma-\tdm}}&\les \| \p_z\mb_2\|_{L^2 H^{s-\tdm}}\| (\partial_z-T_{A_1})\varrho_\delta\|_{L^\infty H^{\sigma-1-\delta}}\\
& \le\mathcal{F}(N_s)\Vert \eta_1-\eta_2\Vert_{H^{\sigma-\delta}}\Vert f\Vert_{\widetilde{H}^s_-}.
\end{aligned}
\]
On the other hand, the commutator $[T_{\mb_2},T_{a_1}](\partial_z-T_{A_1})\varrho_\delta$ can be controlled in $L^2 H^{\sigma-\tdm}$ upon using Theorem \ref{theo:sc} (ii).

It remains to control terms involving $R_{Q_1}$. Using  \eqref{cons:cdsigma} we see that $\tt=\sigma-1$ and $\delta$ satisfy \eqref{cd:tt-s}. Consequently, the estimate \eqref{est:RAa2}  in Lemma \ref{prop:decompose} can be applied, giving
\[
\|R_Qg\|_{Y^{\sigma-1}}\le \cF(\| \eta\|_{H^s})\|\na_{x,z}g\|_{X^{\sigma-1-\delta}}.
\]
Applying this with $g=v$ and taking into account \eqref{est:contractv:2} we deduce that $R_{Q_1}v$ is controllable.  Finally, with $g=\varrho_\delta$ and with the aid of  \eqref{est:XY22}, we have
 \[
 \| T_{\mb_2}R_{Q_1}\varrho_\delta\|_{Y^{\sigma-1}}\les \| \mb_2\|_{L^\infty H^{s-1}} \| R_{Q_1}\varrho_\delta\|_{Y^{\sigma-1}}  \le\mathcal{F}(N_s)\Vert \eta_1-\eta_2\Vert_{H^{\sigma-\delta}}\Vert f\Vert_{\widetilde{H}^s_-}.
\]
The proof of \eqref{DefR2Delta} is complete.
\end{proof}

\begin{proof}[Proof of Theorem \ref{theo:contractionDN2}]

As in the proof of Theorem \ref{tame:RDN}, let us fix $-1<z_0<z_1<0$ and introduce a cut-off $\chi$ satisfying $\chi(z)=1$ for $z>z_1$ and $\chi(z)=0$ for $z<z_0$. It follows from Lemma \ref{lemm:F2analog} that
\[
w_\delta :=\chi(z)\big[(\p_z-T_{A_1})v-T_{\mb_2}(\p_z-T_{A_1})\varrho_\delta \big]
\]
satisfies
\[
(\p_z-T_{a_1})w_\delta=R^2_\delta:=\chi(z)R^1_\delta+\chi'(z)\big[(\p_z-T_{A_1})v-T_{\mb_2}(\p_z-T_{A_1})\varrho\big].
\]
Applying Theorem \ref{theo:sc} (i) and \eqref{est:contractv:2} we have
\[
\begin{aligned}
\| (\p_z-T_{A_1})v\|_{L^2([z_0, 0]; H^{\sigma-\frac{3}{2}})}&\le    \mathcal{F}(N_s)\| \na_{x,z}v\|_{L^2([z_0, 0]; H^{\sigma-\tdm})}\\
&\le   \mathcal{F}(N_s)\| \na_{x,z}v\|_{L^2([z_0, 0]; H^{\sigma-\mez-\delta})}\\
&\le  \mathcal{F}\big(N_s\big)\| \eta_1-\eta_2\|_{H^{\sigma-\delta}}\Vert f\Vert_{\widetilde{H}^s_-}.
\end{aligned}
\]
In addition, \eqref{boundpara} together with \eqref{BdonB2} implies 
\[
\begin{aligned}
\| T_{\mb_2}(\p_z-T_{A_1})\varrho_\delta\|_{L^2([z_0, 0]; H^{\sigma-\frac{3}{2}})}&\les \| \mb_2\|_{L^\infty([z_0, 0]; H^{s-1})}\| (\p_z-T_{A_1})\varrho_\delta\|_{L^2([z_0, 0]; H^{\sigma-\frac{3}{2}})}\\
&\le\mathcal{F}\big(N_s\big)\| f\|_{\wt H^s_{-}}\| \na_{x,z}\varrho_\delta\|_{L^2([z_0, 0]; H^{\sigma-\frac{3}{2}})}\\
&\le  \mathcal{F}\big(N_s\big)\| \eta_1-\eta_2\|_{H^{\sigma-\delta}}\| f\|_{\wt H^s_{-}}.
\end{aligned}
\]
Thus,  $R^2_\delta$ satisfies similar estimates as $R^1_\delta$ in \eqref{DefR2Delta}. Since $w_\delta(z_0)=0$, applying Proposition \ref{prop:parabolic} yields
\begin{equation}\label{BoundonWDelta}
\| w_\delta\|_{X^{\sigma-1}([z_0, 0])}\le   \mathcal{F}\big(N_s\big)\| \eta_1-\eta_2\|_{H^{\sigma-\delta}}\| f\|_{\wt H^s_{-}}.
\end{equation}
In the rest of this proof, functions of $(x, z)$ are evaluated at $z=0$. Besides, we write $g_1\simeq g_2$ to signify that $g_1$ and $g_2$ agree up to acceptable errors,
\begin{equation*}
\Vert g_1-g_2\Vert_{H^{\sigma-1}}\lesssim\mathcal{F}(N_s)\Vert \eta_1-\eta_2\Vert_{H^{\sigma-\delta}}\Vert f\Vert_{\widetilde{H}^s_-}.
\end{equation*}
Set
\[
p_j=\frac{1+|\na_x\varrho_j|^2}{\p_z\varrho_j}=\frac{\partial_z\varrho_j}{\alpha_j},\quad j=1, 2
\]
so that, by \eqref{EstimRho12},
\begin{equation}\label{Bdsp1q1}
\begin{split}
&\Vert p_1\Vert_{H^{s-1}}+\Vert p_2\Vert_{H^{s-1}}+\Vert \nabla_{x,z}\varrho_1\Vert_{H^{s-1}}+\Vert \nabla_{x,z}\varrho_2\Vert_{H^{s-1}}\le\mathcal{F}(N_s)\\
&\Vert p_1-p_2\Vert_{H^{\sigma-1-\delta}}+\Vert \nabla_{x,z}\varrho_\delta\Vert_{H^{\sigma-1-\delta}}\le \mathcal{F}(N_s) \Vert \eta_1-\eta_2\Vert_{H^{\sigma-\delta}}.
\end{split}
\end{equation}
Using \eqref{ExpressionDNNew} and the fact that $v\vert_{z=0}\equiv0$, we write 
\begin{equation}\label{DiffDNSenegal1}
\begin{split}
G^-(\eta_1)f-G^-(\eta_2)f&=(p_1-p_2)\partial_zv_2+p_1\partial_zv-\nabla_x\varrho_\delta\cdot\nabla_xv_2\\%-\nabla_x\varrho_1\cdot\nabla_xv\\
&=(p_1-p_2)\partial_zv_2-\nabla_x\varrho_\delta\cdot\nabla_xv_2+p_1\left[T_{A_1}v+T_{\mb_2}(\partial_z-T_{A_1})\varrho_\delta\right]+p_1w_\delta,
%&=p_1\p_zv_1-q_1\cdot \na_xv_1-p_2\p_zv_2+q_2\cdot \na_xv_2\\
%&=T_{p_1}\p_zv_1+T_{\p_zv_1}p_1+R(p_1,\p_zv_1)-T_{q_1}\cdot \na_xv_1-T_{\na_xv_1}\cdot q_1-R(q_1\cdot, \na_xv_1)\\
%&\quad -T_{p_2}\p_zv_2-T_{\p_zv_2}p_2-R(p_2,\p_zv_2)+T_{q_2}\cdot \na_xv_2+T_{\na_xv_2}\cdot q_2+R(q_2\cdot, \na_xv_2).
\end{split}
\end{equation}
where $T_{A_1}v=0$ (at $z=0$). Using this,   \eqref{EstimV_jNew},   \eqref{BoundonWDelta}, \eqref{Bdsp1q1} with Theorem \ref{theo:sc} (i), \eqref{boundpara}, \eqref{paralin:product}, we find that
\begin{equation*}
\begin{split}
G^-(\eta_1)f-G^-(\eta_2)f&\simeq T_{\partial_zv_2}(p_1-p_2)-T_{\nabla_xv_2}\nabla_x\varrho_\delta +T_{p_1}T_{\mb_2}(\partial_z-T_{A_1})\varrho_\delta,
\end{split}
\end{equation*}
where by virtue of Theorem \ref{theo:sc} (ii), 
\[
T_{p_1}T_{\mb_2}(\partial_z-T_{A_1})\varrho_\delta \simeq T_{\mb_2}T_{p_1}(\partial_z-T_{A_1})\varrho_\delta.
\]
Next applying Theorem \ref{paralin:nonl} we find that
\begin{equation*}
\begin{split}
T_{\p_zv_2}(p_1-p_2)&\simeq T_{2\mb_2\na_x\varrho_2}\cdot\na_x\varrho_\delta-T_{p_2\mb_2}\p_z\varrho_\delta.
\end{split}
\end{equation*}
We thus arrive at
\begin{equation*}
\begin{split}
G^-(\eta_1)f-G^-(\eta_2)f&\simeq -T_{\mb_2}(T_{p_1A_1}-T_{\nabla_x\varrho_2}\nabla_x)\varrho_\delta+T_{\mb_2\nabla_x\varrho_2-\nabla_xv_2}\nabla_x\varrho_\delta+T_{\mb_2}T_{p_1-p_2}\partial_z\varrho_\delta.
\end{split}
\end{equation*}
By \eqref{form:ld}, we have
\begin{equation*}
\begin{split}
&p_1A_1-p_2A_2=\lambda_1-\lambda_2-i\xi\cdot\nabla_x\varrho_\delta=\widetilde{A}\cdot\nabla_x\varrho_\delta,\\
&M^0_\delta(\nabla_x\varrho_\delta)+M^1_\delta(\widetilde{A})\lesssim \mathcal{F}(N_s).
\end{split}
\end{equation*}
Theorem \ref{theo:sc} (i) and \eqref{boundpara} yield that
\begin{equation*}
\begin{split}
&\Vert T_{p_1-p_2}\partial_z\varrho_\delta\Vert_{H^{\sigma-1}}\lesssim \Vert\p_z\varrho_\delta\Vert_{H^{s-1}}\Vert p_1-p_2\Vert_{H^{\sigma-1-\delta}}\lesssim\mathcal{F}(N_s)\Vert \eta_1-\eta_2\Vert_{H^{\sigma-\delta}},\\
&\Vert T_{p_1A_1-p_2A_2}\varrho_\delta\Vert_{H^{\sigma-1}}\le\mathcal{F}(N_s)\Vert \varrho_\delta\Vert_{H^{\sigma-\delta}}+\Vert T_{\nabla_x\varrho_\delta}T_{\widetilde{A}}\varrho_\delta\Vert_{H^{\sigma-1}}\les\mathcal{F}(N_s)\Vert \varrho_\delta\Vert_{H^{\sigma-\delta}}.
\end{split}
\end{equation*}
We conclude that
\begin{equation*}
\begin{split}
G^-(\eta_1)f-G^-(\eta_2)f\simeq -T_{\lambda_2B_2}(\eta_1-\eta_2)+T_{V_2}\cdot\nabla(\eta_1-\eta_2)
\end{split}
\end{equation*}
which finishes the proof of Theorem \ref{theo:contractionDN2}. 
\end{proof}
For future reference, let us end this subsection by providing a variant of Corollary \ref{theo:contractionDN}.
\begin{prop}\label{prop:contraDN100}
Let  $s>1+\frac{d}{2}$ with $d\ge 1$. Consider  $\eta_1$, $\eta_2\in H^{s}(\Rr^d)$ with $\dist(\eta_j, \Gamma^-)> 4h>0$ for $j=1, 2$. For all $\sigma\in [\mez, s]$, there exists $\cF:\Rr^+\to \Rr^+$ depending only on $(s, \sigma, h)$ such that
\begin{equation}\label{contraDN:100}
\| G^{-}(\eta_1)f-G^{-}(\eta_2)f  \|_{H^{\sigma-1}}\le 
\mathcal{F}\big(\| (\eta_1, \eta_2) \|_{H^s}\big)\| \eta_1-\eta_2\|_{H^s}\| f\|_{\wt H^\sigma_{-}}.
\end{equation}
\end{prop}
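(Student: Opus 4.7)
The plan is to mirror the proof of Corollary \ref{theo:contractionDN} but to swap the distribution of regularity between $\eta_1-\eta_2$ and $f$: we now measure the surface difference in $H^s$ and $f$ in $\widetilde H^\sigma_-$, rather than in $H^\sigma$ and $\widetilde H^s_-$. Using the same diffeomorphism setup \eqref{divideOmegaj}--\eqref{diffeo:diff}, set $v_j(x,z)=\phi_j(x,\varrho_j(x,z))$, $v=v_1-v_2$, and $\varrho_\delta=\varrho_1-\varrho_2$; then $v$ satisfies \eqref{eq:vdiff}. Proposition \ref{prop:elliptic} provides the single-surface bound $\|\nabla_{x,z}v_j\|_{X^{\sigma-1}([z_0,0])}\le\mathcal F(N_s)\|f\|_{\widetilde H^\sigma_-}$ for each $j$, which is the basic input to propagate through the contraction argument.

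The first step is to prove the low-regularity contraction bound
\[
\|\nabla_{x,z}v\|_{L^2(\widetilde{\Omega^-})}\le\mathcal F(N_s)\|\eta_1-\eta_2\|_{H^s}\|f\|_{\widetilde H^\sigma_-}.
\]
As in Lemma \ref{lemm:contractlow}, I would insert $v\in H^1_{0,\ast}(\widetilde{\Omega^-})$ into \eqref{variform:fixed} to obtain the identity \eqref{variform:fixed:2}. In contrast to that proof, I would estimate the right-hand side by placing $\mathcal A_1-\mathcal A_2$ in $L^\infty$ and $\nabla_{x,z}v_2$ in $L^2$, using that $s>1+\tfrac d2$ yields $H^s\hookrightarrow W^{1,\infty}$:
\[
\|\mathcal A_1-\mathcal A_2\|_{L^\infty}\le\mathcal F(N_s)\|\eta_1-\eta_2\|_{H^s},\qquad \|\nabla_{x,z}v_2\|_{L^2}\le\mathcal F(N_s)\|f\|_{\widetilde H^{1/2}_-}.
\]
Uniform coercivity of $\mathcal A_1$ then gives the claim, and the trace argument at the end of Lemma \ref{lemm:contractlow} (using the divergence structure of $\mathcal L_1 v=F$) upgrades this to a bound in $X^{-1/2}([-1,0])$.

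Next, I would propagate this to $X^{\sigma-1}$ via Proposition \ref{prop:elliptic} applied to \eqref{eq:vdiff}. The core ingredient is the source estimate
\[
\|F\|_{Y^{\sigma-1}([z_1,0])}\le\mathcal F(N_s)\|\eta_1-\eta_2\|_{H^s}\|f\|_{\widetilde H^\sigma_-}.
\]
For $(\alpha_1-\alpha_2)\Delta_xv_2$ and $(\beta_1-\beta_2)\cdot\nabla_x\partial_zv_2$, I would apply \eqref{XY1} with $s_1=s-1$, $s_2=\sigma-2$, $s_0=\sigma-1$ together with \eqref{contra:alpha}; the hypotheses of \eqref{XY1} reduce to $s>1+\tfrac d2$ and $\sigma>2-s$, both automatic since $s>\tfrac32$. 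For $(\gamma_1-\gamma_2)\partial_zv_2$, I would apply \eqref{XY2} with $s_1=s-1$, $s_2=\sigma-1$, $s_0=\sigma-1$ together with \eqref{contra:gammaY}. Combined with the low-regularity estimate, Proposition \ref{prop:elliptic} then delivers
\[
\|\nabla_{x,z}v\|_{X^{\sigma-1}([z_0,0])}\le\mathcal F(N_s)\|\eta_1-\eta_2\|_{H^s}\|f\|_{\widetilde H^\sigma_-}.
\]

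Finally, I would close using \eqref{ExpressionDNNew}: the typical terms of $G^-(\eta_1)f-G^-(\eta_2)f$ at $z=0$ are products of the coefficient difference $p_1-p_2$ (or $\nabla_x\varrho_\delta$), with $H^{s-1}$-norm controlled by $\mathcal F(N_s)\|\eta_1-\eta_2\|_{H^s}$, against the trace of $\nabla_{x,z}v_j$ in $H^{\sigma-1}$; together with similar terms where $p_2$ (or $\nabla_x\varrho_2$) multiplies the trace of $\nabla_{x,z}v$. Since $s-1>\tfrac d2$, multiplication by $H^{s-1}$ is continuous on $H^{\sigma-1}$ for every $\sigma\in[\tfrac12,s]$ (the symmetric index condition $\sigma-1\ge-(s-1)$ reduces to $\sigma\ge2-s$, automatic), so the product rule \eqref{pr} yields \eqref{contraDN:100}. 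The main technical obstacle is the low-regularity step: we cannot place $\nabla v_2$ in $L^\infty$ (which would demand $f\in\widetilde H^s_-$ rather than $\widetilde H^\sigma_-$), and the swap is feasible only because $\eta_1-\eta_2\in H^s\hookrightarrow W^{1,\infty}$ puts $\mathcal A_1-\mathcal A_2$ in $L^\infty$, which is the essential use of the subcritical threshold $s>1+\tfrac d2$.
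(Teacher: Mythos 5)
Your proposal is correct and follows essentially the same route as the paper: swap the distribution of regularity in the low-regularity step (placing $\mathcal{A}_1-\mathcal{A}_2$ in $L^\infty$ via $H^{s-1}\hookrightarrow L^\infty$ and $\nabla_{x,z}v_2$ in $L^2$), propagate through Proposition~\ref{prop:elliptic} using the single-surface bound $\Vert\nabla_{x,z}v_j\Vert_{X^{\sigma-1}}\lesssim\mathcal{F}(N_s)\Vert f\Vert_{\widetilde H^\sigma_-}$, then close with the product rule at $z=0$. The only cosmetic difference is that you treat $(\gamma_1-\gamma_2)\partial_z v_2$ via \eqref{XY2} and \eqref{contra:gammaY} where the paper uses \eqref{XY1} with $\gamma_1-\gamma_2\in X^{s-2}$, but both are valid.
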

\begin{proof}
   We  shall use the notation in the proof of Theorem \ref{theo:contractionDN2}. In our setting, we can strengthen \eqref{EstimRho12} to
\begin{equation}\label{EstimRho12NewSenegal}
\Vert \varrho_\delta\Vert_{X^s}\lesssim \Vert \eta_1-\eta_2\Vert_{H^s},
\end{equation}
but as $f\in \wt H^\sigma_-$, in place of \eqref{EstimV_jNew} we have
\bq\label{navj:100}
\Vert \nabla_{x,z}v_j\Vert_{X^{\sigma-1}([-1,0])}\lesssim \mathcal{F}(N_s)\Vert f\Vert_{\widetilde{H}^\sigma_-},\quad N_s=\| (\eta_1, \eta_2)\|_{H^s}.
\eq
Recall that $v=v_1-v_2$ solves \eqref{eq:vdiff}. Upon using the product rule \eqref{pr} and proceeding as in the proof of Corollary \ref{theo:contractionDN}, \eqref{contraDN:100} follows easily from the following estimate for $v$
\bq\label{contrav:100}
\| \na_{x, z}v\|_{X^{\sigma-1}([z_0, 0])}\le \cF(N_s)\| \eta_1-\eta_2\|_{H^s}\| f\|_{\wt H^\sigma_{-}},\quad z_0\in (-1, 0).
\eq
To prove \eqref{contrav:100}, we apply Proposition \ref{prop:elliptic} to have
\[
\lA \nabla_{x,z} v\rA_{X^{\sigma-1}([z_0,0])}
\le \mathcal{F}(N_s)\big(\lA F\rA_{Y^{\sigma-1}([z_1, 0])}
+ \lA \nabla_{x,z} v\rA_{X^{-\mez}([-1,0])} \big),
\]
  where $-1<z_1<z_0<0$. Let us estimate each term on the right-hand side. We claim that 
  \[
  \| \na_{x,z}v\|_{X^{-\mez}([-1, 0])}\le \cF(N_s)\| \eta_1-\eta_2\|_{H^s}\Vert f\Vert_{\widetilde{H}^\sigma_-}.
  \]
  This follows along the same lines as in the proof of Lemma \ref{lemm:contractlow} except that for the right-hand side of \eqref{variform:fixed:2}, in place of \eqref{est:diffA} we estimate 
\[
\begin{split}
&\| \mathcal{A}_1-\mathcal{A}_2\|_{L^\infty(\Rr^d\times (-1, 0))}\les\| \mathcal{A}_1-\mathcal{A}_2\|_{L^\infty((-1, 0); H^{s-1})}\les \cF(N_s)\|\eta_1-\eta_2\|_{H^s},\\
&\Vert \na_{x, z}v_2\Vert_{L^2(\mathbb{R}^d\times (-1,0))}\les \Vert \na_{x, z}v_2\Vert_{L^2((-1,0); H^{\sigma-\mez})}\les \cF(N_s)\Vert f\Vert_{\widetilde{H}^\sigma_-}.
\end{split}
\]
 It remains to estimate $\lA F\rA_{Y^{s-\tdm}([z_1, 0])}$. The first term in $F$ can be bounded using \eqref{XY1} as
\[
\begin{aligned}
\| (\alpha_1-\alpha_2)\Delta_xv_2\|_{Y^{\sigma-1}}&
\les \| \alpha_1-\alpha_2\|_{X^{s-1}}\|\Delta_xv_2\|_{X^{\sigma-2}}\\
&\les \cF(\| \eta\|_{H^s})\| \eta_1-\eta_2\|_{H^s}\| f\|_{\wt H^\sigma_-}.
\end{aligned}
\]
Applying \eqref{XY1} again gives
 \[
\| (\gamma_1-\gamma_2)\p_zv_2\|_{Y^{\sigma-1}}\les \| \gamma_1-\gamma_2\|_{X^{s-2}}\| \p_zv_2\|_{X^{\sigma-1}}\les \cF(\| \eta\|_{H^s})\| \eta_1-\eta_2\|_{H^s}\| f\|_{\wt H^\sigma_-}.
\]
The proof is complete.
\end{proof}

\section{Proof of the main theorems}\label{SecMainProof}
\subsection{Proof of Theorem \ref{theo:wp1}}
Let us fix  $s>1+\frac d2$ and consider  either $\Gamma^-=\emptyset$ or $\underline{b}^-\in \dot W^{1, \infty}(\Rr^d)$.  This section is organized as follows. First, we assume that $\eta$ is a solution of \eqref{eq:eta} on $[0, T]$ such that
\begin{align}\label{reg:eta}
&\eta\in C([0, T]; H^s)\cap L^2([0, T]; H^{s+\mez}),\\
\label{apriori:h}
&\dist(\eta(t), \Gamma^-)\ge h\quad\forall t\in [0, T],\\
\label{lower:1-B}
&\inf_{x\in \Rr^d}(1-B(x, t))\ge \ma>0\quad\forall t\in [0, T],
\end{align} 
where $B$ is given by \eqref{BV} with $f=\eta$. Under these assumptions, a priori estimates are derived in Subsections \ref{subsection:reduction}, \ref{subsection:parabolicity}, \ref{section:energyest} and \ref{section:propaRT}. These estimates will be used for solutions of \eqref{eq:eta:app} which is similar to \eqref{eq:eta} with $\partial_t\to\partial_t-\varepsilon\Delta$. This modification only improves the equation and for simplicity, we perform the analysis on solutions of the simpler equation \eqref{eq:eta}. Finally, the proof of Theorem \ref{theo:wp1} is given in Subsection \ref{subsection:proofwp1}.
 %%%%%%%%%%%
 
\subsubsection{Paradifferential reduction}\label{subsection:reduction}
We first apply Theorem \ref{tame:RDN} with $f=\eta$ and $\sigma=s$ to have
\bq\label{eta:lin}
\p_t\eta=-\ka T_\lambda(\eta-T_B\eta)+\ka T_V\cdot\nabla \eta-\ka R^-(\eta)\eta
\eq
where 
%\[
%V=(\nabla_xQ)\vert_\Sigma,\quad B=(\p_yQ)\vert_\Sigma,
%\]
 %$Q$ being the solution of \eqref{elliptic:Q}, and 
 $R^-(\eta)\eta$ obeys the estimate \eqref{est:RDN} with $\sigma=s$
\bq\label{ap:R1}
\| R^-(\eta)\eta\|_{H^{s-\mez}}\le \cF(\| \eta\|_{H^s})\| \eta\|_{H^s}\big(1+\| \eta\|_{H^{s+\mez-\delta}}\big)
\eq
where  $\delta \in  (0, \mez]$ satisfies $\delta<s-1-\frac d2$. Recall that $V$ and  $B$ can be expressed in terms of $\eta$ by virtue of the formulas \eqref{BV} with $f=\eta$. Note that
\bq\label{M1:ld}
 M^1_\delta (\lambda)\le  \cF(\| \eta\|_{H^s}),\quad \| B\|_{W^{\delta, \infty}}\le C\| B\|_{H^{s-1}}\le  \cF(\| \eta\|_{H^s}).
\eq
Owing to Theorem \ref{theo:sc} (ii),  $T_\ld T_B-T_{\ld B}$ is of order $1-\delta$ and
\bq\label{ap:R2}
\| (T_\ld T_B-T_{\ld B})\eta\|_{H^{s-\mez}}\le  \cF(\| \eta\|_{H^s})\| \eta\|_{H^{s+\mez-\delta}}.
\eq
Combining \eqref{eta:lin}, \eqref{ap:R1} and \eqref{ap:R2} we arrive at the following paradifferential reduction for the one-phase Muskat problem \eqref{eq:eta}.
\begin{prop}
  For $\delta \in (0, \mez]$ satisfying $\delta<s-1-\frac d2$, there exists $\cF:\Rr^+\to \Rr^+$  depending only on $(s, \delta, h, \kappa)$ such that
\bq\label{eta:reduce}
\p_t\eta-\ka T_V\cdot\nabla \eta+\ka T_{\lambda (1-B)}\eta=f
\eq
with $f$ satisfying
\bq\label{est:RHS}
\| f\|_{H^{s-\mez}}\le \cF(\| \eta\|_{H^s})\| \eta\|_{H^s}\big(1+\| \eta\|_{H^{s+\mez-\delta}}\big).
\eq
\end{prop}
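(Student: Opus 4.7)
The plan is to start from equation \eqref{eq:eta}, namely $\partial_t\eta+\kappa G^{-}(\eta)\eta=0$, and apply the paralinearization Theorem \ref{tame:RDN} with the specific choice $f=\eta$ and $\sigma=s$. Since $\eta\in H^s(\mathbb{R}^d)$ with $s>1+\tfrac d2$ and $\mathrm{dist}(\eta,\Gamma^-)\ge h$, all hypotheses of Theorem \ref{tame:RDN} are met (trivially $\eta\in\widetilde H^s_-$). This yields
\begin{equation*}
G^{-}(\eta)\eta=T_\lambda(\eta-T_B\eta)-T_V\cdot\nabla\eta+R^{-}(\eta)\eta,
\end{equation*}
with the remainder controlled by \eqref{est:RDN}.

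Substituting into \eqref{eq:eta} gives
\begin{equation*}
\partial_t\eta-\kappa T_V\cdot\nabla\eta+\kappa T_\lambda\eta-\kappa T_\lambda T_B\eta=-\kappa R^{-}(\eta)\eta,
\end{equation*}
so that to obtain the stated form \eqref{eta:reduce} it suffices to rewrite $T_\lambda\eta-T_\lambda T_B\eta$ as $T_{\lambda(1-B)}\eta$ modulo an acceptable error. For this I would introduce the commutator-type difference $T_\lambda T_B-T_{\lambda B}$ and invoke the symbolic calculus of Theorem \ref{theo:sc}(ii): since $\lambda\in\Gamma^1_\delta$ uniformly (first bound in \eqref{M1:ld}) and $B\in W^{\delta,\infty}$ with the second bound in \eqref{M1:ld}, the composition rule shows that $T_\lambda T_B-T_{\lambda B}$ is of order $1-\delta$, with operator norm bounded by $\mathcal{F}(\|\eta\|_{H^s})$. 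Applied to $\eta$ this gives
\begin{equation*}
\bigl\|(T_\lambda T_B-T_{\lambda B})\eta\bigr\|_{H^{s-\frac12}}\le \mathcal{F}(\|\eta\|_{H^s})\,\|\eta\|_{H^{s+\frac12-\delta}},
\end{equation*}
which is of exactly the form allowed on the right-hand side of \eqref{est:RHS}.

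Collecting the two error terms we set
\begin{equation*}
f:=-\kappa R^{-}(\eta)\eta+\kappa\bigl(T_\lambda T_B-T_{\lambda B}\bigr)\eta,
\end{equation*}
and by \eqref{est:RDN} with $\sigma=s$ together with the commutator estimate above we obtain
\begin{equation*}
\|f\|_{H^{s-\frac12}}\le \mathcal{F}(\|\eta\|_{H^s})\|\eta\|_{H^s}\bigl(1+\|\eta\|_{H^{s+\frac12-\delta}}\bigr),
\end{equation*}
which is the desired bound \eqref{est:RHS}. The resulting equation is exactly \eqref{eta:reduce}.

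There is no real obstacle here: the proposition is essentially a direct consequence of Theorem \ref{tame:RDN} and of the paradifferential composition rule. The only non-cosmetic point is to check that the exponent $1-\delta$ produced by the composition of the first-order symbol $\lambda$ (with regularity $\delta$ in $x$) and the $W^{\delta,\infty}$ function $B$ lines up with the target regularity $H^{s-1/2}$, which it does thanks to the linear dependence on $\|\eta\|_{H^{s+\frac12-\delta}}$ in \eqref{M1:ld}; this linearity is precisely what will later provide the small parameter $T^{\delta/\ldots}$ in the energy estimates.
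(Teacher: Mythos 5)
Your proof is correct and follows the same route as the paper: apply Theorem \ref{tame:RDN} with $f=\eta$, $\sigma=s$, then convert $T_\lambda T_B$ into $T_{\lambda B}$ via the symbolic calculus of Theorem \ref{theo:sc}(ii) using the seminorm bounds \eqref{M1:ld}, and collect the two errors into the single forcing term $f$. The only slight slip is in your closing remark: the linear dependence on $\|\eta\|_{H^{s+\frac12-\delta}}$ is not ``in \eqref{M1:ld}'' (those bounds involve only $\|\eta\|_{H^s}$); rather, it arises because the commutator $T_\lambda T_B-T_{\lambda B}$ has operator norm $\mathcal{F}(\|\eta\|_{H^s})$ from $H^{s+\frac12-\delta}$ to $H^{s-\frac12}$ and is then applied to $\eta$ itself.
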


\subsubsection{Parabolicity}\label{subsection:parabolicity}
In equation \eqref{eta:reduce}, $T_{V}\cdot \nabla \eta$ is an advection term. We now prove that $T_{\ka \lambda(1-B)}$ is an elliptic operator, showing that \eqref{eta:reduce} is a first-order drift-diffusion equation.
\begin{lemm}\label{prop:1-B}
For any $t\in [0, T]$ we have
\bq\label{parabolicity}
\sup_{x\in \Rr^d}B(x, t)<1.
\eq
\end{lemm}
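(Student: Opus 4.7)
The plan is to reduce the parabolicity claim $\sup_x B < 1$ to a maximum principle argument applied to the harmonic auxiliary function $\psi(x,y) := y - \phi(x,y)$ in $\Omega^-$, where $\phi$ is the harmonic extension of $\eta$ solving \eqref{eq:elliptic} with Dirichlet data $f = \eta$.

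First I would translate the pointwise bound $B(x) < 1$ into a statement about the normal flux of $\psi$ on $\Sigma$. Differentiating the boundary identity $\phi(x,\eta(x)) = \eta(x)$ yields $V = \nabla_x\phi|_\Sigma = (1-B)\nabla\eta$, after which a one-line computation gives
\[
\partial_n\psi|_\Sigma \;=\; \sqrt{1+|\nabla\eta|^2}\,(1-B),
\]
with $n = (1+|\nabla\eta|^2)^{-1/2}(-\nabla\eta,1)$ the upward unit normal (outward to $\Omega^-$). Thus $B(x) < 1$ is equivalent to $\partial_n\psi > 0$ at $(x,\eta(x))$. The boundary data for $\psi$ are then: $\psi$ is harmonic in $\Omega^-$; $\psi \equiv 0$ on $\Sigma$ because $\phi = \eta$ there; and on $\Gamma^-$ (when present), the Neumann condition $\partial_{\nu^-}\phi = 0$ together with $\nu^- = (1+|\nabla\underline{b}^-|^2)^{-1/2}(\nabla\underline{b}^-,-1)$ gives $\partial_{\nu^-}\psi = -(1+|\nabla\underline{b}^-|^2)^{-1/2} < 0$. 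In the infinite-depth case, normalizing $\phi\to 0$ as $y \to -\infty$ produces $\psi \to -\infty$ at depth.

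The heart of the proof is then to show $\psi \le 0$ in $\Omega^-$ by the maximum principle. A positive supremum cannot be attained in the interior (strong maximum principle), cannot be attained on $\Sigma$ (where $\psi \equiv 0$), and cannot be attained on $\Gamma^-$ (where Hopf's lemma would force $\partial_{\nu^-}\psi > 0$, contradicting the sign computed above). To rule out the supremum being approached at infinity, I would exhaust $\Omega^-$ by bounded subdomains $D_R$ and check that on their lateral and (infinite-depth) lower portions of $\partial D_R$ the harmonic function $\psi$ is bounded by a quantity tending to $0$ as $R \to \infty$; this uses the decay of $\eta \in H^s$, the resulting decay of $\phi$ as $|x|\to\infty$, and $\psi \sim y$ at large depth. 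The alternative $\psi \equiv 0$ is excluded since it would force $\phi(x,y) = y$, which is incompatible with the Neumann (or decay) condition at the bottom. Hence $\psi < 0$ strictly in the interior, and Hopf's lemma at each point of $\Sigma$---valid since $\eta\in H^s$ with $s>1+\tfrac{d}{2}$ gives a $C^1$ interface satisfying the interior ball condition---then yields $\partial_n\psi > 0$, i.e., $B(x,t) < 1$ at every $x$. Since $B$ is continuous and decays together with $\eta$ at spatial infinity, this pointwise strict inequality upgrades to $\sup_x B(x,t) < 1$.

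The main obstacle will be the rigorous execution of the maximum principle on the unbounded, possibly deep, Lipschitz-bottomed domain: one must track the decay of $\phi$ as $|x|\to\infty$ (and as $y\to -\infty$ in the infinite-depth setting) carefully enough to justify the exhaustion argument, and apply Hopf's lemma at the $H^s$ interface. Everything else in the proof is either an algebraic identity or a standard elliptic tool.
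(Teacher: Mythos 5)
Your skeleton is the same as the paper's: introduce the auxiliary harmonic function $\psi$ (you use $y-\phi$, the paper uses $\phi-y$, which is immaterial), identify $1-B$ with $\pm\partial_n\psi$ on $\Sigma$, force a sign on $\psi$ by a maximum principle, and finish with Hopf's lemma on $\Sigma$ (which is legitimate there, since $\eta\in H^s$ with $s>1+\tfrac{d}{2}$ gives a $C^{1,\alpha}$ surface). Two of your steps, however, would not survive the Lipschitz bottom and the merely variational solution, and this is precisely what the paper's energy argument is built to avoid.

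First, you exclude a positive supremum on $\Gamma^-$ by Hopf's lemma. But $\underline b^-$ is only $\dot W^{1,\infty}$, so $\Gamma^-$ is a Lipschitz graph with no interior-ball (or $C^{1,\mathrm{Dini}}$) property, and the Neumann condition $\partial_{\nu^-}\phi=0$ holds only in the weak sense; $\partial_{\nu^-}\psi$ has no classical pointwise meaning there, and Hopf's lemma does not apply. Second, your exhaustion by $D_R$ requires that $\psi$ on the far lateral and deep boundary pieces be controlled by something tending to $0$, which you attribute to ``the resulting decay of $\phi$ as $|x|\to\infty$''. The variational solution only gives $\nabla\phi\in L^2(\Omega^-)$; this does not yield pointwise decay of $\phi$ away from the free surface, the strip depth can be unbounded in the finite-bottom case, and in the infinite-depth case $\phi$ is determined only up to a constant with $\nabla\phi\to 0$. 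The paper sidesteps both issues at once by never invoking any classical boundary or decay information: it first proves a minimum principle $\phi\ge\inf f$ using the test function $\min\{\phi-\inf f,0\}$ in the variational formulation (which confines $\{\psi<0\}$, in the paper's sign convention, to a strip of bounded height over the bottom), and then inserts the cut-off test functions $\varphi_n=\psi^-\chi(x/n)$ into the same variational identity, showing the boundary contribution has a sign and the cut-off error term vanishes as $n\to\infty$ by an $L^2$ energy bound. To make your plan rigorous you would have to replace both the Hopf step on $\Gamma^-$ and the pointwise-decay exhaustion with essentially this variational mechanism.
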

In view of the formula \eqref{BV} for $B$, Lemma \ref{prop:1-B} is a direct consequence of the following surprising upper bound.
\begin{prop}\label{lemm:upperbounG}
 Assume that either $\Gamma^-=\emptyset$ or $\underline b^-\in \dot W^{1, \infty}(\Rr^d)$. If $f\in H^s(\Rr^d)$ with $s>1+\frac d2$, $d\ge 1$, then there exists $c_0 >0$ such that
 \bq\label{ubG}
G^-(f)f(x)<1-c_0\quad\forall x\in \Rr^d.
\eq
\end{prop}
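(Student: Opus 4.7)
The strategy is to reduce inequality \eqref{ubG} to a Hopf-type lemma for the auxiliary harmonic function
\begin{equation*}
w(x,y) := \phi(x,y) - y \qquad\text{in } \Omega^-,
\end{equation*}
where $\phi$ solves \eqref{eq:elliptic} with boundary data $f$. Since both $\phi$ and $y$ are harmonic, so is $w$, and since $\phi|_\Sigma = f$, the function $w$ vanishes on $\Sigma$. Using the upward unit normal $n = (1+|\nabla f|^2)^{-\frac12}(-\nabla f, 1)$ one computes $\partial_n y|_\Sigma = (1+|\nabla f|^2)^{-\frac12}$, which yields the identity
\begin{equation}\label{key-w-identity-plan}
G^-(f)f = \sqrt{1+|\nabla f|^2}\,\partial_n\phi\big|_\Sigma = 1 + \sqrt{1+|\nabla f|^2}\,\partial_n w\big|_\Sigma.
\end{equation}
Thus \eqref{ubG} follows once $\partial_n w < 0$ is established on $\Sigma$ with a uniform gap.

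The first step is to show $w \ge 0$ in $\Omega^-$ via the maximum principle. In the finite-depth case, the Neumann condition $\partial_{\nu^-}\phi = 0$ combined with $\partial_{\nu^-} y = -(1+|\nabla\underline b^-|^2)^{-\frac12}$ yields
\begin{equation*}
\partial_{\nu^-} w = \frac{1}{\sqrt{1+|\nabla\underline b^-|^2}} > 0 \qquad \text{on } \Gamma^-.
\end{equation*}
Since the maximum principle applied to $\phi$ gives $\Vert\phi\Vert_{L^\infty(\Omega^-)} \le \Vert f\Vert_{L^\infty}$, $w$ is a bounded harmonic function on the Lipschitz strip $\Omega^-$; the strong maximum principle rules out a negative interior minimum, while the strict positivity of $\partial_{\nu^-}w$ combined with Hopf at the bottom rules out a negative minimum on $\Gamma^-$. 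In the infinite-depth case, the normalization $\phi \to 0$ as $y \to -\infty$ together with $|\phi| \le \Vert f\Vert_{L^\infty}$ gives $w \ge 0$ on $\{y \le -\Vert f\Vert_{L^\infty}\}$; the maximum principle for bounded harmonic functions on the remaining Lipschitz strip $\{-\Vert f\Vert_{L^\infty} < y < f(x)\}$, where $w = 0$ on the top and $w \ge 0$ on the bottom, closes the argument.

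The pointwise strict inequality $\partial_n w < 0$ on $\Sigma$ then follows from Hopf's lemma. To sidestep the fact that $C^{1,\alpha}$ surfaces with $\alpha < 1$ need not satisfy a uniform interior ball condition, I pull $w$ back through the diffeomorphism $\varrho$ of \eqref{diffeo}: the transformed function $\widetilde w(x,z) := w(x,\varrho(x,z))$ is nonnegative, vanishes on the flat boundary $\{z = 0\}$, and satisfies the divergence-form elliptic equation $\cnx_{x,z}(\mathcal{A}\nabla_{x,z}\widetilde w) = 0$ with $\mathcal{A}$ as in \eqref{def:matrixA}, uniformly elliptic by Lemma \ref{lemm:diffeo}. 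Since $w \not\equiv 0$ (for $w \equiv 0$ would force $\phi \equiv y$, violating either the Neumann condition on $\Gamma^-$ or the decay at $-\infty$), Hopf's lemma at the flat boundary gives $\partial_z \widetilde w(x, 0) < 0$ at every $x$. Since $\partial_z\varrho|_{z=0} > 0$, this translates to $\partial_y w(x, f(x)) < 0$, hence $\partial_n w(x, f(x)) < 0$, and combining with \eqref{key-w-identity-plan} yields $G^-(f)f(x) < 1$ pointwise.

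Finally, to upgrade the pointwise inequality to the required uniform gap: Theorem \ref{theo:estDN} applied with $\sigma = s$ gives $G^-(f)f \in H^{s-1}(\Rr^d)$, and since $s - 1 > \tfrac{d}{2}$, the Sobolev embedding $H^{s-1}(\Rr^d) \hookrightarrow C_0(\Rr^d)$ shows $G^-(f)f$ is continuous and vanishes at infinity. Its supremum is therefore attained at some $x^* \in \Rr^d$, at which the strict pointwise inequality provides $c_0 := 1 - G^-(f)f(x^*) > 0$, establishing \eqref{ubG}. The main obstacle is the Hopf step at the rough boundary; flattening via $\varrho$ elegantly reduces it to a standard Hopf lemma on a flat half-space with uniformly elliptic divergence-form coefficients.
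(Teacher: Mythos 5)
Your overall skeleton is the same as the paper's: set $\psi=\phi-y$, note $G^-(f)f=1+\sqrt{1+|\nabla f|^2}\,\partial_n\psi$, show $\psi\ge 0$ in $\Omega^-$, get strict negativity of $\partial_n\psi$ on $\Sigma$ from a Hopf-type lemma, and use the decay of $G^-(f)f\in H^{s-1}\subset C_0$ to turn the pointwise inequality into a uniform gap. However, the step that carries essentially all of the difficulty -- proving $\psi\ge 0$ -- is where your argument has a genuine gap. The domain is unbounded in $x$ (and the bottom $\underline b^-\in\dot W^{1,\infty}$ may be unbounded, so $w=\phi-y$ is not bounded), $\phi$ is only a variational $\dot H^1$ solution, and the Neumann condition on $\Gamma^-$ holds only in the weak (variational) sense on a merely Lipschitz graph. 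Consequently: (i) the inequality $\Vert\phi\Vert_{L^\infty(\Omega^-)}\le\Vert f\Vert_{L^\infty}$ is asserted but not proved, and in this setting it requires exactly the kind of truncation argument whose admissibility is in question; (ii) since $\Omega^-$ is unbounded, the infimum of $w$ need not be attained, so ``the strong maximum principle rules out a negative interior minimum'' does not apply without a Phragm\'en--Lindel\"of-type argument controlling the behavior as $|x|\to\infty$, which you do not supply; (iii) ``Hopf at the bottom'' is not available: Hopf's lemma requires an interior ball/Dini condition and a pointwise normal derivative, both of which fail in general for Lipschitz $\Gamma^-$, and the identity $\partial_{\nu^-}w=(1+|\nabla\underline b^-|^2)^{-1/2}$ is only formal here. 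In the infinite-depth case you also invoke ``$\phi\to 0$ as $y\to-\infty$,'' which is not the imposed normalization (only $\nabla_{x,y}\phi\to 0$ is), and again assert $|\phi|\le\Vert f\Vert_{L^\infty}$ without proof. The paper circumvents all of this purely variationally: it first gets the minimum principle $\inf_{\Omega^-}\phi\ge\inf f$ by testing with $\min\{\phi-\inf f,0\}$, and then proves $\psi\ge 0$ on the strip above $\widetilde b=\max\{\underline b^-,k\}$ by testing with $\psi^-\chi(x/n)$; the cutoff in $x$ is essential because $\psi^-$ is not an obviously admissible test function, and the error term is controlled by the clever pointwise bound $|\psi|\le L^{1/2}(\int_y^{f}|\partial_y\phi|^2)^{1/2}$ on the set $\{\psi<0\}$, after which $n\to\infty$ gives $\int_{\{\psi<0\}}|\nabla\psi|^2\le 0$.

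Two smaller points. First, your flattening trick for the Hopf step at $\Sigma$ is a reasonable alternative to the paper's direct use of Hopf's lemma on the $C^{1,\alpha}$ surface (the paper cites Safonov for exactly this), but uniform ellipticity of $\mathcal{A}$ alone does not justify Hopf for a divergence-form operator; you need (and do have, since $\nabla\varrho$ is H\"older by $H^s\subset C^{1,\alpha}$) Dini-continuous coefficients, and this should be said. Second, in the uniform-gap step the supremum of $G^-(f)f$ need not be attained if it is nonpositive; this is harmless but requires the trivial case distinction. Neither of these is serious; the missing proof that $\psi\ge 0$ is the substantive gap.
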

\begin{proof}
 Let $\Omega^-$ denote the fluid domain with the top boundary $\Sigma=\{y=f(x)\}$ and the bottom $\Gamma^-$. 
 
1. {\it Finite depth}. According to Proposition \ref{prop:vari1}, there exists a unique solution $\phi\in \dot H^1(\Omega^-)$ to the problem
  \bq\label{eq:ellipticRT}
\begin{cases}
\Delta_{x, y} \phi=0\quad\text{in}~\Omega^-,\\
\phi=f\quad\text{on}~\Sigma,\\
\frac{\p\phi}{\p \nu^-}=0\quad\text{on}~\Gamma^-
\end{cases}
\eq
 in the sense of \eqref{variform:u}
 \bq\label{variform:RT}
\int_{\Omega^-}\na_{x, y} \phi\cdot \na_{x, y} \varphi dxdy=0\quad\forall \varphi \in  H^1_{0, *}(\Omega^-).
\eq 
Inserting $\varphi=\min\{\phi- \inf_{\Rr^d}f, 0\}\in H^1_{0, *}(\Omega^-)$ into \eqref{variform:RT} we obtain the minimum principle 
\bq\label{min:principle}
\inf_{\Omega^-}\phi\ge \inf_{\Rr^d}f.
\eq
Consequently,
\bq\label{psi:below}
\psi(x, y):=\phi(x, y)-y\ge \inf_{\Rr^d}f-y>0
\eq
for $(x, y)\in \Omega^-$ with $y\le k:=(\inf_{\Rr^d}f)-1$. We claim that $\psi$ is also nonnegative elsewhere: 
\bq\label{psi:above}
\psi(x, y)\ge 0\quad \text{in}\quad \Omega_{\wt b}:=\{(x, y)\in \Rr^d\times \Rr: \wt b(x)<y<f(x) \},
\eq
where $\wt b(x)=\max\{\underline b^-(x), k\}$ is a Lipschitz and bounded function, $\wt b\in W^{1, \infty}(\Rr^d)$. %To this end, we first note that $\psi\vert_\Sigma=0$ and that for all $\varphi\in H^{1, 0}(\Omega^-)$ with compact support in $x$, we have
 Let $\chi:\Rr^d\to \Rr^+$ be a compactly  function that equals $1$ in $B(0, 1)$ and vanishes outside $B(0, 2)$. Then consider the test functions $\varphi_n =\psi^-(x, y)\chi(\frac{x}{n})\le 0$ where $\psi^-= \min\{\psi, 0\}$ and $n\ge 1$.  By \eqref{psi:below}, $\supp\psi^-\subset \overline{\Omega_{\wt b}}$ and thus
 \[
 \supp\varphi_n\subset \overline{\Omega_{\wt b, n}},\quad \Omega_{\wt b, n}:=\Omega_{\wt b}\cap \{ |x|<2n\}
 \]
which gives $\varphi_n\in H^1_{0, *}(\Omega^-)$. Replacing $\phi$ with $\psi+y$ and $\varphi$ with $\varphi_n$ in \eqref{variform:RT} gives 
  \[
 \begin{aligned}
\int_{\Omega^-}\na_{x, y} \psi\cdot \na_{x, y} \varphi_n dxdy&=-\int_{\Omega_{\wt b}\cap \{|x|<2n\}}\p_y\varphi_n  dxdy\\
&=-\int_{\{|x|<2n\}}\int_{\wt b(x)}^{f(x)}\p_y\varphi_n(x, y)dydx\\
&=\int_{\{|x|<2n\}}\varphi_n(x, \wt b(x))dx\le 0.
\end{aligned}
\]
 On the other hand,
\[
\int_{\Omega^-}\na_{x, y} \psi\cdot \na_{x, y} \varphi_n dxdy= \int_{U_n}|\na_{x, y} \psi|^2\chi(\frac{x}{n})dxdy+\frac{1}{n} \int_{U_n}\psi\na_x \psi \cdot (\na_x\chi)(\frac{x}{n}) dxdy,
\]
where $U_n=\{(x, y)\in \Omega_{\wt b, n}: \psi(x, y)< 0\}$. Thus,
\bq\label{approx:max}
 \int_{U_n}|\na_{x, y} \psi|^2\chi(\frac{x}{n})dxdy\le \frac{-1}{n} \int_{U_n}\psi\na_x \psi \cdot (\na_x\chi)(\frac{x}{n}) dxdy\quad\forall n\ge 1.
\eq
Since
\[
\psi(x, y)=-\int_y^{f(x)}\p_y\phi(x, y')dy'+f(x)-y
\]
and $f(x)-y\ge 0$, we deduce that if $\psi(x, y)<0$ then 
\[
|\psi(x, y)|=\la \int_y^{f(x)}\p_y\phi(x, y')dy'\ra-|f(x)-y| \le L^\mez\Big(\int_y^{f(x)}|\p_y\phi(x, y')|^2dy'\Big)^\mez
\]
where $L=\| f-\wt b\|_{L^\infty(\Rr^d)}<\infty$. In particular, for $(x, y)\in U_n\subset \Omega_{\wt b, n}$ we have
\[
\int_{U_n}|\psi(x, y)|^2dxdy\le L\int_{\Omega_{\wt b, n}}\int_y^{f(x)}|\p_y\phi(x, y')|^2dy'dxdy\le L^2 \int_{\Omega_{\wt b, n}}|\p_y\phi(x, y')|^2dxdy'.
\]
This combined with the fact that $\na_x\psi=\na_x\phi$ yields
\[
\begin{aligned}
\la \frac{1}{n} \int_{U_n}\psi\na_x \psi \cdot (\na_x\chi)(\frac{x}{n}) dxdy\ra& \le \frac{L\| \na\chi\|_{L^\infty(\Rr^d)}}{n}\| \na_x\phi\|_{L^2(\Omega_{\wt b, n})}\| \p_y\phi\|_{L^2(\Omega_{\wt b, n})} \\
&\le \frac{L\| \na\chi\|_{L^\infty(\Rr^d)}}{n}\| \na_{x, y}\phi\|_{L^2(\Omega^-)}^2
\end{aligned}
\]
which tends to $0$ as $n\to \infty$. Then applying  the monotone convergence theorem to \eqref{approx:max}, we arrive that 
\[
\int_{\{(x, y)\in \Omega_{\wt b}: \psi(x, y)<0\}}|\na_{x, y} \psi|^2dxdy\le 0
\]
which proves that $\psi\ge 0$ in $\Omega_{\wt b}$ as claimed in \eqref{psi:above}. Combining \eqref{psi:below} and \eqref{psi:above} we conclude that $\psi\ge 0$ in $\Omega^-$. 

Now by virtue of Proposition \ref{prop:elliptic}, $\phi\in C^1_b(\Omega^-_h)$ where
\[
\Omega^-_h=\{(x, y)\in \Rr^d\times \Rr: \eta(x)-h<y<\eta(x)\}
\]
for some $h>0$. Consequently, $\psi\in C^1_b(\Omega^-_h)$, $\psi\ge 0$ everywhere in $\Omega^-_h$ and $\psi\vert_\Sigma=0$. The infimum of $\psi$ over $\overline{\Omega^-_h}$ is thus $0$ and is attained at any points of $\Sigma$; moreover, $\psi<0$ in $\Omega^-_h$ thanks to the strong maximum principle.  On one hand, it follows from Theorem \ref{theo:estDN} that $G^-(f)f\in H^{s-1}(\Rr^d)$ with $s-1>\frac{d}{2}$, implying that  $G(f)f(x)<\mez$ for all $|x|\ge M$ for $M$ sufficiently large. On the other hand, letting $V=\Omega_h^-\cap \{|x|<2M\}$, we can apply Hopf's lemma (see \cite{Safo}) to the $C^{1, \alpha}$ boundary $\Sigma\cap\{|x|< 2M\}$ of $V$ to have $\frac{\p \psi}{\p n}<0$ for $|x|\le M$, where $n$ is the upward-pointing normal to $\Sigma$. Hence,
\[
\frac{\p \phi}{\p n}<\frac{\p y}{\p n}=\frac{1}{\sqrt{1+|\na f|^2}}
\]
which yields $G^-(f)f(x)=\sqrt{1+|\na f|^2}\frac{\p \phi}{\p n}<1$ for all $|x|\le M$. By the continuity of $G^-(f)f$ on $\{|x|\le M\}$, we conclude that $G^-(f)f(x)\le 1-c_0$ for some $c_0>0$ and for all $x\in \Rr^d$. 

2. {\it Infinite depth}.  %This case follows similarly once we restrict ourselves to a bounded strip.
The proof for this case is in fact simpler. We first let $\phi\in \dot H^1(\Omega^-)$ be the solution in the sense of Proposition \ref{prop:vari2} to the problem 
  \bq\label{eq:ellipticRT2}
\begin{cases}
\Delta_{x, y} \phi=0\quad\text{in}~\Omega^-,\\
\phi=f\quad\text{on}~\Sigma,\\
\na_{x, y}\phi\to 0\quad\text{as}~y\to -\infty;
\end{cases}
\eq
that is, \eqref{variform:RT} holds. The minimum principle \eqref{min:principle} remains valid, implying \eqref{psi:below}. Then we can proceed as in the previous case upon replacing $\Omega_{\wt b}$ with $\{(x, y)\in \Rr^d\times \Rr: \eta(x)-k<y<\eta(x)\}$.
\end{proof}
\begin{rema}
The proof of Proposition \ref{lemm:upperbounG} is simpler for the periodic case $x\in \T^d$. Indeed, when $x\in \T^d$ we have $\psi^-\in \dot H^1(\Omega^-)$ and thus localization in $x$ by $\chi(\frac{x}{n})$ is not needed.
\end{rema}
\begin{rema}
The one-phase problem \eqref{eq:eta} dissipates the energy $E(t)=\mez \| \eta(t)\|^2_{L^2}$ since
\[
\mez\|\eta(t)\|^2_{L^2}-\mez\|\eta(0)\|^2_{L^2}=-\ka \int_0^t (G^-(\eta)\eta, \eta)_{L^2(\Rr^d)}dr\le 0.
\]
By virtue of the upper bound \eqref{ubG}, if $\eta(t)$ remains nonnegative on $[0, T]$ then the energy dissipation over $[0, T]$ is bounded by the $L^1$ norm of $\eta$:
\[
\ka \int_0^T (G^-(\eta)\eta, \eta)_{L^2(\Rr^d)}dr\le \ka\int_0^T\| \eta(r)\|_{L^1(\Rr^d)}dr.
\]
Note that this bound is linear, while the energy is quadratic. In the case of constant viscosity, the same bound was proved in \cite{ConCorGanStr} without the sign condition on $\eta$. 
\end{rema}
\subsubsection{A priori estimates for $\eta$}\label{section:energyest}
Denote $\langle D_x\rangle=(1+|D_x|^2)^\mez$ and set $\eta_s=\langle D_x\rangle^s$. Conjugating the paradifferential equation \eqref{eta:reduce} with $\langle D_x\rangle$ gives
\bq\label{eq:etas}
\p_t\eta_s=\ka T_V\cdot\nabla\eta_s-\ka T_{\ld(1-B)}\eta_s+f_1
\eq
where 
\[
f_1=\ka[\langle D_x\rangle^s, T_V\cdot \nabla]\eta-\ka[\langle D_x\rangle^s, T_{\ld(1-B)}]\eta+\langle D_x\rangle^s f.
\]
As in \eqref{M1:ld} we have $\| V\|_{W^{\delta, \infty}}\le\cF(\| \eta\|_{H^s})$. This combined with \eqref{est:RHS}, \eqref{M1:ld} and Theorem \ref{theo:sc} (iii) implies that $[\langle D_x\rangle^s, T_V\cdot \nabla]$ and $[\langle D_x\rangle^s, T_{\ld(1-B)}]\eta$ are of order $s+1-\delta$, and that
\bq\label{energy:f1}
\| f_1\|_{H^{-\mez}}\le  \cF(\| \eta\|_{H^s})\| \eta\|_{H^s}\big(1+\| \eta\|_{H^{s+\mez-\delta}}\big).
\eq
Taking the $L^2$ inner product of \eqref{eq:etas} with $\eta_s$ gives
\bq\label{dt:energy}
\mez\frac{d}{dt}\|\eta_s\|_{L^2}^2=\ka (T_V\cdot\nabla\eta_s, \eta_s)_{L^2}-\ka (T_{\ld(1-B)}\eta_s, \eta_s)_{L^2}+(f_1, \eta_s)_{L^2}.
\eq
We have
\[
\ka (T_V\cdot\nabla\eta_s, \eta_s)_{L^2}=\frac{\ka}{2} \Big(\big(T_V\cdot\nabla+(T_V\cdot\nabla)^*\big)\eta_s, \eta_s\Big)_{L^2}
\]
where by virtue of Theorem \ref{theo:sc} (iii), $T_V\cdot\nabla+(T_V\cdot\nabla)^*$ is of order $1-\delta$ and 
\[
\| \big((T_V\cdot\nabla)^*+T_V\cdot\nabla\big)\eta_s\|_{H^{-\mez+\delta}}\le \cF(\| \eta\|_{H^s})\| \eta_s\|_{H^\mez}.
\]
Consequently,
\bq\label{energy:advec}
\ka |(T_V\cdot\nabla\eta_s, \eta_s)_{L^2}|\le \cF(\| \eta\|_{H^s})\| \eta_s\|_{H^\mez} \| \eta_s\|_{H^{\mez-\delta}}.
\eq
Next we write
\bq\label{energy:pos}
\begin{aligned}
 (T_{\ld(1-B)}\eta_s, \eta_s)_{L^2}&=(T_{\sqrt{\omega}}\eta_s, T_{\sqrt{\omega}}\eta_s)_{L^2}+\Big(T_{\sqrt{\omega}}\eta_s, \big((T_{\sqrt{\omega}})^*-T_{\sqrt{\omega}}\big)\eta_s\Big)_{L^2}\\
 &\qquad+\Big(\big(T_\omega-T_{\sqrt{\omega}}T_{\sqrt{\omega}}\big)\eta_s, \eta_s\Big)_{L^2}
 \end{aligned}
\eq
where $\omega=\ld(1-B)$. In view of \eqref{ld} and \eqref{lower:1-B} we have $\omega \ge \ma|\xi|$,  hence
\[
M^\mez_\delta(\sqrt\omega)\le \cF(\| \eta\|_{H^s}, \frac{1}{\ma}).
\]
According to Theorem \ref{theo:sc} (ii) and (iii), $(T_{\sqrt{\omega}})^*-T_{\sqrt{\omega}}$ and $T_\omega-T_{\sqrt{\omega}}T_{\sqrt{\omega}}$ are of order $\mez-\delta$ and $1-\delta$ respectively. Thus
\bq\label{energy:e1}
\begin{aligned}
\left|\Big(T_{\sqrt{\omega}}\eta_s, \big((T_{\sqrt{\omega}})^*-T_{\sqrt{\omega}}\big)\eta_s\Big)_{L^2}\right|&\le \| T_{\sqrt{\omega}}\eta_s\|_{H^{-\delta}}\| \big((T_{\sqrt{\omega}})^*-T_{\sqrt{\omega}}\big)\eta_s\|_{H^\delta}\\
&\le  \cF(\| \eta\|_{H^s}, \frac{1}{\ma})\| \eta_s\|_{H^{\mez-\delta}}\| \eta_s\|_{H^\mez}
\end{aligned}
\eq
and 
\bq\label{energy:e2}
\begin{aligned}
\left|\Big(\big(T_\omega-T_{\sqrt{\omega}}T_{\sqrt{\omega}}\big)\eta_s, \eta_s\Big)_{L^2}\right|&\le \| \big(T_\omega-T_{\sqrt{\omega}}T_{\sqrt{\omega}}\big)\eta_s\|_{H^{-\mez+\delta}}\| \eta_s\|_{H^{\mez-\delta}}\\
&\le  \cF(\| \eta\|_{H^s}, \frac{1}{\ma})\| \eta_s\|_{H^\mez}\| \eta_s\|_{H^{\mez-\delta}}.
\end{aligned}
\eq
In addition, Theorem \ref{theo:sc} (iii) gives that $T_{\sqrt{\omega}^{-1}}T_{\sqrt\omega}-\text{Id}$ is of order $-\delta$ and that
\begin{align*}
\| \eta_s\|_{H^\mez}&\le \| T_{\sqrt{\omega}^{-1}}T_{\sqrt\omega} \eta_s\|_{H^\mez}+\cF(\| \eta\|_{H^s}, \frac{1}{\ma})\| \eta_s\|_{H^{\mez-\delta}}\\
&\le \cF(\| \eta\|_{H^s}, \frac{1}{\ma})\Big(\|T_{\sqrt\omega} \eta_s\|_{L^2}+\| \eta_s\|_{H^{\mez-\delta}}\Big),
\end{align*}
whence 
\bq\label{energy:e3}
\|T_{\sqrt\omega} \eta_s\|^2_{L^2}\ge \frac{1}{\cF(\| \eta\|_{H^s}, \frac{1}{\ma})}\| \eta_s\|^2_{H^\mez}-\| \eta_s\|_{H^{\mez-\delta}}^2.
\eq
Combining \eqref{energy:pos}, \eqref{energy:f1}, \eqref{energy:e1}, \eqref{energy:e2}, and \eqref{energy:e3} leads to 
\bq\label{energy:dif}
-\ka  (T_{\ld(1-B)}\eta_s, \eta_s)_{L^2}\le -\frac{1}{\cF(\| \eta\|_{H^s}, \frac{1}{\ma})}\| \eta_s\|_{H^\mez}^2+ \cF(\| \eta\|_{H^s}, \frac{1}{\ma})\| \eta_s\|_{H^\mez}\| \eta_s\|_{H^{\mez-\delta}}.
\eq
Putting together \eqref{dt:energy}, \eqref{energy:advec}, and \eqref{energy:dif} we obtain
\bq
\mez\frac{d}{dt}\|\eta_s\|_{L^2}^2\le -\frac{1}{\cF(\| \eta\|_{H^s}, \frac{1}{\ma})}\| \eta_s\|_{H^\mez}^2+ \cF(\| \eta\|_{H^s}, \frac{1}{\ma})\Big(\| \eta_s\|_{H^\mez}\| \eta_s\|_{H^{\mez-\delta}}+\| \eta_s\|_{H^\mez}\| \eta_s\|_{L^2}\Big).
\eq
We assume without loss of generality that $\delta\le \mez$. The gain of $\delta$ derivative gives room to  interpolate 
\[
\| \eta_s\|_{H^\mez}\| \eta_s\|_{H^{\mez-\delta}}\le C\| \eta_s\|_{H^\mez}^{1+\mu}\| \eta_s\|_{L^2}^{1-\mu}
\]
for some $\mu\in (0, 1)$. Applying Young's inequality yields
\[
\mez\frac{d}{dt}\|\eta_s\|_{L^2}^2\le -\frac{1}{\cF(\| \eta\|_{H^s}, \frac{1}{\ma})}\| \eta_s\|_{H^\mez}^2+\cF(\| \eta\|_{H^s}, \frac{1}{\ma})\| \eta_s\|_{L^2}^2
\]
where $\cF$ depends only on $(s, h, \ka)$. Finally, using Gr\"onwall's lemma we obtain the following a priori estimate for $\eta$. 
\begin{prop}\label{prop:apriori}
Let $s>1+\frac{d}{2}$. Assume that $\eta$ is a solution of \eqref{eq:eta} on $[0, T]$ with the properties \eqref{reg:eta}, \eqref{apriori:h} and \eqref{lower:1-B}.  Then there exists an increasing function $\cF:\Rr^+\times \Rr^+\to \Rr^+$ depending only on $(s, h, \ka)$ such that
\bq\label{apriori:est}
\| \eta\|_{L^\infty([0, T]; H^s)}+\| \eta\|_{L^2([0, T]; H^{s+\mez})}\le \cF\Big(\| \eta(0)\|_{H^s}+T \cF\big(\| \eta\|_{L^\infty([0, T]; H^s)}, \ma^{-1}\big), \ma^{-1}\Big).
\eq
\end{prop}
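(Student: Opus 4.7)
The plan is to turn the pointwise-in-time differential inequality derived just above the statement into the integrated bound \eqref{apriori:est} by a direct ODE argument. The key observation is that \emph{all} the hard work has already been done: the tame paralinearization of $G^-(\eta)\eta$ in Theorem \ref{tame:RDN}, the crucial sign condition $1-B>0$ in Proposition \ref{lemm:upperbounG}, and the paradifferential symbolic calculus assembled in the previous subsection combine to yield
\begin{equation*}
\tfrac12\frac{d}{dt}\|\eta_s\|_{L^2}^2 \;\le\; -\frac{1}{\cF_1(\|\eta\|_{H^s},\ma^{-1})}\|\eta_s\|_{H^{1/2}}^2 \;+\; \cF_2(\|\eta\|_{H^s},\ma^{-1})\,\|\eta_s\|_{L^2}^2,
\end{equation*}
with $\eta_s=\langle D_x\rangle^s\eta$. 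All that remains is to close this into a bound on the combined norm on the left of \eqref{apriori:est}.

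I would first freeze the coefficients by setting $M:=\|\eta\|_{L^\infty([0,T];H^s)}$ and $A:=\cF_1(M,\ma^{-1})$, $B:=\cF_2(M,\ma^{-1})$, both monotone in $M$. The inequality then simplifies to
\begin{equation*}
\frac{d}{dt}\|\eta_s(t)\|_{L^2}^2 + \frac{2}{A}\|\eta_s(t)\|_{H^{1/2}}^2 \;\le\; 2B\,\|\eta_s(t)\|_{L^2}^2 \;\le\; 2B\,M^2
\end{equation*}
for a.e.\ $t\in[0,T]$. Integrating from $0$ to any $t\in[0,T]$ and then taking the supremum in $t$ together with the time-integral of the dissipative term yields
\begin{equation*}
\|\eta\|_{L^\infty_T H^s}^2 \;+\; \frac{2}{A}\,\|\eta\|_{L^2_T H^{s+1/2}}^2 \;\le\; \|\eta(0)\|_{H^s}^2 \;+\; 2TB\,M^2.
\end{equation*}
Taking square roots and absorbing the factor $A^{1/2}$ (which depends only on $M$ and $\ma^{-1}$) into an outer increasing function $\cF$ produces exactly the structure of the right-hand side of \eqref{apriori:est}: the initial datum norm plus a factor of $T$ multiplied by a function of $(M,\ma^{-1})$, wrapped in one more $\cF$ encoding the $A^{1/2}$ rescaling.

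The only subtlety, and the place where care is needed, is that the coefficients $A,B$ themselves depend on $M=\|\eta\|_{L^\infty_T H^s}$ so the inequality is implicit in $M$; in particular one should not invoke Gr\"onwall naively. The correct reading is that \eqref{apriori:est} is a self-consistency relation for $M$, and this is precisely the form that will be useful in the next subsection: a standard bootstrap on $T$ will allow $M$ to be controlled by $2\|\eta_0\|_{H^s}$, say, provided $T$ is chosen sufficiently small in terms of $\|\eta_0\|_{H^s}$, $\ma$, $h$, $\kappa$, and $s$. There is no genuine obstacle beyond bookkeeping once the differential inequality is in hand; the substantive analytic input (the paralinearization with tame remainder and the ellipticity of $T_{\lambda(1-B)}$) has already been established.
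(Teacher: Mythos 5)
Your argument is correct and matches what the paper does in spirit: integrate the differential inequality \eqref{dt:energy}--\eqref{energy:dif} derived just above; the paper simply cites Gr\"onwall, while your crude pointwise bound $\|\eta_s(t)\|_{L^2}^2\le M^2$ is even more elementary and is actually the cleaner choice, since it directly produces the linear-in-$T$ term required by \eqref{apriori:est} rather than an exponential that then has to be repackaged. One bookkeeping point you gloss over: to absorb the $\sqrt{A/2}$ prefactor (which depends on $M$) into the outer $\cF$, whose first argument is $W:=\|\eta(0)\|_{H^s}+T\cF(M,\ma^{-1})$, you should use the integrated inequality itself --- with $\cF_{\mathrm{inner}}(M,\ma^{-1})=2B(M,\ma^{-1})M^2$ it gives $M^2\le W^2+W$, hence $M\le W+1$, so $A=\cF_1(M,\ma^{-1})\le\cF_1(W+1,\ma^{-1})$; similarly $\sqrt{\|\eta(0)\|_{H^s}^2+2TBM^2}\le W+1$ by the same observation, which is exactly where the factor of $T$ (rather than $\sqrt T$) emerges in the claimed form.
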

In order to close \eqref{apriori:est}, we prove a priori estimates for $\ma$ and $h$ in the next subsection.
\subsubsection{A priori estimates for the parabolicity and the depth}\label{section:propaRT}

Using \eqref{eq:eta} (or  the approximate equation \eqref{eq:eta:app} below) and Theorem \ref{theo:estDN} (with $\sigma=\mez$) we first observe that
\begin{equation*}
\begin{split}
\Vert \eta(t)-\eta(0)\Vert_{H^{-\mez}}&\le \int_{0}^t\Vert \partial_t\eta( \tau )\Vert_{H^{-\mez}}d\tau \le t\mathcal{F}(\Vert \eta\Vert_{L^\infty([0, t]; H^s)})\Vert \eta\Vert_{L^\infty([0,t];H^\tdm)}\les  t\mathcal{F}(\Vert \eta\Vert_{L^\infty([0, t]; H^s)}).
\end{split}
\end{equation*}
Hence, by interpolation, for $s^-=s-\frac{\delta}{2}>1+\frac{d}{2}+\frac{\delta}{2}$,
\begin{equation*}
\begin{split}
\Vert \eta(t)-\eta(0)\Vert_{H^{s^-}}&\lesssim (\Vert\eta(t)\Vert_{H^s}+\Vert \eta(0)\Vert_{H^s})^{1-\tt}\Vert \eta(t)-\eta(0)\Vert_{H^{-\mez}}^\tt,\quad \tt\in (0, 1)\\
\end{split}
\end{equation*}
and similarly, by virtue of Theorem \ref{theo:estDN} and Corollary \ref{theo:contractionDN} (note that $s^->s-\mez-\delta$),
\begin{equation*}
\begin{split}
&\Vert G(\eta(t))(\eta(t)-\eta(0))\Vert_{H^{s^--1}}\le\mathcal{F}(\Vert \eta\Vert_{L^\infty H^s})\Vert\eta(t)-\eta(0)\Vert_{H^{s^-}},\\
&\Vert (G(\eta(t)-G(\eta(0)))\eta(0)\Vert_{H^{s^--1}}\le\mathcal{F}(\Vert \eta\Vert_{L^\infty H^s})\Vert\eta(0)\Vert_{H^s}\Vert \eta(t)-\eta(0)\Vert_{H^{s^-}}.
\end{split}
\end{equation*}
Thus,  there exists $\theta>0$ such that
\begin{equation*}
\begin{split}
&\Vert G(\eta(t))\eta(t)-G(\eta(0)\eta(0)\Vert_{L^\infty}+\Vert \nabla_x\eta(t)-\nabla_x\eta(0)\Vert_{L^\infty}+\Vert \eta(t)-\eta(0)\Vert_{L^\infty}\le t^\theta\mathcal{F}(\Vert \eta\Vert_{L^\infty([0, t]; H^s)}.
\end{split}
\end{equation*}
Recalling the definition in \eqref{BV}, we deduce that
\bq\label{bootstrap:B}
\inf_{(x, t)\in \Rr^d\times [0, T]}(1-B(x, t))\ge \inf_{x\in \Rr^d}(1-B(x, 0))- T^\theta\cF_1\big( \| \eta \|_{L^\infty([0, T]; H^s)}\big)
\eq
 and that \bq\label{bootstrap:h}
\begin{aligned}
\inf_{t\in [0, T]}\dist(\eta(t),\Gamma^-)%&\ge \dist(\eta_0,\Gamma^-)-\| \eta(T)-\eta_0\|_{L^\infty}\\
&\ge \dist(\eta_0,\Gamma^-)- T^\theta \cF_1\big(\| \eta \|_{L^\infty([0, T]; H^s)}\big),
\end{aligned}
\eq
where $\tt\in (0, 1)$ and $\cF_1:\Rr^+\to \Rr^+$ depends only on $(s, h, \kappa)$.

\subsubsection{Proof of Theorem \ref{theo:wp1}}\label{subsection:proofwp1}

Having the a priori estimates \eqref{apriori:est}, \eqref{bootstrap:B} and \eqref{bootstrap:h} in hand, we turn to prove the existence of  $H^s$ solutions of \eqref{eq:eta}.  By a contraction mapping argument, we can prove that for each $\eps \in (0, 1)$ the parabolic approximation 
\bq\label{eq:eta:app}
\p_t\eta_\eps=-\ka G(\eta_\eps)\eta_\eps+\eps\Delta \eta_\eps,\quad \eta_\eps\vert_{t=0}=\eta_0,
\eq
has a unique solution $\eta_\eps$ in the complete metric space 
\bq\label{def:E}
E_{h}(T_\eps)=\{ v\in C([0, T_\eps]; H^s)\cap L^2([0, T_\eps]; H^{s+1}): \dist(v(t), \Gamma^-)\ge h~ \forall~ t\in [0, T_\eps]\}
\eq
provided that $T_\eps$ is sufficiently small and that $\dist(\eta_0, \Gamma^-)\ge 2h>0$ . Let us note that the dissipation term $\eps\Delta \eta_\eps$ in  \eqref{eq:eta:app} has higher order than the term $-\ka G(\eta_\eps)\eta_\eps$ so that the parabolicity coming from $-\ka G(\eta_\eps)\eta_\eps$ is not needed in the definition of $E_h$. 

On the other hand, since $\eta_0\in H^s(\Rr^d)$ with $s>1+\frac d2$, applying the upper bound in Proposition \ref{lemm:upperbounG} with $f=\eta_0$ we obtain that
\bq
\inf_{x\in \Rr^d}(1-B_\eps(x, 0))\ge 2\ma
\eq
for some constant $\ma>0$ independent of $\eps$. It  then follows from the a priori estimates \eqref{apriori:est}, \eqref{bootstrap:B}, \eqref{bootstrap:h} and a continuity argument that there exists a positive time $T$ such that $T<T_\eps$ for all $\eps \in (0, 1)$. Moreover, on $[0, T]$, we have the uniform bounds
\begin{align}\label{ubound:1-B}
&\inf_{(x, t)\in \Rr^d\times [0, T]}(1-B_\eps(x, t))\ge \ma,\\\label{ubound:depth}
&\dist(\eta_\eps(T),\Gamma^-)\ge h,\\\label{uniform:eta}
&\| \eta_\eps\|_{L^\infty([0, T]; H^s)}+\| \eta_\eps\|_{L^2([0, T]; H^{s+\mez})}\le \cF\big(\| \eta(0)\|_{H^s}, \ma^{-1}\big),
\end{align}
where $\cF$ depends only on $(s, h, \kappa)$. In addition, the $L^2$ norm of $\eta_\eps$ is nonincreasing in time since 
\[
(G(\eta_\eps)\eta_\eps, \eta_\eps)_{L^2(\Rr^d)}\ge 0,\quad 
(\Delta\eta_\eps, \eta_\eps)_{L^2(\Rr^d)}=-\| \na \eta_\eps\|^2_{L^2(\Rr^d)}\le 0.
\]
Next we show that for any sequence $\eps_n\to 0$, the solution sequence $\eta_n\equiv \eta_{\eps_n}$ is Cauchy in the space
\[
Z^{s-1}(T)=L^\infty([0, T]; H^{s-1})\cap L^2([0, T]; H^{s-\mez}).
\]
Fix $\delta \in(0, \mez]$ satisfying $\delta<s-1-\frac d2$.  We introduce the difference $\eta_\delta=\eta_m-\eta_n$ and claim that it satisfies a nice equation:
\begin{equation}\label{equForDifferenceNew}
\partial_t\eta_\delta=-\kappa \big(T_{\frak P}\eta_\delta- T_{V}\cdot\nabla_x\eta_\delta\big)+\mathcal{R}_1+\mathcal{R}_2,
\end{equation}
where %$\lambda_m$, $\lambda_n$, $B_n$, $V_n$ are as defined in \eqref{BV} with $\eta_n$ and $\eta_m$ and 
\[
\frak P=\mez\big(\lambda_n(1-B_n)+\lambda_m(1-B_m)\big),\quad V=\mez(V_n+V_m),
\]
and the remainder terms satisfy
\begin{equation*}
\begin{split}
\Vert \mathcal{R}_1(t)\Vert_{H^{s-\frac{3}{2}}}&\le\mathcal{F}(\Vert \eta_n\Vert_{L^\infty H^s}+\Vert \eta_m\Vert_{L^\infty H^s})\Vert \eta_\delta(t)\Vert_{H^{s-\frac{1}{2}-\delta}},\\
%\Vert \mathcal{R}_2(t)\Vert_{H^{s-\frac{3}{2}}}&\lesssim\mathcal{F}(\Vert \eta_n\Vert_{L^\infty H^s}+\Vert \eta_m\Vert_{L^\infty H^s})(\Vert \eta_m\Vert_{H^{s+\frac{1}{2}}}+\Vert\eta_n\Vert_{H^{s+\frac{1}{2}}})\Vert \eta_\delta(t)\Vert_{H^{s-1}},\\
\Vert \mathcal{R}_2(t)\Vert_{ H^{s-\frac{3}{2}}}&\le (\varepsilon_m+\varepsilon_n)(\Vert \eta_m(t)\Vert_{H^{s+\frac{1}{2}}}+\Vert \eta_n(t)\Vert_{H^{s+\frac{1}{2}}})
\end{split}
\end{equation*}

Indeed, taking the difference in \eqref{eq:eta:app}, we obtain
\[
\begin{aligned}
\p_t\eta_\delta&=-\ka \Big(G^-(\eta_m)\eta_\delta+ [G^-(\eta_m)-G^-(\eta_n)]\eta_n\Big)+\eps_m\Delta \eta_m-\eps_n\Delta\eta_n.
%&=-\ka T_{\ld_m}\eta-\ka R_0^-(\eta_m)\eta-\ka T_{\ld_n}T_{B_n}\eta-\ka T_{V_n}\cdot\na \eta+\ka R^-(\eta_m, \eta_n)\eta_n+\eps_m\Delta \eta_m-\eps_n\Delta\eta_n.
\end{aligned}
\]
and we can directly set $\mathcal{R}_2:=\eps_m\Delta \eta_m-\eps_n\Delta\eta_n$. For the remaining terms, we apply Theorem  \ref{paralin:ABZ} (with $\sigma=s-\mez-\delta$) and Theorem \ref{theo:contractionDN2} (with $\sigma=s-\mez$) to get
\begin{equation*}
\begin{split}
G^-(\eta_m)\eta_\delta +[G^-(\eta_m)-G^-(\eta_n)]\eta_n&=T_{\lambda_m}\eta_\delta-T_{\lambda_nB_n}\eta_\delta-T_{V_n}\cdot\nabla_x\eta_\delta\\
%-T_{\lambda_m}T_{B_\delta}\eta_m-T_{V_\delta}\nabla\eta_m\\
&\qquad+R_0^-(\eta_m)\eta_\delta+R_2^-(\eta_m,\eta_n)\eta_n.
\end{split}
\end{equation*}
%where $B_\delta$ and $V_\delta$ are given by \eqref{BV} corresponding to $f=\eta_\delta$.
Note that the remainder $R_0^-$ and $R^-_2$ lead to acceptable terms as in $\mathcal{R}_1$. But we also have 
\begin{equation*}
\begin{split}
G^-(\eta_n)\eta_\delta +[G^-(\eta_n)-G^-(\eta_m)]\eta_m&=T_{\lambda_n}\eta_\delta-T_{\lambda_mB_m}\eta_\delta-T_{V_m}\cdot\nabla_x\eta_\delta\\
%-T_{\lambda_m}T_{B_\delta}\eta_m-T_{V_\delta}\nabla\eta_m\\
&\qquad+R_0^-(\eta_n)\eta_\delta+R_2^-(\eta_n,\eta_m)\eta_n.
\end{split}
\end{equation*}
Thus, by taking the average of the above two identities, we  arrive at \eqref{equForDifferenceNew}.

Now, $H^{s-1}$ energy estimates using \eqref{equForDifferenceNew} give that
\begin{equation*}
\begin{split}
\frac{1}{2}\frac{d}{dt}\Vert \eta_\delta(t)\Vert_{H^{s-1}}^2&=-\kappa(\langle D_x\rangle^{s-\tdm}T_{\frak B}\eta_\delta,\langle D_x\rangle^{s-\mez}\eta_\delta)_{L^2}+\kappa(\langle D_x\rangle^{s-\tdm}T_{V}\cdot\nabla\eta_\delta,\langle D_x\rangle^{s-\mez}\eta_\delta)_{L^2}\\
&\quad+(\langle D_x\rangle^{s-\tdm}\mathcal{R}_1,\langle D_x\rangle^{s-\mez}\eta_\delta)_{L^2}+(\langle D_x\rangle^{s-\tdm}\mathcal{R}_2,\langle D_x\rangle^{s-\mez}\eta_\delta)_{L^2}\\
&:=I_1+I_2+I_3+I_4.
%+\Vert \mathcal{R}_1\Vert_{H^{s-\frac{3}{2}}}\Vert\eta_\delta\Vert_{H^{s+\frac{1}{2}}}
\end{split}
\end{equation*}
We can now estimate each term one by one. First, it follows from the above estimates for $\mathcal{R}_1$ and  $\mathcal{R}_2$ that
\begin{equation*}
\begin{split}
\vert I_4\vert&\le\Vert \mathcal{R}_2\Vert_{H^{s-\frac{3}{2}}}\Vert\eta_\delta\Vert_{H^{s-\frac{1}{2}}}\\
%&\lesssim (\varepsilon_m+\varepsilon_n)(\Vert\eta_m\Vert_{H^{s+\frac{1}{2}}}+\Vert \eta_n\Vert_{H^{s+\frac{1}{2}}})\Vert \eta_\delta\Vert_{H^{s-\frac{1}{2}}}\\
&\le (\varepsilon_m+\varepsilon_n)(\Vert\eta_m\Vert_{H^{s+\frac{1}{2}}}^2+\Vert \eta_n\Vert_{H^{s+\frac{1}{2}}}^2+\Vert \eta_\delta\Vert_{H^{s-\frac{1}{2}}}^2),\\
\vert I_3\vert&\le\Vert \mathcal{R}_1\Vert_{H^{s-\frac{3}{2}}}\Vert\eta_\delta\Vert_{H^{s-\frac{1}{2}}}\\
&\lesssim\mathcal{F}(\Vert \eta_n\Vert_{L^\infty H^s}+\Vert \eta_m\Vert_{L^\infty H^s})\Vert \eta_\delta\Vert_{H^{s-\frac{1}{2}-\delta}}\Vert \eta_\delta\Vert_{H^{s-\frac{1}{2}}}\\
&\lesssim \mathcal{F}(\Vert \eta_n\Vert_{L^\infty H^s}+\Vert \eta_m\Vert_{L^\infty H^s})\Vert \eta_\delta\Vert_{H^{s-\frac{1}{2}}}^{1+\theta}\Vert \eta_\delta\Vert_{H^{s-1}}^{1-\theta}\\
&\le \varepsilon_\ast \Vert \eta_\delta\Vert_{H^{s-\frac{1}{2}}}^2+C_{\varepsilon_\ast} \mathcal{F}(\Vert \eta_n\Vert_{L^\infty H^s}+\Vert \eta_m\Vert_{L^\infty H^s})\Vert \eta_\delta\Vert_{H^{s-1}}^2,
\end{split}
\end{equation*}
where $\eps_*>0$ is arbitrary.   Proceeding as in \eqref{energy:advec}, we see that
\begin{equation*}
\begin{split}
\vert I_2\vert&\le \mathcal{F}(\Vert \eta_n\Vert_{L^\infty H^s}+\Vert \eta_m\Vert_{L^\infty H^s})\Vert \eta_\delta\Vert_{H^{s+\frac{1}{2}}}\Vert \eta_\delta\Vert_{H^{s-\frac{1}{2}+\delta}}\\
&\le \varepsilon_\ast\Vert \eta_\delta\Vert_{H^{s-\frac{1}{2}}}^2+ C_{\varepsilon_\ast}\mathcal{F}(\Vert \eta_n\Vert_{L^\infty H^s}+\Vert \eta_m\Vert_{L^\infty H^s})\Vert \eta_\delta\Vert_{H^{s-1}}^2\\
\end{split}
\end{equation*}
and finally, as in \eqref{energy:dif},
\begin{equation*}
\begin{split}
I_1&\le- \frac{1}{\mathcal{F}(\Vert \eta_n\Vert_{L^\infty H^s},\frac{1}{\mathfrak{a}_n})} \Vert \eta_\delta\Vert_{H^{s-\frac{1}{2}}}^2+ \mathcal{F}(\Vert \eta_n\Vert_{L^\infty H^s},\frac{1}{\mathfrak{a}_n})\Vert\eta_\delta\Vert_{H^{s-\frac{1}{2}}}\Vert \eta_\delta\Vert_{H^{s-\frac{1}{2}+\delta}}\\
&\le -\Big(\frac{1}{\mathcal{F}(\Vert \eta_n\Vert_{L^\infty H^s},\frac{1}{\mathfrak{a}_n})}-\varepsilon_\ast\Big) \Vert \eta_\delta\Vert_{H^{s-\frac{1}{2}}}^2+C_{\varepsilon_\ast}\mathcal{F}(\Vert \eta_n\Vert_{L^\infty H^s},\frac{1}{\mathfrak{a}_n})\Vert \eta_\delta\Vert_{H^{s-1}}^2.
\end{split}
\end{equation*} 
Adding all the above estimates yields
\begin{equation*}
\begin{split}
&\frac{1}{2}\frac{d}{dt}\Vert \eta_\delta(t)\Vert_{H^{s-1}}^2\le -\Big(\frac{1}{\mathcal{F}(\Vert \eta_n\Vert_{L^\infty H^s},\frac{1}{\mathfrak{a}_n})}-3\eps_*-\varepsilon_m-\varepsilon_n\Big) \Vert \eta_\delta(t)\Vert_{H^{s-\frac{1}{2}}}^2\\
&\quad+ C_{\varepsilon_\ast}\mathcal{F}(\Vert \eta_n\Vert_{L^\infty H^s}+\Vert \eta_m\Vert_{L^\infty H^s})\Vert \eta_\delta(t)\Vert_{H^{s-1}}^2+(\varepsilon_m+\varepsilon_n)(\Vert\eta_m(t)\Vert_{H^{s+\frac{1}{2}}}^2+\Vert \eta_n(t)\Vert_{H^{s+\frac{1}{2}}}^2).
\end{split}
\end{equation*}
Bt virtue of the uniform bounds for $\eta_n$, there exists $c_*>0$ such that
\[
\frac{1}{\mathcal{F}(\Vert \eta_n\Vert_{L^\infty([0, T];  H^s)},\frac{1}{\mathfrak{a}_n})}\ge c_*\quad\forall n\in \Nn.
\]
Choosing $\eps_*=\frac{c_*}{10}$ and taking $m$ and $n$ sufficiently large so that $\eps_m,~\eps_n\le\frac{c_*}{10}$, we obtain
\begin{equation}\label{ineq:diff}
\begin{split}
\frac{1}{2}\frac{d}{dt}\Vert \eta_\delta(t)\Vert_{H^{s-1}}^2&\le -\frac{c_*}{2}\Vert \eta_\delta\Vert_{H^{s-\frac{1}{2}}}^2+ C\Vert \eta_\delta\Vert_{H^{s-1}}^2+(\varepsilon_m+\varepsilon_n)(\Vert\eta_m\Vert_{H^{s+\frac{1}{2}}}^2+\Vert \eta_n\Vert_{H^{s+\frac{1}{2}}}^2).
\end{split}
\end{equation}
Ignoring the first term on the right-hand side, then integrating in time we obtain
\[
\| \eta_\delta(t)\|_{H^{s-1}}^2\le C\int_0^t\| \eta_\delta(\tau)\|_{H^{s-1}}^2d\tau+C(\varepsilon_m+\varepsilon_n)(\Vert\eta_m\Vert_{H^{s+\frac{1}{2}}}^2+\Vert \eta_n\Vert_{H^{s+\frac{1}{2}}}^2).
\]
In view of \eqref{uniform:eta}, the sequence $\| \eta_n\|_{L^2([0, T]; H^{s+\mez})}^2$ is bounded, whence Gr\"onwall's lemma implies 
\bq\label{diff:est1}
\| \eta_\delta\|_{L^\infty([0, T]; H^{s-1})}^2\le C'\big(\eps_m+\eps_n\big)\exp(C'T)
\eq
We then integrate \eqref{ineq:diff} in time and use \eqref{diff:est1} to get the dissipation estimate
\bq\label{diff:est2}
\| \eta_\delta\|_{L^2([0, T]; H^{s-\mez})}^2\le C''\big(\eps_m+\eps_n\big)\exp(C''T).
\eq
It follows from \eqref{diff:est1} and \eqref{diff:est2} that  $\eta_n$ is a Cauchy sequence in $Z^{s-1}(T)$. Therefore, there exists $\eta \in Z^s(T)$ such that  $\eta_n\to \eta$ in $Z^{s-1}(T)$. By virtue of Theorem \ref{theo:estDN} and Corollary \ref{theo:contractionDN}, $G^-(\eta_n)\eta_n\to G^-(\eta)\eta$ in $H^{s-1}$ and thus $\eta$ is a solution of \eqref{eq:eta} in $Z^s(T)$. 

Repeating the above proof of the fact that $\eta_n$ is a Cauchy sequence in $Z^{s-1}(T)$, we obtain the following stability estimate.
\begin{prop}\label{prop:stab}
Let $\eta_1$ and $\eta_2$ be two solutions of \eqref{eq:eta} in $Z^s(T)$ defined by \eqref{def:spaceZs} with $s>1+\frac d2$ and
\[
\begin{aligned}
&\dist(\eta_j(t), \Gamma^-)\ge h\quad\forall t\in [0, T],\\
&\inf_{x\in \Rr^d}(1-B_j(x, t))\ge \ma>0\quad\forall t\in [0, T].
\end{aligned} 
\]
Then, 
\bq
\| \eta_1-\eta_2\|_{Z^{s-1}(T)}\le  \mathcal{F}\big(\| (\eta_1, \eta_2) \|_{L^\infty([0, T]; H^s)}\big)\| (\eta_1-\eta_2)\vert_{t=0}\|_{H^{s-1}}
\eq
for some $\cF:\Rr^+\to \Rr^+$ depending only on $(s, h, \ma, \ka)$. 
\end{prop}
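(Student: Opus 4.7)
The plan is to adapt the Cauchy sequence argument from the end of Subsection \ref{subsection:proofwp1}, but without the $\mathcal{R}_2 = \varepsilon_m\Delta\eta_m - \varepsilon_n\Delta\eta_n$ defect since both $\eta_1$ and $\eta_2$ solve the exact equation \eqref{eq:eta}. First I would write the equation for $\eta_\delta = \eta_1-\eta_2$ by subtracting the two Muskat equations,
\begin{equation*}
\partial_t \eta_\delta + \kappa G^-(\eta_1)\eta_\delta + \kappa\bigl[G^-(\eta_1) - G^-(\eta_2)\bigr]\eta_2 = 0,
\end{equation*}
then paralinearize $G^-(\eta_1)\eta_\delta$ by Theorem \ref{paralin:ABZ} (with $\sigma = s-\mez-\delta$) and paralinearize the contraction term by Theorem \ref{theo:contractionDN2} (with $\sigma = s-\mez$), which critically exploits the gain of $\delta$ derivative for $\eta_1-\eta_2$. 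Swapping the roles of $\eta_1$ and $\eta_2$ and averaging, exactly as in the derivation of \eqref{equForDifferenceNew}, yields the symmetric form
\begin{equation*}
\partial_t \eta_\delta = -\kappa T_{\mathfrak{P}}\eta_\delta + \kappa T_V \cdot \nabla \eta_\delta + \mathcal{R}_1, \qquad \|\mathcal{R}_1\|_{H^{s-\tdm}} \le \mathcal{F}(N_s)\,\|\eta_\delta\|_{H^{s-\mez-\delta}},
\end{equation*}
where $\mathfrak{P} = \mez\bigl(\lambda_1(1-B_1) + \lambda_2(1-B_2)\bigr)$, $V = \mez(V_1+V_2)$, $N_s = \|(\eta_1,\eta_2)\|_{L^\infty([0,T];H^s)}$, and $\delta \in (0,\mez]$ satisfies $\delta < s-1-\frac d2$.

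Next I would run the $H^{s-1}$ energy estimate as in Subsection \ref{section:energyest}. Conjugating by $\langle D_x\rangle^{s-1}$ and pairing with $\langle D_x\rangle^{s-1}\eta_\delta$ in $L^2$, the parabolicity argument based on the lower bound $\mathfrak{P} \ge \mathfrak{a}|\xi|/\mathcal{F}(N_s)$ (which follows from the uniform Rayleigh--Taylor assumption $1-B_j \ge \mathfrak{a}$) produces a dissipation $-c_*\|\eta_\delta\|_{H^{s-\mez}}^2$ with $c_* > 0$ depending only on $(s,h,\mathfrak{a},\kappa)$, while the transport contribution $\kappa T_V\cdot\nabla$ is skew to leading order (self-adjoint part of order $1-\delta$) as in \eqref{energy:advec}, and $\mathcal{R}_1$ contributes at most $\mathcal{F}(N_s)\|\eta_\delta\|_{H^{s-\mez-\delta}}\|\eta_\delta\|_{H^{s-\mez}}$ after Cauchy--Schwarz. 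Interpolating $\|\eta_\delta\|_{H^{s-\mez-\delta}} \le C\|\eta_\delta\|_{H^{s-\mez}}^{1-\mu}\|\eta_\delta\|_{H^{s-1}}^\mu$ for some $\mu \in (0,1)$ and applying Young's inequality absorbs a small fraction of the dissipation, leaving
\begin{equation*}
\tfrac{d}{dt}\|\eta_\delta(t)\|_{H^{s-1}}^2 + c_*\|\eta_\delta(t)\|_{H^{s-\mez}}^2 \le C\,\|\eta_\delta(t)\|_{H^{s-1}}^2
\end{equation*}
for some $C = \mathcal{F}(N_s,\mathfrak{a}^{-1},h,\kappa)$. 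Dropping the dissipation and applying Gr\"onwall's lemma yields the $L^\infty_t H^{s-1}_x$ bound, and integrating the inequality in time then recovers the $L^2_t H^{s-\mez}_x$ piece, which together form the $Z^{s-1}(T)$ norm.

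The main obstacle is the derivation of the clean symmetric equation with remainder controlled strictly below $H^{s-\mez}$. A naive bound on $[G^-(\eta_1)-G^-(\eta_2)]\eta_2$ via Corollary \ref{theo:contractionDN} alone would place $\|\eta_\delta\|_{H^{s-\mez}}$ on the right-hand side, which cannot be absorbed into a dissipation of order $\|\eta_\delta\|_{H^{s-\mez}}^2$ for arbitrarily large data; the refined paralinearized contraction of Theorem \ref{theo:contractionDN2} is what furnishes the needed \emph{strict} gain of $\delta$ derivative, and the averaging trick from \eqref{equForDifferenceNew} is what ensures that the principal paradifferential operator $T_{\mathfrak{P}}$ is elliptic and self-adjoint to leading order so that the parabolicity argument of Subsection \ref{section:energyest} applies verbatim. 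This is the mechanism that makes a quantitative Lipschitz bound on the solution map in the $H^{s-1}$ topology possible at the scale-critical level $s>1+\frac d2$.
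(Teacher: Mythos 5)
Your argument is correct and is essentially the paper's own proof: the paper proves Proposition \ref{prop:stab} by simply stating "Repeating the above proof of the fact that $\eta_n$ is a Cauchy sequence in $Z^{s-1}(T)$", and you have correctly identified and reproduced that argument with the artificial-viscosity defect $\mathcal{R}_2$ removed. In particular, the key points you flag — the symmetric averaged equation \eqref{equForDifferenceNew}, the use of Theorem \ref{theo:contractionDN2} (rather than Corollary \ref{theo:contractionDN}) to gain $\delta$ derivatives on $\eta_1-\eta_2$, the $H^{s-1}$ parabolic energy estimate with interpolation and Young to absorb the sub-principal terms, and Gr\"onwall — are exactly the ingredients the paper intends.
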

Finally, uniqueness and continuous dependence on initial data for the solution $\eta\in Z^{s-1}(T)$ constructed above follow at once from Proposition \ref{prop:stab}.
\subsection{Proof of Theorem \ref{theo:wp2}}

We now consider the two-phase problem  \eqref{eq:eta2p}-\eqref{system:fpm}. We assume throughout that either $\Gamma^\pm=\emptyset$ or $\underline b^\pm\in \dot W^{1, \infty}(\Rr^d)$. 
\subsubsection{Well-posedness of the elliptic problem \eqref{system:fpm}}
\begin{prop}\label{prop:fpm}
Let $\eta \in W^{1, \infty}(\Rr^d)\cap H^\mez(\Rr^d)$. Then there exists a unique solution $f^\pm \in \wt H^\mez_{\Theta_\pm}(\Rr^d)$,
\[
 \Theta_\pm(x):=
 \begin{cases}
 \infty\quad\text{if}~\Gamma^\pm=\emptyset,\\
\frac{\mp(\eta^\pm-\underline{b}^\pm(x))}{2(\| \na_x\eta\|_{L^\infty(\Rr^d)}+\|\underline{b}^\pm\|_{L^\infty(\Rr^d)})} \quad\text{if}~\underline b^\pm\in \dot W^{1, \infty}(\Rr^d),
\end{cases}
\]
 to the system \eqref{system:fpm}. Moreover, $f^\pm$ satisfy 
\bq\label{variest:fpm}
\| f^\pm\|_{\wt H^\mez_{\Theta_\pm}(\Rr^d)}\le C(1+\| \eta\|_{W^{1, \infty}(\Rr^d)}){(1+\| (\na_x\eta, \na_x\underline{b}^\pm)\|_{L^\infty(\Rr^d)})}\lb\rho\rb\| \eta\|_{H^\mez(\Rr^d)},
\eq
where the constant $C$ depends only on $(\mu^\pm, h)$.% and $\| \underline b^\pm\|_{\dot W^{1, \infty}(\Rr^d)}$.
\end{prop}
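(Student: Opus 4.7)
The plan is a direct variational approach on the harmonic potentials $(\phi^+,\phi^-)$ rather than on the traces $f^\pm$ themselves; this elegantly sidesteps the difficulty that the two screened Sobolev spaces $\wt H^\mez_{\Theta_+}$ and $\wt H^\mez_{\Theta_-}$ need not coincide. I define the affine admissible set
\begin{equation*}
\mathcal{A}=\{(\phi^+,\phi^-)\in \dot H^1(\Omega^+)\times \dot H^1(\Omega^-):\phi^+\vert_\Sigma-\phi^-\vert_\Sigma=(\rho^+-\rho^-)\eta\}
\end{equation*}
and the Dirichlet energy
\begin{equation*}
J(\phi^+,\phi^-)=\frac{1}{2\mu^+}\int_{\Omega^+}|\na_{x,y}\phi^+|^2\,dxdy+\frac{1}{2\mu^-}\int_{\Omega^-}|\na_{x,y}\phi^-|^2\,dxdy.
\end{equation*}
Since $\eta\in H^\mez(\Rr^d)$ embeds continuously in every screened space $\wt H^\mez_{\Theta}(\Rr^d)$, the lifting Theorems \ref{theo:lift} and \ref{theo:lift2} produce $\underline\eta\in \dot H^1(\Omega^+)$ with $\underline\eta\vert_\Sigma=\eta$ and $\|\na_{x,y}\underline\eta\|_{L^2(\Omega^+)}\le \cF(\|\na\eta\|_{L^\infty})\|\eta\|_{\wt H^\mez_{\Theta_+}}$, so $(\bar\phi^+,\bar\phi^-):=((\rho^+-\rho^-)\underline\eta,0)$ witnesses $\mathcal{A}\neq\emptyset$.

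Writing $(\phi^+,\phi^-)=(\bar\phi^+,\bar\phi^-)+(\tilde\phi^+,\tilde\phi^-)$ reduces the existence of a minimizer to a variational problem on the closed Hilbert subspace
\begin{equation*}
\mathcal{V}=\{(\psi^+,\psi^-)\in \dot H^1(\Omega^+)\times \dot H^1(\Omega^-):\psi^+\vert_\Sigma=\psi^-\vert_\Sigma\}
\end{equation*}
of $\dot H^1(\Omega^+)\times \dot H^1(\Omega^-)$. The bilinear form
\begin{equation*}
B((\psi^+,\psi^-),(\chi^+,\chi^-))=\frac{1}{\mu^+}\int_{\Omega^+}\na\psi^+\cdot\na\chi^++\frac{1}{\mu^-}\int_{\Omega^-}\na\psi^-\cdot\na\chi^-
\end{equation*}
is tautologically continuous and coercive on $\mathcal{V}$ with constants depending only on $\mu^\pm$, so Lax--Milgram produces a unique $(\tilde\phi^+,\tilde\phi^-)\in\mathcal{V}$ satisfying $B((\tilde\phi^+,\tilde\phi^-),\cdot)=-B((\bar\phi^+,\bar\phi^-),\cdot)$. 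Setting $\phi^\pm:=\bar\phi^\pm+\tilde\phi^\pm$ and $f^\pm:=\phi^\pm\vert_\Sigma$, the Euler--Lagrange identity tested successively against pairs compactly supported in $\Omega^\pm$, then pairs supported away from $\Sigma$, and finally pairs $(\psi,\psi)$ sharing a common trace $\psi$ on $\Sigma$, yields via Green's theorem and the definition \eqref{defG:intro}: harmonicity of $\phi^\pm$, the Neumann conditions on $\Gamma^\pm$ (or decay at infinity), and the jump relation $\mu_+^{-1}G^+(\eta)f^+-\mu_-^{-1}G^-(\eta)f^-=0$. Together with the constraint $f^+-f^-=(\rho^+-\rho^-)\eta$, this shows $(f^+,f^-)$ solves \eqref{system:fpm}. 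Uniqueness in the stated spaces follows from strict convexity of $J$ combined with Propositions \ref{prop:vari1}--\ref{prop:vari2}: any other solution lifts to a critical point of $J$ in $\mathcal{A}$, which must coincide with the Lax--Milgram minimizer.

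Finally, the a priori estimate \eqref{variest:fpm} is obtained by chaining the Lax--Milgram bound $\|\na\tilde\phi^\pm\|_{L^2(\Omega^\pm)}\les |\rho^+-\rho^-|\,\|\na\underline\eta\|_{L^2(\Omega^+)}$ with the continuous trace estimate $\|f^\pm\|_{\wt H^\mez_{\Theta_\pm}}\les (1+\|\na\eta\|_{L^\infty})\|\na\phi^\pm\|_{L^2(\Omega^\pm)}$ from the trace theory of \cite{LeoTice} (Appendix \ref{appendix:trace}) and the lifting estimate above. The main technical obstacle is exactly the mismatch between the spaces $\wt H^\mez_{\Theta_+}$ and $\wt H^\mez_{\Theta_-}$, which is circumvented by working directly in the bulk $\dot H^1$ spaces where coercivity is tautological, and passing back to the screened trace spaces only at the very last step.
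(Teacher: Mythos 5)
Your proposal is correct and, at heart, follows the same route as the paper's own proof: both reduce \eqref{system:fpm} to a Lax--Milgram problem for the bilinear form $\frac{1}{\mu^+}\int_{\Omega^+}\nabla\cdot\nabla+\frac{1}{\mu^-}\int_{\Omega^-}\nabla\cdot\nabla$ on a closed subspace where coercivity is immediate, after translating away the inhomogeneous trace constraint by a lifting. The two places where you genuinely diverge are cosmetic but worth noting. First, the paper works on the glued connected domain $\Omega=\{\underline b^-<y<\underline b^+\}$ and looks for a single function $r\in\dot H^1(\Omega)$, writing $q^\pm=r\vert_{\Omega^\pm}\pm\theta$; you instead work with pairs $(\phi^+,\phi^-)\in\dot H^1(\Omega^+)\times\dot H^1(\Omega^-)$ subject to the matching-trace constraint defining $\mathcal{V}$. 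These two spaces are isomorphic (matching traces means the pair glues to an element of $\dot H^1(\Omega)$, with a single additive constant of ambiguity), so the content of Lax--Milgram is the same, and your Euler--Lagrange identity $B(\phi,\chi)=0$ for $\chi\in\mathcal{V}$ is exactly the paper's weak formulation \eqref{vari:rpm}. Second, the paper constructs the lifting $\theta$ \emph{explicitly} as a cut-off Poisson-type extension of $-\frac{\lb\rho\rb}{2}\eta$, chosen so that $\theta$ is compactly supported near $\Sigma$ and hence vanishes near $\Gamma^\pm$, yielding the $\|\eta\|_{H^\mez}$ bound \eqref{tt:eta} directly. You instead invoke the abstract lifting Theorems \ref{theo:lift}/\ref{theo:lift2} and then feed in the embedding chain \eqref{wtH:dotH} (namely $H^\mez\hookrightarrow\dot H^\mez=\wt H^\mez_\infty\hookrightarrow\wt H^\mez_{\Theta_+}$) to recover the bound in terms of $\|\eta\|_{H^\mez}$. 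This is cleaner in one respect: because you keep the Neumann conditions as natural boundary conditions on the full pair $(\phi^+,\phi^-)$ rather than on $r$ alone, you do not need the lifting to vanish near $\Gamma^\pm$, so no cutoff is required. It is slightly less self-contained in that it routes through the Leoni--Tice lifting machinery to get the quantitative bound. Both approaches deliver the estimate \eqref{variest:fpm} with constants of the advertised form.
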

\begin{proof}
Observe that  $f^\pm=q^\pm\vert_\Sigma$ where $q^\pm$ solve the two-phase elliptic problem 
\bq\label{elliptic:qpm}
\begin{cases}
\Delta q^\pm=0\quad\text{in}~\Omega^\pm,\\
q^+-q^-=-\lb \rho\rb\eta,\quad \frac{\p_nq^+}{\mu^+}-\frac{\p_nq^-}{\mu^-}=0\quad\text{on}~\Sigma,\\
\p_{\nu^\pm}q^\pm=0\quad\text{on}~\Gamma^\pm.
\end{cases}
\eq
Here the Neumann boundary conditions need to be modified as in \eqref{vanish:dyphi} when $\Gamma^+$ or $\Gamma^-$ is empty. Thus, it remains to prove the unique solvability of \eqref{elliptic:qpm}.  To remove the jump of $q$ at the interface, let us fix a cut-off $\chi \in C^\infty(\Rr)$ satisfying $\chi(z)=1$ for $|z|<\mez$, $\chi(z)=0$ for $|z|>1$, and set 
\[
\underline\theta(x, z)=-\frac{\lb \rho\rb}{2}\chi(z)e^{-|z|}\langle D_x\rangle\eta(x),\quad(x, z)\in \Rr^d\times \Rr
\]
and
\[
\theta(x, y)=\underline\theta(x, \frac{y-\eta(x)}{h}),\quad(x, y)\in \Rr^d\times \Rr.
\]
Then $\tt(x, \eta(x))=-\frac{\lb \rho\rb}{2}\eta(x)$ and $\tt$ vanishes near $\Gamma^\pm$. Moreover, we have 
\bq\label{tt:eta}
\| \tt\|_{H^1(\Omega)}\le C\lb\rho\rb(1+\| \eta\|_{W^{1, \infty}})\| \eta\|_{H^\mez},\quad \Omega=\Omega^+\cup \Omega^-.
\eq
We then need to prove that there exists a unique solution $r \in \dot H^1(\Omega)$ to the problem 
\bq\label{elliptic:rpm}
\begin{cases}
-\Delta r=\pm \Delta \tt\quad\text{in}~\Omega^\pm,\\
  \frac{\p_n r}{\mu^+}-\frac{\p_n r}{\mu^-}=-\p_n\tt(\frac{1}{\mu^+}+\frac{1}{\mu^-})\quad \text{on}~\Sigma,\\
\p_{\nu^\pm}r=0\quad\text{on}~\Gamma^\pm.
\end{cases}
\eq
The pair $q^\pm:=r\vert_{\Omega^\pm}\pm \tt$ is the unique solution of \eqref{elliptic:qpm}. For a smooth solution $r$ and for any smooth test function  $\phi:\Omega\to \Rr$ we have after integrating by parts that
\[
\int_{\Omega^+} \na r\cdot \na \phi dx+\int_\Sigma \p_nr\phi dS=-\int_{\Omega^+} \na \tt\cdot \na \phi dx-\int_\Sigma \p_n\tt\phi dS
\]
and 
\[
\int_{\Omega^-} \na r\cdot \na \phi dx-\int_\Sigma \p_nr\phi dS=\int_{\Omega^-} \na \tt\cdot \na \phi dx-\int_\Sigma \p_n\tt\phi dS.
\]
 Multiplying the first equation by $\frac{1}{\mu^+}$ and the second one by $\frac{1}{\mu^-}$ then adding and using the jump conditions in \eqref{elliptic:rpm} we obtain
\bq\label{vari:rpm}
\begin{aligned}
&\frac{1}{\mu^+}\int_{\Omega^+} \na r\cdot \na \phi dx+\frac{1}{\mu^-}\int_{\Omega^-} \na r\cdot \na \phi dx=-\frac{1}{\mu^+}\int_{\Omega^+} \na \tt\cdot \na \phi dx+\frac{1}{\mu^-}\int_{\Omega^-} \na \tt\cdot \na \phi dx.
\end{aligned}
\eq
Conversely, if $r$ is a sufficiently smooth function that verifies \eqref{vari:rpm}, then upon integrating by parts we can show that $r$ solves \eqref{elliptic:rpm}. Therefore,  $r\in \dot H^1(\Omega)$ is a variational solution of \eqref{elliptic:rpm} if  the weak formulation \eqref{vari:rpm} is satisfied for all test functions $\phi\in \dot H^1(\Omega)$. By virtue of the estimate \eqref{tt:eta} and Proposition \ref{dotH1:complete}, the Lax-Milgram theorem guarantees the existence of a unique variational solution $r$; moreover,  the variational bound 
\[
\| \na r\|_{L^2(\Omega)}\le C\lb\rho\rb(1+\| \eta\|_{W^{1, \infty}})\| \eta\|_{H^\mez}
\]
holds for some constant $C$ depending only on $(\mu^\pm, h)$. This combined with \eqref{tt:eta} implies that $q^\pm=r\vert_{\Omega^\pm}\pm \tt$ satisfy the same bound
\[
\| q^\pm\|_{\dot H^1(\Omega^\pm)}\le C\lb\rho\rb(1+\| \eta\|_{W^{1, \infty}})\| \eta\|_{H^\mez}.
\]
Finally, \eqref{variest:fpm} follows from this and the trace inequalities \eqref{trace:ineq} and \eqref{trace:ineq2}.% and  Proposition \ref{prop:lowG}.
\end{proof}
Since it is always possible to find $a\in (0, 1)$ such that $\Theta_\pm(x)\ge  \mp a(\eta^\pm(x)-\underline{b}^\pm(x))$, we have $f^\pm\in \wt H^\mez_\pm$.
\begin{rema}
1) In fact, the proof of Proposition \ref{prop:fpm} shows that for $\eta\in W^{1, \infty}(\Rr^d)$ and $g\in H^\mez(\Rr^d)$, there exists a unique variational solution  $f^\pm \in \wt H^\mez_{\Theta_\pm}(\Rr^d)$ to the system
\bq
\begin{cases}
 f^+-f^-= g,\\
\frac{1}{\mu^+}G^+(\eta)f^+-\frac{1}{\mu^-}G^-(\eta)f^-=0.
\end{cases}
\eq
In addition, there exists  a constant $C$ depending only on $(\mu^\pm, h)$ such that
\bq
\| f^\pm\|_{\wt H^\mez_{\Theta_\pm}(\Rr^d)}\le C(1+\| \eta\|_{W^{1, \infty}(\Rr^d)}){(1+\| (\na_x\eta, \na_x\underline{b}^\pm)\|_{L^\infty(\Rr^d)})}\| g\|_{H^\mez(\Rr^d)}.
\eq
2) If $\Gamma^+=\Gamma^-=\emptyset$, then it suffices to assume $\eta\in \dot W^{1, \infty}(\Rr^d)$ and $g\in \dot H^\mez(\Rr^d)$ since localization away from $\Gamma^\pm$ is  not needed and one can choose 
\[
\underline\theta(x, z)=\frac{1}{2}e^{-|z||D_x|}g(x),\quad(x, z)\in \Rr^d\times \Rr.
\]
Then, $\tt(x, y):=\underline\tt(x, y-\eta(x))$ satisfies 
\[
\| \tt\|_{\dot H^1(\Omega)}=\| \tt\|_{\dot H^1(\Rr^d)}\le C(1+\| \na_x\eta\|_{L^\infty(\Rr^d)})\| g\|_{\dot H^\mez(\Rr^d)}.
\]
Consequently, 
\bq
\| f^\pm\|_{\wt H^\mez_\infty(\Rr^d)}\le C(1+\| \na_x\eta\|_{L^\infty(\Rr^d)})^2\| g\|_{\dot H^\mez(\Rr^d)}.
\eq
\end{rema}
\subsubsection{A priori higher regularity estimate for $f^\pm$}
The weak regularity bound can then be bootstrapped to regularity of the surface $\eta$.
\begin{prop}\label{prop:fpmdotHs}
Let $f^\pm$ be the solution of \eqref{system:fpm} as given by Proposition \ref{prop:fpm}. If $\eta \in H^s(\Rr^d)$ with $s>1+\frac d2$ then for any $r\in [\mez, s]$, we have
\bq\label{fpm:dotHs}
\| f^\pm\|_{\widetilde{H}^r_{\pm}}\le \cF(\| \eta\|_{H^s})\| \eta\|_{H^r}.
\eq
\end{prop}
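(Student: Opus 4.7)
The plan is to exploit the ellipticity of the transmission system \eqref{system:fpm}. Eliminating $f^+=f^--\lb\rho\rb\eta$ from the flux-matching condition, we reduce to a single equation for $f^-$:
$$\mathcal{P}(\eta)f^-=\frac{\lb\rho\rb}{\mu^+}G^+(\eta)\eta,\qquad \mathcal{P}(\eta):=\frac{1}{\mu^+}G^+(\eta)-\frac{1}{\mu^-}G^-(\eta).$$
Since the upward normal $n$ points out of $\Omega^-$ but into $\Omega^+$, the principal symbol of $G^-(\eta)$ is $+\lambda$ while that of $G^+(\eta)$ is $-\lambda$, and the combined operator $\mathcal{P}(\eta)$ has principal symbol $-\bigl(\tfrac{1}{\mu^+}+\tfrac{1}{\mu^-}\bigr)\lambda$, which is \emph{strictly elliptic of order $1$} regardless of whether a viscosity jump is present.

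The base case $r=\mez$ is exactly Proposition \ref{prop:fpm}, which yields, for all $r\in[\mez,s]$,
$$\|f^\pm\|_{\wt H^{\mez}_\pm}\le C(1+\|\eta\|_{W^{1,\infty}})^2\lb\rho\rb\|\eta\|_{H^{\mez}}\le \cF(\|\eta\|_{H^s})\|\eta\|_{H^r},$$
using the embedding $H^s\hookrightarrow W^{1,\infty}$ (since $s>1+\tfrac{d}{2}$) and the monotonicity $\|\eta\|_{H^{\mez}}\le\|\eta\|_{H^r}$. Since $\wt H^r_\pm=\wt H^{\mez}_\pm\cap H^{1,r}$ by definition \eqref{def:wtHs}, the remaining task is to bound $\|\nabla f^\pm\|_{H^{r-1}}$, and because $\nabla f^+=\nabla f^--\lb\rho\rb\nabla\eta$ it suffices to bound $\nabla f^-$.

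For this I would paralinearize $G^\pm(\eta)f^-$ using Theorem \ref{paralin:ABZ} (and its analogue for $G^+$) with reference regularity $s$: for any $\sigma\in[\mez,\,s-\delta]$ with $0<\delta<\min\{\mez,\,s-1-\tfrac d2\}$,
$$\mathcal{P}(\eta)f^-=-\Bigl(\tfrac{1}{\mu^+}+\tfrac{1}{\mu^-}\Bigr)T_\lambda f^-+\mathcal{R}_\sigma(\eta)f^-,\qquad \|\mathcal{R}_\sigma(\eta)f^-\|_{H^{\sigma-1+\delta}}\le \cF(\|\eta\|_{H^s})\|f^-\|_{\wt H^\sigma_-}.$$
Meanwhile Theorem \ref{theo:estDN} controls the source: $\|G^+(\eta)\eta\|_{H^{r-1}}\le \cF(\|\eta\|_{H^s})\|\eta\|_{H^r}$. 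Combining these ingredients and constructing a parametrix $T_{\lambda^{-1}}$ of order $-1$ via the symbolic calculus of Theorem \ref{theo:sc}, one inverts $T_\lambda$ to obtain
$$\|\nabla f^-\|_{H^{\min(r,\sigma+\delta)-1}}\le\cF(\|\eta\|_{H^s})\bigl(\|\eta\|_{H^r}+\|f^-\|_{\wt H^\sigma_-}\bigr).$$
Starting from $\sigma_0=\mez$ and iterating $\sigma_{k+1}=\min(r,\sigma_k+\delta)$, a finite bootstrap reaches $\sigma=r$, establishing the bound for $f^-$. The estimate for $f^+$ then follows from $f^+=f^--\lb\rho\rb\eta$ by the triangle inequality, together with the base case for the screened $\wt H^{\mez}_+$ component.

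The main technical hurdle is the paralinearization of $G^+(\eta)$: the paper states Theorems \ref{paralin:ABZ} and \ref{tame:RDN} only for $G^-$, so one must verify that the ``parallel'' statement for $G^+$ (with opposite sign on the principal symbol) holds with identical remainder estimates. A secondary subtlety is that the parametrix $T_{\lambda^{-1}}$ and the bootstrap take place in the screened Sobolev scale $\wt H^r_-$ rather than a standard Sobolev space, but since the base case already handles the low-frequency/screened component and the bootstrap only improves the gradient regularity $\|\nabla f^-\|_{H^{r-1}}$ (essentially a high-frequency statement), this causes no substantive difficulty.
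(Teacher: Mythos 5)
Your proposal is correct and follows essentially the same route as the paper's proof: both paralinearize the Dirichlet--Neumann operators $G^\pm$ (invoking, without writing it out, the parallel $G^+$ version of Theorem \ref{paralin:ABZ}), eliminate $f^+$ via the jump relation, invert the common principal symbol $T_\lambda$ by symbolic calculus, and bootstrap from $r=\mez$ (Proposition \ref{prop:fpm}) in increments of $\delta<s-1-\frac{d}{2}$ until the range $[\mez,s]$ is exhausted. The only cosmetic difference is that you eliminate $f^+$ before paralinearizing and bound the source $G^+(\eta)\eta$ directly by Theorem \ref{theo:estDN}, while the paper paralinearizes both $G^\pm(\eta)f^\pm$ first and then substitutes $f^+=f^--\llbracket\rho\rrbracket\eta$, producing the same $\Vert\eta\Vert_{H^{\sigma+\delta}}$ contribution; and both handle the screened space $\wt H^r_\pm$ as you describe, by splitting off the low-frequency part (already controlled at level $\mez$) and running the gain of regularity on $\Psi(D)f^-$.
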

\begin{proof}
Fix $\delta\in \big(0, \min(s-1-\frac{d}{2}, 1)\big)$. We first claim that whenever $f^\pm \in \wt H^{\sigma}$ with $\sigma\in [\mez, s-\delta]$ we have $f^\pm \in \wt H^{\sigma+\delta}$ and
\bq\label{BoostrapHRNew}
\| f^\pm\|_{\wt H^{\sigma+\delta}}\le  \cF(\|\eta\|_{H^s})\big(\| f^\pm\|_{\wt H^\sigma}+\| \eta\|_{H^{\sigma+\delta}}\big).
\eq
Indeed, applying Theorem \ref{paralin:ABZ} we have 
 \begin{align*}
& G^\pm(\eta)f^\pm=\mp T_{\lambda}f^\pm+R^\pm_0(\eta)f^\pm,\\
 &\Vert R^\pm_0(\eta)f^\pm\Vert_{H^{\sigma-1+\delta}}\le \mathcal{F}(\Vert \eta\Vert_{H^s})\Vert f^\pm\Vert_{\widetilde{H}^\sigma_\pm}.
\end{align*}
Plugging this into the  second equation in \eqref{system:fpm} we obtain 
\[
\Vert \frac{1}{\mu^-}T_\lambda f^-+\frac{1}{\mu^+}T_\lambda f^+\Vert_{H^{\sigma-1+\delta}}\le \mathcal{F}(\Vert \eta\Vert_{H^s})\Vert f^\pm\Vert_{\widetilde{H}^\sigma_\pm}.
\]
But $f^+=f^--\lb \rho\rb$ from the first equation in \eqref{system:fpm}, hence 
\bq\label{boostrap:Tf-}
\Vert T_\lambda f^-\Vert_{H^{\sigma-1+\delta}}\le \mathcal{F}(\Vert \eta\Vert_{H^s})\big(\Vert f^\pm\Vert_{\widetilde{H}^\sigma_\pm}+\| \eta\|_{H^{\sigma+\delta}}\big).
\eq
On the other hand, by virtue of Theorem \ref{theo:sc} (ii) we have 
\[
\| \Psi(D)g\|_{H^\nu}=\| T_1g\|_{H^\nu}\le \mathcal{F}(\Vert \eta\Vert_{H^s})(\| T_\ld g\|_{H^{\nu-1}}+\| g\|_{H^{1, \nu -\delta}}) \quad\forall \nu \in \Rr.
\]
Combining this with  the inequality 
\[
\| g\|_{\wt H^\nu_\pm}\les \| g\|_{\wt H^\mez_\pm}+\| \psi(D) g\|_{H^\nu}\quad\forall\nu \ge \mez
\]
yields 
\[
\| g\|_{\wt H^\nu_\pm}\les \| g\|_{\wt H^\mez_\pm}+\mathcal{F}(\Vert \eta\Vert_{H^s})(\| T_\ld g\|_{H^{\nu-1}}+\| g\|_{H^{1, \nu -\delta}})\quad\forall \nu\ge \mez.
\]
Applying this with $g=f^-$ and $\nu=\sigma+\delta\ge \mez+\delta$ we deduce in view of \eqref{boostrap:Tf-} and \eqref{variest:fpm} that
\[
\| f^-\|_{\wt H^{\sigma+\delta}_\pm}\le\mathcal{F}(\|\eta\|_{H^s})\big(\Vert f^\pm\Vert_{\widetilde{H}^\sigma_\pm}+\| \eta\|_{H^{\sigma+\delta}}\big)
\]
for all $\sigma\in [\mez, s-\delta]$. Clearly, this implies \eqref{BoostrapHRNew}.

Then, because \eqref{fpm:dotHs} holds for $r=\mez$, an induction argument using \eqref{BoostrapHRNew} shows that \eqref{fpm:dotHs} holds for any $r\in [\mez, s]$.
\end{proof}
\subsubsection{Paradifferential reduction} 
\begin{prop}\label{prop:reduce2p}
Let $s>1+\frac d2$ and let $\delta \in \big(0,  s-1-\frac d2\big)$ and $\delta \le 1$. If $\eta\in H^s(\Rr^d)$ and $f^\pm\in \wt H^s_\pm$ solve the system \eqref{eq:eta2p}--\eqref{system:fpm}, then we have
\bq\label{eta:reduce2p}
\begin{aligned}
\p_t\eta&=-\frac{1}{\mu^-}G^-(\eta)f^-\\
&=-\frac{1}{\mu^++\mu^-}T_\lambda\left(\lb\rho\rb\eta-T_{\lb B\rb}\eta\right)+\frac{1}{\mu^++\mu^-}T_{\lb V\rb}\cdot\nabla \eta+R(\eta)
\end{aligned}
\eq
where  $\lb B\rb=B^--B^+$, $\lb V\rb=V^--V^+$, $B^\pm$ and $V^\pm$ are given by 
\bq\label{BVpm}
B^\pm=\frac{\nabla \eta\cdot \nabla f^\pm+G^\pm(\eta)f^\pm}{1+|\nabla \eta|^2},\quad V^\pm=\nabla f^\pm-B^\pm\nabla \eta,
\eq
 and $R(\eta)$ obeys the bound   
\bq\label{bound:Retafpm}
\| R(\eta)\|_{H^{s-\mez}}\le \cF(\| \eta\|_{H^s})\| \eta\|_{H^s}\big(1+\| \eta\|_{H^{s+\mez-\delta}}\big)
\eq
for some $\cF:\Rr^+\to \Rr^+$ depending only on $(s, \mu^\pm, \lb\rho\rb, h)$.
\end{prop}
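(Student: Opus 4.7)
The scheme is to paralinearize $G^\pm(\eta)f^\pm$ separately, combine using the jump relations in \eqref{system:fpm}, and absorb the terms depending on $f^\pm$ using the elliptic bound of Proposition \ref{prop:fpmdotHs}.

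First I would write the analogue of Theorem \ref{tame:RDN} for $G^+(\eta)f^+$. Since the upward unit normal $n$ is \emph{inward}-pointing for $\Omega^+$, the principal sign is reversed and one has
\begin{equation*}
G^+(\eta)f^+=-T_\lambda(f^+-T_{B^+}\eta)+T_{V^+}\cdot\nabla\eta+R^+(\eta)f^+,
\end{equation*}
with $B^+,V^+$ defined by \eqref{BVpm} and the same tame remainder estimate
\begin{equation*}
\|R^+(\eta)f^+\|_{H^{s-\mez}}\le\cF(\|\eta\|_{H^s})\big(1+\|\eta\|_{H^{s+\mez-\delta}}\big)\|f^+\|_{\widetilde H^s_+}.
\end{equation*}
Combined with the corresponding identity \eqref{ParaDN} for $G^-(\eta)f^-$, adding produces
\begin{equation*}
G^-(\eta)f^-+G^+(\eta)f^+=T_\lambda(f^--f^+)-T_\lambda T_{\lb B\rb}\eta-T_{\lb V\rb}\cdot\nabla\eta+R^-(\eta)f^-+R^+(\eta)f^+,
\end{equation*}
and plugging $f^--f^+=\lb\rho\rb\eta$ from the first equation of \eqref{system:fpm} converts this into
\begin{equation*}
G^-(\eta)f^-+G^+(\eta)f^+=T_\lambda\big(\lb\rho\rb\eta-T_{\lb B\rb}\eta\big)-T_{\lb V\rb}\cdot\nabla\eta+R^-(\eta)f^-+R^+(\eta)f^+.
\end{equation*}

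Second, I would exploit the transmission condition $\tfrac{1}{\mu^+}G^+(\eta)f^+=\tfrac{1}{\mu^-}G^-(\eta)f^-$ to rewrite
\begin{equation*}
\frac{1}{\mu^-}G^-(\eta)f^-=\frac{1}{\mu^++\mu^-}\big(G^-(\eta)f^-+G^+(\eta)f^+\big),
\end{equation*}
which is the algebraic pivot: once the symmetric sum on the right has been paralinearized, multiplying by $-1$ gives exactly the claimed expansion of $\p_t\eta=-\tfrac{1}{\mu^-}G^-(\eta)f^-$, with the remainder
\begin{equation*}
R(\eta)=-\frac{1}{\mu^++\mu^-}\big(R^-(\eta)f^-+R^+(\eta)f^+\big).
\end{equation*}

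Finally, to prove the bound \eqref{bound:Retafpm} it remains to estimate $R^\pm(\eta)f^\pm$. The paralinearization estimate \eqref{est:RDN} controls them by $\|f^\pm\|_{\widetilde H^s_\pm}$, and Proposition \ref{prop:fpmdotHs} applied with $r=s$ turns that into $\cF(\|\eta\|_{H^s})\|\eta\|_{H^s}$. Multiplying by the factor $(1+\|\eta\|_{H^{s+\mez-\delta}})$ yields exactly \eqref{bound:Retafpm}.

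The only mildly nontrivial step is verifying the sign convention and the ``parallel'' version of Theorem \ref{tame:RDN} for $G^+$, together with checking that the constant functions annihilated by $T_\lambda$ make the expression $T_\lambda(f^--f^+)$ well defined despite $f^\pm$ being determined only up to additive constants (the difference $f^--f^+=\lb\rho\rb\eta$ is of course well-defined, and $\lambda$ vanishes at $\xi=0$ so $T_\lambda$ kills constants). Once these are in place, the rest is straightforward algebra combined with the already established elliptic regularity of Proposition \ref{prop:fpmdotHs}.
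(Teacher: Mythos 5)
Your proof is correct and uses the same inputs as the paper: Theorem \ref{tame:RDN} applied to both $G^\pm(\eta)f^\pm$ (with the sign flipped for $G^+$ because $n$ is inward-pointing for $\Omega^+$), the jump relations in \eqref{system:fpm}, and the elliptic bound of Proposition \ref{prop:fpmdotHs} with $r=s$ to turn $\|f^\pm\|_{\wt H^s_\pm}$ into $\cF(\|\eta\|_{H^s})\|\eta\|_{H^s}$. The one place your argument is organized differently is the algebra: the paper plugs both paralinearizations into the transmission equation, solves for $T_\lambda f^-$ in terms of $\eta$ and the remainders, and substitutes back into the paralinearization of $G^-(\eta)f^-$; you instead observe that the transmission condition $\frac{1}{\mu^+}G^+(\eta)f^+ = \frac{1}{\mu^-}G^-(\eta)f^-$ yields directly
\begin{equation*}
\frac{1}{\mu^-}G^-(\eta)f^- = \frac{1}{\mu^++\mu^-}\bigl(G^-(\eta)f^- + G^+(\eta)f^+\bigr),
\end{equation*}
so it suffices to paralinearize the symmetric sum, at which point $T_\lambda f^\pm$ cancel except through the combination $f^--f^+$. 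This reorganization is shorter and lands on the remainder $R(\eta) = -\frac{1}{\mu^++\mu^-}\bigl(R^-(\eta)f^-+R^+(\eta)f^+\bigr)$ in one step; after simplification the paper's substitution gives the same expression. Your parenthetical check that $T_\lambda$ annihilates constants (so that $T_\lambda f^\pm$ is unambiguous even though $f^\pm$ are defined only modulo constants) is correct and worth stating, though it is already implicit in the statement and proof of Theorem \ref{tame:RDN}.
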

\begin{proof}
We first apply the paralinearization in Theorem \ref{tame:RDN} to $G^\pm(\eta)f^\pm$ with $\sigma=s$, to have 
\bq\label{linGpm:pr1}
\begin{aligned}
&G^-(\eta)f^-= T_\lambda(f^--T_{B^-}\eta)-T_{V^-}\cdot\nabla \eta+R^-(\eta)f^-,\\
&G^+(\eta)f^+= -T_\lambda(f^+-T_{B^+}\eta)+T_{V^+}\cdot\nabla \eta+R^+(\eta)f^+,
\end{aligned}
\eq
where, using Proposition \ref{prop:fpmdotHs},
\begin{equation}\label{est:Rpmfpm}
\| R^\pm(\eta)f^\pm\|_{H^{s-\mez}}\le \cF(\| \eta\|_{H^s})\Vert \eta\Vert_{H^s}\big(1+\| \eta\|_{H^{s+\mez-\delta}}\big).
\end{equation}
In view of the second equation in \eqref{system:fpm}, \eqref{linGpm:pr1}  yields
\bq\label{linG=G}
T_\ld\big(\frac{f^+}{\mu^+}+\frac{f^-}{\mu^-}\big)=T_\ld T_{\frac{B^+}{\mu^+}+\frac{B^-}{\mu^-}}\eta+T_{\frac{V^+}{\mu^+}+\frac{V^-}{\mu^-}}\cdot\na \eta+\frac{1}{\mu^+}R^+(\eta)f^+-\frac{1}{\mu^-}R^-(\eta)f^-.
\eq
Since $f^+=f^--\lb \rho\rb\eta$, \eqref{linG=G} implies
\[
\begin{aligned}
T_\ld f^-&=\frac{\lb \rho\rb\mu^-}{\mu^++\mu^-}T_\ld \eta+\frac{1}{\mu^++\mu^-}T_\ld T_{\mu^+B^-+\mu^-B^+}\eta+\frac{1}{\mu^++\mu^-}T_{\mu^+V^-+\mu^-V^+}\cdot\na \eta\\
&\quad +\frac{1}{\mu^++\mu^-}\big(\mu^- R^+(\eta)f^+-\mu^+R^-(\eta)f^-\big).
\end{aligned}
\]
Plugging this into the first equation in \eqref{linGpm:pr1}, we arrive at \eqref{eta:reduce2p}
with
\begin{equation*}
\begin{split}
R(\eta)&=-\frac{1}{\mu^++\mu^-}\big( R^+(\eta)f^+-\frac{\mu^+}{\mu^-}R^-(\eta)f^-\big).
\end{split}
\end{equation*}
In view of \eqref{est:Rpmfpm}, this finishes the proof.
%
% that 
%\[
%G^-(\eta)f^-=\frac{\mu^-}{\mu^++\mu^-}T_\lambda\left(\lb \rho\rb\eta-T_{\lb B\rb}\eta\right)-\frac{\mu^-}{\mu^++\mu^-}T_{\lb V\rb}\cdot\nabla \eta+R(\eta, f^\pm)
%\]
%where $R(\eta, f^\pm)$ obeys the bound \eqref{bound:Retafpm}. Finally, \eqref{eta:reduce2p} follows from this and \eqref{eq:eta2p}.
\end{proof}
When the top fluid is vacuum, equation \eqref{eta:reduce2p} reduces to equation \eqref{eta:reduce} previously obtained for the one-phase problem. Remarkably, \eqref{eta:reduce2p} together with the fact that
\[
T_\lambda\left(\lb\rho\rb\eta-T_{\lb B\rb}\eta\right)\sim T_{\lambda(\lb\rho\rb-\lb B\rb)}\eta
\]
 shows that the two-phase Muskat problem is parabolic so long as  the Rayleigh-Taylor condition $\text{RT}=\sqrt{1+|\na \eta|^2}(\lb\rho\rb-\lb B\rb)>0$ holds. In addition, for constant viscosity, $\lb \mu\rb=0$, by using \eqref{RT} and \eqref{RT2} we find that the parabolic term becomes explicit
 \bq
 T_{\lambda\left(\lb\rho\rb-\lb B\rb\right)}\eta=T_{\ld\lb \rho\rb(1+|\na_x\eta|^2)^{-1}}\eta.
 \eq
\subsubsection{Proof of Theorem \ref{theo:wp2}}
We observe that the paradifferential equation \eqref{eta:reduce2p} has the same form as equation \eqref{eta:reduce} for the one-phase problem. In particular,  an $H^s$ energy estimate on $[0, T]$ can be obtained as in Section \ref{section:energyest} provided that  $\text{RT}(x, t)>0$ for all $(x, t)\in \Rr^d\times [0, T]$. This stability condition can be propagated as in Section \ref{section:propaRT} if it is assumed to hold at initial time. In the rest of this subsection, we only sketch the approximation scheme that preserves the aforementioned a priori estimates. 

For each $\eps\in (0, 1)$, consider the approximate problem 
\bq\label{eq:eta2pe}
\p_t\eta_\eps=-\frac{1}{\mu^-}G^{-}(\eta_\eps)f^-_\eps+\eps \Delta \eta_\eps,
\eq
where $f^\pm_\eps$ solves \eqref{system:fpm}:
\bq\label{system:fpme}
\begin{cases}
 f^+_\eps-f^-_\eps= (\rho^+-\rho^-)\eta_\eps,\\
\frac{1}{\mu^+}G^+(\eta_\eps)f^+_\eps-\frac{1}{\mu^-}G^-(\eta_\eps)f^-_\eps=0.
\end{cases}
\eq
\begin{prop}
Let $s>1+\frac d2$ with $d\ge 1$. For each $\eta_0\in H^s(\Rr^d)$ with $\dist(\eta_0, \Gamma^\pm)>2h>0$, there exist  $T_\eps=T_\eps(\| \eta_0\|_{H^s}, h, s, \mu^\pm, \lb\rho\rb)>0$ and a unique solution $\eta_\eps$ to \eqref{eq:eta2pe}-\eqref{system:fpme} on $[0, T_\eps]$ such that $\eta_\eps\vert_{t=0}=\eta_0$,
 \[
  \eta_\eps\in C([0, T_\eps]; H^s(\Rr^d))\cap L^2([0, T_\eps]; H^{s+1}(\Rr^d)),
  \]
  and
 \[
\dist(\eta_\eps(t), \Gamma^\pm)\ge h\quad\forall t\in [0, T_\eps].
\]
\end{prop}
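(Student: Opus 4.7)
The strategy is a Banach fixed-point argument in which the strictly parabolic term $\varepsilon\Delta\eta_\eps$ provides the main smoothing, while $-\mu_-^{-1}G^-(\eta_\eps)f_\eps^-$ is treated as a lower-order (first-order) perturbation. Fix $R\ge 2\|\eta_0\|_{H^s}$ and $h>0$ as in the hypothesis. On the closed subset
\begin{equation*}
\mathcal{B}=\{v\in L^\infty([0,T_\eps];H^s)\cap L^2([0,T_\eps];H^{s+1}):\,\|v\|_{L^\infty_tH^s}\le R,\; \dist(v(t),\Gamma^\pm)\ge h,\; v(0)=\eta_0\},
\end{equation*}
equipped with the metric $d(v_1,v_2)=\|v_1-v_2\|_{L^\infty_tH^{s-1}}$, I define the map $\Phi:\widetilde\eta\mapsto\eta$ by solving the linear heat equation
\begin{equation*}
(\p_t-\eps\Delta)\eta=-\tfrac{1}{\mu^-}G^-(\widetilde\eta)\widetilde f^-,\qquad \eta|_{t=0}=\eta_0,
\end{equation*}
where $\widetilde f^\pm$ is obtained from $\widetilde\eta$ via the elliptic system \eqref{system:fpme}. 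Proposition \ref{prop:fpm} produces a unique $\widetilde f^\pm\in\widetilde H^{1/2}_\pm$ at each time, and Proposition \ref{prop:fpmdotHs} upgrades this to $\|\widetilde f^\pm\|_{\widetilde H^s_\pm}\le\mathcal{F}(R)$ uniformly in $t$; Theorem \ref{theo:estDN} then gives $\|G^-(\widetilde\eta)\widetilde f^-\|_{L^\infty_tH^{s-1}}\le\mathcal{F}(R)$.

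\textbf{Self-mapping.} A standard $H^s$ energy estimate on the linear parabolic equation, using $|( F,\eta)_{H^s}|\le \frac{1}{2\eps}\|F\|_{H^{s-1}}^2+\frac{\eps}{2}\|\eta\|_{H^{s+1}}^2$ with $F=-\mu_-^{-1}G^-(\widetilde\eta)\widetilde f^-$, yields
\begin{equation*}
\|\eta\|_{L^\infty_tH^s}^2+\eps\|\eta\|_{L^2_tH^{s+1}}^2\le\|\eta_0\|_{H^s}^2+C\eps^{-1}T_\eps\,\mathcal{F}(R)^2,
\end{equation*}
so for $T_\eps=T_\eps(R,\eps,h,s,\mu^\pm,\lb\rho\rb)$ small enough one has $\|\eta\|_{L^\infty_tH^s}\le R$. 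For the distance, I will rewrite $\p_t\eta=\eps\Delta\eta+F\in L^2([0,T_\eps];H^{s-1})$, so that $\|\eta(t)-\eta_0\|_{H^{s-1}}\le C(\eps,R)\sqrt t$; interpolating against the $H^s$-bound and invoking the embedding $H^{s-\theta}\hookrightarrow L^\infty$ (valid for some small $\theta>0$ since $s>1+d/2$) gives $\|\eta(t)-\eta_0\|_{L^\infty}\le C(\eps,R)t^{\theta'}$. For $T_\eps$ further reduced this secures $\dist(\eta(t),\Gamma^\pm)\ge h$ on $[0,T_\eps]$, so $\Phi(\mathcal{B})\subset\mathcal{B}$.

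\textbf{Contraction and conclusion.} For $\widetilde\eta_1,\widetilde\eta_2\in\mathcal{B}$, set $\eta_j=\Phi(\widetilde\eta_j)$, $\eta_\delta=\eta_1-\eta_2$, $\widetilde\eta_\delta=\widetilde\eta_1-\widetilde\eta_2$, and similarly for $\widetilde f^\pm_\delta$. The difference satisfies
\begin{equation*}
(\p_t-\eps\Delta)\eta_\delta=-\tfrac{1}{\mu^-}\bigl\{[G^-(\widetilde\eta_1)-G^-(\widetilde\eta_2)]\widetilde f_1^-+G^-(\widetilde\eta_2)\widetilde f_\delta^-\bigr\},\qquad \eta_\delta(0)=0.
\end{equation*}
The first term is controlled in $H^{s-2}$ by $\mathcal{F}(R)\|\widetilde\eta_\delta\|_{H^{s-1}}\|\widetilde f_1^-\|_{\widetilde H^s_-}$ thanks to Corollary \ref{theo:contractionDN} applied with $\sigma=s-1$. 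The second term is controlled in $H^{s-2}$ by $\mathcal{F}(R)\|\widetilde f_\delta^-\|_{\widetilde H^{s-1}_-}$ via Theorem \ref{theo:estDN}. An $H^{s-1}$ energy estimate analogous to the one above then gives $\|\eta_\delta\|_{L^\infty_tH^{s-1}}\le C(\eps,R)T_\eps^{1/2}d(\widetilde\eta_1,\widetilde\eta_2)$, so $\Phi$ contracts on $\mathcal{B}$ after shrinking $T_\eps$. Banach's fixed-point theorem produces a unique $\eta_\eps\in\mathcal{B}$; uniqueness in the whole class follows from the same contraction argument applied to two hypothetical solutions.

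\textbf{Main obstacle.} The delicate point is the contraction estimate for the implicitly defined pair $\widetilde f^\pm_\delta$, for which the available results (Proposition \ref{prop:fpm} and Proposition \ref{prop:fpmdotHs}) only give boundedness, not dependence on $\widetilde\eta$. I therefore need to establish
\begin{equation*}
\|\widetilde f_\delta^\pm\|_{\widetilde H^{s-1}_\pm}\le\mathcal{F}(R)\|\widetilde\eta_\delta\|_{H^{s-1}},
\end{equation*}
by differencing the two variational problems behind \eqref{system:fpme} (as in the proof of Proposition \ref{prop:fpm}), extracting source terms of the form $(\rho^--\rho^+)\widetilde\eta_\delta$ and a jump in coefficients controlled by $\widetilde\eta_\delta$, and running a Lax–Milgram argument on the fixed domain $\widetilde\Omega$ with coefficients frozen at (say) $\widetilde\eta_2$. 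Upgrading from the weak variational bound to the $\widetilde H^{s-1}_\pm$-bound can be done by a bootstrap mimicking the proof of Proposition \ref{prop:fpmdotHs}, using the paralinearization \eqref{paralin:intro} and Proposition \ref{prop:contraDN100} to trade regularity between $\widetilde\eta_\delta$ and $\widetilde f_\delta^\pm$. All other steps are routine once this difference estimate is in place.
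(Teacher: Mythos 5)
Your approach matches the paper's: the paper's proof is a one-sentence remark that existence of $\eta_\eps$ follows from a contraction mapping argument in the complete metric space $E'_h(T_\eps)$ of \eqref{def:E'h}, ``similar'' to the one-phase case around \eqref{def:E}. Your proposal fleshes out exactly that scheme — a fixed-point map for the linear heat equation with frozen source, a self-mapping estimate exploiting the $\eps$-parabolicity via Young's inequality (trading $\eps^{-1}\Vert F\Vert_{H^{s-1}}^2$ against $\eps\Vert\eta\Vert_{H^{s+1}}^2$), a $t^{1/2}$-modulus argument for propagating the separation from $\Gamma^\pm$, and a contraction estimate in $L^\infty_tH^{s-1}$ — and all of these steps are sound.

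You are also right to single out the contraction estimate for the implicitly defined pair $\widetilde f^\pm$ as the non-routine ingredient. This is a genuine issue that the paper does not address: Propositions \ref{prop:fpm} and \ref{prop:fpmdotHs} give only a priori bounds on $f^\pm$ in terms of $\eta$, and the remark ``note the similarity with $E_h$'' papers over the fact that in the one-phase case $f=\eta$ so there is nothing to invert, whereas here one needs Lipschitz dependence of $f^\pm$ on $\eta$. Your plan — difference the variational problems on a fixed reference domain with coefficients frozen at $\widetilde\eta_2$ (exactly as in Lemma \ref{lemm:contractlow}, noting that the coefficient difference $\mathcal{A}_1-\mathcal{A}_2$ and the difference of the lifts $\theta_1-\theta_2$ both contribute sources linear in $\widetilde\eta_\delta$), then bootstrap from $\widetilde H^{1/2}_\pm$ to $\widetilde H^{s-1}_\pm$ by mimicking the proof of Proposition \ref{prop:fpmdotHs} with the paralinearization \eqref{def:R0} and Theorem \ref{theo:contractionDN2}/Proposition \ref{prop:contraDN100} — is the right way to close this. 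It would be cleaner to state and prove this contraction lemma for the system \eqref{system:fpm} as a separate proposition, since it is also implicitly needed in the Cauchy-sequence argument passing to the limit $\eps_n\to 0$ in the proof of Theorem \ref{theo:wp2}, and the paper does not record it there either.
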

\begin{proof}
The unique existence of $\eta_\eps$ can be obtained via a contraction mapping argument in the space
\bq\label{def:E'h}
E'_{h}(T_\eps)=\{ v\in L^\infty([0, T_\eps]; H^s)\cap L^2([0, T_\eps]; H^{s+1}): \dist(v(t), \Gamma^\pm)\ge h~ \text{a.e.}~ t\in [0, T_\eps]\}
\eq
provided that $T_\eps$ is sufficiently small. Note the similarity between  $E'_h$ and $E_h$ defined by \eqref{def:E}. 
\end{proof}

\appendix
\section{Traces for homogeneous Sobolev spaces}\label{appendix:trace}
\subsection{Infinite strip-like domains}
Let $\eta_1$ and $\eta_2$ be two Lipchitz functions on $\Rr^d$, $\eta_j\in \dot W^{1, \infty}(\Rr^d)$, such that $\eta_1>\eta_2$.  Set 
\[
L=\| \na\eta_1\|_{L^\infty}+\| \na\eta_2\|_{L^\infty}, \quad\Theta(x)=\frac{\eta_1(x)-\eta_2(x)}{2L}.
\]
 Consider the infinite strip-like domain
\bq
U=\{(x, y)\in \Rr^{d+1}: \eta_2(x)<y<\eta_1(x)\}.
\eq
We record in this Appendix the trace theory in \cite{LeoTice} (see also \cite{Stri}) for $\dot H^1(U)$ where 
\[
\dot H^1(U)=\{u\in L^2_{loc}(U): \na u\in L^2(U)\}/~\Rr.
\]
\begin{theo}[\protect{\cite[Theorem~5.1]{LeoTice}}]\label{theo:trace}
 There exists a unique linear operator 
\[
\text{Tr}: \dot H^1(U)\to L^2_{loc}(\Rr^d)
\]
such that the following hold. 
\begin{itemize}
\item[1)] $\text{Tr}(u)=u\vert_{\p U}$ for all $u\in \dot H^1(U)\cap C(\overline U)$.
\item[2)] There exists a positive constant $C=C(d)$ such that for all $u\in \dot H^1(U)$, the functions $g_j=\text{Tr}(u)(\cdot, \eta_j(\cdot))$ are in $\wt H^\mez_\Theta(\Rr^d)$ and satisfy
\begin{align}\label{trace:ineq}
%&\int_{\Rr^n}\int_{B(0, (\eta_1(x)-\eta_2(x))/(2L))} \frac{|\text{Tr}(u_j)(x+h, \eta_j(x+h)-\text{Tr}(u_j)(x, \eta_j(x)|^2}{|h|^{n+1}}dhdx\\
 & \| g_j\|_{ \wt H^\mez_\Theta(\Rr^d)}\le C(1+L) \| u\|_{\dot H^1(U)},\\
&\int_{\Rr^d}\frac{|g_1(x)-g_2(x)|^2}{\eta_1(x)-\eta_2(x)}dx\le C\int_U |\p_y u(x, y)|^2dxdy.
\end{align}
\end{itemize}
Recall that the space $\wt H^\mez_\Theta(\Rr^d)$ is defined by \eqref{def:wtHmez}.
\end{theo}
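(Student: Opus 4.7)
The plan is to establish the trace operator by first defining it on a dense subclass of $\dot H^1(U)$ where the pointwise restriction makes sense, proving the two quantitative bounds there, and then extending by continuity to all of $\dot H^1(U)$. Uniqueness will follow automatically from the density argument and linearity. The key ingredient is approximation by smooth functions: one shows that $C^\infty(\overline{U}) \cap \dot H^1(U)$ is dense in $\dot H^1(U)$ (using standard mollification together with the Lipschitz diffeomorphism $(x,z)\in \Rr^d\times(-1,0)\mapsto (x,(1+z)\eta_1(x)-z\eta_2(x))\in U$ to flatten the boundaries and a Poincar\'e–type quotient argument to handle the absence of a zeroth-order control).

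For smooth $u$, set $g_j(x)=u(x,\eta_j(x))$. The second estimate is elementary: writing
\begin{equation*}
g_1(x)-g_2(x)=\int_{\eta_2(x)}^{\eta_1(x)}\partial_y u(x,y)\,dy,
\end{equation*}
Cauchy--Schwarz in $y$ yields $|g_1(x)-g_2(x)|^2\le (\eta_1(x)-\eta_2(x))\int_{\eta_2(x)}^{\eta_1(x)}|\partial_y u|^2\,dy$, and integration in $x$ closes the bound.

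The main obstacle is the screened Sobolev estimate \eqref{trace:ineq}, which controls the non-local norm
\begin{equation*}
\int_{\Rr^d}\int_{B(0,\Theta(x))}\frac{|g_j(x+k)-g_j(x)|^2}{|k|^{d+1}}\,dk\,dx.
\end{equation*}
The plan is a thickening trick. Fix $x\in\Rr^d$ and $k\in B(0,\Theta(x))$; the defining choice of $\Theta$ guarantees $L|k|\le \tfrac12(\eta_1(x)-\eta_2(x))$, so by the Lipschitz bound on $\eta_j$ one has
\begin{equation*}
[\,\eta_j(x)-L|k|,\ \eta_j(x)+L|k|\,]\cap [\,\eta_j(x+k)-L|k|,\ \eta_j(x+k)+L|k|\,]\subset (\eta_2(x'),\eta_1(x'))
\end{equation*}
for every $x'$ on the segment $[x,x+k]$. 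I will then pick an auxiliary height $y$ in a strip of width of order $|k|$ and decompose
\begin{equation*}
g_j(x+k)-g_j(x)=\bigl[u(x+k,\eta_j(x+k))-u(x+k,y)\bigr]+\bigl[u(x+k,y)-u(x,y)\bigr]+\bigl[u(x,y)-u(x,\eta_j(x))\bigr],
\end{equation*}
average over $y$ in that strip, and express each bracket as an integral of $\nabla_{x,y}u$ along a short vertical or horizontal segment lying in $U$.

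The final step is to bound the squared difference by Cauchy--Schwarz, yielding a weighted integral of $|\nabla_{x,y}u|^2$ over a tube of radius $\asymp |k|$ around the two boundary points; Fubini then lets me perform the $k$-integration first against $|k|^{-d-1}\,dk$, which integrates to a bounded quantity exactly because the integrand now has a compensating factor coming from the area of the tube. The constraint $|k|\le\Theta(x)$ is crucial here to ensure the tubes lie entirely inside $U$. The resulting integral is $\lesssim (1+L)^2\|\nabla_{x,y}u\|_{L^2(U)}^2$, which is the claimed bound. Passing to the limit on a Cauchy sequence of smooth approximations then both defines $\mathrm{Tr}$ on all of $\dot H^1(U)$ and propagates the estimates to the general case.
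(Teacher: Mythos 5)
Note first that the paper does not prove Theorem~\ref{theo:trace}; it is cited verbatim from \cite[Theorem~5.1]{LeoTice} and used as a black box, so there is no internal proof to compare against. With that caveat, your plan---density of $C^\infty(\overline U)\cap\dot H^1(U)$ via flattening and mollification, the fundamental-theorem-of-calculus plus Cauchy--Schwarz argument for the second inequality, and an averaging (``thickening'') argument for the screened $\wt H^\mez_\Theta$ bound---is the standard template and is what one expects in \cite{LeoTice}. The density step, the uniqueness argument, and the $L^2$ difference estimate are all fine.

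The thickening step, however, contains a genuine gap. The inclusion you assert,
\[
\bigl[\eta_j(x)-L|k|,\,\eta_j(x)+L|k|\bigr]\cap\bigl[\eta_j(x+k)-L|k|,\,\eta_j(x+k)+L|k|\bigr]\subset\bigl(\eta_2(x'),\eta_1(x')\bigr),
\]
is false: that intersection is roughly centered at $\eta_j$, so for $j=1$ it extends above the graph of $\eta_1$ along the segment (one only has $\eta_1(x')\ge\max(\eta_1(x),\eta_1(x+k))-L|k|$, which can be strictly below your right endpoint $\min(\eta_1(x),\eta_1(x+k))+L|k|$), and symmetrically it dips below $\eta_2$ when $j=2$; so $u$ would be evaluated outside $U$. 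A one-sided repair such as $y\in(\eta_1(x)-2L|k|,\,\eta_1(x)-L|k|)$ for $j=1$ requires $\eta_1(x)-\eta_2(x)>3L|k|$ to stay above $\eta_2(x')$ over the whole segment, but the constraint $|k|\le\Theta(x)=\frac{\eta_1(x)-\eta_2(x)}{2L}$ only gives $\eta_1(x)-\eta_2(x)\ge 2L|k|$. So with the exact $\Theta$ of the statement your sketch does not close: either you prove the bound for a strictly smaller $\Theta$ and must then upgrade it, or you need a more careful variable-width window whose degeneration as $|k|\to\Theta(x)$ is tracked through the Cauchy--Schwarz/Fubini step. The decisive device is missing from your outline; for the actual argument consult \cite{LeoTice}.
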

\begin{theo}[\protect{\cite[Theorem~5.4]{LeoTice}}]\label{theo:lift}
 Suppose that $g_1$ and $g_2$ are in $\wt H^\mez_{a(\eta_1-\eta_2)}(\Rr^d)$ for some $a\in (0, 1)$ such that
 \[
 \int_{\Rr^d}\frac{|g_1(x)-g_2(x)|^2}{\eta_1(x)-\eta_2(x)}dx<\infty.
 \] 
Then there exists $u\in \dot H^1(U)$ such that $\text{Tr}(u)(\cdot, \eta_j(\cdot))=g_j$ and 
\bq
\| u\|^2_{\dot H^1(U)}\le C(1+L)^2\int_{\Rr^d}\frac{|g_1(x)-g_2(x)|^2}{\eta_1(x)-\eta_2(x)}dx +C(1+L)^2\Big(\| g_1\|^2_{\wt H^\mez_{a(\eta_1-\eta_2)}(\Rr^d)}+\| g_2\|^2_{\wt H^\mez_{a(\eta_1-\eta_2)}(\Rr^d)}\Big)
\eq
where $C=C(d)$. 
\end{theo}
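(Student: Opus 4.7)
The plan is to flatten the domain via a bi-Lipschitz change of variables, then construct the lift on the flat strip by a Littlewood–Paley decomposition whose vertical scales are matched to the horizontal frequencies.

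First, I would introduce the map $\Phi:\Rr^d\times(0,1)\to U$, $\Phi(x,z)=(x,\eta_2(x)+z(\eta_1(x)-\eta_2(x)))$. This is bi-Lipschitz with Lipschitz constants controlled by $1+L$, and a direct change-of-variables computation shows that $u\in\dot H^1(U)$ with $u\circ\Phi=v$ if and only if
\[
\int_{\Rr^d}\int_0^1\Big\{(\eta_1-\eta_2)|\nabla_x v|^2+\frac{|\partial_z v|^2}{\eta_1-\eta_2}\Big\}\,dz\,dx<\infty,
\]
up to multiplicative constants depending only on $L$. The problem therefore reduces to producing $v$ on the flat strip with $v|_{z=0}=g_2$, $v|_{z=1}=g_1$, whose weighted energy is controlled by the right-hand side of the claimed inequality. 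By linearity I would split $v=v_\sharp+v_\flat$, where $v_\sharp$ carries only the difference $g_1-g_2$ across the vertical direction and $v_\flat$ is an ``even'' lift of $(g_1+g_2)/2$ with vanishing normal derivative.

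For the ``even'' part $v_\flat$, I would use a Littlewood–Paley construction: write $g=(g_1+g_2)/2=\sum_k\Delta_k g$ and set $v_\flat(x,z)=\sum_k\chi_k(z)\Delta_k g(x)$, where $\chi_k\in C^\infty_c(\Rr)$ is a bump at vertical scale $2^{-k}$ with $\chi_k(0)=\chi_k(1)=1$ whenever $2^{-k}\ge 1$ (low frequencies see the whole strip) and a suitable interpolation otherwise. The horizontal energy then converts into the Fourier characterization of $\widetilde H^{1/2}_\infty$ at high frequencies, whereas at frequencies $|\xi|\lesssim 1/\Theta(x)$ the screened norm $\widetilde H^{1/2}_{a(\eta_1-\eta_2)}$ only controls the local Haar-type oscillations. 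To handle this mismatch I would localize in $x$ by a Whitney-type partition of unity $\{\psi_i\}$ subordinate to balls on which $\Theta$ is comparable to a constant $r_i$, apply the flat construction on each ball at scale $r_i$, and paste. The overlap errors are absorbed using the equivalence between $\|g\|_{\widetilde H^{1/2}_{a(\eta_1-\eta_2)}}^2$ and the sum of local $\widetilde H^{1/2}_{r_i}$ norms, which is a standard property of Whitney decompositions.

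The ``odd'' part $v_\sharp$ is where the hypothesis $\int|g_1-g_2|^2/(\eta_1-\eta_2)\,dx<\infty$ enters decisively: I would take $v_\sharp(x,z)=(z-\tfrac12)(g_1-g_2)(x)\,\kappa(x,z)$ for a suitable cutoff, or more precisely a frequency-localized variant adapted to the local thickness $\Theta(x)$. A direct computation shows that the vertical contribution $\int(\eta_1-\eta_2)^{-1}|\partial_z v_\sharp|^2$ is exactly the integral hypothesized, while the horizontal contribution is controlled by combining the screened Sobolev norms of $g_1,g_2$ with the same integral. The main obstacle is the matching of vertical and horizontal scales in the region where $\Theta(x)$ degenerates: there, the pure screened norm loses control of the low frequencies of $g_1-g_2$, and the only substitute is precisely the $L^2$-type integrability of $(g_1-g_2)/\sqrt{\eta_1-\eta_2}$. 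Verifying that the low-frequency portions of $v_\sharp$ and $v_\flat$ do not collide, and that all constants depend on $L$ only through the explicit factor $(1+L)^2$, is the most delicate bookkeeping, and essentially reproduces the argument in Theorem 5.4 of \cite{LeoTice}.
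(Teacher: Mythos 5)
This theorem is not proved in the paper: it is quoted verbatim, constant and all, from Theorem~5.4 of Leoni--Tice \cite{LeoTice}, and the appendix it sits in is explicitly introduced by ``We record in this Appendix the trace theory in \cite{LeoTice}.'' There is therefore no paper-internal proof to compare your sketch against; any genuine argument must reconstruct (or replace) the one in \cite{LeoTice}, and you yourself flag this at the end.

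As for the sketch itself: the opening reduction (flatten via $\Phi(x,z)=(x,\eta_2(x)+z(\eta_1(x)-\eta_2(x)))$, observe that the pull-back energy is comparable to $\int\bigl[(\eta_1-\eta_2)|\nabla_x v|^2+(\eta_1-\eta_2)^{-1}|\partial_z v|^2\bigr]$ up to $(1+L)^2$ factors) is correct and is the right starting point. The difficulty is entirely in the construction of $v$, and your sketch leaves the two load-bearing steps open. First, the ``odd'' lift $v_\sharp(x,z)=(z-\tfrac12)(g_1-g_2)(x)\kappa(x,z)$ as literally written fails: its horizontal contribution is of size $\int(\eta_1-\eta_2)|\nabla_x(g_1-g_2)|^2\,dx$, which involves a full derivative of $g_1-g_2$ and is controlled by none of the hypotheses. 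You acknowledge that a ``frequency-localized variant adapted to $\Theta(x)$'' is needed, but that variant is precisely the theorem: the entire point is to lift only frequencies $\lesssim 1/\Theta(x)$ across the whole strip (where the $L^2$-weighted integral pays for it) and confine frequencies $\gtrsim 1/\Theta(x)$ to a boundary layer of thickness $\sim 1/|\xi|$ (where the screened norm pays for it). Second, your assertion that ``$\|g\|_{\widetilde H^{1/2}_{a(\eta_1-\eta_2)}}^2$ is equivalent to a sum of local $\widetilde H^{1/2}_{r_i}$ norms over a Whitney cover'' is not a standard fact to be invoked in passing; it is essentially the content of the localization theory developed in \cite{LeoTice} (their Theorem~3.13 and surrounding material), and proving it from scratch for a general lower semicontinuous $\Theta$ is nontrivial even in the Lipschitz case at hand. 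So the architecture is sensible, but the two places where you write ``suitable interpolation'' / ``frequency-localized variant'' / ``standard property'' are exactly where the substance of Leoni--Tice's proof lives, and at present your proposal has genuine gaps there rather than a self-contained argument.
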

\subsection{Lipschitz half spaces}
For a Lipschitz function $\eta$ on $\Rr^d$ we consider the associated half-space 
\[
U=\{(x, y)\in \Rr^{n+1}: y<\eta(x)\}.
\]
\begin{theo}\label{theo:trace2}
 There exists a unique linear operator 
\[
\text{Tr}: \dot H^1(U)\to L^2_{loc}(\Rr^d)
\]
such that the following hold. 
\begin{itemize}
\item[1)] $\text{Tr}(u)=u\vert_{\p U}$ for all $u\in \dot H^1(U)\cap C(\overline U)$.
\item[2)] There exists a positive constant $C=C(d)$ such that for all $u\in \dot H^1(U)$, the function $g=\text{Tr}(u)(\cdot, \eta(\cdot))$ is in $\wt H^\mez_\infty(\Rr^d)$ and satisfies
\bq\label{trace:ineq2}
  \| g\|_{ \wt H^\mez_\infty(\Rr^d)}\le C(1+\|\na \eta\|_{L^\infty(\Rr^d)}) \|  u\|_{\dot H^1(U)}.
\eq
\end{itemize}
%Recall that the space $\wt H^\mez_\infty(\Rr^d)$ is defined by \eqref{def:wtHinfty}.
\end{theo}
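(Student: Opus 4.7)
The plan is to reduce to the flat half-space $V = \Rr^d \times (-\infty, 0)$ via the bi-Lipschitz change of variables $\Phi:V \to U$, $\Phi(x,z) = (x, z+\eta(x))$. Since $\|\nabla\Phi\|_{L^\infty} + \|\nabla \Phi^{-1}\|_{L^\infty} \le C(1+\|\nabla\eta\|_{L^\infty})$ and the Jacobian is identically $1$, the map $u \mapsto u\circ \Phi$ defines a bounded isomorphism $\dot H^1(U) \to \dot H^1(V)$ (modulo constants), with
\[
\frac{1}{C(1+\|\nabla\eta\|_{L^\infty})} \|u\|_{\dot H^1(U)} \le \|u\circ \Phi\|_{\dot H^1(V)} \le C(1+\|\nabla\eta\|_{L^\infty}) \|u\|_{\dot H^1(U)}.
\]
Hence it suffices to construct the trace on $V$ and show it lands in $\wt H^{1/2}_\infty(\Rr^d)$, since $g(x) = \mathrm{Tr}(u)(x,\eta(x))$ will then correspond to the trace on $\{z=0\}$ of $u\circ \Phi$, and this composition preserves $\wt H^{1/2}_\infty$ (the space is defined purely in terms of $x$, without reference to $\eta$).

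Next I would establish the trace on the flat half-space $V$. For $u \in C^\infty_c(\overline V)$, define $\mathrm{Tr}_0(u)(x) = u(x,0)$. By the Fourier characterization \eqref{Fourier:mezHinfty} of $\wt H^{1/2}_\infty(\Rr^d)$, it is enough to show
\[
\int_{\Rr^d} |\xi|\, |\widehat{u(\cdot,0)}(\xi)|^2\, d\xi \le C \int_V |\nabla_{x,z} u|^2\, dxdz.
\]
For fixed $\xi\ne 0$, writing $\widehat u(\xi,z)$ for the partial Fourier transform in $x$, the fundamental theorem of calculus yields
\[
|\widehat u(\xi,0)|^2 = -2\RE \int_{-\infty}^0 \widehat u(\xi,z)\,\overline{\partial_z \widehat u(\xi,z)}\, dz \le 2 \Bigl(\int_{-\infty}^0 |\xi|^2 |\widehat u|^2 dz\Bigr)^{1/2}\Bigl(\int_{-\infty}^0 \frac{|\partial_z \widehat u|^2}{|\xi|^2} dz\Bigr)^{1/2}
\]
after inserting a factor of $|\xi|^{\pm 1}$; multiplying by $|\xi|$ and applying Cauchy--Schwarz, followed by Parseval in $x$ and integration in $\xi$, yields the bound with $C = C(d)$ independent of $u$. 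This gives a linear operator $\mathrm{Tr}_0 : C^\infty_c(\overline V) \to \wt H^{1/2}_\infty(\Rr^d)$ of norm $\le C$.

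The extension to $\dot H^1(V)$ proceeds by density. Since $C^\infty_c(\overline V)$ is dense in $\dot H^1(V)$ modulo constants (via standard truncation and mollification, truncating first in $z$ to make the $L^2$-Poincaré constant on each slab finite), the estimate above extends by continuity. The resulting operator is unique because any two continuous extensions must agree on the dense subset $C^\infty_c(\overline V) \cap \dot H^1(V)$, where they coincide with the classical trace. Transferring back via $\Phi$ yields \eqref{trace:ineq2} with the stated dependence on $\|\nabla\eta\|_{L^\infty}$.

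The main obstacle I anticipate is the bookkeeping around the quotient by constants: one must check that $\mathrm{Tr}_0$ is well-defined on equivalence classes (adding a constant to $u$ adds the same constant to $g$, which is a zero element of $\wt H^{1/2}_\infty(\Rr^d)$ by the quotient in \eqref{def:wtHmez}), and that the density argument respects the quotient. A secondary subtlety is justifying the integration-by-parts step $|\widehat u(\xi,0)|^2 = -2\RE \int_{-\infty}^0 \widehat u\,\overline{\partial_z \widehat u}\, dz$ for general $u \in \dot H^1(V)$: for fixed $\xi$ the quantity $\widehat u(\xi, z)$ need not vanish as $z\to -\infty$, but on the compactly supported approximants the identity is immediate, and passage to the limit is controlled by the norm bound already established.
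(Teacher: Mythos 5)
The paper itself does not prove Theorem \ref{theo:trace2}: the entire Appendix \ref{appendix:trace} is imported from \cite{LeoTice} (the preamble states ``We record in this Appendix the trace theory in \cite{LeoTice}''), so there is no internal proof to compare against. Your proposal is therefore judged on its own terms. The reduction to the flat half-space via $\Phi(x,z)=(x,z+\eta(x))$, the Fourier characterization \eqref{Fourier:mezHinfty} of $\wt H^{1/2}_\infty = \dot H^{1/2}$, and the core estimate $\int|\xi|\,|\widehat{u}(\xi,0)|^2\,d\xi \le \|\nabla_{x,z}u\|_{L^2(V)}^2$ (via FTC in $z$, Cauchy--Schwarz, and Young) are all correct and this is the natural line of argument.

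The one place that needs more care is the density/extension step, which you rightly flagged. The specific fix you sketch --- ``truncating first in $z$ to make the $L^2$-Poincar\'e constant on each slab finite'' --- does not actually close the gap: for $u$ with only $\nabla u\in L^2(V)$, cutting off by $\chi(z/N)$ produces an error $N^{-1}\|u\|_{L^2(\Rr^d\times(-2N,-N))}$, and slab Poincar\'e only controls $u$ after subtracting an $x$-dependent average $\bar u(x)=N^{-1}\int_{-2N}^{-N}u(x,z)\,dz$, which is not a global constant, so the quotient by $\Rr$ does not absorb it. Two standard repairs: (i) reflect evenly across $z=0$ to obtain $\tilde u\in\dot H^1(\Rr^{d+1})$ and invoke the Deny--Lions density of $C^\infty_c$ in the Beppo Levi space (logarithmic cutoffs when $d=1$); or (ii) bypass density altogether by mollifying only in $x$ and observing that, for a.e.\ $\xi\neq 0$, the conditions $|\xi|\widehat{u}(\xi,\cdot)\in L^2((-\infty,0))$ and $\partial_z\widehat{u}(\xi,\cdot)\in L^2((-\infty,0))$ place $\widehat u(\xi,\cdot)$ in $H^1((-\infty,0))$, which forces $\widehat u(\xi,z)\to 0$ as $z\to-\infty$ and lets the FTC computation go through directly. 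With either repair your argument is complete; the remaining points (well-definedness on equivalence classes, uniqueness via density of $\dot H^1\cap C(\overline U)$) are handled correctly.
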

\begin{theo}\label{theo:lift2}
For each $g\in \wt H^\mez_\infty(\Rr^d)$, there exists $u\in \dot H^1(U)$ such that $\text{Tr}(u)(\cdot, \eta(\cdot))=g$ and 
\bq
\| u\|_{\dot H^1(U)}\le C(1+\|\na \eta\|_{L^\infty(\Rr^d)})\| g_1\|_{\wt H^\mez_\infty(\Rr^d)}
\eq
where $C=C(d)$. 
\end{theo}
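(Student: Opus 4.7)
The plan is to reduce the construction to the flat half-space by means of the Lipschitz diffeomorphism
\[
\Phi:\Rr^d\times(-\infty,0)\to U,\qquad \Phi(x,z)=(x,z+\eta(x)),
\]
whose Jacobian equals $1$. First I would construct a lifting $\wt u$ on the flat half-space $\Rr^d\times(-\infty,0)$ using the Poisson-type extension
\[
\wt u(x,z):=e^{z\langle |D_x|\rangle\cdot}g(x)\quad\text{i.e.}\quad \widehat{\wt u}(\xi,z)=e^{z|\xi|}\hat g(\xi),\qquad z<0,
\]
which clearly satisfies $\wt u(x,0)=g(x)$ in the sense of non-tangential limits. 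The Fourier-side characterization of $\wt H^{1/2}_\infty(\Rr^d)$ recalled in the paper (namely $\|g\|_{\wt H^{1/2}_\infty}^2\approx \int|\xi||\hat g(\xi)|^2d\xi$) yields by Plancherel
\[
\|\nabla_{x,z}\wt u\|_{L^2(\Rr^d\times(-\infty,0))}^2=\int_{\Rr^d}\int_{-\infty}^0 2|\xi|^2 e^{2z|\xi|}\,dz\,|\hat g(\xi)|^2\,d\xi=\int_{\Rr^d}|\xi|\,|\hat g(\xi)|^2\,d\xi\les \|g\|_{\wt H^{1/2}_\infty}^2.
\]

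Next I would define the candidate $u:U\to\Rr$ by pull-back, $u(x,y):=\wt u(x,y-\eta(x))=\wt u\circ\Phi^{-1}(x,y)$. Since $\Phi$ preserves the horizontal fibres and $\Phi^{-1}(x,\eta(x))=(x,0)$, the trace identity $\operatorname{Tr}(u)(\cdot,\eta(\cdot))=g$ follows immediately from the boundary value of $\wt u$. The chain rule gives
\[
\p_y u(x,y)=\p_z\wt u(x,y-\eta(x)),\qquad \na_x u(x,y)=\na_x\wt u(x,y-\eta(x))-\p_z\wt u(x,y-\eta(x))\na\eta(x),
\]
so that
\[
|\na_{x,y}u(x,y)|^2\le \bigl(1+\|\na\eta\|_{L^\infty}\bigr)^2|\na_{x,z}\wt u(x,y-\eta(x))|^2
\]
pointwise, and the conclusion follows after integrating over $U$ and performing the unit-Jacobian change of variables.

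The only genuine points of care are: (i) justifying that the harmonic extension formula really produces an element of $\dot H^1$ (not merely an $L^2$-gradient distribution) and that its trace on $\{z=0\}$ is $g$ in the sense of Theorem \ref{theo:trace2}, which is routine since $\wt u\in C^\infty(\Rr^d\times(-\infty,0))$ with $\wt u(\cdot,z)\to g$ in $\mathcal{S}'$ as $z\uparrow 0$; and (ii) ensuring that the change of variables is compatible with the definition of $\dot H^1$ as an equivalence class modulo constants, which is immediate because $\Phi$ is a bi-Lipschitz bijection. I expect the main (small) obstacle to be the precise compatibility of traces under the bi-Lipschitz pull-back, but this is handled at the level of smooth approximations $g_n\to g$ in $\wt H^{1/2}_\infty$, for which $u_n\in C(\overline{U})\cap \dot H^1(U)$ and Theorem \ref{theo:trace2} part 1) applies directly; passing to the limit using the continuity bound \eqref{trace:ineq2} closes the argument.
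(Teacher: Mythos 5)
The paper states Theorem \ref{theo:lift2} without supplying a proof (it belongs to the trace theory imported from the Leoni--Tice framework, parallel to Theorems \ref{theo:trace}, \ref{theo:lift}, \ref{theo:trace2}), so there is no in-paper argument to compare against. Your construction is correct and is the natural one: take the harmonic Poisson lift $\widehat{\wt u}(\xi,z)=e^{z|\xi|}\hat g(\xi)$ on the flat half-space, for which Plancherel and the Fourier characterization \eqref{Fourier:mezHinfty} give $\| \nabla_{x,z}\wt u\|_{L^2(\Rr^d\times(-\infty,0))}^2=\int|\xi||\hat g|^2\,d\xi\approx\| g\|_{\wt H^{1/2}_\infty}^2$, then compose with the unit-Jacobian shear $\Phi^{-1}(x,y)=(x,y-\eta(x))$, which costs only a factor controlled by $1+\|\nabla\eta\|_{L^\infty}$ through the chain rule. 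Your handling of the two subtleties is also appropriate: the pointwise inequality you write is true only up to a dimensional constant (one gets $|\nabla_{x,y}u|^2\le (3+2\|\nabla\eta\|_{L^\infty}^2)|\nabla_{x,z}\wt u|^2$ rather than the literal $(1+\|\nabla\eta\|_{L^\infty})^2$), but this is absorbed into $C(d)$ as claimed; and the trace identity $\operatorname{Tr}(u)=g$ is correctly settled by approximating $g$ by smooth elements (for which $u_n\in C(\overline U)\cap\dot H^1(U)$ so part 1 of Theorem \ref{theo:trace2} applies) and passing to the limit via the continuity bound \eqref{trace:ineq2}, which also quietly resolves the minor issue that $e^{z|\xi|}\hat g(\xi)$ need only be interpreted in $\mathcal{S}'$ for rough $g$. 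Keeping the approximation step explicit, as you do, is exactly what makes the argument airtight.
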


\subsection{Trace of normalized normal derivative}
 We consider the infinite strip-like domain
\[
 U_h=\{(x, y)\in \Rr^d\times \Rr:~\eta(x)-h<y<\eta(x)\}
\]  
of width $h$ underneath the graph of $\eta$
\[
\Sigma=\{ (x, \eta(x)):~x\in \Rr^d\}.
\]
Note that $n=\frac{1}{\sqrt{1+|\nabla \eta|}}(-\nabla \eta, 1)$ is the upward pointing unit normal to $\Sigma$.

Our goal is to show that for any function $u$ in the homogeneous maximal domain of the Laplace operator $\Delta$ 
 \[
E(U_h)=\{u\in \dot H^1(U_h):~\Delta_{x, y} u\in L^2(U_h)\},
 \]
 the trace 
 \[
\sqrt{1+|\na \eta|^2}\frac{\p u}{\p n}\vert_\Sigma=\p_y u(x, \eta(x))-(\na_xu)(x ,\eta(x))\cdot \na \eta(x)
 \]
  makes sense in $H^{-\mez}(\Rr^d)$. 
    \begin{theo}\label{theo:normaltrace}
Assume that $\eta\in \dot W^{1, \infty}(\Rr^d)$. There exists a unique linear operator $\cN: E(U_h)\to H^{-\mez}(\Rr^d)$ such that the following hold.
\begin{itemize}
\item[1)] $\cN(u)=\sqrt{1+|\na \eta|^2}\frac{\p u}{\p n}\vert_\Sigma$ if $u\in \dot H^1(U_h)\cap C^1(\overline U_h)$.
\item[2)] There exists an absolute constant $C>0$ such that 
\[
\| \cN(u)\|_{H^{-\mez}(\Rr^d)}\le  Ch^{-\mez}(1+\|\na\eta\|_{L^\infty})\| \na_{x, y}u\|_{L^2(U_h)}+Ch^{-\mez}\|\Delta_{x, y}u\|_{L^2(U_h)}
\]
for all $u\in E(U_h)$.
\end{itemize}
\end{theo}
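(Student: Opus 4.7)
\textbf{Proof plan for Theorem \ref{theo:normaltrace}.}

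The plan is to define $\mathcal{N}(u)$ by duality, using Green's identity against a carefully chosen lift. For $\phi\in H^\mez(\Rr^d)$, I will construct a lift $\tilde\psi:U_h\to\Rr$ satisfying $\tilde\psi|_\Sigma=\phi$ and $\tilde\psi\equiv 0$ in a neighborhood of the bottom $\{y=\eta(x)-h\}$, with the quantitative bounds
\begin{equation*}
\|\tilde\psi\|_{L^2(U_h)}+h\|\na_{x,y}\tilde\psi\|_{L^2(U_h)}\le C h^{\mez}(1+\|\na\eta\|_{L^\infty})\|\phi\|_{H^\mez(\Rr^d)}.
\end{equation*}
The natural candidate is $\tilde\psi(x,y)=\chi\big((y-\eta(x))/h_\ast\big)\,(e^{(y-\eta(x))\L{D_x}}\phi)(x)$ where $h_\ast=\min(h,1)$ and $\chi\in C^\infty_c((-1,0])$ with $\chi(0)=1$. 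The estimates follow from Plancherel applied in $x$, using that $\int_{-h_\ast}^0\|\L{D_x}^k e^{z\L{D_x}}\phi\|_{L^2}^2dz$ is controlled by $\|\phi\|_{H^{k-\mez}}^2$, together with a careful accounting of the scaling factor produced by $\chi'(\cdot/h_\ast)$.

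With the lift in hand, I will define, for $u\in E(U_h)$ and $\phi\in H^\mez(\Rr^d)$,
\begin{equation*}
\langle\cN(u),\phi\rangle:=\int_{U_h}(\Delta_{x,y}u)\,\tilde\psi\,dxdy+\int_{U_h}\na_{x,y}u\cdot\na_{x,y}\tilde\psi\,dxdy.
\end{equation*}
For $u\in \dot H^1(U_h)\cap C^1(\overline{U_h})$, classical Green's identity (using that $\tilde\psi$ vanishes near the bottom boundary) shows this equals $\int_\Sigma(\p u/\p n)\tilde\psi\sqrt{1+|\na\eta|^2}\,dS/\sqrt{1+|\na\eta|^2}$, i.e., $\int_{\Rr^d}\mathcal{N}(u)\phi\,dx$ with $\mathcal{N}(u)=\p_yu-\na_xu\cdot\na\eta$ evaluated at $y=\eta(x)$, matching assertion 1). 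For general $u\in E(U_h)$, the right-hand side of the displayed formula is well-defined and is independent of the choice of lift: indeed, the difference of two admissible lifts lies in the closure (for the $H^1(U_h)$ topology) of $C_c^\infty(U_h)$, and on this space the identity $\int\Delta u\cdot v+\int\na u\cdot\na v=0$ holds by density, using $\Delta u\in L^2$ and $\na u\in L^2$. This simultaneously gives uniqueness of the extension $\mathcal{N}$ in part 2).

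The quantitative bound then follows from Cauchy-Schwarz and the lift estimates:
\begin{equation*}
|\langle\cN(u),\phi\rangle|\le \|\Delta u\|_{L^2(U_h)}\|\tilde\psi\|_{L^2(U_h)}+\|\na_{x,y} u\|_{L^2(U_h)}\|\na_{x,y}\tilde\psi\|_{L^2(U_h)},
\end{equation*}
which combined with the bounds on $\tilde\psi$ yields the stated inequality (possibly after replacing the $h^\mez$ arising on the $\Delta u$ term by the slightly worse $h^{-\mez}$, which is permissible in the regime of small $h$ and absorbable into $C$ for bounded $h$).

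The main obstacle, and the only place where the Lipschitz (non-smooth) nature of $\eta$ truly intrudes, is the lift construction: since we can only differentiate $\eta$ in $L^\infty$, the composition with $e^{(y-\eta(x))\L{D_x}}$ must be handled by differentiating under Fourier multipliers in $x$ only and keeping $y$ fixed, which forces the appearance of the factor $1+\|\na\eta\|_{L^\infty}$ through the chain rule applied to $\na_x[\chi(\cdot)]$ and $\na_x[E\phi(\cdot,y-\eta)]$. Once this is handled cleanly by Plancherel in $x$ and explicit integration in $z=y-\eta$, the remainder of the argument (density, independence of lift, and the final estimate) is routine functional analysis.
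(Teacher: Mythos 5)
Your proposal is correct in substance, but it takes a genuinely different route from the paper. The paper flattens the strip via $(x,z)\mapsto (x,\eta(x)+zh)$, singles out the conormal component $\Xi$ of $A\na_{x,z}\wt u$, uses the divergence form of the transformed Laplacian to place $\p_z\Xi$ in $L^2_z H^{-1}_x$, and then invokes the interpolation trace theorem (Theorem \ref{trace:Lions}) to get $\Xi\in C([-1,0];H^{-\mez})$ and evaluate at $z=0$. You instead define $\cN(u)$ by duality, pairing $\Delta u$ and $\na u$ against an explicit Poisson-type lift of $\phi\in H^{\mez}(\Rr^d)$ cut off away from the bottom, and verify consistency with the classical conormal derivative by Green's formula. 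Your route is more elementary and self-contained: no change of variables, no Lions lemma, and once a fixed lift is chosen the well-definedness and the $H^{-\mez}$ bound are immediate from Cauchy--Schwarz (in fact you do not even need the independence-of-lift step for the construction itself). The paper's route has the advantage of being the same machinery used throughout (flattening, the $X^\sigma/Y^\sigma$ spaces), and it yields interior information --- continuity in $z$ of the flux with values in $H^{-\mez}$ --- which is exactly what is reused in Lemma \ref{lemm:tracewt}; your argument only produces the boundary value.

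Two minor caveats. First, on the $\|\Delta_{x,y}u\|_{L^2}$ term your method naturally gives the factor $h_*^{\mez}$ (with no Lipschitz factor, since $\|\wt\psi\|_{L^2}$ requires no derivative of $\eta$), so you recover the stated $h^{-\mez}\|\Delta u\|_{L^2}$ only in the regime $h\les 1$; this is harmless for every application in the paper (there $\Delta\phi=0$, cf.\ Proposition \ref{prop:lowG}), and you flag it, but you should state it as a restriction to $h\le 1$ rather than as something ``absorbable into $C$''. Second, independence of the lift shows your $\cN$ is well defined and extends the classical trace; uniqueness of an operator satisfying 1) and 2) is really a density statement ($E(U_h)\cap C^1(\overline{U_h})$ dense in $E(U_h)$ for the norm $\|\na u\|_{L^2}+\|\Delta u\|_{L^2}$), which your write-up (like the paper's) leaves implicit. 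Neither point affects the core of the argument.
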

  \begin{proof}
  Set $S=\Rr^d\times (-1, 0)$ and introduce  $\tt(x, z)=\eta(x)+zh$ for $(x, z)\in S$. It is clear that $(x, z)\mapsto (x, \tt(x, z))$ is a diffeomorphism from $S$ onto $U_h$. If $f:U_h\to \Rr$ we denote $\wt f(x, z)=f(x, \tt(x, z))$. It follows that 
 \[
 \begin{aligned}
 (\p_yf)(x, \tt(x, z))=\frac{1}{h}\p_z\wt f(x, z),\\
 (\na_xf)(x, \tt(x, z))=(\na_x-\frac{\na \eta}{h}\p_z)\wt f(x, z):=\Lambda \wt f.
 \end{aligned}
 \]
 For $u\in E(U)$ we have that $f=\Delta u$ satisfies 
 \bq\label{eq:wtf}
 \wt f =\cnx_{x,z}(A \na_{x,z}\wt u)
 \eq
with
\bq
A=
\begin{bmatrix}
Id & -\frac{\na \eta}{h}\\
-\frac{(\na \eta)^T}{h} &\frac{1+|\na\eta|^2}{h^2}.
\end{bmatrix}
\eq 
Equivalently, we have
\bq\label{eq0:dzXi}
\cnx_x\Lambda \wt u+\p_z(-\frac{\na \eta}{h}\na_x\wt u+\frac{1+|\na\eta|^2}{h^2}\p_z\wt u)=\wt f. 
\eq
Let us consider the quantity 
 \[
 \begin{aligned}
&(\p_y u)(x, \tt(x, z)) -(\na_xu)(x, \tt(x, z))\cdot \na \eta(x)\\
 &=\frac{1}{h}\p_z\wt u(x, z)-\na \eta(x, z)\cdot (\na_x-\frac{\na \eta}{h}\p_z)\wt u(x, z)\\
 &=-\frac{\na \eta(x)}{h}\cdot\na_x\wt u(x, z)+\frac{1+|\na\eta(x, z)|^2}{h^2}\p_z\wt u(x, z)\\
 &:= \Xi(x, z).
 \end{aligned}
 \]
 Using the first expression we deduce easily that
   \bq\label{L2:Xi}
 \| \Xi\|_{L^2((-1, 0); L^2(\Rr^d))}\le h^{-\mez}(1+\|\na\eta\|_{L^\infty})\| \na_{x, y}u\|_{L^2(U_h)}.
 \eq
  Moreover, \eqref{eq0:dzXi} implies
 \bq\label{eq:dzXi}
 \p_z \Xi=\wt f-\cnx_x\Lambda \wt u.
 \eq
  Note that if $u\in C^1(\overline U)$ then 
 \[
\p_y u(x, \eta(x)) -(\na_xu)(x ,\eta(x))\cdot \na \eta(x)=\Xi(x, 0).
 \]
 We shall appeal to Theorem \ref{trace:Lions} below to prove that the trace $\Xi\vert_{z=0}$ is well-defined in $H^{-\mez}(\Rr^d)$ for $u\in \dot H^1(U_h)$.  To this end, we use \eqref{eq:dzXi} to have
  \[
  \| \p_z\Xi\|_{L^2((-1, 0); H^{-1}(\Rr^d))}\le \|\wt f\|_{L^2((-1, 0); H^{-1}(\Rr^d))}+\| \Lambda \wt u\|_{L^2((-1, 0); L^2(\Rr^d))}
 \]
 where 
 \[
 \| \Lambda \wt u\|_{L^2((-1, 0); L^2(\Rr^d))}\le h^{-\mez}\| \na_xu\|_{L^2(U_h)}.
 \]
  On the other hand, 
\[
\|\wt f\|_{L^2((-1, 0); H^{-1}(\Rr^d))}\le \|\wt f\|_{L^2(S)}\le h^{-\mez}\| f\|_{L^2(U_h)},
\]
hence
\bq\label{L2:dzXi}
 \| \p_z\Xi\|_{L^2((-1, 0); H^{-1}(\Rr^d))}\le h^{-\mez}\big(\|f\|_{L^2(U_h)}+\| \na_xu\|_{L^2(U_h)}\big).
 \eq
 Combining \eqref{L2:Xi} and \eqref{L2:dzXi} we conclude by virtue of Theorem \ref{trace:Lions} that $\Xi\in C([-1, 0]; L^2(\Rr^d))$ and 
  \bq\label{C0:Xi}
\| \Xi \|_{ C([-1, 0]; L^2(\Rr^d))}\le  Ch^{-\mez}(1+\|\na\eta\|_{L^\infty})\| \na_{x, y}u\|_{L^2(U_h)}+Ch^{-\mez}\|f\|_{L^2(U_h)}
\eq
for some absolute constant $C>0$.
\end{proof}
\begin{theo}[\protect{\cite[Theorem 3.1]{Lions}}]\label{trace:Lions}
Let  $s \in \Rr$ and $I$ be a closed (bounded or unbounded) interval in $\Rr$. Let  $u\in L^2_z(I, H^{s+ \mez}(\Rr^d))$ such that 
$\partial_z u \in L_z^2(I, H^{s-\mez}(\Rr^d)).$ Then $u \in BC(I, H^{s}(\Rr^d))$ 
and there exists an absolute constant $C>0$ such that
$$
\sup_{z\in I}\| u(z, \cdot) \|_{H^{s}(\Rr^d)} 
\leq C \big(\|u\|_{L^2(I,H^{s+ \mez}(\Rr^d))}+  \|\partial_z u\|_{L^2(I,H^{s- \mez}(\Rr^d))} \big).
$$
\end{theo}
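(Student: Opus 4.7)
The plan is to reduce to the case $s=0$ and then exploit the duality $(H^{1/2})^\ast=H^{-1/2}$ via the fundamental theorem of calculus applied to $\tfrac{d}{dz}\|u(z)\|_{L^2}^2$.

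First I would reduce to the scalar-weight case $s=0$. Since the Bessel potential $\langle D_x\rangle^s$ is an isometry $H^{\sigma}\to H^{\sigma-s}$ and commutes with $\partial_z$ (it acts only in the $x$-variable), the substitution $\widetilde u:=\langle D_x\rangle^s u$ converts the hypotheses into $\widetilde u\in L^2_z(I;H^{1/2})$ and $\partial_z\widetilde u\in L^2_z(I;H^{-1/2})$, and the conclusion into $\widetilde u\in BC(I;L^2)$ with the analogous bound. So from now on assume $s=0$.

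Next I would carry out an a priori identity on smooth test functions: if $u\in C^1(I;\mathcal S(\mathbb{R}^d))$, then for any $z_0,z\in I$,
\begin{equation*}
\|u(z,\cdot)\|_{L^2}^2-\|u(z_0,\cdot)\|_{L^2}^2 = 2\int_{z_0}^{z}\langle u(\tau,\cdot),\partial_\tau u(\tau,\cdot)\rangle_{H^{1/2},H^{-1/2}}\,d\tau,
\end{equation*}
and by Cauchy-Schwarz in $\tau$ this yields
\begin{equation*}
\bigl|\|u(z)\|_{L^2}^2-\|u(z_0)\|_{L^2}^2\bigr|\le 2\,\|u\|_{L^2(I;H^{1/2})}\,\|\partial_z u\|_{L^2(I;H^{-1/2})}.
\end{equation*}
Averaging $\|u(z_0)\|_{L^2}^2$ over $z_0$ in a subinterval of length $\ell$ then provides an absolute bound of the form
\begin{equation*}
\sup_{z\in I}\|u(z)\|_{L^2}^2\le \tfrac{1}{\ell}\|u\|_{L^2(I;L^2)}^2+2\|u\|_{L^2(I;H^{1/2})}\|\partial_z u\|_{L^2(I;H^{-1/2})},
\end{equation*}
whence the desired inequality (one may take $\ell$ any fixed positive length, or use $I$ itself if bounded; when $I$ is unbounded one truncates to a unit interval after a translation). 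Continuity of $z\mapsto u(z)$ in $L^2$ follows from the same identity, since the right-hand side is a continuous function of $z$.

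The remaining task is to justify the identity and the duality pairing for general $u$ in the stated class. For this I would regularize by convolution: extend $u$ from $I$ to $\mathbb{R}$ (by reflection across the endpoints if $I$ is a proper subinterval, which preserves both $L^2_z(H^{1/2})$ and $L^2_z(H^{-1/2})$ membership), and convolve in $z$ with a standard mollifier $\rho_\varepsilon$ to obtain $u_\varepsilon$ smooth in $z$ with values in $H^{1/2}$. Then $u_\varepsilon\to u$ in $L^2_{z,\mathrm{loc}}(H^{1/2})$ and $\partial_z u_\varepsilon\to\partial_z u$ in $L^2_{z,\mathrm{loc}}(H^{-1/2})$. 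The duality pairing $\langle u_\varepsilon(\tau),\partial_\tau u_\varepsilon(\tau)\rangle_{H^{1/2},H^{-1/2}}$ is perfectly legitimate, and the a priori bound above holds uniformly in $\varepsilon$; passing to the limit gives a continuous representative of $u$ with the required norm estimate.

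The main obstacle here is purely technical, namely making rigorous sense of the identity $\tfrac{d}{dz}\|u(z)\|_{L^2}^2=2\langle u(z),\partial_z u(z)\rangle$ when $u(z)$ and $\partial_z u(z)\rangle$ lie only in dual spaces rather than a single Hilbert space. The mollification argument and the symmetric duality between $H^{1/2}$ and $H^{-1/2}$ (with $L^2$ as pivot) sidestep this cleanly; once that is done, the rest is quantitative Cauchy-Schwarz and averaging.
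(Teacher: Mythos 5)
The paper does not give its own proof of this statement; it is quoted from Lions--Magenes, so there is no in-paper argument to compare against. Your proof---reduce to $s=0$ via $\langle D_x\rangle^s$, differentiate $\|u(z)\|_{L^2}^2$ through the $H^{1/2}/H^{-1/2}$ duality, integrate, Cauchy--Schwarz, average in the base point, and regularize by reflection plus mollification to justify the a~priori identity---is the standard ``Lions lemma'' argument on the Gelfand triple $H^{1/2}\subset L^2\subset H^{-1/2}$, and it is correct in substance.

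Three points deserve tightening. First, the identity should carry a real part, $\frac{d}{dz}\|u(z)\|_{L^2}^2=2\,\RE\langle u(z),\partial_z u(z)\rangle$, if $u$ is allowed to be complex-valued. Second, the phrase ``continuity of $z\mapsto u(z)$ in $L^2$ follows from the same identity'' is too quick on its own: the identity only gives continuity of $z\mapsto\|u(z)\|_{L^2}$. One must additionally note that $u,\partial_z u\in L^2_z(I;H^{-1/2})$ gives $u\in C(I;H^{-1/2})$, hence $u(z_n)\rightharpoonup u(z_*)$ weakly in $L^2$ as $z_n\to z_*$, and then weak convergence together with norm convergence yields strong $L^2$ convergence; alternatively the continuity is already supplied by the uniform-on-compacts convergence of the mollified $u_\varepsilon$, which your regularization step provides. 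Third, your constant is $\sqrt{1+1/\ell}$, so it is absolute only when the interval has length bounded below; for $|I|\to 0$ no absolute constant can work (take $u$ constant in $z$ with a unit-frequency $x$-profile, so that the left side is $\|v\|_{L^2}$ while the right side is $O(|I|^{1/2}\|v\|_{L^2})$). This is an imprecision in the theorem as quoted rather than in your argument, and it is harmless for the paper's applications where $I=[-1,0]$, but it is worth flagging.
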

\subsection{Proof of Proposition \ref{prop:wtH}}\label{appendix:wtH}
Assuming \eqref{wt:cd1}, let us prove \eqref{equi:wtH}. By comparing $\sigma_1$ with $\min\{\sigma_1, \sigma_2\}+M$ it suffices to prove the following claim. If $\sigma\ge 2h>0$ for some $h>0$, then for any $M>0$, there exists $C=C(d, h, M)$ such that 
\bq\label{claim:wtH}
\| f\|_{\wt H^\mez_{\sigma+M}}\le C\| f\|_{\wt H^\mez_{\sigma}}.
\eq 
By iteration, \eqref{claim:wtH} will follow from the same estimate with $M$ replaced by $\delta=\frac{h}{10}$. To prove this, we first note that
 \begin{equation*}
\begin{split}
\Vert f\Vert_{\wt H^\mez_{\sigma+\delta}}^2%&=\int_{x\in\mathbb{R}^d}\int_{k\in B(0,\sigma(x)+\delta)}\frac{\vert f(x+k)-f(x)\vert^2}{\vert k\vert^{d+2s}}dxdk\\
&=\Vert f\Vert_{\wt H^\mez_{\sigma}}^2+ \int_{x\in\mathbb{R}^d}\int_{\{\sigma(x)\le \vert k\vert\le \sigma(x)+\delta\}}\frac{\vert f(x+k)-f(x)\vert^2}{\vert k\vert^{d+1}}dkdx:=\Vert f\Vert_{\wt H^\mez_{\sigma}}^2+ J.
%&=\Vert f\Vert_{H^s_{\sigma}}^2+ \int_{x\in\mathbb{R}^d}\int_{\omega\in\mathbb{S}^2}\int_{\{\sigma(x)\le  r\le \sigma(x)+\delta\}}\frac{\vert f(x+r\omega)-f(x)\vert^2}{r^{1+2s}}dx drd\omega
\end{split}
\end{equation*}
Letting $\theta(x)=1-\frac{h}{2\sigma(x)}$, we have $\tt\in [\mez, 1]$ and 
\bq\label{equi:wth:10}
\vert y\theta(x)\vert\le \sigma(x)-\frac{h}{4}\quad\text{when}\quad \vert y\vert\le\sigma(x)+\delta.
\eq
 Then, for $\vert u\vert\le h/4$, we decompose $J\le 2J_1+2J_2$ where
\begin{equation*}
\begin{split}
&J_1= \int_{x\in\mathbb{R}^d}\int_{\{\sigma(x)\le \vert k\vert\le \sigma(x)+\delta\}}\frac{\vert f(x+k)-f(x+\theta k+u)\vert^2}{\vert k\vert^{d+1}}dkdx\\
&J_2=\int_{x\in\mathbb{R}^d}\int_{\{\sigma(x)\le \vert k\vert\le \sigma(x)+\delta\}}\frac{\vert f(x+\theta k+u)-f(x)\vert^2}{\vert k\vert^{d+1}}dkdx.
\end{split}
\end{equation*}
We can easily dispense with $J_2$ by changing variable $k\mapsto z=\theta k+u$ and using \eqref{equi:wth:10}:
\begin{equation*}
\begin{split}
J_2&\lesssim \int_{x\in\mathbb{R}^d}\int_{\{\vert z\vert\le \sigma(x)\}}\frac{\vert f(x+z)-f(x)\vert^2}{\vert z\vert^{d+1}}dxdz=\Vert f\Vert_{H^s_{\sigma}}^2\\
\end{split}
\end{equation*}
uniformly in $\vert u\vert\le h/4$. To estimate $J_1$, we  average over $\vert u\vert\le h/4$:
\[
J_1\lesssim h^{-d} \int_{u\in B(0,h/4)}\int_{x\in\mathbb{R}^d}\int_{\{\sigma(x)\le \vert k\vert\le \sigma(x)+\delta\}}\frac{\vert f(x+k)-f(x+ k+(\tt-1)k+u)\vert^2}{\vert k\vert^{d+1}}dkdxdu.
\]
Since $|(\tt-1)k+u|\le h$, by the changes  of variables $u\mapsto (\tt-1)k+u$ and then $k\mapsto k+x$, we get
\begin{equation*}
\begin{split}
J_1&\lesssim h^{-d} \int_{u\in B(0,h)}\int_{x,k\in\mathbb{R}^d}\frac{\vert f(x+k)-f(x+k+u)\vert^2}{\vert k\vert^{d+1}}\mathfrak{1}_{\{\sigma(x)\le\vert k\vert\le\sigma(x)+\delta\}}dkdxdu\\
&\lesssim h^{-d}\int_{z\in\mathbb{R}^d} \int_{u\in B(0,h)}\vert f(z)-f(z+u)\vert^2 \left\{\int_{x\in\mathbb{R}^d}\frac{1}{\vert z-x\vert^{d+1}}\mathfrak{1}_{\{\sigma(x)\le\vert z-x\vert\le\sigma(x)+\delta\}}dx\right\}dudz\\
&\lesssim \int_{z\in\mathbb{R}^d} \int_{u\in B(0,h)}\frac{\vert f(z)-f(z+u)\vert^2}{\vert u\vert^{d+1}} dudz\\
&\lesssim \Vert f\Vert_{H^\mez_{\sigma}}^2
\end{split}
\end{equation*}
which finishes the proof.
\section{Proof of \eqref{RT} and \eqref{RT2}}\label{appendixB}
Since $p^+(x, \eta(x))=p^-(x, \eta(x))$ we have
\[
\na_xp^+-\na_xp^-=(\p_yp^--\p_yp^+)\na \eta\quad\text{on}~\Sigma=\{y=\eta(x)\}.
\]
Then 
\[
\sqrt{1+|\na \eta|^2}RT=-(\na_x p^+-\na_xp^-)\vert_\Sigma\cdot \na\eta+(\p_y p^+-\p_yp^-)\vert_\Sigma=(\p_y p^+-\p_yp^-)\vert_\Sigma(1+|\na \eta|^2).
\]
Finally, using the fact that
\[
\p_yp^\pm\vert_\Sigma=\p_yq^\pm\vert_\Sigma-\rho^\pm=B^\pm-\rho^\pm,
\]
we obtain
\[
RT=\sqrt{1+|\na \eta|^2}[(\rho^--\rho^+)-(B^--B^+)]
\]
which proves \eqref{RT}. As for \eqref{RT2}, we use the Darcy law \eqref{Darcy:pm} and the continuity \eqref{u.n:pm} of $u\cdot n$ to have
\[
\mu^\pm u\cdot n+\na p^\pm=-(0, \rho^\pm)\cdot n=-\rho^\pm (1+|\na\eta|^2)^{-\mez},
\]
yielding
\[
RT=(\na p^+-\na p^-)\cdot n=(\mu^--\mu^+)u\cdot n+(\rho^--\rho^+)\rho^\pm (1+|\na\eta|^2)^{-\mez}.
\]

%%%%%%%%%%%%%
%%%%%%%%%%%%%
%%%%%%%%%%%%%
%%%%%%%%%%%%%

\section{Paradifferential calculus}\label{appendix:para}
This section is devoted to a review of basic features of Bony's paradifferential calculus (see e.g. \cite{Bony, Hormander, MePise,ABZ3}). 
\begin{defi}\label{defi:para}
1. (Symbols) Given~$\rho\in [0, \infty)$ and~$m\in\Rr$,~$\Gamma_{\rho}^{m}(\Rr^d)$ denotes the space of
locally bounded functions~$a(x,\xi)$
on~$\Rr^d\times(\Rr^d\setminus 0)$,
which are~$C^\infty$ with respect to~$\xi$ for~$\xi\neq 0$ and
such that, for all~$\alpha\in\Nn^d$ and all~$\xi\neq 0$, the function
$x\mapsto \partial_\xi^\alpha a(x,\xi)$ belongs to~$W^{\rho,\infty}(\Rr^d)$ and there exists a constant
$C_\alpha$ such that,
\begin{equation*}%\label{para:10}
\forall |\xi|\ge \mez,\quad 
\Vert \partial_\xi^\alpha a(\cdot,\xi)\Vert_{W^{\rho,\infty}(\Rr^d)}\le C_\alpha
(1+|\xi|)^{m-|\alpha|}.
\end{equation*}
Let $a\in \Gamma_{\rho}^{m}(\Rr^d)$, we define the semi-norm
\begin{equation}\label{defi:norms}
M_{\rho}^{m}(a)= 
\sup_{|\alpha|\le 2(d+2) +\rho ~}\sup_{|\xi| \ge \mez~}
\Vert (1+|\xi|)^{|\alpha|-m}\partial_\xi^\alpha a(\cdot,\xi)\Vert_{W^{\rho,\infty}(\Rr^d)}.
\end{equation}
2. (Paradifferential operators) Given a symbol~$a$, we define
the paradifferential operator~$T_a$ by
\begin{equation}\label{eq.para}
\widehat{T_a u}(\xi)=(2\pi)^{-d}\int \chi(\xi-\eta,\eta)\widehat{a}(\xi-\eta,\eta)\Psi(\eta)\widehat{u}(\eta)
\, d\eta,
\end{equation}
where
$\widehat{a}(\theta,\xi)=\int e^{-ix\cdot\theta}a(x,\xi)\, dx$
is the Fourier transform of~$a$ with respect to the first variable; 
$\chi$ and~$\Psi$ are two fixed~$C^\infty$ functions such that:
\begin{equation}\label{cond.psi}
\Psi(\eta)=0\quad \text{for } |\eta|\le \frac{1}{5},\qquad
\Psi(\eta)=1\quad \text{for }|\eta|\geq \frac{1}{4},
\end{equation}
and~$\chi(\theta,\eta)$ 
satisfies, for~$0<\eps_1<\eps_2$ small enough,
$$
\chi(\theta,\eta)=1 \quad \text{if}\quad |\theta|\le \eps_1| \eta|,\qquad
\chi(\theta,\eta)=0 \quad \text{if}\quad |\theta|\geq \eps_2|\eta|,
$$
and such that
$$
\forall (\theta,\eta), \qquad | \partial_\theta^\alpha \partial_\eta^\beta \chi(\theta,\eta)|\le 
C_{\alpha,\beta}(1+| \eta|)^{-|\alpha|-|\beta|}.
$$
\end{defi}
\begin{rema}
The cut-off $\chi$ can be appropriately chosen so that when $a=a(x)$, the paradifferential operator $T_au$ becomes the usual paraproduct.
\end{rema}
\begin{defi}\label{defi:order}
Let~$m\in\Rr$.
An operator~$T$ is said to be of  order~$m$ if, for all~$\mu\in\Rr$,
it is bounded from~$H^{\mu}$ to~$H^{\mu-m}$.% and from $C_*^\mu$ to $C_*^{\mu-m}$. 
\end{defi}
Symbolic calculus for paradifferential operators is summarized in the following theorem.
\begin{theo}\label{theo:sc}(Symbolic calculus)
Let~$m\in\Rr$ and~$\rho\in [0,1)$. \\
$(i)$ If~$a \in \Gamma^m_0(\Rr^d)$, then~$T_a$ is of order~$m$. 
Moreover, for all~$\mu\in\Rr$ there exists a constant~$K$ such that
\begin{equation}\label{esti:quant1}
\Vert T_a \Vert_{H^{\mu}\rightarrow H^{\mu-m}}\le K M_{0}^{m}(a).
%\Vert T_a \Vert_{H^{\mu}\rightarrow H^{\mu-m}}+\Vert T_a \Vert_{C_*^{\mu}\rightarrow C_*^{\mu-m}}\le K M_{0}^{m}(a).
\end{equation}
$(ii)$ If~$a\in \Gamma^{m}_{\rho}(\Rr^d), b\in \Gamma^{m'}_{\rho}(\Rr^d)$ then 
$T_a T_b -T_{ab}$ is of order~$m+m'-\rho$. 
%where
%\[
%a\sharp b:=\sum_{|\alpha|<\rho}\frac{(-i)^{\alpha}}{\alpha !}\partial_{\xi}^{\alpha}a(x, \xi)\partial_x^{\alpha}b(x, \xi).
%\] 
Moreover, for all~$\mu\in\Rr$ there exists a constant~$K$ such that
\begin{equation}\label{esti:quant2}
\begin{aligned}
\Vert T_a T_b  - T_{a  b} \Vert_{H^{\mu}\rightarrow H^{\mu-m-m'+\rho}}
%\Vert T_a T_b  - T_{a  b} \Vert_{H^{\mu}\rightarrow H^{\mu-m-m'+\rho}}+\Vert T_a T_b  - T_{a  b} \Vert_{C_*^{\mu}\rightarrow C_*^{\mu-m-m'+\rho}}
&\le 
K (M_{\rho}^{m}(a)M_{0}^{m'}(b)+M_{0}^{m}(a)M_{\rho}^{m'}(b)).
\end{aligned}
\end{equation}
$(iii)$ Let~$a\in \Gamma^{m}_{\rho}(\Rr^d)$. Denote by 
$(T_a)^*$ the adjoint operator of~$T_a$ and by~$\overline{a}$ the complex conjugate of~$a$. Then $(T_a)^* -T_{\overline{a}}$ is of order~$m-\rho$  where
%\[
%a^*=\sum_{|\alpha|<\rho}\frac{1}{i^{|\alpha|}\alpha!}\partial_{\xi}^{\alpha}\partial_x^{\alpha}\overline{a}.
%\] 
Moreover, for all~$\mu$ there exists a constant~$K$ such that
\begin{equation}\label{esti:quant3}
\Vert (T_a)^*   - T_{\overline{a}}   \Vert_{H^{\mu}\rightarrow H^{\mu-m+\rho}}
%\Vert (T_a)^*   - T_{\overline{a}}   \Vert_{H^{\mu}\rightarrow H^{\mu-m+\rho}}+\Vert (T_a)^*   - T_{\overline{a}}   \Vert_{C_*^{\mu}\rightarrow C_*^{\mu-m+\rho}}
\le 
K M_{\rho}^{m}(a).
\end{equation}
\end{theo}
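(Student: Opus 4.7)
The plan is to reduce all three assertions to Littlewood--Paley dyadic analysis and classical pseudodifferential symbolic calculus on each dyadic block. The defining cutoff $\chi(\theta,\eta)$ in \eqref{eq.para} is engineered precisely so that, when $u$ has Fourier support in an annulus $|\eta|\sim 2^k$, only the part of $a(x,\xi)$ with $x$-Fourier support $|\theta|\lesssim \eps_2 2^k$ contributes to $T_a u$. Writing $\widehat{u}=\sum_k \Delta_k \widehat{u}$ with standard dyadic projectors $\Delta_k$ (compatible with the cutoff $\Psi$), I would define
\[
\sigma_k(x,\xi)=\mathcal{F}^{-1}_\theta\bigl(\chi(\theta,\xi)\widehat{a}(\theta,\xi)\bigr)(x)\Psi(\xi),\qquad T_a u=\sum_k \sigma_k(x,D)\Delta_k u,
\]
so each $\sigma_k(\cdot,\xi)$ is smooth in $x$ with $\|\partial_x^\beta\sigma_k(\cdot,\xi)\|_{L^\infty}\lesssim 2^{|\beta|k}M_0^m(a)(1+|\xi|)^m$, and the output $\sigma_k(x,D)\Delta_k u$ has Fourier support in a fixed-size enlargement of the annulus $|\xi|\sim 2^k$.

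For (i), the $L^2$-boundedness of each block follows from Schur's lemma applied to the kernel of $\sigma_k(x,D)\Delta_k$, giving $\|\sigma_k(x,D)\Delta_k u\|_{L^2}\lesssim M_0^m(a)\,2^{km}\|\Delta_k u\|_{L^2}$. The near-disjointness of the Fourier supports in the outputs then assembles into the Sobolev estimate \eqref{esti:quant1} by Plancherel.

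For (ii), I would expand $T_aT_b-T_{ab}$ dyadically. Because of the cutoff structure, only blocks $\sigma_j^a\circ\sigma_k^b$ with $j\sim k$ (or with one scale dominating the other in a controlled way) contribute, and on each such block I would Taylor-expand $a(x,\xi)$ in $x$ against the kernel of $T_b$. The zeroth-order term reconstructs $T_{ab}$ on that block; the remainder involves differences $a(x,\xi)-a(x',\xi)$ at scale $|x-x'|\lesssim 2^{-k}$, which by the $W^{\rho,\infty}$-regularity of $a$ produce a factor $M_\rho^m(a)2^{-k\rho}$. A symmetric argument transfers derivatives to $b$, yielding \eqref{esti:quant2} after summing geometrically in $k$. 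For (iii), the adjoint of each block $\sigma_k(x,D)$ equals $\overline{\sigma_k}(x,D)$ up to a commutator term controlled again by the $\rho$-regularity of $a$ in $x$, by the standard integration-by-parts identity for pseudodifferential adjoints.

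The main obstacle is the bookkeeping of the remainders in (ii) and (iii): one must verify that the off-diagonal blocks $(j,k)$ with $|j-k|$ large give negligible contributions (using the support properties forced by $\chi$ and $\Psi$), and that the diagonal remainders sum to an operator of the claimed order with the quantitative dependence on the seminorms $M_\rho^m(a), M_0^m(a), M_\rho^{m'}(b), M_0^{m'}(b)$. Since this theorem is classical (see Bony~\cite{Bony}, H\"ormander~\cite{Hormander}, or M\'etivier~\cite{MePise}), the cleanest route is to cite a reference and only verify that the quantitative seminorm dependence matches \eqref{esti:quant1}--\eqref{esti:quant3}, which is a direct inspection of the proofs in those sources.
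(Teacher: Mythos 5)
The paper does not prove Theorem~\ref{theo:sc}: it is stated in Appendix~\ref{appendix:para} as part of a summary of classical paradifferential calculus and attributed to \cite{Bony, Hormander, MePise, ABZ3} without argument. Your Littlewood--Paley sketch is a faithful outline of the proof in those references (block-wise $L^2$ estimates enabled by the frequency localization forced by $\chi$ and $\Psi$ for (i), Taylor expansion of the slowly varying symbol against the fast oscillation to extract the $\rho$-gain for (ii) and (iii)), and your conclusion that the cleanest route is citation together with inspection of the quantitative seminorm dependence coincides exactly with what the paper does.
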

\begin{rema}\label{rema:low}
In the definition \eqref{eq.para} of paradifferential operators, the cut-off $\Psi$ removes the low frequency part of $u$. In particular, when $a\in \Gamma^m_0$  we have
\[
\Vert T_a u\Vert_{H^\sigma}\le CM_0^m(a)\Vert \nabla u\Vert_{H^{\sigma+m-1}}. 
\]
\end{rema}
To handle symbols of negative Zygmund regularity, we shall appeal to the following. 
\begin{prop}[\protect{\cite[Proposition 2.12]{ABZ3}}]\label{prop:negOp}
Let  $m\in \Rr$ and $\rho<0$. We denote by $\dot \Gamma^m_\rho(\Rr^d)$ the class of symbols $a(x, \xi)$ that are homogeneous of order $m$ in $\xi$, smooth in $\xi\in \Rr^d\setminus \{0\}$ and such that  
\[
M_{\rho}^{m}(a)= 
\sup_{|\alpha|\le 2(d+2) +\rho ~}\sup_{|\xi| \ge \mez~}
\Vert |\xi|^{|\alpha|-m}\partial_\xi^\alpha a(\cdot,\xi)\Vert_{C^\rho_*(\Rr^d)}<\infty.
\]
 If $a\in \dot \Gamma^m_\rho$ then the operator $T_a$ defined by  \eqref{eq.para} is of order~$m-\rho$.
\end{prop}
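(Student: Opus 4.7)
The plan is to reduce to Theorem \ref{theo:sc}(i) via a Littlewood--Paley decomposition of the symbol $a$ in the spatial variable, and to exploit the frequency cut-off $\chi$ built into the quantization \eqref{eq.para} to absorb the loss coming from negative Zygmund regularity.

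First I would write $a(x,\xi)=\sum_{k\ge 0} a_k(x,\xi)$ where $a_k(\cdot,\xi):=\Delta_k^x a(\cdot,\xi)$ denotes the $k$-th Littlewood--Paley block acting in the $x$-variable (with $\Delta_0^x$ collecting all low frequencies $|\zeta|\lesssim 1$). The Paley--Littlewood characterization of $C^\rho_*(\Rr^d)$ applied to $\partial_\xi^\alpha a(\cdot,\xi)$ together with the homogeneity in $\xi$ gives, for every admissible multi-index $\alpha$ and every $|\xi|\ge \mez$,
\[
\Vert \partial_\xi^\alpha a_k(\cdot,\xi)\Vert_{L^\infty_x}\lesssim 2^{-k\rho}\, |\xi|^{m-|\alpha|}\, M^m_\rho(a),
\]
so each $a_k$ lies in $\Gamma^m_0$ with classical seminorm $M^m_0(a_k)\lesssim 2^{-k\rho}\, M^m_\rho(a)$. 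Since $\rho<0$, these bounds grow geometrically in $k$, but this growth will be compensated by a frequency gain at the output.

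The crucial observation is that $a_k(\cdot,\xi)$ has Fourier support in $x$ inside $\{|\zeta|\sim 2^k\}$ (or $\{|\zeta|\lesssim 1\}$ when $k=0$), so the admissibility cone $|\zeta|\le \varepsilon_2 |\eta|$ imposed by $\chi$ in \eqref{eq.para} forces the input frequency to satisfy $|\eta|\gtrsim 2^k$, and the output frequency $\xi=\zeta+\eta$ then obeys $|\xi|\sim|\eta|$. Consequently, for each Littlewood--Paley shell $\Delta_j$ at the output,
\[
\Delta_j T_{a_k}u = \Delta_j T_{a_k}\widetilde\Delta_j u \qquad (k\le j+C),
\]
and the contribution vanishes if $k>j+C$; here $\widetilde\Delta_j$ is a fattened projector covering a fixed finite number of neighboring shells. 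Applying Theorem \ref{theo:sc}(i) to each $a_k\in \Gamma^m_0$ and using Remark \ref{rema:low} to convert the $H^m$-bound of $\widetilde\Delta_j u$ into an $L^2$-bound yields
\[
\Vert \Delta_j T_{a_k} u\Vert_{L^2}\lesssim M^m_0(a_k)\, 2^{jm}\, \Vert\widetilde\Delta_j u\Vert_{L^2} \lesssim 2^{-k\rho}\, 2^{jm}\, M^m_\rho(a)\, \Vert\widetilde\Delta_j u\Vert_{L^2}.
\]
Summing the geometric series $\sum_{k\le j+C} 2^{-k\rho}\lesssim 2^{-j\rho}$ (which converges precisely because $\rho<0$) and then invoking almost-orthogonality in $j$ at the output gives
\[
\Vert T_a u\Vert_{H^{\mu-m+\rho}}^2 \sim \sum_j 2^{2j(\mu-m+\rho)}\,\Vert\Delta_j T_a u\Vert_{L^2}^2 \lesssim M^m_\rho(a)^2\,\Vert u\Vert_{H^\mu}^2,
\]
which is precisely the claim that $T_a$ is of order $m-\rho$.

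The main technical hurdle is bookkeeping: one must justify the convergence of $\sum_k T_{a_k}u$ in $\mathcal{S}'$ and check that the Littlewood--Paley decomposition of $a$ in $x$ commutes with the oscillatory-integral quantization \eqref{eq.para}. The former is a direct consequence of the dyadic control above once applied to partial sums; the latter follows since $\Delta_k^x$ acts only on the first slot of $a(x,\xi)$ while \eqref{eq.para} is built from the Fourier transform of $a$ in $x$. A subsidiary point is that $M^m_\rho$ controls only a restricted range of $\xi$-derivatives when $\rho<0$, but since the $L^2$ bound in Theorem \ref{theo:sc}(i) uses only a fixed finite number of derivatives (depending on dimension), the admissible range is sufficient after adjusting constants.
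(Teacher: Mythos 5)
Your argument is correct and is essentially the standard one; the paper does not reproduce a proof but cites [ABZ3, Proposition 2.12], where the same scheme appears: decompose $a=\sum_k a_k$ by Littlewood--Paley in $x$, note that $M^m_0(a_k)\lesssim 2^{-k\rho}M^m_\rho(a)$ by the dyadic characterization of $C^\rho_*$, and use the cone $|\xi-\eta|\le\varepsilon_2|\eta|$ in \eqref{eq.para} (together with $\Psi$) to force the output frequency of $T_{a_k}$ to live at scale $\gtrsim 2^k$, so that on the $j$-th output shell only $k\le j+C$ contribute and $\sum_{k\le j+C}2^{-k\rho}\lesssim 2^{-j\rho}$ converges precisely because $\rho<0$. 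Summing in $j$ then yields the claimed order $m-\rho$.

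The one point you flag and that deserves to be made explicit rather than waved away: the seminorm $M^m_\rho$ with $\rho<0$ runs over $|\alpha|\le 2(d+2)+\rho$, which is strictly fewer $\xi$-derivatives than the $|\alpha|\le 2(d+2)$ appearing in $M^m_0$ in \eqref{defi:norms}. So one cannot invoke Theorem \ref{theo:sc}(i) literally to bound $\Vert T_{a_k}\Vert_{H^\mu\to H^{\mu-m}}$ by $M^m_0(a_k)$, since the latter quantity is not controlled by the hypotheses. The fix is either (a) to observe that the Calder\'on--Vaillancourt / Coifman--Meyer $L^2$ bound underlying Theorem \ref{theo:sc}(i) only consumes on the order of $d/2+1$ derivatives in $\xi$, so the truncated range $|\alpha|\le 2(d+2)+\rho$ still suffices whenever $\rho$ is not too negative (e.g.\ $\rho>-(3d/2+3)$; in this paper $\rho=-1+\delta$, so there is ample room), or (b) to note that one may equally well define the seminorm on $\Gamma^m_0$ using $|\alpha|\le 2(d+2)+\rho$ with the same boundedness conclusion, and then the chain $M^m_0(a_k)\lesssim 2^{-k\rho}M^m_\rho(a)$ is legitimate. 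With that made precise, your proof is complete and matches the cited source.
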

\begin{nota}
If $a$ and $u$ depend on a parameter $z\in J\subset \Rr$ we denote 
\[
(T_au)(z)=T_{a(z)}u(z),\quad M^m_\rho(a; J)=\sup_{z\in J}M^m_\rho(a(z)).
\]
If $M^m_\rho(a; J)$ is finite we write $a\in \Gamma^m_\rho(\Rr^d\times J)$.
\end{nota}
\begin{defi}
Given two functions~$a,~u$ defined on~$\Rr^d$  the Bony's remainder is defined by
$$
R(a,u)=au-T_a u-T_u a.
$$
\end{defi}
We gather here several useful product and paraproduct rules.
\begin{theo}\label{pproduct}
Let $s_0$, $s_1$ and $s_2$ be real numbers.
\begin{enumerate}
\item For any $s\in \Rr$, 
\bq\label{pp:Linfty}
\| T_a u\|_{H^s}\le C\| a\|_{L^\infty}\| u\|_{H^s}.
\eq
%\item If $s\in \Rr$ and $m>0$ then 
%\bq\label{pp:C-m}
%\| T_a u\|_{H^{s-m}}\le C\| a\|_{C_*^{-m}}\| u\|_{H^s}.
%\eq
\item If
$s_0\le s_2$ and~$s_0 < s_1 +s_2 -\frac{d}{2}$, 
then
\begin{equation}\label{boundpara}
\Vert T_a u\Vert_{H^{s_0}}\le C \Vert a\Vert_{H^{s_1}}\Vert u\Vert_{H^{s_2}}.
\end{equation}
\item  If~$s_1+s_2>0$ then
\begin{align}
&\Vert R(a,u) \Vert _{H^{s_1 + s_2-\frac{d}{2}}(\Rr^d)}
\leq C \Vert a \Vert _{H^{s_1}(\Rr^d)}\Vert u\Vert _{H^{s_2}(\Rr^d)},\label{Bony1}\\
&\Vert R(a,u) \Vert _{H^{s_1+s_2}(\Rr^d)} \leq C \Vert a \Vert _{C^{s_1}_*(\Rr^d)}\Vert u\Vert _{H^{s_2}(\Rr^d)}.\label{Bony2}
%&\lA R(a,u) \rA _{C_*^{\alpha + \beta}(\Rr^d)}\leq C \lA a \rA _{C^{\alpha}_*(\Rr^d)}\lA u\rA _{C_*^{\beta}(\Rr^d)}.\label{Bony3}
\end{align}
\item If $s_1+s_2> 0$, $s_0\le s_1$ and $s_0< s_1+s_2-\frac{d}{2}$ then  
\bq\label{paralin:product}
\Vert au - T_a u\Vert_{H^{s_0}}\le C \Vert a\Vert_{H^{s_1}}\Vert u\Vert_{H^{s_2}}.
\eq
\item \label{it.1} If $s_1+s_2> 0$, $s_0\le s_1$, $s_0\le s_2$ and  $s_0< s_1+s_2-\frac d2
$ then 
\begin{equation}\label{pr}
\Vert u_1 u_2 \Vert_{H^{s_0}}\le C \Vert u_1\Vert_{H^{s_1}}\Vert u_2\Vert_{H^{s_2}}.
\end{equation}
%If $s, t>0$ then %(see Proposition $5.2.2$ in \cite{MePise})
%\begin{align}
%\label{paralin:product2}
%&\Vert au-T_au\Vert_{H^s}\le K\Vert \nabla a\Vert_{H^{s-1}}\Vert u\Vert_{L^\infty},\\
%\label{paralin:product3}& \Vert au-T_au\Vert_{H^s}\le K\Vert \nabla a\Vert_{H^{s-1+t}}\Vert u\Vert_{C_*^{-t}}.
%\end{align}
\end{enumerate}
\end{theo}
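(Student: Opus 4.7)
All of these estimates are classical consequences of Littlewood–Paley theory and Bony's decomposition, so my plan is to reduce each item to a dyadic bookkeeping exercise using the frequency localization built into \eqref{eq.para}. Let $(\Delta_j)_{j\ge -1}$ denote the standard Littlewood–Paley blocks, write $S_j = \sum_{k\le j-N_0}\Delta_k$ for a suitable $N_0$, and recall that for the choice of $\chi,\Psi$ in \eqref{eq.para} one has the representation $T_a u = \sum_j (S_{j-N_0}a)\Delta_j u$ modulo harmless low-frequency terms, together with the spectral localization $\supp \widehat{(S_{j-N_0}a)\Delta_j u}\subset \{|\xi|\sim 2^j\}$.

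For \eqref{pp:Linfty} I would simply observe that $a\in L^\infty$ gives $\|S_{j-N_0}a\|_{L^\infty}\lesssim \|a\|_{L^\infty}$ uniformly in $j$, whence $\|\Delta_j T_a u\|_{L^2}\lesssim \|a\|_{L^\infty}\|\Delta_j u\|_{L^2}$ by the almost-orthogonality of the dyadic frequency annuli, and summation in $\ell^2_j$ with weight $2^{2js}$ yields the estimate. For \eqref{boundpara} I upgrade this: by Bernstein and the condition $s_0<s_1+s_2-d/2$,
\begin{equation*}
\|S_{j-N_0}a\|_{L^\infty}\lesssim \sum_{k\le j}2^{kd/2}\|\Delta_k a\|_{L^2}\lesssim 2^{j(d/2-s_1)}\|a\|_{H^{s_1}}
\end{equation*}
so that $\|\Delta_j T_a u\|_{L^2}\lesssim 2^{j(d/2-s_1)}\|a\|_{H^{s_1}}\|\Delta_j u\|_{L^2}$, and the hypothesis $s_0\le s_2$ together with $s_0<s_1+s_2-d/2$ permits the $\ell^2$-summation. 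The Bony remainder estimates \eqref{Bony1}–\eqref{Bony2} are obtained by writing $R(a,u)=\sum_{|j-k|\le N_0}\Delta_j a\,\Delta_k u$, noting that the spectral support of each summand lies in a ball of radius $2^{\max(j,k)}$, and summing with the dyadic characterization $\|a\|_{C^{s_1}_*}\sim \sup_j 2^{js_1}\|\Delta_j a\|_{L^\infty}$ for \eqref{Bony2}, or Cauchy–Schwarz in $k$ for \eqref{Bony1}; the positivity $s_1+s_2>0$ is precisely what makes the geometric series converge. Items \eqref{paralin:product} and \eqref{pr} are then immediate consequences of Bony's formula
\begin{equation*}
au = T_a u + T_u a + R(a,u)
\end{equation*}
combined with \eqref{boundpara} applied to $T_u a$ (using $s_0\le s_1$) and with \eqref{Bony1} applied to $R(a,u)$ (using $s_1+s_2>0$ and $s_0<s_1+s_2-d/2$).

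The only mildly delicate point is the borderline condition $s_0<s_1+s_2-d/2$ in \eqref{boundpara} and \eqref{pr}: the strict inequality is what lets one sum the geometric factor $2^{j(s_0-s_1-s_2+d/2)}$ in $\ell^2_j$, and weakening it to equality would require a logarithmic loss or Besov-type refinement, which I would not pursue here. All other constraints ($s_0\le s_2$, $s_0\le s_1$, $s_1+s_2>0$) correspond transparently to the regions where the paraproduct is the leading term versus where the remainder has to be controlled, and are built into the dyadic summations just described.
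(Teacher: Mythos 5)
This theorem is stated in Appendix C of the paper as a collection of classical paraproduct and paradifferential estimates, with pointers to the literature (Bony, H\"ormander, M\'etivier, Bahouri--Chemin--Danchin); the paper does not prove it, so there is no in-paper argument to compare against. Your sketch follows the standard Littlewood--Paley route and is correct in its broad strokes, and items (1), (3), (4), (5) are handled exactly as one would expect: \eqref{pp:Linfty} is the almost-orthogonality argument, \eqref{Bony1}--\eqref{Bony2} come from the diagonal sum plus the dyadic characterization of Zygmund norms, and \eqref{paralin:product}, \eqref{pr} are immediate from Bony's decomposition once \eqref{boundpara} and \eqref{Bony1} are in hand.

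There is, however, a genuine gap in the step for \eqref{boundpara}. The chain
\begin{equation*}
\|S_{j-N_0}a\|_{L^\infty}\lesssim \sum_{k\le j}2^{kd/2}\|\Delta_k a\|_{L^2}\lesssim 2^{j(d/2-s_1)}\|a\|_{H^{s_1}}
\end{equation*}
is only valid when $s_1<\frac d2$: the last inequality is Cauchy--Schwarz applied to $\sum_{k\le j}2^{k(d/2-s_1)}\cdot 2^{ks_1}\|\Delta_k a\|_{L^2}$, and the geometric series $\sum_{k\le j}2^{2k(d/2-s_1)}$ is $\lesssim 2^{2j(d/2-s_1)}$ precisely when $d/2-s_1>0$. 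When $s_1>\frac d2$ the correct bound is the $j$-independent one $\|S_{j-N_0}a\|_{L^\infty}\lesssim\|a\|_{H^{s_1}}$ (Sobolev embedding), after which the hypothesis $s_0\le s_2$ alone suffices; this case could also be subsumed by your own item (1). When $s_1=\frac d2$ the sum produces a factor $(j+2)^{1/2}$, and it is exactly this logarithm that the strict inequality $s_0<s_1+s_2-\frac d2$ is designed to absorb. As written you attribute the Bernstein bound to ``the condition $s_0<s_1+s_2-d/2$,'' which conflates two separate roles: the Cauchy--Schwarz step needs $s_1<d/2$, while the strictness of $s_0<s_1+s_2-d/2$ is needed only at the borderline $s_1=d/2$. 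Likewise your closing remark that ``the strict inequality is what lets one sum the geometric factor in $\ell^2_j$'' is misleading: once the per-block factor $2^{j(s_0-s_1-s_2+d/2)}$ is bounded, the $\ell^2_j$ sum is controlled by $\|u\|_{H^{s_2}}$ with no further loss, so non-strict inequality suffices away from $s_1=d/2$. A clean write-up should split \eqref{boundpara} into the three sub-cases $s_1<d/2$, $s_1=d/2$, $s_1>d/2$, or else argue directly with Young's inequality for the convolution of sequences $(2^{ks_1}\|\Delta_k a\|_{L^2})_k\in\ell^2$ and $(2^{k(d/2-s_1)}\mathfrak{1}_{k\le 0})_k$ against $(2^{js_2}\|\Delta_j u\|_{L^2})_j\in\ell^2$, which handles all cases uniformly under the theorem's hypotheses.
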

\begin{theo}\label{est:nonl}
  Consider~$F\in C^\infty(\Cc^N)$ such that~$F(0)=0$. 
  
  (i) For $s>\frac{d}{2}$, there exists a non-decreasing function~$\mathcal{F}\colon\Rr_+\rightarrow\Rr_+$ 
such that, for any~$U\in H^s(\Rr^d)^N$,
\begin{equation}\label{est:F(u):S}
\Vert F(U)\Vert_{H^s}\le \mathcal{F}\bigl(\Vert U\Vert_{L^\infty}\bigr)\Vert U\Vert_{H^s}.
\end{equation}
(ii) For $s>0$,  there exists an increasing function~$\mathcal{F}\colon\Rr_+\rightarrow\Rr_+$ 
such that, for any~$U\in C_*^s(\Rr^d)^N$,
\begin{equation}\label{est:F(u):Z}
\Vert F(U)\Vert_{C_*^s}\le \mathcal{F}\bigl(\Vert U\Vert_{L^\infty}\bigr)\Vert U\Vert_{C_*^s}.
\end{equation}
\end{theo}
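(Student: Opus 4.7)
My plan is to treat parts (i) and (ii) separately, using a Fa\`a di Bruno chain-rule expansion for the Sobolev estimate and a telescoping Littlewood--Paley decomposition for the Zygmund estimate.

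For part (i), I would induct on $\lfloor s\rfloor$. The base case $s\in(d/2,1)$ is handled by the equivalent Gagliardo--Slobodeckij norm
\begin{equation*}
\|f\|_{H^s}^2\asymp \|f\|_{L^2}^2+\iint_{\Rr^d\times\Rr^d}\frac{|f(x)-f(y)|^2}{|x-y|^{d+2s}}\,dx\,dy.
\end{equation*}
Since $F(0)=0$ and $F\in C^\infty$, the mean value inequality applied to the segment $[U(x),U(y)]\subset \overline{B_{\Cc^N}(0,\|U\|_{L^\infty})}$ yields the pointwise bounds $|F(U(x))|\le \cF(\|U\|_{L^\infty})|U(x)|$ and $|F(U(x))-F(U(y))|\le \cF(\|U\|_{L^\infty})|U(x)-U(y)|$, which transfer directly to the $L^2$ part and to the seminorm. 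For the integer case $s=k\ge 1$, I would expand $\partial^\alpha F(U)$ by Fa\`a di Bruno into sums of terms $F^{(p)}(U)\prod_{j=1}^p \partial^{\alpha_j}U$ with $\sum_j|\alpha_j|=|\alpha|$, keep $F^{(p)}(U)$ in $L^\infty$ (pointwise bounded by $\cF(\|U\|_{L^\infty})$), and estimate $\prod_j \partial^{\alpha_j}U$ in $L^2$ via Gagliardo--Nirenberg interpolation, giving $\cF(\|U\|_{L^\infty})\|U\|_{H^s}$ thanks to $H^s\hookrightarrow L^\infty$ for $s>d/2$. The fractional case $s\in(k,k+1)$ then follows by applying the base-case Slobodeckij argument to $\partial^\alpha F(U)$ of order $k$, expressed through the same Fa\`a di Bruno formula.

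For part (ii), I would rely on the Littlewood--Paley characterization $\|f\|_{C_*^s}=\sup_{j\ge -1}2^{js}\|\Delta_j f\|_{L^\infty}$ and the classical telescoping $F(U)=F(S_0U)+\sum_{k\ge 0}m_k\Delta_k U$, where $m_k:=\int_0^1 F'(S_kU+t\Delta_k U)\,dt$. The coefficients satisfy $\|m_k\|_{L^\infty}\le \cF(\|U\|_{L^\infty})$ and, by inductive differentiation combined with Bernstein applied to $S_{k+1}U$ and $\Delta_k U$, $\|\nabla^\ell m_k\|_{L^\infty}\le C_\ell\,\cF(\|U\|_{L^\infty})\,2^{\ell k}$ for every $\ell\ge 0$ (noting that $\|U\|_{L^\infty}\lesssim \|U\|_{C_*^s}$ for $s>0$). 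To estimate $\|\Delta_j F(U)\|_{L^\infty}$, I would split $\sum_k\Delta_j(m_k\Delta_k U)$ into $k>j$ and $k\le j$. For $k>j$, the direct bound $\|\Delta_j(m_k\Delta_k U)\|_{L^\infty}\le \cF(\|U\|_{L^\infty})\,2^{-ks}\|U\|_{C_*^s}$ sums to a geometric series dominated by $2^{-js}\|U\|_{C_*^s}$; for $k\le j$, integration by parts against the Schwartz kernel of $\Delta_j$, using the derivative bounds on $m_k$, produces an arbitrary power $2^{-N(j-k)}$, and summing again yields $2^{-js}\|U\|_{C_*^s}$.

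The main obstacle is securing the \emph{tame} dependence (linear in $\|U\|_{H^s}$, with prefactor depending only on $\|U\|_{L^\infty}$) in part (i). A naive recursion based on $\nabla F(U)=F'(U)\nabla U$ would demand a product estimate in $H^{s-1}$, which is not an algebra in the range $s\in(d/2,d/2+1)$. The Fa\`a di Bruno/Gagliardo--Nirenberg route sidesteps this because it only requires $L^2$ control of the derivative product: at most one factor $\partial^{\alpha_j}U$ needs to be kept in $L^2$ while all others are placed in $L^\infty$ through interpolation. In part (ii), the delicate step is the gain $2^{-N(j-k)}$ for $k\le j$, which rests on the $2^{\ell k}$ bounds for $\nabla^\ell m_k$ together with the rapid decay of derivatives of the Littlewood--Paley kernel at scale $2^j$.
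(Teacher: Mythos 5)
The paper does not prove this theorem: it is quoted in the appendix as part of the standard paradifferential toolbox (it is the classical Moser--Meyer composition estimate, see e.g.\ \cite{BCD} or \cite{MePise}), and the references establish both parts by the Littlewood--Paley telescoping $F(U)=F(S_0U)+\sum_k m_k\Delta_kU$, $m_k=\int_0^1 F'(S_kU+t\Delta_kU)\,dt$. Your part (ii) reproduces exactly that argument and is correct, including the key derivative bounds $\|\nabla^\ell m_k\|_{L^\infty}\le C_\ell\,\cF(\|U\|_{L^\infty})2^{\ell k}$ and the $2^{-N(j-k)}$ gain for $k\le j$. Your part (i) for integer $s$ (Fa\`a di Bruno plus Gagliardo--Nirenberg, keeping $F^{(p)}(U)$ in $L^\infty$) is also the standard real-variable proof and is fine.

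The gap is in the fractional case of part (i). Writing $s=k+\sigma$ with $k=\lfloor s\rfloor\ge1$ and $\sigma\in(0,1)$, the Slobodeckij seminorm of $\partial^\alpha F(U)=\sum F^{(p)}(U)\prod_j\partial^{\alpha_j}U$ does not reduce to the base-case Lipschitz bound: differencing a \emph{product} produces, after telescoping, terms such as $\bigl[F^{(p)}(U(x))-F^{(p)}(U(y))\bigr]\prod_j\partial^{\alpha_j}U(x)$, and the natural move of extracting $|U(x)-U(y)|/|x-y|^{\sigma}$ in $L^\infty$ requires $U\in C^{\sigma}$. But $H^s\hookrightarrow C^{\sigma}$ only when $s-\tfrac d2\ge\sigma$, i.e.\ $k\ge\tfrac d2$; for instance $d=3$, $s=1.7$ gives $\sigma=0.7>s-\tfrac d2=0.2$, so this factor is not bounded, while the cofactor $\prod_j\partial^{\alpha_j}U$ has already consumed all the integrability available from $H^k$ and sits only in $L^2$. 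Closing this requires fractional Gagliardo--Nirenberg/Leibniz estimates in Besov or Triebel--Lizorkin scales, which is a genuine extra layer of work that your one-sentence reduction hides. The cleaner repair is already in your own part (ii): the same telescoping identity together with $\|g\|_{H^s}^2\simeq\sum_j2^{2js}\|\Delta_jg\|_{L^2}^2$ and the bounds on $\nabla^\ell m_k$ yields \eqref{est:F(u):S} for every $s>0$ in one stroke (the hypothesis $s>\tfrac d2$ only serves to control $\|U\|_{L^\infty}$), which is precisely how the quoted references argue.
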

\begin{theo}[\protect{\cite[Theorem~2.92]{BCD} and \cite[Theorem 5.2.4]{MePise}}]\label{paralin:nonl}(Paralinearization for nonlinear functions)
Let $\mu,~\tau$ be positive real numbers and let $F\in C^{\infty}(\mathbb{C}^N)$ be a scalar function satisfying $F(0)=0$. If $U=(u_j)_{j=1}^N$ with $u_j\in H^{\mu}(\Rr^d)\cap C_*^{\tau}(\Rr^d)$ then we have
\bq\label{def:RF}
F(U)=\Sigma_{j=1}^NT_{\p_jF(U)}u_j+R_F(U)
\eq
with
\[
\| R_F(U)\|_{ H^{\mu+\tau}}\le \cF(\| U\|_{L^{\infty}})\| U\|_{C_*^{\tau}}\| U\|_{H^{\mu}}.
\]
\end{theo}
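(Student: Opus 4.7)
The plan is to prove Theorem \ref{paralin:nonl} by a classical dyadic/telescoping argument. Fix a Littlewood--Paley partition with low-frequency cutoffs $S_q$ and shells $\Delta_q = S_{q+1}-S_q$, and write
\begin{equation*}
F(U) = F(S_0 U) + \sum_{q\ge 0}\bigl[F(S_{q+1}U) - F(S_q U)\bigr].
\end{equation*}
The fundamental theorem of calculus gives
\begin{equation*}
F(S_{q+1}U) - F(S_q U) = \sum_j m_{q,j}\,\Delta_q u_j, \qquad m_{q,j}:=\int_0^1 \partial_j F\bigl(S_q U + t\,\Delta_q U\bigr)\,dt,
\end{equation*}
and the low-frequency piece $F(S_0 U)$ is trivially controlled by the nonlinear estimate \eqref{est:F(u):S} (or directly by $F(0)=0$ and smoothness of $F$).

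Next I would split each coefficient as $m_{q,j} = S_{q-N_0}m_{q,j} + (m_{q,j} - S_{q-N_0}m_{q,j})$ for a fixed $N_0$ large enough to match the spectral support convention in \eqref{eq.para}. Summing the first piece reconstructs $T_{\partial_j F(U)}u_j$ up to the discrepancy between $S_{q-N_0}m_{q,j}$ and $S_{q-N_0}\partial_j F(U)$. Using again the mean-value identity,
\begin{equation*}
m_{q,j} - \partial_j F(U) = \sum_k \int_0^1 (S_q u_k + t\Delta_q u_k - u_k)\, g_{q,k,j,t}\, dt,
\end{equation*}
with $g_{q,k,j,t}$ a smooth nonlinear function of $U$, $S_qU$, $\Delta_qU$. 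Since $\|u_k-S_qu_k-t\Delta_q u_k\|_{L^\infty}\le C 2^{-q\tau}\|U\|_{C_*^\tau}$ and $g_{q,k,j,t}$ is uniformly bounded by $\mathcal{F}(\|U\|_{L^\infty})$ via \eqref{est:F(u):Z}, the resulting contribution is a spectrally localized sum whose $\ell^2$-quasi-orthogonal assembly is bounded in $H^{\mu+\tau}$ by $\mathcal{F}(\|U\|_{L^\infty})\|U\|_{C_*^\tau}\|U\|_{H^\mu}$.

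For the second piece, the key observation is that $m_{q,j} - S_{q-N_0}m_{q,j}$ is high-frequency and by \eqref{est:F(u):Z} satisfies $\|m_{q,j}-S_{q-N_0}m_{q,j}\|_{L^\infty}\le C 2^{-q\tau}\mathcal{F}(\|U\|_{L^\infty})\|U\|_{C_*^\tau}$. Multiplying by $\Delta_q u_j$, the product is again spectrally localized (Bony-type remainder support), and one gains the weight $2^{q(\mu+\tau)}$ in $H^{\mu+\tau}$ while $\|\Delta_q u_j\|_{L^2}\le c_q 2^{-q\mu}\|U\|_{H^\mu}$ with $(c_q)\in\ell^2$; the bound $\mathcal{F}(\|U\|_{L^\infty})\|U\|_{C_*^\tau}\|U\|_{H^\mu}$ then follows by Cauchy--Schwarz.

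The main obstacle in writing a self-contained proof is the careful bookkeeping of the spectral supports: one needs to verify that the chosen $N_0$ makes the tails $S_{q-N_0}m_{q,j}$ compatible with the paraproduct cutoff $\chi$ in \eqref{eq.para}, and that the ``remainder-type'' sums (second piece, and the cross terms arising from the discrepancy) can actually be recombined into an $H^{\mu+\tau}$ function via the usual Littlewood--Paley square-function characterization, despite $\partial_j F(U)$ itself only having \emph{low} regularity. The hypothesis $\tau>0$ is what makes the geometric factor $2^{-q\tau}$ summable; the hypothesis $F(0)=0$ eliminates a spurious low-frequency constant. Everything else reduces to \eqref{est:F(u):S}--\eqref{est:F(u):Z} and standard Littlewood--Paley estimates.
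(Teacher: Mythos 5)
Your overall strategy -- Meyer's dyadic telescoping, writing $F(S_{q+1}U)-F(S_qU)=\sum_j m_{q,j}\Delta_q u_j$ with $m_{q,j}=\int_0^1 \partial_j F(S_qU+t\Delta_q U)\,dt$, then splitting each coefficient $m_{q,j}$ into a low-frequency part reconstructing $T_{\partial_j F(U)}u_j$ and a high-frequency residue -- is indeed the argument behind the cited Theorem~2.92 of \cite{BCD} and Theorem~5.2.4 of \cite{MePise}; the paper itself does not reprove this and simply quotes it. However your treatment of what you call the ``second piece'' contains a genuine gap.

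The assertion that the product $\bigl(m_{q,j}-S_{q-N_0}m_{q,j}\bigr)\Delta_q u_j$ is ``spectrally localized (Bony-type remainder support)'' is false. The input $S_qU+t\Delta_q U$ is band-limited to a ball of radius $\sim 2^q$, but the nonlinear composition $\partial_j F(\,\cdot\,)$ spreads the Fourier support over all of $\Rr^d$: $(I-S_{q-N_0})m_{q,j}$ lives at frequencies $\gtrsim 2^{q-N_0}$ with no upper bound, so its product with $\Delta_q u_j$ is supported nowhere in particular, neither in an annulus nor in a ball $B(0,C2^q)$. Consequently you cannot invoke the Littlewood--Paley square-function characterization of $H^{\mu+\tau}$ on the sum $\sum_q(I-S_{q-N_0})m_{q,j}\Delta_q u_j$ as you do; your $\ell^2$ Cauchy--Schwarz step has nothing to stand on. The missing idea -- and it is the crux of the cited proofs -- is the quantitative Meyer estimate for smooth functions of band-limited data: since $m_{q,j}$ is a smooth function of a function supported in $B(0,C2^q)$, for every $N$ and all $p\ge q$ one has
\begin{equation*}
\lA \Delta_p m_{q,j}\rA_{L^\infty}\lesssim_N 2^{-N(p-q)}\,2^{-q\tau}\,\cF(\lA U\rA_{L^\infty})\lA U\rA_{C_*^\tau},
\end{equation*}
obtained by combining Bernstein bounds on $\partial^\alpha(S_qU+t\Delta_q U)$ with the $C_*^\tau$ control (after subtracting the constant $\partial_j F(0)$). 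One must therefore expand $(I-S_{q-N_0})m_{q,j}=\sum_{p\ge q-N_0}\Delta_p m_{q,j}$, note that each $\Delta_p m_{q,j}\,\Delta_q u_j$ \emph{is} localized in $B(0,C2^p)$, reorganize the resulting double sum in $p$ and $q$, and conclude via the ball version of the square-function lemma (valid because $\mu+\tau>0$) using the geometric gain $2^{-N(p-q)}$, $N>\mu+\tau$, to sum the off-diagonal $p\gg q$. The $2^{-p\tau}$ decay coming from $C_*^\tau$ alone, which is what your ``low regularity'' remark gestures at, is not by itself summable; it is precisely the extra rapid decay from band-limitedness that closes the estimate. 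Your ``main obstacle'' paragraph flags roughly the right place but misdiagnoses the difficulty: it is not primarily a matter of tuning $N_0$ against the cutoff $\chi$ in \eqref{eq.para}, but of controlling the high-frequency spreading induced by the nonlinearity.
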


Recall the definitions \eqref{XY}. The next proposition provides parabolic estimates for elliptic paradifferential operators.
\begin{prop}[\protect{\cite[Proposition 2.18]{ABZ3}}]\label{prop:parabolic}
Let~$r\in \Rr$,~$\varrho\in (0,1)$,~$J=[z_0,z_1]\subset\Rr$ and let 
$p\in \Gamma^{1}_{\varrho}(\Rr^d\times J)$ 
satisfying$$
\RE p(z;x,\xi) \geq c |\xi|,
$$
for some positive constant~$c$. Then for any ~$f\in Y^r(J)$ %L^2(J;H^r({\mathbf{R}}^d))$ 
and~$w_0\in H^{r}(\Rr^d)$, there exists~$w\in X^{r}(J)$ solution of  the parabolic evolution equation
\begin{equation}\label{eqW}
\partial_z w + T_p w =f,\quad w\arrowvert_{z=z_0}=w_0,
\end{equation}
satisfying 
\begin{equation*}
\| w \|_{X^{r}(J)}\le \cF(\mathcal{M}^1_\varrho(p), \frac{1}{c})\big(\| w_0\|_{H^{r}}+ \| f\|_{Y^{r}(J)}\big),
\end{equation*}
for some increasing function $\cF$ depending only on $r$ and $\varrho$. Furthermore, this solution is unique in ~$X^s(J)$ for any~$s\in \Rr$. 
 \end{prop}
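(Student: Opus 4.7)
The plan is to obtain the estimate by an energy method combined with the symbolic calculus of Theorem \ref{theo:sc}, then deduce existence by vanishing-viscosity regularization and uniqueness directly from the estimate.

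First, I would establish the a priori estimate for a sufficiently smooth solution. Assume $w$ solves \eqref{eqW}. Set $E(z)=\| w(z)\|_{H^r}^2$ and compute
\begin{equation*}
\tfrac{d}{dz}E(z)=-2\RE(\langle D_x\rangle^r T_p w,\langle D_x\rangle^r w)_{L^2}+2\RE(\langle D_x\rangle^r f,\langle D_x\rangle^r w)_{L^2}.
\end{equation*}
The main point is a coercivity estimate for $T_p$. By Theorem \ref{theo:sc} (iii), $(T_p)^\ast-T_{\overline p}$ is of order $1-\varrho$, so $\RE(T_pw,w)_{H^r}=(T_{\RE p}w,w)_{H^r}+O(\| w\|_{H^r}\| w\|_{H^{r+\mez-\varrho/2}})$. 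Writing $T_{\RE p}=T_{\sqrt{\RE p}}^\ast T_{\sqrt{\RE p}}+\text{lower order}$ via Theorem \ref{theo:sc} (ii)-(iii) applied to $\sqrt{\RE p}\in\Gamma^{\mez}_{\varrho}$ (which is controlled since $\RE p\ge c|\xi|$ and $M^1_\varrho(p)<\infty$), and using that $T_{(\RE p)^{-\mez}}T_{\sqrt{\RE p}}-\mathrm{Id}$ is of order $-\varrho$, I obtain the Gårding-type inequality
\begin{equation*}
\RE(T_pw,w)_{H^r}\ge \frac{1}{\cF(M^1_\varrho(p),\tfrac 1c)}\| w\|_{H^{r+\mez}}^2-\cF(M^1_\varrho(p),\tfrac 1c)\| w\|_{H^r}^2.
\end{equation*}
For the source, decompose $f=f_1+f_2$ with $f_1\in L^1_zH^r$ and $f_2\in L^2_zH^{r-\mez}$; pair $f_1$ with $w$ in $H^r$ and $f_2$ with $w$ in the duality $H^{r-\mez}\times H^{r+\mez}$, absorbing the latter by the dissipative term using Young's inequality. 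This yields
\begin{equation*}
\tfrac{d}{dz}\| w\|_{H^r}^2+\tfrac{c_0}{\cF}\| w\|_{H^{r+\mez}}^2\le\cF\| w\|_{H^r}^2+\cF\| f_2\|_{H^{r-\mez}}^2+2\| f_1\|_{H^r}\| w\|_{H^r},
\end{equation*}
where $\cF=\cF(M^1_\varrho(p),\tfrac 1c)$. A Gr\"onwall argument together with an optimization over the decomposition $f=f_1+f_2$ gives the desired bound $\| w\|_{X^r(J)}\le\cF(\| w_0\|_{H^r}+\| f\|_{Y^r(J)})$.

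For existence, I would approximate: let $w^\eps$ solve $\partial_zw^\eps+T_pw^\eps-\eps\Delta_xw^\eps=f$ with $w^\eps|_{z=z_0}=w_0$. The strict parabolicity of $-\eps\Delta_x$ makes this a standard (linear) parabolic problem solvable by Galerkin or semigroup methods in $L^2_zH^{r+1}\cap C_zH^r$. The above energy estimate, carried out with $T_p$ replaced by $T_p-\eps\Delta_x$, is uniform in $\eps$ because the extra term only strengthens the dissipation, giving uniform bounds in $X^r(J)$. Weak-$\ast$ compactness allows passage to the limit $\eps\to 0$, producing a solution $w\in X^r(J)$. Uniqueness in $X^s(J)$ for any $s\in\Rr$ follows by applying the a priori estimate to the difference of two solutions (which solves a homogeneous equation with zero initial data): the Gr\"onwall inequality forces the difference to vanish.

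The main technical obstacle is the Gårding-type inequality, since we only assume $\varrho<1$ and must handle a symbol that is merely of Zygmund regularity $W^{\varrho,\infty}$ in $x$. The square-root symbol $\sqrt{\RE p}$ must be shown to lie in $\Gamma^{\mez}_\varrho$ with seminorm controlled by $M^1_\varrho(p)$ and $c^{-1}$, and the remainder terms in the factorization $T_{\RE p}-T_{\sqrt{\RE p}}^\ast T_{\sqrt{\RE p}}$ must be shown to be of order strictly less than $1$ so that they can be absorbed by interpolation between $H^r$ and $H^{r+\mez}$. Once this coercivity is secured, the rest of the argument is routine energy analysis.
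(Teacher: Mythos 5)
You should first note that the paper never proves Proposition \ref{prop:parabolic}: it is imported verbatim from \cite[Proposition 2.18]{ABZ3}, so there is no internal proof to compare against. Judged on its own merits, your argument is correct in outline and follows the standard route: an $H^r$ energy identity, a G\aa rding-type coercivity bound for $T_p$ obtained by symmetrizing (using that $(T_p)^*-T_{\overline p}$ has order $1-\varrho$) and factorizing $T_{\RE p}$ through $T_{\sqrt{\RE p}}$ with $\sqrt{\RE p}\in\Gamma^{\mez}_\varrho$ thanks to the ellipticity $\RE p\ge c|\xi|$, a splitting of $f$ according to the definition of $Y^r(J)$, then vanishing viscosity for existence and the a priori estimate for uniqueness. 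This is in fact exactly the coercivity mechanism the authors themselves deploy in the nonlinear energy estimate (compare \eqref{energy:pos}--\eqref{energy:dif}, where $\omega=\lambda(1-B)\ge\ma|\xi|$ and the same $T_{\sqrt\omega}$ factorization with order-$(1-\delta)$ remainders appears), so your route is consistent with the paper's toolbox. Two points should be tightened. First, your bookkeeping of the symmetrization error is slightly off: the remainders are operators of order $1-\varrho$, and since $\varrho$ may be less than $\mez$ you cannot afford a factor $\|w\|_{H^{r+1-\varrho}}$; you must distribute the pairing so that each factor is at most $H^{r+\mez}$, e.g. bound these terms by $\|w\|_{H^{r+\mez-\varrho}}\|w\|_{H^{r+\mez}}$ or $\|w\|^2_{H^{r+\mez-\varrho/2}}$, after which interpolation between $H^r$ and $H^{r+\mez}$ and Young's inequality absorb them; the bound you display, $\|w\|_{H^r}\|w\|_{H^{r+\mez-\varrho/2}}$, is not what the duality pairing yields, although the conclusion survives. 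The same remark applies to the commutator $[\langle D_x\rangle^r,T_p]$, of order $r+1-\varrho$, which you should treat explicitly since the coercivity is used after conjugation by $\langle D_x\rangle^r$, and you should recall that $\mathrm{Id}-T_1=\mathrm{Id}-\Psi(D)$ is only a smoothing operator, so low frequencies contribute an extra harmless $\|w\|_{H^r}$ term in the step $\|w\|_{H^{r+\mez}}\lesssim\cF\,(\|T_{\sqrt{\RE p}}w\|_{H^r}+\dots)$. Second, $X^r(J)$ demands $C^0_z H^r$ continuity, not merely an $L^\infty_z$ bound; this follows from the equation ($\p_z w=f-T_pw\in Y^r(J)$ together with $w\in L^2_zH^{r+\mez}$) and an interpolation argument as in Theorem \ref{trace:Lions}, and it should be carried through the limit $\eps\to0$ of your viscous approximation. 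With these routine repairs your proposal is a complete and correct proof.
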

 \section{Proof of Lemma \ref{lemm:XY}}\label{appendix:XY}
 1) Assuming \eqref{cd:XY1}, we prove \eqref{XY1}. We decompose $u_1u_2=T_{u_1}u_2+T_{u_2}u_1+R(u_1, u_2)$. Since $s_0\le s_2+1$ and $s_1+s_2>s_0+\frac{d}{2}-1$,  \eqref{boundpara} gives 
 \[
 \| T_{u_1}u_2\|_{L^2 H^{s_0-\mez}}\les \| u_1\|_{L^\infty H^{s_1}}\|u_2\|_{L^2 H^{s_2+\mez}}\les \|u_1\|_{X^{s_1}}\|u_2\|_{X^{s_2}}.
 \]
 The paraproduct $T_{u_2}u_1$ can be estimated similarly. As for $R(u_1, u_2)$ we use \eqref{Bony1} and the conditions $s_1+s_2+1>0$ and $s_1+s_2>s_0+\frac{d}{2}-1$:
 \[
 \| R(u_1, u_2)\|_{L^1 H^{s_0}}\les \|u_1\|_{L^2H^{s_1+\mez}}\| u_2\|_{L^2 H^{s_2+\mez}}\les \|u_1\|_{X^{s_1}}\|u_2\|_{X^{s_2}}.
 \]
 
 2) Assuming \eqref{cd:XY2}, we prove \eqref{XY2}. Again, we decompose $u_1u_2=T_{u_1}u_2+T_{u_2}u_1+R(u_1, u_2)$. If $u_1=a+b$ with $a\in L^1 H^{s_1}$ and $b\in L^2 H^{s_1-\mez}$ then 
 \[
 u_1u_2=T_{a}u_2+T_bu_2+T_{u_2}a+T_{u_2}b+R(a, u_2)+R(b, u_2).
 \]
  Using the conditions  $s_0\le s_2$ and $s_1+s_2>s_0+\frac{d}{2}$, we can apply \eqref{boundpara} to have
 \begin{align*}
& \| T_{a}u_2\|_{L^1 H^{s_0}}\les \| a\|_{L^1 H^{s_1}}\|u_2\|_{L^\infty H^{s_2}},\\
& \| T_{b}u_2\|_{L^2 H^{s_0-\mez}}\les \| b\|_{L^2 H^{s_1-\mez}}\|u_2\|_{L^\infty H^{s_2}}.
  \end{align*}
 Next from the conditions  $s_0\le s_1$ and $s_1+s_2>s_0+\frac{d}{2}$, \eqref{boundpara} yields \begin{align*}
& \| T_{u_2}a\|_{L^1 H^{s_0}}\les\|u_2\|_{L^\infty H^{s_2}} \| a\|_{L^1 H^{s_1}},\\
& \| T_{u_2}b\|_{L^2 H^{s_0-\mez}}\les \|u_2\|_{L^\infty H^{s_2}}\| b \|_{L^2 H^{s_1-\mez}}.
  \end{align*}
Finally, under the conditions  $s_1+s_2>0$ and $s_1+s_2>s_0+\frac{d}{2}$, using \eqref{Bony1} we obtain
\begin{align*}
&\|R(a, u_2)\|_{L^1H^{s_0}}\les \|a\|_{L^1 H^{s_1}}\| u_2\|_{L^\infty H^{s_2}},\\
&\|R(b, u_2)\|_{L^1H^{s_0}}\les \|b\|_{L^2 H^{s_1-\mez}}\| u_2\|_{L^2 H^{s_2+\mez}}.
\end{align*}
We have proved that 
\[
\| u_1u_2\|_{Y^{s_0}}\les (\|a\|_{L^1 H^{s_1}}+\|b\|_{L^2 H^{s_1-\mez}})\| u_2\|_{X^{s_2}}
\]
for any decomposition $u_1=a+b$. Therefore, \eqref{XY2} follows. We have also proved \eqref{est:XY22}, \eqref{est:XY3} and \eqref{est:XY4}.

\vspace{.1in}
\noindent{\bf{Acknowledgment.}} 
The work of HQN was partially supported by NSF grant DMS-1907776. BP was partially supported by NSF grant DMS-1700282. We would like to thank the reviewer for his/her positive and insightful comments on the manuscript.
%%%%%%%%%%%%%

\end{document}